\newcommand{\e}{\ensuremath{\eta}} 
\newcommand{\te}{\tilde{\e}}
\newcommand{\pd}[2]{\ensuremath{{\partial_{#2} #1}}}
\def\TM+{T^*(\rr_+ \times M)}
\def\rp{\mathbb R_{>0}}
\newcommand{\rr}{\ensuremath{\mathbb{R}}}
\newcommand{\zz}{\ensuremath{\mathbb{Z}}}
\newcommand{\ZZ}{\ensuremath{\mathbb{Z}}}
\newcommand{\Sphere}{\ensuremath{\mathbb{S}}}
\theoremstyle{plain}
\newtheorem{thm}{Theorem}[section]
\newtheorem{cor}[thm]{Corollary}
\newtheorem{lem}[thm]{Lemma}
\newtheorem{prop}[thm]{Proposition}
\newtheorem{ques}[thm]{Question}
\newtheorem{choice}[thm]{Choice}
\theoremstyle{definition}
\newtheorem{defn}[thm]{Definition}
\newtheorem{notation}[thm]{Notation}
\newtheorem{assume}[thm]{Assumption}
\newtheorem{construction}[thm]{Construction}
\theoremstyle{remark}
\newtheorem{rem}[thm]{Remark}
\newtheorem{ex}[thm]{Example}
\numberwithin{equation}{section} 
\newcommand{\dfn}[1]{{\textbf {#1}}}
\newcommand{\leg}{\ensuremath{\Lambda}}
\newcommand{\nc}{\newcommand}
\nc{\eqnn}{\begin{equation*}}
\nc{\eqnnd}{\end{equation*}}
\nc{\eqnd}{\end{equation}}
\nc{\eqn}{\begin{equation}}
\nc{\RR}{\mathbb{R}}
\nc{\CC}{\mathbb{C}}
\nc{\HH}{\mathbb{H}}
\nc{\bk}{{\bf k}}
\nc{\lmax}{\overline{\ell}}
\nc{\lmin}{\underline{\ell}}
\nc{\co}{\colon}
\def\clag{\mathcal L} 
\def\sclag{L} 
\def\Infin{\Omega}
\def\infin{\omega}
\def\col{\operatorname{colim}}
\def\Co{\operatorname{Cone}}
\def\id{\operatorname{id}}
\def\grad{\operatorname{grad}}
\def\FF{\mathbb F}
\nc{\DMO}{\DeclareMathOperator}
\def\claghiro{\overline{\mathcal L}} 
\nc{\LLeg}{\mathcal{L}\!\operatorname{eg}}
\nc{\GGen}{\mathcal{G}\!\operatorname{en}}
\DMO{\gen}{gen}
\DMO{\aut}{Aut}
\DMO{\swap}{swap}
\DMO{\flow}{Flow}
\DMO{\diff}{Diff}
\DMO{\del}{\partial}
\DMO{\colim}{colim}
\DMO{\hocolim}{hocolim}
\DMO{\cone}{Cone}
\nc{\Top}{\mathcal{T}\!\operatorname{op}}
\nc{\spectra}{\mathcal{S}\!\operatorname{pectra}}
\nc{\inftycat}{\mathcal{C}\!\operatorname{at}_\infty}
\nc{\GF}{\mathcal{GF}}
\nc{\immto}{\looparrowright}
\nc{\gfc}{C}
\begin{document}
 
\title[{A Stable Homotopy Invariant for Legendrians}]
	{A stable homotopy invariant for Legendrians with generating families}
\author{Hiro Lee Tanaka}
\author{Lisa Traynor}
\date{\today}

\begin{abstract}  
We construct a stable homotopy type invariant
for any Legendrian submanifold in a jet bundle equipped with a  linear-at-infinity generating family. We show that this spectrum lifts the generating family homology groups.
When the generating family extends to a generating family for an embedded Lagrangian filling, we lift the Seidel isomorphism to the spectrum level. As applications, we establish topological constraints on Lagrangian fillings arising from generating families, algebraic constraints on whether generating families admit fillings, and lower bounds on how many fiber dimensions are needed to construct a generating family for a Legendrian.
 \end{abstract}
\maketitle

\tableofcontents

\section{Introduction}

A central problem in contact topology is  the search for invariants of Legendrian submanifolds of contact manifolds.
 Given $\rr^{2n+1}$ with its standard contact structure $\xi_0 = \ker (dz - y\,dx)$, 
 there are classical integer-valued invariants of a closed Legendrian $\leg \subset \RR^{2n+1}$ known as the Thurston-Bennequin and rotation numbers; for definitions, see \cite{ees:high-d-geometry}.  For a Legendrian with vanishing rotation number, there exist categorifications: homological invariants of the Legendrian that can recover the Thurston-Bennequin number; \cite[Proposition 5.7] {chv},
 \cite[Proposition 3.3]{ees:high-d-geometry}.

Indeed, given a Legendrian $\leg$ with an augmentation $\epsilon$,  it is 
possible to define {\it linearized contact homology}   $LCH_*(\leg, \epsilon; R)$ \cite{chv, ees:high-d-analysis, Karlsson} via the theory of holomorphic curves.
When  $\leg$ is equipped with a generating family $f$, 
building on work in \cite{traynor:shomology, traynor:gf-polys}, Fuchs and Rutherford~\cite{f-r} defined {\em generating family homology}, $GFH_*(\leg, f; R)$. For both these homology theories $R$ denotes a ring of coefficients.  
 It has been established~\cite{f-r}
 that for a $1$-dimensional Legendrian
  the existence of a linear-at-infinity $f$ implies the existence of an augmentation $\epsilon_f$ and an isomorphism
	\eqn\label{eqn. gfh is lch}
	GFH_{*+c}(\leg, f;\ZZ/2\ZZ) \cong LCH_*(\leg, \epsilon_f;\ZZ/2\ZZ).
	\eqnd
Here, $c$ is a constant equalling either 0 or 1 depending on the convention. In this and further work, we will  employ the convention that $c=1$ (see Remark~\ref{remark. grading shift in LCH} and Remark~\ref{rem:GFH-indexing}). The $c=0$ convention will only appear briefly in Section~\ref{section. GFH from GS spectra}, with explicit notation indicating the convention shift.

Homological invariants can admit stable homotopy refinements. 
 Indeed, such refinements have a rich history, some of which we summarize in Section~\ref{section. history}. Thus, it is natural to ask:
 \begin{ques} Are there spectral lifts of linearized contact homology or of generating family homology?
 \end{ques}
For linearized contact homology, the answer is widely expected to be yes under favorable circumstances, and the (still open) construction of such a lift falls under the purview of an active field, often called Floer homotopy theory. 
For generating family homology, we provide an affirmative answer in this work. 
More precisely,
fix a smooth manifold $B$, a closed Legendrian $\Lambda \subset J^1 B$ in the $1$-jet bundle of $B$, and a linear-at-infinity generating family 
	\eqnn
	f: B \times \RR^N \to \RR
	\eqnnd 
for $\leg$ (see Definition~\ref{defn. linear-at-infinity}). From this data we define a stable homotopy type (Definition~\ref{defn:leg-spectrum})
	\eqn\label{eqn. gfc over sphere}
	\gfc(\leg,f;\Sphere)
	\eqnd
that we call the {\em generating family spectrum} associated to the pair $(\leg,f)$.  We prove that our spectral lifts are invariants of the pair $(\Lambda,f)$ (Theorem~\ref{theorem. invariance}) and that the spectrum recovers generating family homology 
 (Theorem~\ref{thm:spec-lift}). We also establish a highly useful structural result: The Seidel isomorphism lifts to  stable homotopy (Theorem~\ref{thm:suspension computation}). 

\subsection{Main results}

\begin{thm}[Proposition~\ref{prop:equiv-class-specta} and Theorem~\ref{thm:iso-spectra}.]
\label{theorem. invariance}
The generating family spectrum of $(\leg,f)$, $C(\leg, f; \Sphere)$, is invariant under Legendrian isotopy and equivalence of generating families (Definition~\ref{defn:gf-equiv}). 
\end{thm}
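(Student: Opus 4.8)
The plan is to obtain Theorem~\ref{theorem. invariance} by combining Proposition~\ref{prop:equiv-class-specta} (invariance under equivalence of generating families) with Theorem~\ref{thm:iso-spectra} (invariance under Legendrian isotopy), so the substance lies in how each of those is proved. Recall that $\gfc(\leg,f;\Sphere)$ is built from the difference function attached to the linear-at-infinity generating family $f$, whose critical points encode the Reeb chords of $\leg$ and whose critical values are the chord lengths; invariance is the assertion that the resulting stable homotopy type is blind to the two ambiguities in the data --- the choice of $f$ within its equivalence class, and the choice of $\leg$ within its Legendrian isotopy class.

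For invariance under equivalence of generating families, the strategy is to reduce Definition~\ref{defn:gf-equiv} to its elementary moves and to treat each one. A fiber-preserving diffeomorphism carries the difference function of $f$ to the difference function of the new family via the induced diffeomorphism on the doubled fiber, so the spaces (hence the spectra) underlying the construction are homeomorphic. Addition of a function pulled back from the base $B$ leaves the difference function literally unchanged, so there is nothing to check. The only move with content is stabilization by a nondegenerate quadratic form $Q$ on $\rr^m$: this replaces the difference function $\delta_f$ by $\delta_f \oplus \big(Q(v_1)-Q(v_2)\big)$, and $Q(v_1)-Q(v_2)$ is a nondegenerate quadratic form on $\rr^{2m}$ of index exactly $m$ and coindex exactly $m$. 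The point is that the normalization in Definition~\ref{defn:leg-spectrum} --- the desuspension by the fiber dimension $N$ --- is chosen precisely so that passing from $N$ to $N+m$ smashes the construction with $S^{m}\wedge S^{-m}\simeq\Sphere$; hence the spectrum is independent of the stabilization and so descends to equivalence classes.

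For invariance under Legendrian isotopy, the plan is first to upgrade an isotopy $\{\leg_t\}_{t\in[0,1]}$ --- with $\leg_0,\leg_1$ carrying linear-at-infinity generating families $f_0,f_1$ --- to a smooth one-parameter family $\{f_t\}$ of linear-at-infinity generating families for $\{\leg_t\}$, at the cost of first stabilizing $f_0$ and $f_1$; this invokes the parametrized existence/uniqueness theory for generating families (the generating-family $h$-principle used throughout the literature). Granting that, Theorem~\ref{thm:iso-spectra} reduces to showing that a smooth path of linear-at-infinity generating families induces an equivalence of spectra, which is a continuation argument. Linearity at infinity is exactly what makes this work: it provides a uniform $C^0$-control, so that outside a fixed compact subset of the doubled fiber the difference functions $\delta_{f_t}$ stay uniformly away from the critical values, and the pairs of sublevel sets modelling $\gfc(\leg_t,f_t;\Sphere)$ fit together over $[0,1]$ into a family that is locally trivial up to homotopy. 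A Conley-index-style invariance statement then yields $\gfc(\leg_0,f_0;\Sphere)\simeq\gfc(\leg_t,f_t;\Sphere)$ for all $t$, in particular at $t=1$.

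I expect the main obstacle to be the spectrum-level (as opposed to homology-level) bookkeeping in the stabilization step. At the level of $GFH$ only the \emph{index} $m$ of $Q(v_1)-Q(v_2)$ matters and the argument is a routine grading shift; but to identify the smash factor with the sphere spectrum $\Sphere$ itself one must track the stable framing (equivalently, the orientation data) of the quadratic stabilization and check that it is the trivial one, and one must verify that Definition~\ref{defn:leg-spectrum} genuinely does not depend on the auxiliary $N$. Secondarily, the continuation argument for Legendrian isotopy relies essentially on the compactness afforded by linearity at infinity --- without a uniform control at infinity the critical values of $\delta_{f_t}$ could escape and the homotopy type of the sublevel-set pairs could jump. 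Once those two points are pinned down, the remainder is the formal assembly described above.
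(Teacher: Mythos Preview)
Your treatment of equivalence of generating families is correct in spirit, but the paper's argument is more direct and your worry about framing is unfounded. In Definition~\ref{defn:leg-spectrum}, the prespectrum is \emph{already} the sequence $\{X_i\}_{i\ge N}$ with $X_i=\delta_{f_i}^{\le\omega}/\delta_{f_i}^{\le\epsilon}$ for $f_i$ the $(i-N)$-fold rank-one stabilization of $f$, and Proposition~\ref{prop:leg-stab} supplies explicit structure maps $\Sigma X_i\xrightarrow{\simeq}X_{i+1}$. So if $f'$ is a stabilization of $f$, the prespectrum of $f'$ is literally a tail of the prespectrum of $f$ (up to fiber-preserving diffeomorphism, by Remark~\ref{rem:stab-choice-difference}), and Proposition~\ref{prop:prespectra-spectra-maps} finishes. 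There is no abstract ``smash factor'' to identify with $\Sphere$ and no orientation datum to track: the maps are concrete inclusions of sublevel pairs (Equations~\eqref{eqn. positive stabilization map}--\eqref{eqn. negative stabilization map}). Also, ``addition of a function pulled back from $B$'' is not one of the moves in Definition~\ref{defn:gf-equiv}; it is harmless for the difference function, but its appearance suggests you should re-read that definition.

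There is a genuine gap in your isotopy argument. You set up the problem with \emph{both} endpoints $f_0,f_1$ given in advance and invoke a ``parametrized existence/uniqueness theory'' to interpolate. No such uniqueness is available (or true) in this setting: distinct linear-at-infinity generating families for the same Legendrian can yield inequivalent spectra (cf.\ the remark immediately following the theorem: $\gfc(\leg,f;\Sphere)$ is an invariant of the \emph{pair} $(\leg,f)$, not of $\leg$ alone). What the paper actually proves, via Proposition~\ref{prop:leg-persist}, is one-sided path lifting: given $f$ for $\leg_0$ and an isotopy $\leg_t$, there is a path $f_t$ of linear-at-infinity generating families with $f_0$ a stabilization of $f$. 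The endpoint $f_1$ is produced, not prescribed. From there your continuation idea is right---Lemma~\ref{lem:uniform-non0} bounds the length spectrum uniformly away from zero along the isotopy, so one can choose $\epsilon,\omega$ uniformly, and the Critical Non-Crossing Lemma (Lemma~\ref{lem:crit-non-crossing}) gives levelwise homotopy equivalences of the sublevel pairs, whence an equivalence of spectra by Proposition~\ref{prop:prespectra-spectra-maps}.
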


\begin{rem}
$\gfc(\leg,f;\Sphere)$ is an invariant not of $\leg$ alone, but of the pair $(\leg,f)$. 
One expects that the (Spanier-Whitehead duals to) the collection of spectra $\{\gfc(\leg,f;\Sphere)\}_f$ form endomorphisms in a non-unital $\infty$-category associated to $\Lambda$ -- see Section~\ref{section. comultiplications}.
\end{rem}

The following is proven in Section~\ref{section. GFH from GS spectra}. See Definition~\ref{defn:GF-homology} for the definition of $GFH$ and for an explicit description of the grading convention used in this work.
\begin{thm} \label{thm:spec-lift} 
The generating family spectrum is a lift of generating family homology. That is, 
for any coefficient abelian group $A$,
the generating family homology of $(\leg,f)$ is isomorphic to the homology of the generating family spectrum:
	\eqnn
	\forall k \in \ZZ, \qquad
	H_{k}(\gfc(\leg, f; \Sphere) ; A)\cong {GFH}_k(\leg, f;A).
	\eqnnd
\end{thm}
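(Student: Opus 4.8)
The plan is to reduce the statement to a comparison of two chain-level (or spectrum-level) constructions, both built from the same Morse-theoretic data of the difference function associated to the generating family $f$. Recall that generating family homology $GFH_*(\leg,f;A)$ is defined as the relative Morse homology of the difference function $\delta_f(x,\eta_1,\eta_2) = f(x,\eta_1)-f(x,\eta_2)$ on $B\times\RR^N\times\RR^N$, taken relative to a sublevel set $\{\delta_f \le -\epsilon\}$ (equivalently, as the homology of the pair of sublevel sets $(\{\delta_f < \infty\}, \{\delta_f \le -\epsilon\})$), with the linear-at-infinity condition ensuring the relevant spaces are compact up to homotopy. The generating family spectrum $\gfc(\leg,f;\Sphere)$ of Definition~\ref{defn:leg-spectrum} is, by construction, the Spanier--Whitehead / Thom-spectrum avatar of exactly this pair of spaces — a stable homotopy type whose underlying object is (a shift or desuspension of) the cofiber of the inclusion of the negative sublevel set into the total sublevel set (or the Thom spectrum of a stabilized bundle over such a cofiber).

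The key steps, in order, are: (1) Unwind Definition~\ref{defn:leg-spectrum} to identify $\gfc(\leg,f;\Sphere)$ explicitly as a (de)suspension spectrum or Thom spectrum attached to a pair of finite-dimensional spaces $(\mathcal{E}, \mathcal{E}_{-})$ — concretely the fiberwise one-point compactification data of the difference function's sublevel sets, with the $\RR^N$-direction and the linear-at-infinity bundle accounted for by an appropriate suspension/Thom twist. (2) Recall from Theorem~\ref{thm:spec-lift}'s target, i.e. Definition~\ref{defn:GF-homology}, that $GFH_k(\leg,f;A) := H_{k+?}(\mathcal{E},\mathcal{E}_{-};A)$ with the same shift; here one must track the grading-shift convention $c=1$ fixed in the introduction and make sure the suspension/Thom-twist bookkeeping in the spectrum matches the index shift in the definition of $GFH$. (3) Apply the Thom isomorphism (if a nontrivial bundle twist is present) and the suspension isomorphism in ordinary homology to compute $H_*(\gfc(\leg,f;\Sphere);A)$ and observe it equals the relative singular homology $H_*(\mathcal{E},\mathcal{E}_{-};A)$ up to the prescribed shift. (4) Invoke the standard fact that, for a tame (here: linear-at-infinity) function, the relative singular homology of the sublevel-set pair agrees with relative Morse homology, which is the definition of $GFH$. (5) Check that the isomorphism is natural in the coefficient group $A$ and independent of the auxiliary choices, using the invariance Theorem~\ref{theorem. invariance} to know the spectrum is well-defined up to equivalence so the homology groups are genuinely invariants.

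I expect the main obstacle to be step (2)–(3): getting the grading conventions and the suspension/Thom-twist normalization to line up exactly. The paper's spectrum is built with a specific desuspension (coming from the fiber dimension $N$, possibly plus a correction from the linear-at-infinity model at infinity) and the introduction flags two competing conventions ($c=0$ versus $c=1$) with an explicit warning that the $c=0$ convention reappears in this very section; so the bulk of the real work is a careful, convention-consistent identification of the shift, rather than any deep homotopical input. A secondary technical point is justifying the Morse-homology $=$ singular-homology-of-sublevel-pair identification in the non-compact, linear-at-infinity setting — this requires the compactness/tameness arguments underpinning the definition of $GFH$ in \cite{f-r}, but these should be available (or at least quotable) from the setup already established when $GFH$ was defined. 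Everything else — the suspension isomorphism, the Thom isomorphism, naturality in $A$ — is formal.
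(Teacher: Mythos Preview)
Your overall instinct---that both sides are built from the same sublevel-set pair of the difference function and the proof should reduce to a formal comparison---is correct, and this is exactly what the paper does. But you have guessed the definitions rather than read them, and as a result you have introduced machinery (Thom spectra, Thom isomorphism, Morse homology versus singular homology) that plays no role.

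Concretely: Definition~\ref{defn:leg-spectrum} builds $\gfc(\leg,f;\Sphere)$ as the spectrification of a prespectrum whose $N$th space is literally the quotient $\delta_f^{\leq\infin}/\delta_f^{\leq\epsilon}$ (for $0<\epsilon<\lmin\leq\lmax<\infin$; note the sublevel is at \emph{positive} $\epsilon$, not $-\epsilon$), and whose structure maps $\Sigma X_i\to X_{i+1}$ are already homotopy equivalences by Proposition~\ref{prop:leg-stab}. There is no nontrivial Thom twist. Meanwhile Definition~\ref{defn:GF-homology} defines $GFH_k(\leg,f)$ \emph{directly} as the relative singular homology $H_{k+N}(\delta_f^{\leq\infin},\delta_f^{\leq\epsilon})$---not as Morse homology---so no Morse-versus-singular comparison is needed either.

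Given these definitions, the paper's proof is four lines: the homology of the spectrum is by definition a sequential colimit of reduced homologies; because the structure maps are equivalences the colimit is constant and equals $\widetilde{H}_{N+k}(\delta_f^{\leq\infin}/\delta_f^{\leq\epsilon})$; reduced homology of a quotient along a cofibration equals relative homology of the pair; and that is $GFH_k$ by definition. The only genuine bookkeeping is the $N$-shift, which is baked into both definitions identically. Your steps (3)--(5) are therefore unnecessary, and the ``main obstacle'' you anticipate does not arise.
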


\begin{rem}
Theorem~\ref{thm:spec-lift} in part explains the appearance of the sphere spectrum $\Sphere$ in our notation. Indeed, the notation~\eqref{eqn. gfc over sphere} is meant to evoke ``generating family chains with coefficients in the sphere spectrum.'' We view classical $GFH$ as a linear invariant computed using $\ZZ$-linear coefficients, while the generating family spectrum is a lift to sphere-spectrum-linear coefficients.
\end{rem}

Generating families pose interesting geometric questions of their own. For example, given a Legendrian $\leg$ equipped with a linear-at-infinity generating family $f:B \times \rr^N \to \rr$, one can define the {\it dimension} of $f$, $\dim f$,  to be the fiber dimension $N$, and ask to reduce $\dim f$.  More precisely, let $[(\leg, f)]$ denote the equivalence class of $(\leg, f)$ generated by stabilization, fiberwise diffeomorphism, and Legendrian isotopy (Section~\ref{section. equivalent gfs} and Proposition~\ref{prop:leg-persist}). We can ask: What is
 $$
 N_{\min}[(\leg, f)] := \min_{(\leg',f') \in [(\leg, f)]} \{ \dim f' \}?
 $$ 

For a fixed 1-dimensional Legendrian equipped with a graded, normal ruling, Fuchs and Rutherford  
gave an algorithm to construct a generating family that would induce this ruling~\cite[Section 3]{f-r}.
 Thus for a Legendrian equipped with  a ruling, if we consider the generating family $f$ obtained through
the Fuchs-Rutherford algorithm, one obtains an upper bound on $N_{\min}[(\leg, f)]$.

 In all dimensions, the next theorem shows that the generating family spectrum produces a lower bound on $N_{\min}[(\leg, f)]$. It is proven in Section~\ref{ssec:dim-bound}; we review the definition of suspension spectra of a pointed space in Definition~\ref{defn:suspension-spec} and the suspension of spectra in Notation~\ref{notation:shift-spec} of the Appendix.

\begin{thm}
\label{theorem. bound on family dimension}
Suppose $\leg \subset J^1B$, and 
let $N$ be the minimal non-negative integer for which $\Sigma^{N} \gfc(\leg,f;\Sphere)$ is equivalent to a suspension spectrum  -- i.e., for which there exists a pointed space $A$ and an equivalence of spectra
$$\gfc(\leg,f;\Sphere) \simeq  \Sigma^{-N}( \Sigma^\infty(A)).$$
Then
$$N \leq N_{\min}[(\leg, f)].$$
In particular, $f$ itself cannot arise as a stabilization of an $(N-1)$-dimensional generating family.
\end{thm}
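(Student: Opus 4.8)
The plan is to read the bound straight off the construction of $\gfc(\leg,f;\Sphere)$ and then transport it around the equivalence class using Theorem~\ref{theorem. invariance}. The one structural input I need is that, by its very definition (Definition~\ref{defn:leg-spectrum}), the generating family spectrum of \emph{any} pair $(\leg',f')$ is a desuspension --- by exactly the fiber dimension $\dim f'$ --- of an honest suspension spectrum: there is a pointed space $A'$, built from sublevel-set data of $f'$, together with an equivalence
\[
\gfc(\leg',f';\Sphere)\;\simeq\;\Sigma^{-\dim f'}\bigl(\Sigma^\infty(A')\bigr).
\]
Granting this, $\Sigma^{\dim f'}\gfc(\leg',f';\Sphere)\simeq\Sigma^\infty(A')$ is already a suspension spectrum; in particular, taking $f'=f$ shows that the minimal integer $N$ in the statement is well defined and $N\le\dim f$.

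Next I would invoke invariance. By Theorem~\ref{theorem. invariance} together with Proposition~\ref{prop:leg-persist}, the spectrum $\gfc(\leg,f;\Sphere)$ is, up to equivalence, an invariant of the class $[(\leg,f)]$ generated by stabilization, fiberwise diffeomorphism, and Legendrian isotopy; so $\gfc(\leg,f;\Sphere)\simeq\gfc(\leg',f';\Sphere)$ for every $(\leg',f')\in[(\leg,f)]$. (The stabilization case can also be checked by hand from the displayed formula: stabilizing $f'$ by a nondegenerate quadratic form on $\rr^k$ raises $\dim f'$ by $k$ and suspends $A'$ by $S^k$ --- the induced stabilization of the difference function has index $k$ regardless of the signature of the added form --- so the two changes cancel under the $\Sigma^{-\dim f'}$ normalization.) Combining the two inputs, for every $(\leg',f')\in[(\leg,f)]$ one gets
\[
\Sigma^{\dim f'}\,\gfc(\leg,f;\Sphere)\;\simeq\;\Sigma^{\dim f'}\,\gfc(\leg',f';\Sphere)\;\simeq\;\Sigma^\infty(A'),
\]
a suspension spectrum, whence $N\le\dim f'$. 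Minimizing over the class yields $N\le N_{\min}[(\leg,f)]$. The last assertion is the contrapositive: if $f$ were a stabilization of an $(N-1)$-dimensional generating family $f_0$ of $\leg$, then $(\leg,f_0)\in[(\leg,f)]$ has fiber dimension $N-1$, forcing $N_{\min}[(\leg,f)]\le N-1<N$, contradicting the inequality just proved.

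The only delicate point --- and the one I would spend the most care on --- is the claim that the desuspension degree in Definition~\ref{defn:leg-spectrum} is precisely $\dim f$, and not $\dim f$ modified by the grading constant that appears in $GFH$ (cf.\ Remarks~\ref{remark. grading shift in LCH} and~\ref{rem:GFH-indexing}) or by $\dim B$. One must verify that any such apparent extra shift is absorbed --- either because the pointed space $A'$ is itself a suspension of the underlying space, or because the shift is already built into the normalization of the construction; with the $c=1$ convention this is essentially the statement that $GFH$ as defined here is stabilization-invariant, now lifted to the level of spaces. Everything past that point is formal manipulation with Theorem~\ref{theorem. invariance} and with $\Sigma^{\pm1}$ of suspension spectra.
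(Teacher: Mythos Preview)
Your proposal is correct and follows essentially the same approach as the paper: for any $(\leg',f')\in[(\leg,f)]$ with $\dim f'=M$, invariance (Theorem~\ref{theorem. invariance}) gives $\gfc(\leg,f;\Sphere)\simeq\gfc(\leg',f';\Sphere)$, and Definition~\ref{defn:leg-spectrum} directly exhibits $\Sigma^{M}\gfc(\leg',f';\Sphere)$ as the suspension spectrum of $\delta_{f'}^{\leq\omega}/\delta_{f'}^{\leq\epsilon}$, whence $N\le M$. Your caution about the desuspension degree is well placed but easily resolved: the prespectrum in Definition~\ref{defn:leg-spectrum} begins at index $N=\dim f$ with $X_N=\delta_f^{\leq\omega}/\delta_f^{\leq\epsilon}$ and has structure maps that are homotopy equivalences (Proposition~\ref{prop:leg-stab}), so $\Sigma^N\gfc(\leg,f;\Sphere)\simeq\Sigma^\infty X_N$ with no additional shift.
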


\noindent

The stable-homotopy bounds of Theorem~\ref{theorem. bound on family dimension} immediately give bounds using homology. In the following, and as we do throughout this work, we use the grading convention $c=1$ in~\eqref{eqn. gfh is lch} -- i.e., the convention in Definition~\ref{defn:GF-homology}.

\begin{cor}\label{cor. gfh degrees bound dimension} 
Fix a linear-at-infinity generating family $f$ for an $m$-dimensional {connected} Legendrian $\leg \subset J^1B$. 
Suppose, for some coefficient group,  the generating family homology $GFH_{-k}(\leg,f)$ is non-zero for some $k \geq 0$.
Suppose also that for some coefficient group (not necessarily equal to the previous coefficient group) $GFH_{\ell}(\leg,f)$ is non-zero for some $\ell \geq 0$.  Then $N_{\min} \geq \max\{k+1 , \ell  - m\}$.
 \end{cor}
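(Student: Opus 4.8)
The plan is to combine Theorem~\ref{theorem. bound on family dimension}, which bounds $N_{\min}[(\leg,f)]$ below by the desuspension index of the generating family spectrum, with Theorem~\ref{thm:spec-lift}, which converts that index into a constraint on the degrees in which $GFH$ can be nonzero, and to run the argument on a fiber-dimension-minimal representative of the equivalence class $[(\leg,f)]$.

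First I would pick $(\leg',f')\in[(\leg,f)]$ with $\dim f'=N_{\min}:=N_{\min}[(\leg,f)]$. By Theorem~\ref{theorem. invariance} the spectra $\gfc(\leg,f;\Sphere)$ and $\gfc(\leg',f';\Sphere)$ are equivalent, so by Theorem~\ref{thm:spec-lift},
\[
GFH_i(\leg,f;A)\;\cong\;GFH_i(\leg',f';A)\;\cong\;H_i(\gfc(\leg',f';\Sphere);A)
\]
for all $i\in\zz$ and all coefficient groups $A$; it therefore suffices to locate the degrees in which the homology of $\gfc(\leg',f';\Sphere)$ can be nonzero. For that I would use the model of the generating family spectrum underlying the proof of Theorem~\ref{theorem. bound on family dimension}: for a linear-at-infinity generating family $g\co B'\times\rr^{N}\to\rr$ over the $m$-dimensional base $B'$, the spectrum $\gfc(\leg',g;\Sphere)$ is built as an $N$-fold desuspension $\Sigma^{-N}\Sigma^\infty(X_g)$ of the suspension spectrum of a pointed space $X_g$ that (i) is built from the $(m+2N)$-dimensional total space of the fiberwise difference construction for $g$, hence is homotopy equivalent to a CW complex of dimension at most $m+2N$, and (ii) is connected -- equivalently $\widetilde H_0(X_g;\zz)=0$ -- because the subspace collapsed in forming $X_g$ meets every path component (this is where connectedness of $\leg$ enters). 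Taking $g=f'$ and $N=N_{\min}$ and invoking Theorem~\ref{thm:spec-lift} once more gives
\[
GFH_i(\leg,f;A)\;\cong\;\widetilde H_{\,i+N_{\min}}\bigl(X_{f'};A\bigr)\qquad\text{for all }i\in\zz,\ \text{all coefficient groups }A.
\]

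The two inequalities then drop out. By (ii), $\widetilde H_j(X_{f'};A)=0$ for $j\le 0$, so $GFH_i(\leg,f;A)=0$ for $i\le -N_{\min}$; taking a coefficient group witnessing $GFH_{-k}(\leg,f)\ne0$ with $k\ge0$ forces $-k>-N_{\min}$, i.e. $N_{\min}\ge k+1$. By (i), $\widetilde H_j(X_{f'};A)=0$ for $j>m+2N_{\min}$, so $GFH_i(\leg,f;A)=0$ for $i>m+N_{\min}$; taking a coefficient group witnessing $GFH_{\ell}(\leg,f)\ne0$ with $\ell\ge0$ forces $\ell\le m+N_{\min}$, i.e. $N_{\min}\ge\ell-m$. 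Together these give $N_{\min}\ge\max\{k+1,\ell-m\}$.

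The hard part is the middle step, not the homological algebra: one must confirm, with the $c=1$ grading convention of this paper, that the canonical model of the generating family spectrum of a fiber-dimension-$N$ family is at once (a) a desuspension by exactly $N$, (b) the suspension spectrum of a \emph{connected} space, and (c) the suspension spectrum of a space of dimension at most $m+2N$. Item (b) is what upgrades the cheap bound $N_{\min}\ge k$ -- valid for any desuspended suspension spectrum, since the reduced homology of a pointed space vanishes in negative degrees -- to $N_{\min}\ge k+1$; item (c) is what yields $GFH_i(\leg,f)=0$ for $i>m+N_{\min}$ and hence $N_{\min}\ge\ell-m$. Using Theorem~\ref{theorem. bound on family dimension} purely as a black box gives only $N_{\min}\ge k$, so the sharper statement really does depend on this finer description of $X_{f'}$.
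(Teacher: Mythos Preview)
Your argument is correct and follows essentially the same approach as the paper's proof: both identify the generating family spectrum as $\Sigma^{-N}\Sigma^\infty A$ for a pointed space $A$ built from sublevel sets of the difference function on $B\times\rr^{2N}$, observe that $A$ has cells in dimensions at most $m+2N$ and is path-connected, and read off the degree bounds $-N+1\le *\le N+m$ for nonvanishing $GFH_*$. The only structural difference is that you first pass to a minimal-fiber-dimension representative and invoke invariance, whereas the paper argues the bound for an arbitrary representative and then takes the infimum; these are equivalent. One small imprecision: your parenthetical ``this is where connectedness of $\leg$ enters'' is not quite how the paper uses that hypothesis---the paper uses connectedness of $\leg$ to reduce to $B$ connected (so $B\times\rr^{2N}$ is already connected), while the fact that the collapsed sublevel set is nonempty comes from linearity-at-infinity alone.
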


\begin{proof} Suppose $\leg \subset J^1B$ is an $m$-dimensional {connected} Legendrian. 
{Because $\leg$ is connected, we may as well assume $B$ is connected.}
Given a generating family $f: B \times \RR^N \to \RR$ for $\leg$, the difference function has domain $B \times \RR^{2N}$, so for any real number $\epsilon$, the associated sublevel quotient $B \times \RR^{2N} / \{f \leq \epsilon\}$ is homotopy equivalent to some pointed CW complex $A$ with cells of dimension at most $2N + \dim B = 2N + m$. By Definition~\ref{defn:leg-spectrum}, the generating family spectrum $C(\leg,f;\Sphere)$ is equivalent to $\Sigma^{-N}\Sigma^\infty A$  -- the suspension spectrum of $A$, shifted $-N$ times (for $\epsilon$ positive and small enough). So   the generating family spectrum is generated by spheres of at least degree $-N$ and at most degree $N + m$.  In particular, generating family homology can only be non-zero in degrees between $-N$ and $N+ m$, inclusive. 

If $f$ is linear-at-infinity, the space $ \{ f \leq \epsilon\} \subset B \times \RR^{2N}$ is non-empty. (See also Remark~\ref{remark. dim f at least 1}.) 
 Because $B \times \RR^{2N}$ is path-connected,
it follows that  the quotient space $A$ is path-connected. In particular, $A$ has reduced homology $\widetilde{H}_0(A) \cong 0$. Applying the shift by $-N$, we see that $GFH_{*}(\leg, f) \neq 0$ implies
that $-N+1 \leq * \leq N+m$.  In particular, defining $k$ and $\ell$ as above, we see that $N \geq k+1$ and $N \geq \ell - m$. This shows $N_{\min} \geq \max\{k+1 , \ell  - m\}$.
\end{proof}

\begin{rem} 
If one chooses the coefficient group to be $\ZZ/2\ZZ$, then $GFH_{-k}(\leg,f)$ is non-zero if and only if $GFH_{m+1+k}(\leg,f)$ is non-zero by duality of $GFH_*(\leg, f)$ \cite[Theorem 1.1]{BST:geography}.   
In particular,  $\ell - m =  (m+1 + k) - m =  k+1$.  
\end{rem} 

\begin{rem}
We caution that duality does not hold for all $(\leg,f)$ in general with arbitrary coefficient groups for homology. For example,  the statement of \cite[Theorem 6.1(1)]{BST:geography} utilizes Poincar\'e duality for $\leg$, and indeed the proof relies on \cite[Lemma~7.1]{S-T:obstruct} which in turn utilizes Alexander/Lefschetz/Poincar\'e duality (which requires orientation hypotheses on $\leg$).
It is not hard to find a counter-example to duality if one violates orientation hypotheses. For example, take $B = \RR P^{2n}$, and let $f: B \times \RR \to \RR$ be a cubic function, constant in the $B$-variable, with two critical points in each fiber of distinct critical values. Then $f$ generates a Legendrian $\leg \cong B \coprod B \subset J^1(B)$, and the generating family spectrum of $(\leg,f)$ is computed in Example~\ref{example. cubic gf spectrum over B} below. Because the homology of $\Sigma^\infty (B_+)$ is the (unreduced) homology of $B$, incorporating the appropriate shifts, one finds (for any coefficient group):
	\eqnn
	GFH_a(\leg,f) \cong H_{a+1}(B),
	\qquad
	\forall a \in \ZZ.
	\eqnnd
Then over $\ZZ$ or over a finite field $\FF_p$ with odd $p$, one can check that the extremal values of $k+1$ and $\ell-m$ do not agree.
\end{rem}

\begin{rem}\label{remark. spectrum level statements are more powerful than homology level statements}
{
One can in fact prove
Corollary~\ref{cor. gfh degrees bound dimension} without ever knowing about generating family spectra -- after all, one does not need to know about spectra to study (a shift of) the cellular chain complex.
Note also that the corollary is  a weaker conclusion than Theorem~\ref{theorem. bound on family dimension}, since there exist spectra whose homology range behaves like a suspension spectrum (of a pointed, connected space), but which are not suspension spectra. One example is $X = \Sigma^{-1}\Sigma^\infty \CC P^N$ for $N \geq 2$. Because the homology of $X$ is only non-zero in degrees $1,3,\ldots,2N-1$, its homology alone leaves open the possibility that $X$ is the suspension spectrum of a connected pointed CW complex}
 -- 
but $X$ cannot be a suspension spectrum because the Steenrod operation $Sq^2$ does not vanish on $H^1(X) \cong H^2(\CC P^N)$. This is one example of spectra harboring more information than chain complexes.
\end{rem}

\begin{ex} \label{ex:FR-ub} 
 Consider the Legendrian $m(5_2)$  knot $\leg_3 \subset \rr^3 = J^1\rr$ (Figure~\ref{fig:stab-5}) with its unique graded, normal ruling.  By the Fuchs-Rutherford algorithm in \cite[Section 3]{f-r}, $\leg$ admits
a generating family $f: \rr \times \rr^7 \to \rr$  that will induce this ruling. Assuming that the index of each strand is at least $2$, the algorithm produces a generating family with dimension equaling two greater than the maximum of the
switch indices, which is $2 + \max\{3, 4, 5\} = 7$ in this example.  Thus $N_{\min}[(\leg_3, f)] \leq  7$.
 Calculations of $LCH(\leg_3, \epsilon)$ from \cite[Section 3]{melvin-shrestha} combined with~\eqref{eqn. gfh is lch} show that $GFH_k(\leg_3, f)$  
 is non-zero in degree $-1,2,3$ with our grading convention of $c = 1$ -- see Equation~\eqref{eqn. gfh is lch}.  Thus, by Corollary~\ref{cor. gfh degrees bound dimension},   $N_{\min}[(\leg_3, f)] \geq 2$.    Since $\leg_3$ can be obtained by  attaching two $0$-handles to an appropriately indexed Hopf link and  unknot that has undergone three Legendrian Reidemeister type-1 moves, one can construct a generating family with $N = 4$; (see the constructions in 
 \cite[Lemma 6.8]{BST:geography} and the corrected statement of \cite[Theorem 4.2]{BST:geography} in~\cite{BST:geography-erratum}).
 To the authors' knowledge, it is unknown whether there exists $(\leg', f') \in [(\leg_3, f)]$ with 
 $\dim f' < 4$. 
 
 More generally, we can generalize the above example and arguments to construct $\leg_d$, having $d \geq 4$ crossings in the right-hand column; see the rightmost front projection in Figure~\ref{fig:stab-5}. 
 For the generating family $f$ associated 
 to this ruling, $GFH_k(\leg_d, f; \zz)$ is non-zero in dimensions $-d+2, 2, d$.  Thus the lower bound given by Corollary~\ref{cor. gfh degrees bound dimension} and the upper bound given by the Fuchs-Rutherford algorithm tell us that   $d-1 \leq N_{\min}[(\leg_d, f)] \leq  d+4$.   Constructions yield a generating family with $N = d+1$.

 \begin{figure}
 \centerline{\includegraphics[height=1in]{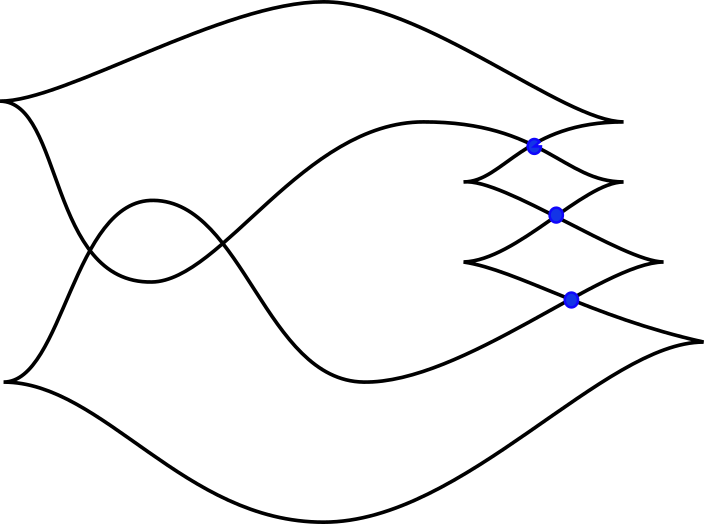} \qquad\qquad  \includegraphics[height=1in]{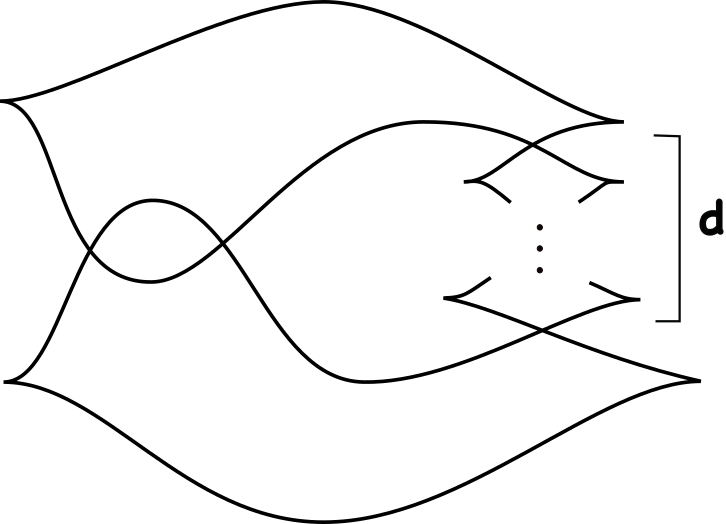}}
  \caption{  On the left, a Legendrian $m(5_2)$ knot $\leg_3$ with generating family $f$ corresponding to the ruling arising from switches at the $3$ marked crossings; the associated
  $GFH_k(\leg_3, f; \zz)$ is non-zero when $k$ is $-1, 2$, or $3$, using our grading convention.
  On the right is a Legendrian twist knot $\leg_d$ that has a ruling
  from the switches at the $d \geq 3$ crossings in the rightmost column.  The associated generating family $f$ will
  satisfy $GFH_k(\leg_d, f; \zz) \not \cong 0$ for $k$ values $-d+2, 2$, and $d$. 
  }
   \label{fig:stab-5}
 \end{figure}
\end{ex}
 
When a generating family $f$ for a Legendrian $\leg$ extends to a ``linearly-controlled" 
generating family $F$ for a Lagrangian filling $\sclag$ of $\leg$ (Definition~\ref{defn:compatible}), 
we may apply our invariant to obstruct the topology of Lagrangian fillings of Legendrian submanifolds. In what follows, we let 
	\eqnn
	L_0 \subset L
	\eqnnd 
be the complement of an open collared end of $L$. In particular, $L_0$ is a compact, codimension zero submanifold of $L$ with boundary diffeomorphic to $\leg$. We prove the following in Section~\ref{section. proof of main theorem}.

\begin{thm}
[The spectral Seidel isomorphism for generating families]
\label{thm:suspension computation}
Let $(L,F)$ be a filling  of $(\Lambda,f)$ in the sense of Definition~\ref{defn:compatible}. Then there is an equivalence of spectra
	\eqnn
	\gfc(\leg,f;\Sphere) \simeq \Sigma^\infty(L_0/\leg),
	\eqnnd
where $\Sigma^\infty(L_0/\leg)$ denotes the suspension spectrum of the quotient $L_0/\leg$.
\end{thm}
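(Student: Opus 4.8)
The plan is to identify both sides with the suspension spectrum of a single model space built from the difference function of $F$, and then show that the linearly-controlled hypothesis forces this space to deformation-retract onto $L_0/\leg$. First I would recall the construction of $\gfc(\leg,f;\Sphere)$ from Definition~\ref{defn:leg-spectrum}: it is $\Sigma^{-N}\Sigma^\infty$ of a sublevel quotient of the difference function $\delta_f\co B\times\RR^{2N}\to\RR$ of $f$, for $\epsilon>0$ small. The key geometric input is that, because $F$ extends $f$ and is linearly-controlled on the filling, the difference function $\delta_F$ of $F$ is defined on $L\times\RR^{2M}$ (or more precisely on the total space of the doubled fiber over $L$), restricts to $\delta_f$ over the boundary $\leg$, and is linear-at-infinity in the fiber directions uniformly over $L$. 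The crux is a parametrized Morse-theoretic comparison: the pair $(\{\delta_F\le\epsilon\},\{\delta_F\le-\epsilon\})$ over $L_0$, after collapsing, models $L_0/\leg$ up to the same suspension shift.

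The steps, in order, would be: (1) Set up the difference function $\delta_F$ of the filling generating family and verify it is a well-defined linear-at-infinity family over $L$ restricting to $\delta_f$ over $\leg$; this is where Definition~\ref{defn:compatible} is used essentially. (2) Show the sublevel quotient $\{\delta_F\le\epsilon\}/\{\delta_F\le-\epsilon\}$, or the appropriate relative version over the collar, is independent of $\epsilon$ for $\epsilon$ small and nonzero, using the absence of critical values of $\delta_F$ near $0$ except along the "diagonal" critical locus, which is diffeomorphic to $L$ itself. (3) Use the linear-at-infinity structure to trivialize the fiberwise data: since $\delta_F$ is fiberwise a nondegenerate quadratic form near infinity with a fixed index, the sublevel set $\{\delta_F\le\epsilon\}$ fiberwise retracts (Thom-space style) onto a bundle over the critical locus $L$, and collapsing the collar end produces $L_0/\leg$ after a shift by the index; this shift is exactly the $-N$ (equivalently, the stable normal data of the negative eigenspace bundle) appearing in Definition~\ref{defn:leg-spectrum}. (4) Compare the boundary restriction of this model with the construction of $\gfc(\leg,f;\Sphere)$ to conclude the two spectra agree, invoking Theorem~\ref{theorem. invariance} to handle choices (stabilization, fiberwise diffeomorphism) made along the way.

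The main obstacle I expect is Step~(3): controlling the fiberwise Morse theory \emph{uniformly over the non-compact filling $L$} and matching the resulting index/Thom-space shift precisely with the normalization in Definition~\ref{defn:leg-spectrum}. One has to be careful that the negative eigenspace bundle of $\delta_F$ at infinity is trivial (or at least that its Thom spectrum is a shift of a suspension spectrum) — this is presumably guaranteed by the "linearly-controlled" condition, which should pin down the quadratic form at infinity up to a fixed, trivialized model rather than an arbitrary family of forms. A secondary difficulty is the collar/compactification bookkeeping: one must show that replacing $L$ by $L_0$ and collapsing $\leg$ is compatible with passing to the sublevel quotient, i.e., that the collar end of $L$ contributes nothing to the spectrum beyond the basepoint, which follows from the product structure of $F$ on the collar together with the small-$\epsilon$ choice. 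Once these are in place, the identification $\gfc(\leg,f;\Sphere)\simeq\Sigma^\infty(L_0/\leg)$ is formal.
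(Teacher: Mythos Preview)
Your outline has a genuine gap at the most delicate point: the link between the sublevel pair you build from $\delta_F$ and the Legendrian spectrum $\gfc(\leg,f;\Sphere)$. Recall from Definition~\ref{defn:leg-spectrum} that $\gfc(\leg,f;\Sphere)$ is built from the pair $(\delta_f^{\le\omega},\delta_f^{\le\epsilon})$ with $0<\epsilon<\lmin\le\lmax<\omega$; this quotient encodes precisely the topology contributed by the Reeb-chord critical points of $\delta_f$. Your Step~(2) instead studies $\{\delta_F\le\epsilon\}/\{\delta_F\le-\epsilon\}$, whose boundary restriction is $\{\delta_f\le\epsilon\}/\{\delta_f\le-\epsilon\}$. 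That pair straddles only the Morse--Bott critical value $0$ and sees none of the Reeb chords, so it is not the Legendrian invariant. Your Step~(4) (``compare the boundary restriction'') therefore cannot go through as stated: the two objects you are trying to compare are simply different sublevel quotients. There is also a domain confusion in Step~(1): $\delta_F$ lives on $\RR_{>0}\times B\times\RR^{2N}$, not on $L\times\RR^{2M}$; $L$ appears only as (a piece of) the critical locus.

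The paper resolves exactly this difficulty by introducing a \emph{sheared} difference function $\Delta_F=\delta_F+H(t)$ (Definition~\ref{defn:pert-diff}), where the shearing Hamiltonian $H$ pushes the Reeb-chord critical points out to the conical region $t>u$ while leaving a compact Morse--Bott critical manifold $\cong L_0$ over $t\le 1$ (Proposition~\ref{prop:crit-pt-wgh}). One then decomposes the pair $(\Delta_F^{\le\Omega},\Delta_F^{\le-\mu})$ along the $t$-axis into a homotopy pushout square over $\{u\}$, $(0,u]$, $[u,\infty)$, $(0,\infty)$. The slice over $\{u\}$ recovers $\gfc(\leg,f;\Sphere)$ (Corollary~\ref{corollary. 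W is gf spectrum}); the piece over $(0,u]$ gives $\Sigma^\infty(L_0/\leg)$ via the Morse--Bott analysis (Corollary~\ref{corollary. L mod Lambda}); and the pieces over $[u,\infty)$ and $(0,\infty)$ are cones, hence contractible. The long exact sequence of the pushout then forces the map $W\to A$ to be an equivalence. The shearing and the pushout decomposition are the substantive content; without them there is no mechanism in your proposal that produces the Reeb-chord sublevel pair $(\omega,\epsilon)$ from the filling data.
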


\begin{rem}
One can identify the pointed space $L_0/\leg$ with the one-point compactification of $L$.  
\end{rem}

\begin{rem}
Theorem~\ref{thm:suspension computation} is a spectral lift of the 
the Seidel isomorphism for generating family (co)homology~\cite[Theorem 1.5]{S-T:obstruct}. This is the isomorphism between 
 the generating family cohomology groups of the Legendrian boundary and the relative singular cohomology groups of the filling, and thus via Lefschetz duality the homology groups
 of the filling: 
 $$GFH^{k+1}(\leg, f; \mathbb Z/2\mathbb Z) \cong H^{k+1}(L,\partial L; \mathbb Z/2\mathbb Z) \cong H_{n-k}(L^{n+1}; \mathbb Z/2\mathbb Z).
 $$
An analogous isomorphism for linearized contact homology, in line with~\eqref{eqn. gfh is lch}, was established by Ekholm~\cite{ekholm:lagr-cob} and Dimitriglou-Rizell~\cite{DR:polygons}.
 \end{rem}

\begin{rem} For one-dimensional examples of  $\leg$, Theorem~\ref{thm:suspension computation} does not give new invariants — this is because when $L_0$ is two-dimensional, the suspension spectrum $\Sigma^\infty(L_0/\leg)$ is always a wedge sum of spheres, of dimensions that can be read off from the Betti numbers of the (possibly nodal) surface $L_0/\leg$. 
In particular, for connected,  $1$-dimensional Legendrians, the spectrum in Theorem~\ref{thm:suspension computation} carries the same information as the Thurston-Bennequin invariant of $\leg$ and the genus of the filling.  \end{rem} 
Theorem~\ref{thm:suspension computation} gives an alternate path to prove the following previously known result:

\begin{cor}\label{cor. no embedded GFs in Euclidean space}
For any smooth manifold $B$, there does not exist a non-empty, compact embedded Lagrangian in $T^*(B \times \RR_{>0})$ admitting a linear-at-infinity generating family.  In fact,  for all $n \geq 1$, there is no non-empty compact embedded Lagrangian in $T^*\RR^n \cong T^*(\RR^{n-1} \times \RR_{>0})$ admitting a linear-at-infinity generating family.
\end{cor}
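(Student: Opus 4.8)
The plan is to derive a contradiction from the spectral Seidel isomorphism (Theorem~\ref{thm:suspension computation}) applied to the hypothetical filling. Suppose $L \subset T^*(B \times \RR_{>0})$ is a non-empty compact embedded Lagrangian admitting a linear-at-infinity generating family $F$. First I would argue that $L$ must have non-empty boundary: a closed (boundaryless) embedded Lagrangian in $T^*(B \times \RR_{>0})$ would be a closed Lagrangian in an exact symplectic manifold admitting a Liouville flow pushing everything to infinity, which forces emptiness; alternatively, the linear-at-infinity generating family $F$ for $L$ restricts to a linear-at-infinity generating family $f$ for the Legendrian boundary $\leg = \partial L \subset J^1 B$ (using $T^*(B\times\RR_{>0}) \cong \RR_{>0} \times J^1 B$ as a symplectization), so $(L,F)$ is a filling of $(\leg,f)$ in the sense of Definition~\ref{defn:compatible}, and we may assume $\leg \neq \emptyset$, for otherwise $L$ is closed and the previous remark applies. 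Actually the cleanest route is: if $\leg = \partial L = \emptyset$, then $L_0/\leg = L_0/\emptyset = L_0 \sqcup \{*\} = L_{+}$ with $L_0 = L$ compact closed, but then by Theorem~\ref{thm:spec-lift} the generating family homology is the unreduced homology of $L$, which is non-zero in degree $0$; however, linear-at-infinity generating families produce $GFH$ concentrated away from degree $0$ in the relevant range by the connectivity argument in the proof of Corollary~\ref{cor. gfh degrees bound dimension}, or more simply the Euler-characteristic/Morse-theoretic vanishing $\sum (-1)^k \dim GFH_k = 0$ forces a contradiction with $L$ closed.

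The main step is then: assuming $\leg = \partial L \neq \emptyset$, Theorem~\ref{thm:suspension computation} gives $\gfc(\leg, f; \Sphere) \simeq \Sigma^\infty(L_0/\leg)$. Now $L_0$ is a compact manifold with boundary $\leg$, and $L_0$ is connected if $L$ is (and in the $T^*\RR^n$ case, since $L_0/\leg$ is the one-point compactification of the non-compact $L$, the space $L_0/\leg$ is connected). Then $\widetilde{H}_0(L_0/\leg) = 0$, so by Theorem~\ref{thm:spec-lift}, $GFH_0(\leg,f) = 0$ (for any coefficient group). But a linear-at-infinity generating family satisfies the Euler-characteristic relation: the graded Euler characteristic of $GFH_*(\leg,f;\FF)$ vanishes (this is the standard Morse-theoretic fact that the difference function of a linear-at-infinity family has sublevel sets whose relative homology has vanishing Euler characteristic — equivalently, one deduces it from the duality/Poincaré–Hopf type statement underlying the Seidel isomorphism, or directly since $\chi(L_0/\leg) = \chi(L_0) - \chi(\leg) + 1$ and $\chi(\leg) = \chi(\partial L_0)$ which is $2\chi(L_0) - \chi(\text{double})$, etc.). Combining: $L_0$ is an odd-dimensional-boundary manifold — more precisely $L$ has dimension $n = \dim B + 1$, so $\leg$ has dimension $n-1$. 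The key is to extract a contradiction purely from the statement that $\gfc(\leg,f;\Sphere)$ is the suspension spectrum of the one-point compactification $L^{+}$ of a connected non-compact manifold-with-nothing $L$ of dimension $n$. Such a compactification has $H_n(L^{+}) \neq 0$ only if $L$ is oriented, and $H_0(L^{+}) = 0$ by non-compactness — but fundamentally, by Theorem~\ref{thm:spec-lift} the homology of this spectrum equals $GFH_*(\leg,f)$, and the vanishing Euler characteristic of $GFH$ (valid over $\FF_2$ always) forces $\chi(L^{+}) = 0$; yet $\chi(L^{+}) = \chi(L_0) - \chi(\leg) + 1$, and pairing with Lefschetz duality $\chi(L_0, \leg) = (-1)^n \chi(L_0)$ one reduces to a parity statement that cannot be satisfied, giving the contradiction and hence the non-existence.

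For the sharper second sentence, specialize $B = \RR^{n-1}$ so $T^*(B\times\RR_{>0}) \cong T^*\RR^n$: the argument above already covers this, and since $n \geq 1$ we only need to rule out the degenerate small cases ($n=1$: a compact embedded exact Lagrangian in $T^*\RR$ would be an arc or circle, both impossible by exactness/area; this is the base case) by the same Euler-characteristic obstruction, or simply observe that the general argument makes no use of $B$ being a point.

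The hard part, and the place where I would be most careful, is pinning down exactly which vanishing/duality property of $GFH$ for \emph{linear-at-infinity} families one is entitled to use over an arbitrary coefficient group: the clean Euler-characteristic statement $\sum_k (-1)^k \dim_{\FF} GFH_k(\leg,f;\FF) = 0$ is the most robust input (it follows from the difference-function Morse theory being that of a function that is ``linear at infinity'' on $B \times \RR^{2N}$, so its sublevel-set pair is a finite CW pair arising from a proper Morse function whose alternating sum of critical-point counts is forced by the topology of $B \times \RR^{2N}$ to be $0$), and I would route the entire contradiction through that single fact plus $\widetilde H_0(L^{+}) = 0$, rather than through the full duality theorem which, as the authors caution, can fail without orientation hypotheses. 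Since the authors explicitly flag this result as \emph{previously known} and only want an \emph{alternate path}, I expect the intended proof is exactly this short chain: Seidel isomorphism $\Rightarrow$ $\gfc(\leg,f;\Sphere)$ is a connected suspension spectrum $\Rightarrow$ $GFH$ concentrated in degrees $\geq 1$ of the relevant window $\Rightarrow$ contradiction with the vanishing graded Euler characteristic of $GFH$ for a linear-at-infinity family.
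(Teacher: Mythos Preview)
Your proposal contains a genuine gap, and it misses the one-line observation that makes the paper's proof immediate.

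First, the case split is confused. In the corollary, $L$ is a \emph{compact embedded} Lagrangian in $T^*(B\times\RR_{>0})$; such an $L$ is closed (no boundary) and has no cylindrical end, so its Legendrian boundary is necessarily $\leg = \emptyset$. There is no separate case $\leg \neq \emptyset$ to treat, and your entire ``main step'' (the Euler characteristic and Lefschetz duality manipulations for nonempty $\leg$) is both unnecessary and does not terminate in a contradiction anyway.

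Second, and more importantly, in the case $\leg = \emptyset$ you correctly identify that the spectral Seidel isomorphism yields $C(\emptyset,f;\Sphere)\simeq\Sigma^\infty(L\coprod *)$, whose $H_0$ is the unreduced $H_0(L)\neq 0$. To obtain a contradiction you need $GFH_0(\emptyset,f)=0$. Your two proposed routes both fail. The connectivity argument in the proof of Corollary~\ref{cor. gfh degrees bound dimension} shows that the sublevel-set quotient $A$ is path-connected, hence $\widetilde H_0(A)=0$; after the $-N$ shift this says $GFH_{-N}=0$, not $GFH_0=0$. The Euler-characteristic argument, even granting the vanishing $\sum(-1)^k\dim GFH_k=0$, would only yield $\chi(L)=0$, which is no contradiction for, say, $L$ a torus of any dimension.

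The paper's proof avoids all of this with a direct computation you overlooked: a non-zero linear function $\ell$ is a generating family for $\leg=\emptyset$, and its difference function $\delta_\ell$ is again a non-zero linear function, hence has \emph{no} critical points (Proposition~\ref{prop:leg-crit-point} with empty Reeb-chord set). Therefore the sublevel-set quotient is contractible and $C(\emptyset,\ell;\Sphere)\simeq 0$, so $GFH_*(\emptyset,\ell)=0$ in \emph{every} degree. Combined with $H_0\bigl(\Sigma^\infty(L\coprod *)\bigr)\neq 0$ for $L\neq\emptyset$, this is the whole argument. You had all the ingredients but reached for indirect tools instead of the immediate vanishing that Proposition~\ref{prop:leg-crit-point} hands you when $\leg=\emptyset$.
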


\begin{proof}[Proof of Corollary~\ref{cor. no embedded GFs in Euclidean space}.]
First observe that any non-zero linear function, $\ell$,  is a generating family for  $\leg = \emptyset$. 
A simple computation shows $\gfc(\emptyset,\ell;\Sphere)$ is equivalent to the zero spectrum, otherwise known as the suspension spectrum of a point.
 On the other hand any pair $(L,F)$ of a non-empty, compact embedded Lagrangian $L=L_0 \subset T^*(M \times \RR_{0})$
with a linear-at-infinity generating family $F$ is a filling of $(\leg, \ell)$. 
 Thus we can apply Theorem~\ref{thm:suspension computation} to find
	\eqnn
	0 \simeq \Sigma^\infty\left(L_0/\emptyset\right)\simeq \Sigma^\infty\left(L \coprod \ast \right).
	\footnote{ In point-set topology, $X/\emptyset$ is identified with 
$X$, but  the quotient can alternatively be defined via a pushout diagram and this  categorical definition yields $X$ with a disjoint point:
	\eqnn
	X/\emptyset \cong X \coprod \ast.
	\eqnnd
	}
	\eqnnd
But the suspension spectrum of a non-empty space given a disjoint basepoint $\ast$ is never trivial -- its zeroth homology has rank equal to the number of path-connected components of the space. We conclude $L$ must be empty.
\end{proof}

\begin{rem}
\label{remark. no compact exacts in cotangents of open manifolds}
We present Corollary~\ref{cor. no embedded GFs in Euclidean space} and its proof mainly for the novelty of avoiding any holomorphic curve techniques. Indeed, a stronger form of the corollary is well-known to experts: For any open manifold $X$, the cotangent bundle $T^*X$ admits no compact exact Lagrangian. (Note any Lagrangian admitting a generating family is exact.) This stronger form follows from the fact that $T^*X$ is a subcritical Weinstein manifold: Any compact Lagrangian can be made close to a skeleton of $T^*X$ by the Liouville flow, while the skeleton (and hence a neighborhood of itself) is self-displaceable by virtue of being isotropic. On the other hand, no exact compact Lagrangian is self-displaceable.
 t is this self-displaceability that is usually proven using holomorphic curve techniques. 
Here are the details.     Following Gromov's original arguments -- see $2.3.B_1$ and $2.3.B_2$ of~\cite{gromov-1985} -- one deforms the moduli of constant disks via a Hamiltonian isotopy to study a moduli of solutions to a PDE with large inhomogeneous term. Concluding there must be no such solutions, one infers bubbling -- which is not possible for exact Lagrangians. We note that, now-a-days, sheaf-theoretic techniques in cotangent bundles also prove non-displaceability results~\cite{tamarkin-microlocal-condition}.
 \end{rem}

\begin{rem}
Corollary~\ref{cor. no embedded GFs in Euclidean space} can also be proven without a spectral lift of $GFH$ -- one computes $H_0$ of $L_0/\emptyset$  to witness a $\ZZ$ summand when $L_0$ is non-empty.
\end{rem}

Theorem~\ref{thm:suspension computation} also implies the following. Again, we use the $c=1$ grading convention -- i.e., the grading convention in Definition~\ref{defn:GF-homology} -- for generating family homology.
 
\begin{cor}\label{cor. fillings and suspension spectra}
Fix a non-empty $\leg$. If $C(\Lambda,f;\Sphere)$ is not equivalent to a suspension spectrum,
then $(\Lambda,f)$ admits no filling $(L,F)$. In particular, if the generating family homology of $(\Lambda,f)$ does not vanish in all negative degrees $k < 0$ (for all coefficient abelian groups) then the pair $(\Lambda,f)$ does not admit a filling. Further, if the generating family homology is nonzero in degree $k = 0$ and $\leg$ is connected, then $(\Lambda, f)$ admits no connected 
filling $(L, f)$.
\end{cor}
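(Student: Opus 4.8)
The plan is to deduce Corollary~\ref{cor. fillings and suspension spectra} from Theorem~\ref{thm:suspension computation} together with Theorem~\ref{thm:spec-lift}, using only formal properties of suspension spectra. The single structural input is the contrapositive of the Seidel isomorphism: if $(\Lambda,f)$ admitted a filling $(L,F)$, then $C(\Lambda,f;\Sphere) \simeq \Sigma^\infty(L_0/\Lambda)$, which is a suspension spectrum by definition. Hence if $C(\Lambda,f;\Sphere)$ is \emph{not} equivalent to a suspension spectrum, no filling can exist. This gives the first sentence immediately.

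For the second sentence, I would argue that the homology hypothesis forces $C(\Lambda,f;\Sphere)$ not to be a suspension spectrum. Concretely, a suspension spectrum $\Sigma^\infty A$ of a pointed space $A$ is connective: its homology $H_k(\Sigma^\infty A;A') \cong \widetilde H_k(A;A')$ vanishes for all $k<0$ and all coefficient groups $A'$. So if there is some coefficient abelian group for which $GFH_k(\Lambda,f)\neq 0$ for some $k<0$, then by Theorem~\ref{thm:spec-lift} the spectrum $C(\Lambda,f;\Sphere)$ has nontrivial homology in a negative degree, hence cannot be a suspension spectrum, hence (by the first part) $(\Lambda,f)$ admits no filling. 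This is the reverse implication of the phrasing ``if the generating family homology does not vanish in all negative degrees then no filling exists.''

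For the last sentence, suppose $(\Lambda,f)$ admits a \emph{connected} filling $(L,F)$ with $\Lambda$ connected. Then $L_0$ is connected with nonempty boundary $\Lambda$, so the quotient $L_0/\Lambda$ is a connected pointed space; I would note $\widetilde H_0(L_0/\Lambda;A') \cong \widetilde H_0(\mathrm{pt}) = 0$ for every coefficient group (more carefully: $H_0(L_0/\Lambda)$ maps isomorphically onto the reduced $H_0$, which is $0$ since the space is path-connected with a chosen basepoint). By Theorem~\ref{thm:suspension computation} and Theorem~\ref{thm:spec-lift}, $GFH_0(\Lambda,f;A') \cong \widetilde H_0(L_0/\Lambda;A') = 0$, contradicting the hypothesis that $GFH_0(\Lambda,f)$ is nonzero. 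Therefore no connected filling exists. (One should be slightly careful about the orientation of the implication and about the fact that connectedness of $L$ together with $\partial L_0 = \Lambda \neq \emptyset$ really does give connectedness of $L_0$, since $L_0$ is $L$ with an open collar removed and a collar deformation retracts; this is the kind of routine point-set check I would state but not belabor.)

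I do not expect a genuine obstacle here: the corollary is a formal consequence of the two cited theorems plus the elementary fact that suspension spectra are connective and that the suspension spectrum of a connected pointed space has vanishing reduced zeroth homology. The only place that requires a sentence of care is matching conventions --- specifically that Theorem~\ref{thm:spec-lift} identifies $H_k(C(\Lambda,f;\Sphere);A')$ with $GFH_k(\Lambda,f;A')$ on the nose with the $c=1$ grading, so that ``negative degree'' and ``degree $0$'' mean the same thing on both sides --- and making sure the statement is read as stating sufficient (not necessary) conditions for the nonexistence of a filling, so that all implications run in the direction ``homology behaves un-suspension-spectrum-like $\implies$ no filling.''
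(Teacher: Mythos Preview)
Your proposal is correct and follows essentially the same approach as the paper's proof, which is equally brief: it invokes Theorem~\ref{thm:suspension computation} to see that a filling forces $C(\leg,f;\Sphere)$ to be a suspension spectrum (hence connective), and for the last claim observes that $L_0/\leg$ has trivial reduced $H_0$ when $L$ is connected. Your version is simply more explicit about the role of Theorem~\ref{thm:spec-lift} and the grading conventions, which is fine.
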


\begin{proof}
By Theorem~\ref{thm:suspension computation}, a filling exhibits $C(\leg,f;\Sphere)$ as a suspension spectrum of a topological space -- thus all $k^{th}$ homology groups vanish for $k < 0$.  For the second claim, if $L$ is connected,
then $(L_0/\leg)$ has trivial reduced $0^{th}$ homology.  
\end{proof}

\begin{rem}
The homology-level statement of Corollary~\ref{cor. fillings and suspension spectra} was already utilized in~\cite{S-T:obstruct} (see Example~1.6 and Theorem~1.7(2) of ibid). As explained in Remark~\ref{remark. spectrum level statements are more powerful than homology level statements}, the spectrum-level statement in Corollary~\ref{cor. fillings and suspension spectra} is more powerful than the homology-level implication.
\end{rem}
\begin{rem}
In our definition of fillings, we demand that $L$ not only be a Lagrangian filling of $\leg$, but that the generating family $f$ extends to a well-behaved generating family $F$ for $L$ (Definition~\ref{defn:compatible}). This style of filling condition -- not just of the manifolds $L$ and $\leg$, but incorporating extra structures -- is already familiar from Floer theory: Such an extension is necessary if we are to compare Floer-type invariants of $\leg$ defined over some dga or ring spectrum $R$ to Floer-type invariants of $L$ defined over the same $R$. In the present paper, $R$ is the sphere spectrum, and we view the existence of a generating family $F$ extending $f$ as analogous to a grading or a null-homotopy of the stable Gauss map extending from $\leg$ to $L$.  See also Remark~\ref{remark. tangential structures on branes are natural}.  
\end{rem}

Utilizing the stable homotopy type, one also has ``generating family homotopy groups" associated to a pair $(\leg, f)$:
\begin{notation}
Given $(\leg,f)$, we let 
	\eqnn
	\pi_k(\leg,f) := \pi_k(\gfc(\leg, f;\Sphere)) 
		\eqnnd
denote the homotopy groups of the generating family spectrum.\end{notation}

We review homotopy groups of spectra in the Section~\ref{section. homotopy groups via mapping spaces} of the Appendix.

\subsection{Examples}
 
 \begin{ex} Suppose $(U^n, f)$ is the $n$-dimensional Legendrian sphere obtained by applying the spinning procedure to the max-$tb$ Legendrian unknot~\cite{BST:geography};   this Legendrian has Lagrangian projection equal to the standard Whitney sphere. Then $(U^n,f)$ admits a filling $(L, F)$  where
 $L$ is a  Lagrangian disk. 
 By Theorem~\ref{thm:suspension computation},
 	\eqnn
	\gfc(U^n,f;\Sphere) \simeq \Sigma^\infty(S^{n+1}).
	\eqnnd
So the homotopy groups of the generating family spectrum are computed as the stable homotopy groups $\pi^s$ of spheres; see Example~\ref{ex:homotopy-suspension-stabilized-homotopy}.
For those unfamiliar with the beautiful and unpredictable nature of stable homotopy groups, we refer to Example~\ref{ex:homotopy-calculations} where we display the first eleven values of $\pi_k^s(S^3) \cong \pi_k(\gfc(U^2,f;\Sphere))$. 
\end{ex}
 \begin{ex} 
 Suppose $(\leg_3, f)$ is the max-$tb$ positive
 Legendrian trefoil with generating family $f$  corresponding to the ruling in Figure~\ref{fig:trefoil} following the Fuchs-Rutherford algorithm \cite{f-r}.  Then it is known that $\Lambda_3$ admits a generating family  filling with $L \cong T^2 \setminus D^2$, so by Example~\ref{example. suspension of torus},
 $$\pi_k(\leg_3, f) \cong \pi_k^{s}(T^2) \cong \pi_k^s(S^1 \vee S^1 \vee S^2) \cong \pi_k^{s}(S^1) \oplus \pi_k^s(S^1) \oplus \pi_k^s(S^2).$$
\end{ex} 
 
\begin{rem}
We would like to both caution and intrigue the reader unfamiliar with stable homotopy groups of spheres. The intrigue: It is impossible to recover/compute the homotopy groups of a spectrum merely from its homology. The other direction is also true -- homotopy groups alone cannot recover the homology groups of a spectrum. The caution: because generating family spectra are typically finite spectra, it turns out that their homotopy groups are notoriously difficult to compute. So instead of using homotopy groups, in practice, one often tries to distinguish spectra using auxiliary invariants -- e.g., Steenrod operations on cohomology, and homotopy groups after applying a smashing localization. These are the invariants one hopes can hit a sweet spot between sensitivity (e.g., they are more powerful than homology groups) and computability (e.g., not as difficult to compute as homotopy groups). 
\end{rem}

 \begin{figure}
  \centerline{\includegraphics[height=.7in]{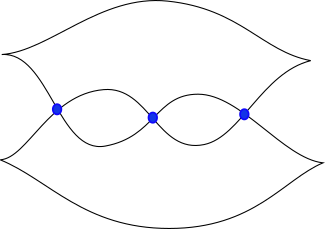}}
  \caption{The fillable max $tb$ Legendrian positive trefoil $\leg_3$, which has a generating family $f$ corresponding to the above ruling with all three crossing switches, ~\cite{f-r}.}
  \label{fig:trefoil}
  \end{figure}

\begin{ex}
\label{example. cubic gf spectrum over B}
Let $f_0 : \RR \to \RR$ be a cubic function with two critical points of distinct critical values. For any smooth compact manifold $B$, let 
	\eqnn
	f: B \times \RR \to \RR,
	\qquad
	(b,t) \mapsto f_0(t)
	\eqnnd
be the $B$-independent cubic. Because $f$ has fiber derivative with constant sign outside a compact subset of $B \times \RR$, $f$ is linear-at-infinity. (See also Example~\ref{example. cubic over a point}.) Then $f$ is a generating family for a Legendrian $\leg \subset J^1B$ diffeomorphic to $B \coprod B$. One may compute its generating family spectrum in two distinct ways.

One may compute by hand the sublevel set of $\delta_{f_0} \leq \epsilon$ for small positive $\epsilon$ -- one finds a homotopy equivalence of pairs
	\eqnn
	(\RR^2 , \{\delta_{f_0} \leq \epsilon\} ) \simeq (D^2, S^1).
	\eqnnd
As a result, we find a homotopy equivalence of pairs
	\eqnn
	(B \times \RR^2, \{\delta_{f} \leq \epsilon\} )
	\simeq
	(B \times D^2, B \times S^1).
	\eqnnd
By definition of generating family spectrum, we thus find 
	\eqnn
	C(\leg, f; \Sphere)
	\simeq
	\Sigma^{-1}  \Sigma^\infty (B \times D^2)/( B \times S^1).
	\eqnnd
(Here the $-1$ is the shift of degree $\dim f = 1$, which is baked into the definition of generating family spectra -- see Definition~\ref{defn:leg-spectrum}.) On the other hand, the quotient $(B \times D^2)/( B \times S^1)$ is homotopy equivalent to the two-fold reduced suspension of $B_+$, where $B_+$ is $B$ with a disjoint basepoint. Thus	
	\eqn\label{eqn. cubic computation}
	C(\leg, f; \Sphere)
	\simeq
	\Sigma^\infty \Sigma^{1} (B_+).
	\eqnd

 The second way to compute the generating family spectrum is to note that $(\leg = B \coprod B, f)$ admits a Lagrangian filling $(L,F)$, compatible with $f$, and with $L$ diffeomorphic to $B \times \RR$. By our spectral Seidel isomorphism (Theorem~\ref{thm:suspension computation}) we conclude
	\eqnn
	C(\leg,f; \Sphere)
	\simeq
	\Sigma^\infty (L_0 / \leg).
	\eqnnd
On the other hand, we have homotopy equivalences of pointed spaces
	\eqnn
	 L_0 / \leg \simeq 
	(B \times [0,1] / B \times \{0,1\})
	\simeq
	\Sigma^1 (B_+).
	\eqnnd
This agrees with our first computation~\eqref{eqn. cubic computation}.
\end{ex}

\begin{rem}
If one chooses a basepoint for $B$, then $\Sigma^1(B_+) \simeq \Sigma^1 B \vee S^1$. Because $\Sigma^\infty$ sends wedge sums of pointed spaces to direct sums of spectra, we thus find that $\Sigma^\infty(\Sigma^1 (B_+)) \simeq \Sigma^\infty B \oplus \Sphere^1$, where $\Sphere^1$ is the one-dimensional sphere spectrum (i.e., the sphere spectrum shifted by 1). Thus, up to an $\Sphere^1$ summand, and by choosing an appropriate $B$, we see that any suspension spectrum of a compact manifold (orientable or otherwise) is realized as the generating family spectrum of some compact Legendrian. 
\end{rem}

\subsection{A next step: More computations}
As mentioned already, spectral invariants are often much more powerful than chain-complex or homological invariants. 
This is because there are many inequivalent spectra with isomorphic homology and cohomology. (For example, the suspension spectra of $\CC P^3$ and of $S^2 \times S^4$ have isomorphic homology and cohomology yet the spectra are inequivalent:   $\Sigma^\infty(\CC P^3)$ has non-trivial Steenrod operations while   $\Sigma^\infty(S^2 \times S^4)$ does not.) In the absence of spectrum-level comparison results (Section~\ref{section. comparisons}), it is thus highly desirable to produce computational techniques for generating family spectra straight from their definition.

As later work will show, for many examples, generating family spectra are highly computable because of the local-to-global (in $B$) properties inherent in the definition, reducing computations to Mayer-Vietoris type arguments for spectra. (As far as we know, this observation provides a new computational technique for generating family homology as well.)

\begin{rem}
In fact, generating family homology can be defined not for a single generating family, but for ordered pairs $(f_0,f_1)$ of generating families (each $f_i$ may potentially generate a different Legendrian). This was exploited in~\cite{traynor:gf-polys,lisa-jill} to produce Legendrian link invariants. (In fact, the generating family homology of pairs was the first generating family homology to be constructed in the Legendrian setting.) The same techniques of this paper also lift generating family homology groups of pairs to the spectral level. Even when $B = \ast$ is a point, it seems any finite stable homotopy type can be constructed as the generating family spectrum of a pair of Legendrians in $J^1(\ast) \cong \RR$ -- in other words, the local classification of pairs of generating families is at least as rich as the classification of finite spectra (e.g., suspension spectra of finite CW complexes and their shifts).
\end{rem}

\subsection{A next step: An $\infty$-category of generating families}
\label{section. comultiplications}
In later work, we will construct a spectrally enriched, non-unital  $\infty$-category whose objects are generating families. (The spectra in the present work are, after taking Spanier-Whitehead duals, endomorphisms in this category.) The composition in this category is constructed roughly as follows: Given $f_0, f_1, f_2: B \times \RR^N \to \RR$, the diagonal embedding of spaces
	\begin{align}
	(B \times \RR^N_2 \times \RR^N_0) \times \RR^N
	& \to
	(B \times \RR^N_2 \times \RR^N_1) \times
	(B \times \RR^N_1 \times \RR^N_0), \label{eqn. coproduct map}
	\end{align}
after an appropriate homotopy, induces a map from the spectrum associated to $(f_0,f_2)$ to the smash of the spectra associated to $(f_1,f_2)$ and $(f_0,f_1)$. Taking Spanier-Whitehead duals, one obtains the composition product. We claim these maps cohere to form an $\infty$-category enriched in spectra. 
In particular, taking $f_0=f_1=f_2=f$, one obtains a not-necessarily-unital, $A_\infty$-algebra structure on the dual of $\gfc(\leg,f;\Sphere)$. 

\begin{rem}
By applying the singular chains functor, one obtains an $A_\infty$-category whose objects are generating families (and by passing to homology over a field, one obtains a category whose objects are generating families).
To the best of our knowledge, this would be the first demonstration of an $A_\infty$-structure on generating family invariants that does not invoke an isomorphism to another invariant. We also expect this product structure to be a spectrum-level lift of the $m_2$ product constructed by Ziva Myer~\cite{ziva} at the level of homology.
\end{rem}

\begin{rem}[Invariants of Legendrians]
The spectrally enriched $\infty$-category of those generating families that generate $\leg$ is a Legendrian isotopy invariant of $\leg$. While it was known that ``the collection of all $f$ and all $\gfc(\leg,f)$'' was an invariant of $\leg$, the compositions in the $\infty$-category give an algebraic structure to this collection. On the other hand, it is rather difficult to use just two pairs $(\leg,f)$ and $(\leg',f')$ to distinguish the Legendrian isotopy type of $\leg$ from that of $\leg'$. 

To ward off discouragement, let us assure the reader that this situation is completely parallel to that of Fukaya categories as a tool for distinguishing Lagrangians (see also Remark~\ref{remark. tangential structures on branes are natural}). If two objects of a Fukaya category have different endomorphisms, one cannot immediately conclude that the underlying Lagrangians of the two objects must not be Hamiltonian isotopic -- it may be that the two objects are simply the same Lagrangian equipped with different brane structures.  
And, it is already rather powerful to be able to distinguish equivalence classes of pairs $(\leg,f)$. (A priori, it is completely non-obvious whether $(\leg',f')$ is Legendrian isotopic to $\leg$ in a way relating $f$ and $f'$ via stabilization and fiberwise diffeomorphism!) 

The upshot is: If one truly wants to use linear-at-infinity generating families to distinguish Legendrian isotopy types, it seems one should understand the full subcategories of generating families consisting of those $f$ generating a single $\leg$, and the full subcategory of those $f'$ generating $\leg'$. By distinguishing two such subcategories, one may conclude $\leg$ and $\leg'$ are not Legendrian isotopic.
\end{rem}

\begin{rem}
\label{remark. grading shift in LCH}
The existence of the above $\infty$-category is a compelling reason to use the ``natural'' grading of $GFH$ we use in this work -- i.e., the $c=1$ grading convention in~\eqref{eqn. gfh is lch}.
Indeed, without the present grading convention, multiplication/composition would not be a degree zero operation.

These grading differences have appeared in previous works. The $A_\infty$ grading conventions in Civan-Etnyre-Koprowski-Sabloff-Walker~\cite{CEKSW} are shifted from many other works dealing with $A_\infty$-structures, as their $m_2$ product is a map of degree 1 (not degree 0). This is because the authors use the grading on $LCH$ -- i.e., the grading on $GFH^{c=0}$ (see Notation~\ref{notation. c =0 grading}). To have a product of degree 0, one must use $c=1$ grading convention for $GFH$.  The $c=1$ grading convention is also used by Myer~\cite{ziva} to construct a product of degree zero, albeit for generating family {\em cohomology}. 

As further motivation for the importance of our convention, we caution that for spectra, one cannot simply ``shift the signs in a formula'' to verify $A_\infty$-relations; instead, one must often exhibit higher and higher homotopies, and an incorrect degree may doom such efforts. Getting the ``correct shift'' from the outset is critical.

\end{rem}

\begin{rem}
We also note the natural appearance of Atiyah duality. The dual to~\eqref{eqn. coproduct map} is naturally a Thom collapse map sensitive to the diagonal embedding of $B$ inside $B \times B$.
\end{rem}

\subsection{Future direction: Comparisons passing through sheaves}
Let us organize the landscape of Legendrian invariants -- wrapped Fukaya categories stopped at $\leg$, the Chekanov-Eliashberg dga of $\leg$, the linearized contact homologies of $\leg$, categories of sheaves with microsupport at infinity contained in $\leg$, and of course, generating family invariants. We will try to summarize what is known and what is, to us at least, unknown.

\begin{rem}[Generating family spectra are computable using sheaves]
It is known that generating family {\em cohomology} (which is isomorphic to the cohomology of the generating family spectra we construct here) is isomorphic to the cohomology of morphism complexes between certain sheaves with prescribed singular supports -- see for example Theorem~8 of the withdrawn work~\cite{shende-withdrawn} (the proof of Theorem~8 is in fact correct, though the author of ibid.~points out that there are other portions of the work which are not correct). 

Though a fully satisfactory six-functor formalism for sheaves of spectra seems not yet in the literature, there is enough written formalism to launch off the ground the microlocal theory of  sheaves with values in spectra. (See~\cite{volpe-six-functors}, and Section 2 of~\cite{jin-treumann}.) In particular, the same proof techniques as in Theorem~8 of~\cite{shende-withdrawn} shows that the Spanier-Whitehead duals of generating family spectra of pairs can be computed as morphisms complexes between their induced sheaves on $B \times \RR$. (In fact, one does not even need the full assortment of microlocal foundations for this computation!) Thus, one has at their disposal both sheaf-theoretic techniques and generating-family techniques for computing generating family spectra. 
\end{rem}

\begin{rem}[Product structures on other invariants]
Given the isomorphism to linearized contact homology~\eqref{eqn. gfh is lch}, it is natural to conjecture that the spectrum-level products for (the Spanier-Whitehead duals to) generating family spectra lift the chain-level products on other Legendrian invariants (such as LCH and microlocal invariants).

As far as we know, an endomorphism algebra for linearized contact homology manifests as an $A_\infty$-algebra first identifiable 
in work of Civan-Etnyre-Koprowski-Sabloff-Walker~\cite{CEKSW} (defined for Legendrians in $\RR^3$) -- see also Remark~\ref{remark. grading shift in LCH}. Generalizing this structure, Bourgeois-Chantraine~\cite{bourgeois-chantraine} constructed an $A_\infty$-category of augmentations and bimodules, where the algebra from~\cite{CEKSW} arises as endomorphisms of a single augmentation -- and this was done for Legendrians in $J^1B$ for any dimension of $B$. A microlocal version was produced, and its subcategory of microstalk-rank-1 objects was conjectured to be equivalent to the augmentation category, in~\cite{stz}. In fact, the conjecture holds for a modified version of the augmentation category, as shown in~\cite{nrssz} when Legendrians are 1-dimensional.  

We note the generating family $\infty$-category has no dimension constraints on $\leg$.
We also note that the generating family $\infty$-category is not restricted to a single Legendrian -- that is, the morphism spectrum of the pair $(f_0,f_1)$ can be defined even when $f_0$ and $f_1$ do not generate the same (isotopy class of) Legendrian (one need only fix the base manifold $B$ of $J^1B$).

We have already mentioned sheaf-theoretic and the augmentation-bimodule categories. We would like to note that this categorical structure is also expected to be captured by Fukaya categories. Indeed, it was asked in Akaho-Joyce's work~\cite[Section 13.5]{akaho-joyce} on immersed Lagrangian Floer cohomology whether their theory can recover Floer theory for Legendrians. Fukaya's $A_\infty$-category of immersed Lagrangians~\cite{fukaya-immersed} (which, according to personal communications with Fukaya, can actually be constructed immediately from~\cite{akaho-joyce} if one only considers a full subcategory with {\em finitely} many objects) when applied to immersed Lagrangians in $T^*B$, is thus expected to be a Fukaya-categorical counterpart.
\end{rem}

\begin{rem}[Conjectures about spectral lifts of the Chekanov-Eliashberg dga and other Fukaya-categorical invariants] \label{rem:spectral lift conjectures} 
Work of Ekholm-Lekili~\cite{ekholm-lekili} has shown that, over chains on the based loop space of $\leg$, Koszul duality for $A_\infty$-algebras and coalgebras relate the Chekanov-Eliashberg dga $CE^*(\leg)$ to an $A_\infty$-coalgebra structure on a kind of linearized contact homology -- this coalgebra is denoted $LC_* = LC_*(\leg,\epsilon)$ in their work. While ibid. constructs $LC_*$ a posteriori from an augmentation $\epsilon$ on the Legendrian Chekanov-Eliashberg dga, the generating family spectrum theory seems to instead take (a spectral, generating-family version of) $LC_*$ as the starting point. Instead of an action of the based loop space $\Omega \leg$ on the Chekanov-Eliashberg algebra, there is naturally a comodule action of the coalgebra $\Sigma^\infty \leg$ on the generating family spectrum $C(\leg,f;\Sphere)$. 

This conjecturally gives two frameworks for trying to define a generating-family, spectral analogue of the Chekanov-Eliashberg dga for $\leg$. One method is to construct the Koszul dual to the generating family spectrum of $(\leg,f)$. One would hope that the Morse filtration of a difference function (or, a filtration of the spectrum by Reeb chord lengths) will allow one to recover a version of the Chekanov-Eliashberg dga as an $A_\infty$-algebra completed with respect to a filtration by quantitative invariants (such as Reeb chord length), and spectrally so. Likewise, the comodule action of $\Sigma^\infty \leg$ would be Koszul dual to a module action from (a completed version of) $\Sigma^\infty \Omega \leg$.

It is yet unproved that these  Koszul dual algebras for a single $\leg$ should all be equivalent regardless of $f$. However, as suggested by an anonymous referee, one may imagine that generating families for $\Lambda$ are like twists of bounding cochains for a curved coalgebra (whose Koszul dual is the  Chekanov-Eliashberg dga). Then indeed the Koszul duals of the linearized coalgebras should all give rise to the same algebra.

The other framework is to consider the generating family category as a model for an infinitesimally wrapped Fukaya category, and localize with respect to positive wrappings. At a naive level, the Chekanov-Eliashberg algebra is  similar to the kinds of bar constructions familiar from localizations, where words of high length contain letters representing the morphisms one seeks to invert -- in this case, positive Reeb chords, which one might think of as a composition of morphisms in the infinitesimal category arising from many positive wrappings. Indeed, it is worth noting that quadratic-near-infinity generating families can lift the kinds of non-compact branes one finds in the theory of infinitesimal Fukaya categories of cotangent bundles, and generating family spectra seem to compute the correct (infinitesimal) morphisms between such objects. 
\end{rem}

\subsection{Future direction: Finite-dimensional approximations and comparison with Floer theory}
\label{section. comparisons}
\label{section. finite dim approx}
The present work's generating family spectrum is the first to explicitly encode a spectral lift for generating-family invariants for Legendrians. While Floer homotopy theory has yet to produce a spectral lift of linearized contact homology, nor of infinitesimally wrapped Fukaya categories --  so we cannot yet produce a rigorous comparison -- it is regardless highly desirable to have a strategy of how one {\em would} compare generating family spectra to Floer-theoretic invariants. We share here a speculative analogy that may give some insight.

Given a generating family, one uses difference functions to naturally associate a pointed space to serve as an invariant of the generating family. Spectra are forced upon us when we try to prove that these spaces are invariant under {\em Legendrian} isotopies. Explicitly: When a Legendrian is isotoped, the associated generating families may naturally acquire a higher-dimensional domain (Proposition~\ref{prop:leg-persist}), and this accounts for the appearance of suspensions of spaces (Proposition~\ref{prop:leg-stab}),  whence spectra emerge. 

That invariance necessitates suspension seems to be a motif. Spectra emerge with an inevitable air for the same reason in Seiberg-Witten/Bauer-Furuta invariants. Thus, in the same way that finite-dimensional approximations of PDEs resulted in Bauer-Furuta's stable homotopy lifts, one may wonder the extent to which difference functions lead to finite-dimensional approximations to holomorphic curve equations in symplectizations.
Indeed, a version of finite-dimensional approximation seems to have been the original motivation for Viterbo's work probing Hamiltonian dynamics using generating families~\cite{viterbo-1992-symplectic-topology-as-generating-functions}.

One might hope for such a philosophy -- that gradients of difference functions yield finite-dimensional approximations to holomorphic curve equations -- to guide future work. However, in Seiberg-Witten theory (where the analysis is expected to be considerably simpler), the comparison between finite-dimensional approximation invariants (which are spectral in nature) and the original Seiberg-Witten Floer invariants (which, at present, are homological) involves highly non-trivial arguments as in the work of Lidman-Manolescu~\cite{lidman-manolescu}. 
 
A direct comparison between generating family invariants and  holomorphic-curve invariants has only been attained in a few limited examples -- 
for example, Viterbo~\cite{viterbo-functors-applications-ii} exhibits an equivalence with the Floer cohomology of two Lagrangians that are both Hamiltonian isotopic to the zero section.

\subsection{Context and history}
\label{section. history}
Let us first motivate the problem of lifting chain complex and homological invariants to the level of stable homotopy types (i.e., spectra). 
In homotopy theory, the stable homotopy type of a space is a refinement of the singular chain complex of the space.
In low-dimensional topology, 
Lipshitz and Sarkar~\cite{L-S:homotopy-type} showed
that the chain complex underlying Khovanov homology has a refinement to a CW spectrum.  These
spectra have homology groups that recover Khovanov homology 
and contain more information: 
By studying Steenrod operations, Seed \cite{Seed} found examples of smooth knots with the same Khovanov homology
but nonequivalent spectra. 

In the realm of geometric invariants constructed from Floer theory, Floer noted in their original works~\cite{floer:witten} that stable homotopy refinements of Floer homology groups should be present. Since then, many stable homotopy types lifting Floer-type homology groups have been constructed. In symplectic geometry,
Cohen-Jones-Segal outlined an approach using flow categories~\cite{cohen-jones-segal},
Kragh lifted both symplectic homology and Viterbo's transfer maps~\cite{Kragh:Viterbo-transfer},
and Abouzaid-Blumberg have lifted Hamiltonian Floer homology~\cite{abouzaid-blumberg}. 
In Seiberg-Witten theory, one has 
Bauer-Furuta's invariants~\cite{bauer-furuta} and Manolescu's equivariant stable homotopy type~\cite{manolescu-pin2}. 

More recent foundational works in this direction include the work of Large~\cite{large-thesis} and the work of Abouzaid-Blumberg~\cite{abouzaid-blumberg-foundations} proving a categorification of the Pontrjagin-Thom isomorphism -- so that the kinds of flow categories appearing in Floer theory tautologically give rise to stable homotopy types.    The work of Porcelli-Smith  build on this idea to construct ``Donaldson'' type Fukaya categories over spectra~\cite{porcelli-smith-bordism,porcelli-smith-tangential}. (By ``Donaldson" type we mean that one sidesteps the higher coherences required for an $\infty$-category enriched in spectra, and in fact only remembers the homotopy groups.) We refer also to the work of  Blakey~\cite{blakey}, Porcelli~\cite{porcelli-selecta}, and Hirschi-Porcelli~\cite{hirschi-porcelli} for applications. Importantly, all the works in this paragraph attempt to extract stable homotopy types directly from moduli spaces of pseudoholomorphic objects -- in particular, the categorical coherence, the tangential obstructions, and the analytical difficulties are far more intricate than when dealing with generating families.

Below, we survey some uses of  generating families to tackle problems in symplectic topology. We thank the referee for suggesting that we exposit these works. 
We have also been informed that forthcoming work of Lazarev is expected to produce spectral lifts of wrapped  Floer cohomology in a cotangent bundle, via generating families.

\subsubsection{Bounds on intersection numbers}
\label{section. intersection number bounds by counting crit points of gfs}
A now-classical problem of symplectic geometry is to establish lower bounds on the number of intersection points between the zero section $M \subset T^*M$ and its image under a Hamiltonian isotopy. Cup-length lower bounds were established by Hofer~\cite{hofer-lagrangian-embeddings-AIHP-1985} (using variational methods) and by Laudenbach-Sikorav~\cite{laudenbach-sikorav-inventiones-1985} (using broken geodesic methods). Generating family finally appeared in later work of Sikorav~\cite{sikorav:gen-fn} -- once one knows that Lagrangians Hamiltonian isotopic to the zero section admit generating families, a simple count of critical points of a generating family yields the desired bounds.  Lalonde-Sikorav~\cite[Theorem 3(i)]{lalonde-sikorav} built upon this insight to yield bounds on (and in particular, the non-zero-ness of) the number of intersection points between any $L$ Hamiltonian isotopic to the zero section, and the conormal $T^*_K M$ to a submanifold $K \subset M$.

\subsubsection{Works of Viterbo}
In~\cite{viterbo-functors-applications-ii}, it is shown that the cohomology of sublevel sets of a difference function recovers Floer cohomology (for pairs of Lagrangians Hamiltonian-isotopic to the zero section).\footnote{We note there is a minor error, in that the cohomology of the sublevel sets must be shifted to match (choices of) gradings on the Lagrangians; this is roughly the same reason for the $N$-fold shift in Definition~\ref{defn:leg-spectrum} below.} In contrast to the works cited in Section~\ref{section. intersection number bounds by counting crit points of gfs}, this work made use of the underlying topology of sublevel sets arising from generating families. A powerful outcome is that this isomorphism holds as objects filtered by action, but little is mentioned about the (stable) homotopy type defined by the sublevel sets.

In Section~2 of~\cite{viterbo-1992-symplectic-topology-as-generating-functions}, the Conley index for a quadratic-at-infinity generating family is seen as computing a Thom space up to homotopy equivalence. And in~\cite{viterbo-jdg}, the importance of the suspension-invariance of the Conley indices related to sublevel spaces is obviously recognized 
(see \cite[Proposition~4.1]{viterbo-jdg}  and the commentary surrounding it); so the seeds for defining a stable homotopy type in this way were planted at least as early as this work, though no stable homotopy type computations (and no mention of stable homotopy types) appeared. It is our understanding that the first explicit use of spectra using this method first appeared Kragh's work~\cite{Kragh:Viterbo-transfer}, following inspiration from the work of Viterbo.

\subsubsection{Arnold conjecture for tori}
It was noticed early on that (the topological invariants of) flows induced by generating families contain symplectic and dynamical information.   For example, the proof of the Arnold Conjecture for the $2n$-dimensional torus can be given by  first writing a Hamiltonian symplectomorphism as a
composition of  a finite number of tiny symplectomorphisms admitting generating families. 
Then, studying the number of fix points of the original symplectomorphism amounts to counting critical points of a Morse function constructed out of a sequence of difference functions associated to the generating families (this Morse function also goes by the name of the discrete action functional). The number of critical points of a Morse function is bounded by the Betti numbers of (the relative homology groups of) a Conley index pair. Moreover, it turns out to be easy to see that the Conley pair is the smash product of $T^n \coprod pt$ with a sphere. For details see the proof of  Theorem~11.1.9 in~\cite{mcduff-salamon-intro}. Of course, in this situation there is little to be gained from the stable homotopy type (as opposed to the co/homology) -- after all, the suspension of $T^n$ is homotopy equivalent to a wedge of spheres by Whitehead's theorem for simply-connected CW complexes.

\subsubsection{Relations to capacities}
Work of the second author~\cite{traynor:shomology} constructed an analogue of symplectic homology (together with its filtration by capacities) using generating families (and the homologies of certain super/sub level sets). Later, Sheila Sandon~\cite{sandon-capacity-annales-fourier-2011} extended the construction to the contact setting. In fact, capacity-like invariants from filtered generating family homology were utilized by Mohan Bhupal~\cite{bhupal-partial-order-2001} (taking inspiration from Viterbo's~\cite{viterbo-1992-symplectic-topology-as-generating-functions}) to define a partial order on the group of compactly supported contactomorphisms of Euclidean space.

\subsubsection{Tube generation}
In~\cite{abouzaid-courte-guillermou-kragh}, a homological criterion for a generating family locally (in $B$) tube-generating ``half the double'' of the zero section is expressed through the homology of the sublevel set of a difference function. To be more specific, in ibid., a function $D$\footnote{Up to fiber diffeomorphism, this is the generating family from Example~\ref{example. cubic gf spectrum over B}.} is used to model the attachment of one trivial handle to affine half-space. (See the start of Section~3.4 of ibid.) By definition, their notion of tube type and tube generation systematically concentrates on the single handle, and not its necessarily cancelling partner\footnote{Any linear-at-infinity generating family has sublevel spaces that begin as a half-space and end as a half-space, so the only two handles must cancel.}.

The homological criterion we have referred to is Proposition~3.25 of ibid., which detects when a stabilization of a function is of tube type. Crucially, the authors pass to the difference function of $f$ to compute the homology of $f$ (specifically, they combine their Lemma~3.22 and Lemma~3.28 -- the former of which computes a particular sublevel set homology of the difference function in terms of that of $f$ under a mild assumption on the critical values of $f$). This is what allows the authors to conclude their main tube generation result (Theorem D of ibid.). The stable homotopy types of sublevel sets of difference functions are not utilized.\footnote{However, we do point out that Lemma~3.22 holds at the level of spectra by replacing $C_*$ by the stable homotopy type and $\otimes_\ZZ$ by the smash product of spectra.} We also point out that the sublevel set computed in Lemma~3.22 is particularly simple; the difficulty of generating family homology computations lies in computing the sublevel set $\delta_f^{\leq a}$ for very small positive values of $a$, which in particular precludes the helpfulness of the kinds of critical-value bounds in the hypothesis of Lemma~3.22.

\subsection{Acknowledgements}
 This material is based upon work supported by the National Science Foundation under Grant No. 1440140, while the authors were in residence at SLMath in Berkeley, California, during the Floer Homotopy Theory program in Fall 2022. HLT was supported by a Sloan Research Fellowship, an NSF CAREER grant DMS-2044557, and the Texas State University Presidential Seminar Award and Valero Award. 
We thank
Mohammed Abouzaid,
Daniel \'Alvarez-Gavela,
Denis Auroux,
Roger Casals,
Thomas Kragh,
Oleg Lazarev,
Robert Lipshitz,
Lenny Ng, 
Dan Rutherford,
Josh Sabloff,
Paul Seidel,
and
Claude Viterbo
for helpful communication regarding this work.  The authors also thank the anonymous referee for exceptionally useful comments, suggestions, and corrections.

\section{Generating families and difference functions} \label{sec:background}

\subsection{Generating family background} \label{ssec:gf-background}
We recommend~\cite{theret:viterbo, traynor:gf-polys, viterbo-1992-symplectic-topology-as-generating-functions, S-T:obstruct} 
for further reading.  
Let $B$ be a smooth manifold (not necessarily compact). Given a smooth function $f\co B \to \rr$,
the graph of $df$ in $T^*B$ is a Lagrangian submanifold, and the $1$-jet of $f$  in $J^1B$ is a Legendrian submanifold.  Generating families can further produce ``non-graphical'' Legendrian submanifolds by expanding the domain of the function to, for example, the trivial vector bundle $B \times \rr^N$ for some potentially large $N$.

\begin{notation}[$\eta$]
\label{notation. eta}
We will denote the fiber coordinates (i.e., the coordinates of $\RR^N$) by $\e = (\e_1, \ldots, \e_N)$. 
\end{notation}
 
\begin{assume}[Genericity of $f$]
\label{assumption. f generic}
Throughout this section, $f$ denotes a smooth function 
	\eqnn
	f\colon B \times \rr^N \to \rr
	\eqnnd
such that $\mathbf{0}$ is a regular value of the map $\partial_{\e} f\co  B \times \rr^N \to \rr^N$. 
\end{assume} 

A generic $f$ yields a Legendrian as follows.
 The graph of $df$, $\Gamma_{df}$ is an embedded Lagrangian submanifold of $T^*(B \times \rr^N)$. A coisotropic reduction, as described in \cite[Section 5.4]{mcduff-salamon}, gives
rise to an immersed, exact Lagrangian $L$ in $T^*B$, which lifts to an immersed  Legendrian $\leg$ in $J^1B$.

Alternatively, 
using the perspective of Weinstein's category, 
\cite{Weinstein-cat}, one can view $\Gamma_{df}$ as a Lagrangian 
correspondence (also known as a canonical relation) between $T^*B$ and $ T^*\rr^N$.   The zero-section of $T^*\rr^N$ can also be viewed as a Lagrangian correspondence
between $T^*\rr^N$ and  the trivial space $pt$.
	\eqnn
	\xymatrix{
		& \Gamma_{df} \ar[dr] \ar[dl]  && \RR^N \ar[dr] \ar[dl] \\
	T^*B && T^*\RR^N && pt
	}
	\eqnnd
It is well known that a  composition of a Lagrangian (Legendrian) correspondence with a transverse Lagrangian correspondence yields another Lagrangian (Legendrian). Thus, the fiber products $\Sigma_f$ below
	\eqnn
	\xymatrix{
		&& \Sigma_f \ar[dr] \ar[dl] \\
		& \Gamma_{df} \ar[dr] \ar[dl]  && \RR^N \ar[dr] \ar[dl] \\
	T^*B && T^*\RR^N && pt
	}
	\eqnnd
	\eqnn
	\xymatrix{
		&& \Sigma_f \ar[dr] \ar[dl] \\
		& \Gamma_{df} \ar[dr] \ar[dl]  && \RR^N \ar[dr] \ar[dl] \\
	J^1B && T^*\RR^N && pt
	}
	\eqnnd
are (diffeomorphic) smooth manifolds; we call $\Sigma_f$ the \dfn{fiber critical set}. Via the diffeomorphism of $\Gamma_{df}$ with $B \times \RR^N$, and the fact that $\Sigma_f \to \Gamma_{df}$ is necessarily an embedding, we may naturally identify $\Sigma_f$ with the subset
	\eqn\label{eqn. sigma f as a subset}
	\{
	(q,\eta) \, | \, \del_\eta f(q,\eta) = 0
	\} \subset B \times \RR^N.
	\eqnd
The maps from $\Sigma_f$ to $T^*B$ and $J^1B$ define an immersed Lagrangian and an immersed Legendrian, respectively; these immersions have formulas
\eqn\label{eqn. immersions of sigma f}
  (q,\e) \mapsto (q,\partial_qf(q,\e)), \qquad
  (q,\e) \mapsto (q, \partial_q f(q,\e), f(q,\e)).
\eqnd 
We let $L$ denote the immersed Lagrangian and $\leg$ denote the immersed Legendrian.
We say
that $f$ \dfn{generates} $L$ and $\leg$, or that $f$ is a
\dfn{generating family (of functions)} for $L$ and $\leg$.

\begin{rem} Sometimes the term ``generating function" is used instead of ``generating family".  Both terms are a shortening of the longer phrase ``generating family of functions."  Due to the
 common use of ``generating function" in physics and combinatorics, some authors prefer the term ``generating family'' in the present context, to avoid confusion when communicating beyond symplectic and contact topology.
\end{rem}

\subsection{Equivalent generating families}
\label{section. equivalent gfs}

Given a generating family $f$ for a Lagrangian/Legendrian, the following two operations produce more generating families. These operations generate an equivalence relation on the collection of generating families for a fixed Lagrangian/Legendrian.

\begin{defn}  \label{defn:gf-equiv}
Fix a smooth function $f: B \times \RR^N \to \RR$.
\begin{enumerate} 
\item 
	A {\bf rank $K$
  stabilization} of $f$ of \dfn{index} $i$ is a function
  \eqnn
  	f \oplus Q\co B \times \rr^N \times \rr^K \to \rr,
	\qquad
	(q,
  \e, \e') \mapsto f(q, \e) + Q(\e'),
  \eqnnd
  where  $Q\co
  \rr^K \to \rr$ is a non-degenerate quadratic form of index $i$.
\item
	A \dfn{fiber-preserving diffeomorphism} is a diffeomorphism
  	\eqnn
	\Phi\co B \times \rr^N \to B \times \rr^N , 
	\qquad
	(q,\eta) \mapsto (q, \phi_q(\e))
	\eqnnd
	for some smooth family of 
  diffeomorphisms $\phi_q$. Then $f \circ \Phi$ is said to be
  obtained from $f$ by fiber-preserving diffeomorphism.
\end{enumerate}
 \end{defn}
 
\begin{rem} 
If $f$ generates a Lagrangian, the addition of a constant to $f$
will not change the Lagrangian generated but will change the Legendrian generated. 
 We will later be considering generating families for Lagrangian fillings  that are an ``extension" of the generating family
on the cylindrical end formed from the Legendrian, and so this addition of a constant will not arise.   See Definition~\ref{defn:compatible}. 
\end{rem}

\subsection{Difference functions} \label{ssec:diff}

\begin{defn} \label{defn:leg-diff}
Suppose that $f\co B \times \rr^N \to \rr$ is a generating family for a Legendrian $\leg \subset J^1B$.  The {\bf difference function} of $f$ is the function 
	\eqnn
	\delta_f \co B \times \rr^N \times \rr^N \to \rr,
	\qquad
	 \delta_f(q,\e,\te) := f(q,\te) - f(q,\e). 
	 \eqnnd
\end{defn}

\begin{rem}
More generally, given generating families $f_0, f_1$ that generate Legendrians $\leg_0,\leg_1 \subset J^1B$, one may assume that $f_0$ and $f_1$ have the same domain (by stabilizing if necessary). Then we may define a difference function for this pair as follows:
	\eqnn
	\delta_{f_0,f_1} \co B \times \rr^N \times \rr^N \to \rr,
	\qquad
	 \delta_{f_0,f_1}(q,\e,\te) := f_1(q,\te) - f_0(q,\e). 
	\eqnnd
Such a difference function was already used in~\cite{traynor:gf-polys,lisa-jill} to detect Legendrian linking phenomena between $\Lambda_0$ and $\Lambda_1$.
\end{rem}

Difference functions are at the core of our invariants, so we take some time to explicate their properties.

 \begin{rem} \label{rem:stab-choice-difference}
Let $f_\pm: B \times \rr^N \times \rr \to \rr$ be
 a stabilization of $f$ by  a non-degenerate quadratic form $Q_\pm(\e') = \pm (\e')^2$. 
Even though the indices of $Q_+$ and $Q_-$ differ, the associated difference
functions $\delta_{f_{\pm}} \co B \times \rr^{2N +2} \to \rr$ both differ from $\delta_f \co B \times \rr^{2N} \to \rr$ by a stabilization by a non-degenerate quadratic form of index $1$:
$$ \begin{aligned}
\delta_{f_+}(x, \e, \e', \te, \te') &=
( f(x, \te) + (\te')^2) -  (f(x, \e) + (\e')^2)\\ 
&= 
 \delta_f(x, \e, \te) + (\te')^2  - (\e')^2, \\
 \delta_{f_-}(x, \e, \e', \te, \te') &= 
 ( f(x, \te) - (\te')^2) -  (f(x, \e) - (\e')^2) \\
 &=
 \delta_f(x, \e, \te) - (\te')^2  +(\e') ^2.
 \end{aligned}$$
Moreover, $\delta_+$ and $\delta_-$ are related by a fiber-preserving diffeomorphism that swaps $\te$ and $\e$.
 \end{rem}

In the standard contact structure on the jet bundle $J^1B$, {\bf Reeb chords} 
	 \eqnn
	 \gamma\co [a,b] \to J^1B
	 \eqnnd
of a Legendrian $\leg$ are   trajectories of $\frac{\partial}{\partial z}$ whose endpoints lie on $\leg$.  Under the projection of the 
Legendrian $\leg \subset J^1B$ generated by $f$  to the immersed Lagrangian  $L \subset T^*B$ generated by $f$ the Reeb chords of $\leg$  are in one-to-one correspondence with double points of $L$.

\begin{notation}[$\ell$] \label{notation:length} 
Given a Reeb chord $\gamma: [a,b] \to J^1B$ of an embedded Legendrian, 
we let $\ell(\gamma) > 0$ be its \dfn{length} -- that is, the integral $\int_{[a,b]} \gamma^*\theta$ where $\theta = dz-ydx$ is the standard contact 1-form on $J^1B$.
\end{notation}

The following shows that the critical locus of the difference function is sensitive to the topology and some of the Reeb dynamics of $\leg$. See \cite[Lemma 3.3]{josh-lisa:cap} and~\cite{f-r}.
  
\begin{prop}
  \label{prop:leg-crit-point} 
  Suppose $f$ is a generating family for an embedded Legendrian $\leg \subset J^1B$.  Then 
  \begin{enumerate}  
  \item The critical locus with $\delta_f \equiv 0$ is the locus 	
  	$$\left\{ (q,\e,\e) \,:\, (q,\e) \in \Sigma_f\right\}$$
	and hence is naturally diffeomorphic to the critical submanifold $\Sigma_f$ (and thus is diffeomorphic to $\leg$).
  \item The critical locus with $\delta_f  \neq 0$ is identified with a 2-to-1 cover of the set of Reeb chords of $\leg$.
  Specifically, for each Reeb chord $\gamma$ of $\leg$, there are two
    critical points $(q,\e,\te)$ and $(q,\te,\e)$ of $\delta_f$ with
    nonzero critical values $\pm \ell(\gamma)$.
\end{enumerate}
\end{prop}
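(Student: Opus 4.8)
The plan is to compute the critical points of $\delta_f$ directly from the defining formula $\delta_f(q,\e,\te) = f(q,\te) - f(q,\e)$ and then match the data against the description of $\Sigma_f$ in~\eqref{eqn. sigma f as a subset} and the immersions~\eqref{eqn. immersions of sigma f}. First I would write out the three blocks of partial derivatives. We have $\partial_q \delta_f = \partial_q f(q,\te) - \partial_q f(q,\e)$, $\partial_\e \delta_f = -\partial_\e f(q,\e)$, and $\partial_\te \delta_f = \partial_\te f(q,\te)$. So $(q,\e,\te)$ is critical if and only if $\partial_\e f(q,\e)=0$, $\partial_\eta f(q,\te)=0$, and $\partial_q f(q,\e) = \partial_q f(q,\te)$. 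The first two conditions say exactly that $(q,\e)\in\Sigma_f$ and $(q,\te)\in\Sigma_f$; the third says that the two points of $\Sigma_f$ lying over the same base point $q$ are mapped to the \emph{same} point of $T^*B$ under the Lagrangian immersion~\eqref{eqn. immersions of sigma f}, i.e.\ they either coincide or form a double point of $L$.

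For part (1), I would impose the additional constraint $\delta_f(q,\e,\te)=0$, i.e.\ $f(q,\e)=f(q,\te)$. Combined with $\partial_q f(q,\e)=\partial_q f(q,\te)$ this says the two points $(q,\partial_q f(q,\e),f(q,\e))$ and $(q,\partial_q f(q,\te),f(q,\te))$ of $J^1B$ coincide; since $\leg$ is \emph{embedded}, the Legendrian immersion from $\Sigma_f$ is injective, so $(q,\e)=(q,\te)$, forcing $\e=\te$. Hence the zero-value critical locus is precisely the diagonal $\{(q,\e,\e):(q,\e)\in\Sigma_f\}$, which is manifestly diffeomorphic to $\Sigma_f$, and $\Sigma_f$ is diffeomorphic to $\leg$ via the Legendrian embedding. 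Conversely any diagonal point of this form is critical with $\delta_f=0$ by the computation above. This establishes (1).

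For part (2), consider a critical point $(q,\e,\te)$ with $\delta_f \neq 0$, so $\e\neq\te$ but both lie in $\Sigma_f$ over $q$ with $\partial_q f(q,\e)=\partial_q f(q,\te)$. Then the two points of $\leg$ over these, namely $p_- = (q,\partial_q f(q,\e),f(q,\e))$ and $p_+ = (q,\partial_q f(q,\te),f(q,\te))$, have the same $(x,y)$-coordinates but distinct $z$-coordinates, so the vertical segment joining them is a Reeb chord $\gamma$ (oriented, say, in the direction of increasing $z$) with length $\ell(\gamma) = \int \gamma^*\theta = f(q,\te)-f(q,\e)$ if $f(q,\te)>f(q,\e)$, and $\delta_f(q,\e,\te)= f(q,\te)-f(q,\e) = \pm\ell(\gamma)$ accordingly. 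Conversely, every Reeb chord of $\leg$ arises this way: its two endpoints pull back under the Legendrian embedding to two points $(q,\e),(q,\te)\in\Sigma_f$ over a common $q$ with equal $\partial_q f$ and $f(q,\te)\neq f(q,\e)$, so both $(q,\e,\te)$ and $(q,\te,\e)$ are critical points of $\delta_f$, with critical values $f(q,\te)-f(q,\e)$ and $f(q,\e)-f(q,\te)$ — i.e.\ $+\ell(\gamma)$ and $-\ell(\gamma)$. Since swapping $\e$ and $\te$ is a fixed-point-free involution on the non-zero critical locus (here we use $\e\neq\te$) that covers the identity on the set of Reeb chords, this exhibits the non-zero critical locus as a $2$-to-$1$ cover of the Reeb chord set, as claimed. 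The only mild subtlety — and the step I'd be most careful about — is the bookkeeping of signs and orientation conventions for $\ell(\gamma)$, making sure the contact form $\theta = dz - y\,dx$ integrates over the Reeb chord to exactly the $z$-difference $f(q,\te)-f(q,\e)$; everything else is a direct unwinding of the definitions, using embeddedness of $\leg$ precisely to pass between points of $\Sigma_f$ and points of $\leg$.
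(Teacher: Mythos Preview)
Your proof is correct and is exactly the standard direct computation. The paper does not supply its own proof of this proposition; it simply refers the reader to \cite[Lemma~3.3]{josh-lisa:cap} and~\cite{f-r}, whose arguments are essentially the one you have written out.
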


 It is convenient to consider the  {\bf length spectrum} of a Legendrian submanifold $\leg$, defined as 
 $$\ell(\leg) := \{\ell(\gamma)\mid  \gamma \text{ a Reeb chord of } \leg\}.$$
 
 For later discussions, it will be useful to keep in mind the following lemma, which tells us that for a $1$-parameter family of Legendrians, the length spectra
 will be uniformly bounded away from $0$.

 \begin{lem} \label{lem:uniform-non0} If $\leg_t$, $t \in [0,1]$, is a 1-parameter family of compact, embedded Legendrian submanifolds in $J^1B$, then there exists an $\epsilon > 0$ such that 
 $$\ell(\leg_t) \cap (0,\epsilon) = \emptyset, \quad \text{ for all } t \in [0,1].$$ 
 \end{lem}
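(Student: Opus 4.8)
The plan is to argue by contradiction using a compactness argument on the parameter space $[0,1]$ together with the correspondence between Reeb chords and critical points of difference functions from Proposition~\ref{prop:leg-crit-point}. First I would suppose no such uniform $\epsilon$ exists. Then for every $n$ there is some $t_n \in [0,1]$ and a Reeb chord $\gamma_n$ of $\leg_{t_n}$ with $0 < \ell(\gamma_n) < 1/n$. By compactness of $[0,1]$, pass to a subsequence so that $t_n \to t_\infty \in [0,1]$.

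The key geometric step is to show this forces a degenerate phenomenon at $t_\infty$: either a Reeb chord of length $0$, which is impossible for an \emph{embedded} Legendrian (a length-zero chord corresponds to a self-intersection of the Lagrangian projection, contradicting embeddedness), or a ``nascent'' Reeb chord appearing in the limit. Concretely, I would realize the chords as double points of the Lagrangian projections $L_t \subset T^*B$. Since the $\leg_t$ are compact and embedded and vary in a $1$-parameter family, the Lagrangian projections $L_t$ are compact immersed Lagrangians varying continuously; the chord $\gamma_n$ gives a pair of points $p_n, q_n \in \leg_{t_n}$ lying on the same Reeb trajectory with $|z(p_n) - z(q_n)| = \ell(\gamma_n) \to 0$. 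By compactness of $\leg_{t_n}$ (uniformly in $t$, since these fit into a compact total space over $[0,1]$), pass to a further subsequence so $p_n \to p_\infty$, $q_n \to q_\infty$ with $p_\infty, q_\infty \in \leg_{t_\infty}$. These project to the same point of $B$ and have the same $z$-coordinate, so their images in $L_{t_\infty} \subset T^*B$ coincide (their $x$ and $z$ agree; the $y$-coordinate agrees because they lie on the same Reeb chord, which is vertical in the $z$-direction only, and in the limit the chord has zero length so $p_\infty = q_\infty$ already in $J^1B$ up to the fiber direction — more carefully, $p_\infty$ and $q_\infty$ have the same image under $J^1B \to T^*B$). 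If $p_\infty \neq q_\infty$ in $\leg_{t_\infty}$ this is a double point of the immersion $\leg_{t_\infty} \to L_{t_\infty}$, i.e.\ a self-intersection of $L_{t_\infty}$ of ``length $0$'', contradicting that $\leg_{t_\infty}$ is embedded (an embedded Legendrian has no Reeb chords of length $0$). If $p_\infty = q_\infty$, one derives a contradiction with embeddedness of $\leg_{t_\infty}$ near that point: in a Darboux chart, arbitrarily short Reeb chords accumulating at a point force the Lagrangian projection to fail to be an immersion or fail to be injective there, again contradicting that $\leg_{t_\infty}$ is an embedded Legendrian.

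I expect the main obstacle to be handling the case $p_\infty = q_\infty$ cleanly — i.e.\ ruling out a sequence of genuine short Reeb chords that all shrink to a single point of an embedded Legendrian. The resolution is local: choose a Weinstein/Darboux neighborhood of the limit point in which $\leg_{t_\infty}$ is a graph $j^1 g$ of some function $g$ on an open set of $B$ (using that $\leg_{t_\infty}$ is embedded, hence locally graphical after possibly applying a contactomorphism, or more simply using that the front projection is locally a graph away from cusps, and choosing the limit point to be a non-cusp point — or arguing with the Lagrangian projection directly); then for $n$ large the nearby $\leg_{t_n}$ are also graphical there, and a graphical Legendrian has \emph{no} Reeb chords in that neighborhood at all, so the chord $\gamma_n$ cannot be supported near $p_\infty$, a contradiction. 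Assembling these local arguments with the subsequence extraction gives the claimed uniform lower bound $\epsilon$. Finally, I would note the statement is exactly what is needed later to choose a single small positive sublevel value working along an entire Legendrian isotopy.
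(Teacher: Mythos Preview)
Your argument is essentially correct and rests on the same geometric fact as the paper's proof --- that the Lagrangian projection $\leg_t \to T^*B$ is an immersion, hence a local embedding, and this local injectivity persists for nearby $t$ --- but the two proofs are organized differently. You argue by contradiction via sequence extraction, splitting into the cases $p_\infty \neq q_\infty$ (contradiction with embeddedness of $\leg_{t_\infty}$, since the limit points coincide in $J^1B$) and $p_\infty = q_\infty$ (contradiction with local injectivity of the Lagrangian projection). The paper instead gives a direct argument: it fixes a parametrization $\lambda: X \times [0,1] \to J^1B$, covers $X$ by opens $U$ on which $\pi \circ \lambda_t|_U$ is injective for \emph{all} $t$ simultaneously, then forms the compact set $T^+ \subset X \times X \times [0,1]$ of non-constant Reeb chord endpoint pairs by excising $\bigcup U \times U \times [0,1]$ from the incidence locus, and applies the extreme value theorem to the length function on $T^+$. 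The paper's route avoids the case split and the subsequence bookkeeping; yours is perhaps more intuitive but requires more care.

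Two small points of imprecision in your write-up: first, your detour through the front projection (``$\leg_{t_\infty}$ is a graph $j^1 g$ \ldots away from cusps'') is unnecessary and potentially misleading, since the limit point may well be a front singularity; the Lagrangian projection argument you mention in passing is the correct one and works at every point. Second, you should track the endpoint data as points $a_n, b_n$ in the abstract manifold $X$ rather than in $\leg_{t_n} \subset J^1B$; otherwise the case distinction ``$p_\infty = q_\infty$ vs.\ $p_\infty \neq q_\infty$'' is ambiguous (in $J^1B$ they always coincide in the limit). With that bookkeeping fixed, the step you flagged as the main obstacle --- that local injectivity of $\pi \circ \lambda_{t_\infty}$ on a compact neighborhood $\overline U$ is an open condition in $t$ --- is standard and completes the argument. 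Finally, note you never actually use Proposition~\ref{prop:leg-crit-point}; the lemma is purely about Reeb chords and needs no generating family.
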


\begin{proof}
Let $X = \Lambda_0$ and fix a smooth map $\lambda: X \times[0,1] \to J^1B$ such that for all $t \in [0,1]$, the map $\lambda_t: X \cong X \times \{t\} \to J^1B$ is a Legendrian embedding with image $\Lambda_t$. A standard exercise in symplectic geometry shows that the induced map $X \xrightarrow{\lambda_t} J^1B \xrightarrow{\pi} T^*B$ is an immersion for all $t$. On the other hand, every immersion is locally an embedding. Hence, for every $(x,t) \in X \times [0,1]$ there exists an open neighborhood $U \subset X$ of $x$ and an open interval $I \subset [0,1]$ containing $t$ so that, for every $t' \in I$, the composition
	\eqnn
	U \subset X \cong X \times \{t'\} \subset X \times [0,1] \xrightarrow{\lambda} J^1 B \xrightarrow{\pi} T^*B
	\eqnnd
is a smooth embedding. Thus, for every $t$ we have produced an open cover $\mathcal{U}_t$ of $X$ such that $\pi \circ \lambda_t$ is an injection along each $U \in \mathcal{U}_t$. By compactness (and refining $\mathcal{U}_t$ if necessary) we may choose the cover $\mathcal{U}$ to be independent of $t$ (while still satisfying the property that $U \in \mathcal{U}$ implies $(\pi \circ \lambda_t)|_U$ is an injection, for all $t$).
 On the other hand, the Reeb chords (possibly constant, possibly backward) of $\Lambda_{t}$  correspond exactly to pairs $x',x'' \in X$ having equal image in $T^*B$ under $\pi\circ\lambda_{t}$. So we have covered $X$ by open sets $U$ such that, for all $t$, no $U$ contains the endpoints of a non-constant Reeb chord of $\Lambda_t$.
 
In $X \times X \times [0,1] $, consider the closed (hence compact) subspace 
	\eqnn
	T:= \{  (x,x',t) \, | \, \pi \circ \lambda_t(x) = \pi \circ \lambda_t(x')\}.
	\eqnnd 
Letting $\pi_z: J^1B \to \RR$ denote the projection to the $z$ coordinate of the jet bundle, we see that the function 
	\eqnn
	\ell: 
	T \to \RR,
	\qquad
	(x,x',t) \mapsto 
	\pi_z(\lambda_t (x)) - \pi_z(\lambda_t (x'))
	\eqnnd 
equals $0$ only along the diagonal points -- i.e., along those $(x,x',t)$ for which $x=x'$; here, we have used that each $\lambda_t$ is an embedding. Note 
	\eqnn
	T^{\geq 0} := \ell^{-1}( [0,\infty))
	\eqnnd 
is a closed (hence compact) subspace. One identifies $T^{\geq 0}$ with the space of pairs $(\gamma,t)$ where $\gamma$ is a  (possibly constant, but not backward) Reeb chord with endpoints on $\Lambda_t$. Now consider the closed (hence compact) subspace
	\eqnn
	T^+ := T^{\geq 0} \bigcap \left( X \times X \times [0,1] \setminus
	\bigcup_{U \in \mathcal{U}} U \times U \times [0,1]
	\right).
	\eqnnd
By design, $U \times U \times [0,1]$ does not contain (the end points of) any non-constant Reeb chord, while of course the union $\bigcup_{U \in \mathcal{U}} U \times U \times [0,1]$ contains the entire diagonal. So the above intersection $T^+$ is identified with the space of pairs $(\gamma,t)$ where $\gamma$ is a (non-constant, non-backward) Reeb chord with endpoints on $\Lambda_t$. By the extreme value theorem $\ell$ must attain a minimum on $T^+$, but because $T^+$ does not intersect $\ell^{-1}(0)$, this minimum $\mu$ must be a positive real number. Choosing $\epsilon$ to be any real number in the interval $(0,\mu)$, the result follows.
\end{proof}

\subsection{Linearity-at-infinity} 
A generating family is defined on the non-compact space $B \times \rr^N$. The family's behavior outside a compact set must be sufficiently well-behaved in order to apply the Morse-theoretic lemmas mentioned in 
Section~\ref{ssec:morse-theory}. So henceforth in this paper, we will assume that generating families
 for Legendrian submanifolds satisfy a ``linear-at-infinity" condition, similar to that used in, for example, 
 \cite{f-r,S-T:obstruct}.

Recall the following classical definition:

\begin{defn}[Classical]
\label{defn. classical linear-at-infinity}
A function $f: B \times \RR^N \to \RR$ is called linear-at-infinity if there exists 
a non-zero linear functional
$A: \RR^N \to \rr$ and constant $c \in \rr$ such that, outside a compact subset of $B \times \rr^N$, $f$ takes the form 
	\eqnn
	(x,\eta) \mapsto A\eta + c.
	\eqnnd
\end{defn}

Definition~\ref{defn. classical linear-at-infinity} is not preserved by the two natural notions of equivalence for generating families: stabilization and $B$-parametrized diffeomorphisms of $\RR^N$. Some authors have often thought of linear-at-infinity to mean that after a fiber-preserving diffeomorphism the generating family takes on the classical form.  
To make this thought process more transparent, we introduce the following slight generalization, 
using the same terminology.

\begin{defn}[For this paper]
\label{defn. linear-at-infinity}
A smooth function $f: B \times \RR^N \to \RR$ is called {\em linear-at-infinity} if there exists a diffeomorphism $\phi: B \times \RR_t \times \RR^{N-1} \to B \times \RR^N$ such that
\begin{enumerate}
	\item $\phi$ respects the projection to $B$ (meaning $\phi$ is a $B$-parametrized family of diffeomorphisms from $\RR_t \times \RR^{N-1}$ to $\RR^N$), and
	\item outside a compact set, $f \circ \phi = t$, where $t$ is the projection $B \times \RR_t \times \RR^{N-1} \to \RR_t$.
\end{enumerate}
\end{defn}

\begin{rem}\label{remark. gradient flow complete for linear-at-infinity function}
Note that if $f: B \times \RR_t \times \RR^{N-1} \to \rr$ is equal to $t$ outside a compact subset, then there necessarily exists a Riemannian metric on $B \times \RR_t \times \RR^{N-1}$ for which the gradient flow of $f$ is complete. 
\end{rem}

\begin{rem}
\label{remark. dim f at least 1}
Definitions~\ref{defn. classical linear-at-infinity}  and~\ref{defn. linear-at-infinity} both imply that $N$ is at least 1. 
\end{rem}

\begin{rem}
Definition~\ref{defn. linear-at-infinity} also codifies the utility of a generating family being linear-at-infinity: One can parametrize $B \times \RR^N$ so that the dynamics of the generating family (outside a compact set) is simply translation along some direction in $\RR^N$.
\end{rem}

\begin{prop}
\label{prop. linear at infinity defns are equiv}
If $f$ is linear-at-infinity in the classical sense (Definition~\ref{defn. classical linear-at-infinity}), then it is linear-at-infinity in the sense of Definition~\ref{defn. linear-at-infinity}.
Conversely, if $f$ is linear-at-infinity in the sense of Definition~\ref{defn. linear-at-infinity}, there exists a fiberwise diffeomorphism $B \times \RR^N \to B \times \RR^N$ transforming $f$ to a function that is linear-at-infinity in the classical sense.
\end{prop}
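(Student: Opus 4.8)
The plan is to establish the two implications separately; both are elementary, relying only on linear algebra and the fact that diffeomorphisms carry compact sets to compact sets in both directions.

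For the first implication, suppose $f(x,\eta) = A\eta + c$ outside a compact set $K \subset B \times \RR^N$, with $A \colon \RR^N \to \RR$ a nonzero linear functional and $c \in \RR$. Since $A \neq 0$, I would choose a basis $v_1, \dots, v_N$ of $\RR^N$ adapted to $A$, namely with $A(v_1) = 1$ and $A(v_i) = 0$ for $i \geq 2$, and use it to build the affine isomorphism $\psi \colon \RR_t \times \RR^{N-1} \to \RR^N$, $(t, s_2, \dots, s_N) \mapsto (t-c)\,v_1 + \sum_{i \geq 2} s_i v_i$. Then $\phi := \id_B \times \psi$ is a diffeomorphism respecting the projection to $B$ (in fact a $B$-independent, hence in particular $B$-parametrized, family of fiber diffeomorphisms), and a one-line computation gives $f \circ \phi = t$ outside the compact set $\phi^{-1}(K)$. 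This is precisely Definition~\ref{defn. linear-at-infinity}; the affine shift by $-c$ in $\psi$ is there only to absorb the additive constant.

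For the converse, given $\phi \colon B \times \RR_t \times \RR^{N-1} \to B \times \RR^N$ as in Definition~\ref{defn. linear-at-infinity} (so $\phi(q,t,s) = (q,\phi_q(t,s))$ and $f \circ \phi = t$ off a compact set $K'$), I would simply reinterpret it. Identifying $\RR_t \times \RR^{N-1}$ with $\RR^N$ via the tautological relabeling of coordinates turns $\phi$ into a fiber-preserving diffeomorphism $\Phi \colon B \times \RR^N \to B \times \RR^N$ in the sense of Definition~\ref{defn:gf-equiv}, and under this identification the function $t$ becomes the coordinate projection $\eta \mapsto \eta_1$, which is a nonzero linear functional. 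Hence $f \circ \Phi$ equals a nonzero linear functional (with additive constant $0$) outside the compact set corresponding to $K'$, exhibiting $f \circ \Phi$ as linear-at-infinity in the classical sense.

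The hard part here is essentially nonexistent: the proposition is a bookkeeping compatibility between two definitions rather than a substantive theorem. The only mild points to be careful about are that Definition~\ref{defn. linear-at-infinity} permits a $B$-constant family of fiber diffeomorphisms (so the first implication never needs genuine dependence on the base $B$), and that "outside a compact set" must be transported correctly through the diffeomorphisms — but this is automatic, since preimages of compact sets under diffeomorphisms are compact. I expect the whole argument to occupy only a few lines.
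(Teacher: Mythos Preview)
Your proof is correct and follows essentially the same approach as the paper's: for the forward direction you choose fiber coordinates adapted to $A$ (the paper uses an orthogonal change of basis, you use any adapted basis) together with a translation to absorb $c$, and for the converse you reinterpret $\phi$ as a fiberwise diffeomorphism, which the paper simply calls ``immediate from the definitions.'' The only difference is cosmetic packaging.
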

 
\begin{proof} 
First assume that we have a generating family $f: B \times \rr^N \to \rr$ that is linear-at-infinity in the classical sense: outside
a compact set $f(x, \eta) = A \eta + c$. We will construct the desired $\phi: B \times \rr_t \times \rr^{N-1} \to B \times \rr^N$ so that,
outside of a compact set,
$f \circ \phi (x, t, \tilde \eta) = t$.
To construct $\phi$, first
observe that there is a special orthogonal transformation $M$ of $\rr_t \times \rr^{N-1}$ to  $\rr^N$, 
that maps $\rr^{N-1}$ to $\ker A$ and $\rr_t \times \{0\}$ to the $1$-dimensional vector subspace perpendicular
 to the vector subspace $\ker A$.  Applying this linear
map in each fiber gives rise to a diffeomorphism
$$\phi_{M}:  B \times \rr_t \times \rr^{N-1} \to B \times \rr^N,$$
such that, for all $x \in B$,  
$$\phi_M\left(\{x\} \times \{ t = b \}  \times \rr^{N-1} \right) = \left(\{x\} \times \{ A = b\}\right).  $$
Now at each $x \in B$,  in each $\rr^N$ we can perform translation in the direction perpendicular to $\ker A$, which defines 
 a diffeomorphism
$$\phi_{-c}: B \times \rr^N \to B \times \rr^N,$$
such that, for all $x \in B$, 
$$\phi_{-c}\left( \{x\} \times  \{A = b\}  \right) = \left( \{x \} \times \{A = b - c\}\right).$$
By construction, for $\phi = \phi_{-c} \circ \phi_M$, 
$$f \circ \phi \left( \{x\} \times \{t = b \} \times \rr^{N-1} \right) = f\left(  \{x \} \times \{A = b - c\} \right) = (b-c) + c = b,$$
 thus showing that, outside of a compact set, 
$$f \circ \phi(x, t, \tilde \eta) = t,$$
 as desired.

The converse is immediate from the definitions.
\end{proof}

\begin{prop}
\label{prop. stabilization preserves linearity}
A stabilization of a linear-at-infinity function is linear-at-infinity.
\end{prop}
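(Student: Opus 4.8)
The plan is to reduce to the classical form using Proposition~\ref{prop. linear at infinity defns are equiv}, stabilize there, and then translate back. So first I would take $f: B \times \RR^N \to \RR$ linear-at-infinity in the sense of Definition~\ref{defn. linear-at-infinity} and, by the converse direction of Proposition~\ref{prop. linear at infinity defns are equiv}, choose a fiberwise diffeomorphism $\Psi: B \times \RR^N \to B \times \RR^N$ with $f \circ \Psi$ linear-at-infinity in the classical sense, say $f \circ \Psi(x,\eta) = A\eta + c$ outside a compact set $K$. Given a rank $K$ stabilization $f \oplus Q: B \times \RR^N \times \RR^K \to \RR$, $(x,\eta,\eta') \mapsto f(x,\eta) + Q(\eta')$, I observe that $\Psi \times \id_{\RR^K}$ is a fiberwise diffeomorphism of $B \times \RR^{N+K}$ and that $(f \oplus Q) \circ (\Psi \times \id) = (f \circ \Psi) \oplus Q$. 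Since Definition~\ref{defn. linear-at-infinity} is preserved under fiberwise diffeomorphism (indeed, if $\phi$ witnesses linearity-at-infinity for $g$, then $\phi$ post-composed appropriately witnesses it for $g \circ (\text{fiberwise diffeo})$), it suffices to show $(f \circ \Psi) \oplus Q$ is linear-at-infinity. In other words, I have reduced to the case where $f$ itself is classically linear-at-infinity.

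Next, with $f(x,\eta) = A\eta + c$ outside a compact $K \subset B \times \RR^N$ and $Q: \RR^K \to \RR$ a nondegenerate quadratic form, I need to show $g(x,\eta,\eta') = A\eta + Q(\eta') + c$, valid outside $K \times \RR^K$, is linear-at-infinity (either classically after a fiberwise diffeomorphism, or directly in the sense of Definition~\ref{defn. linear-at-infinity}). The cleanest route is to produce the diffeomorphism $\phi$ of Definition~\ref{defn. linear-at-infinity} explicitly. On $\RR^N$, as in the proof of Proposition~\ref{prop. linear at infinity defns are equiv}, rotate so that $A\eta$ becomes the coordinate $t$; on $\RR^K$, diagonalize $Q$ so that $Q(\eta') = \sum_{i} \epsilon_i (\eta_i')^2$ with $\epsilon_i = \pm 1$. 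The issue is that $Q$ is not linear, so $g$ is not literally equal to a single linear coordinate outside a compact set — the quadratic directions grow. To handle this, I would use a fiberwise diffeomorphism of $\RR_t \times \RR^{K} \subset \RR_t \times \RR^{N-1} \times \RR^K$ that absorbs the quadratic form into the $t$-direction outside a compact set: concretely, a map $(t, \eta') \mapsto (t - \chi(t,\eta')Q(\eta'), \eta')$ where $\chi$ is a cutoff equal to $1$ for $|\eta'|$ large (and which one checks is a diffeomorphism, e.g. by noting the Jacobian in $t$ stays positive for a suitable choice of $\chi$, or simply by shearing: $(t,\eta') \mapsto (t + Q(\eta'), \eta')$ is a global diffeomorphism of $\RR \times \RR^K$ whose inverse is $(s,\eta') \mapsto (s - Q(\eta'), \eta')$, and under it $g$ becomes, outside the relevant compact set, the coordinate function). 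Composing all of these fiberwise diffeomorphisms of $\RR^{N+K}$ yields the $\phi$ demanded by Definition~\ref{defn. linear-at-infinity}.

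The main obstacle — really the only nontrivial point — is making sure that after the shear $(t, \eta') \mapsto (t + Q(\eta'), \eta')$ the resulting function genuinely equals the linear coordinate \emph{outside a compact set} and not merely on the locus where $|\eta'|$ is large: one must also control the region where $(x,\eta) \in K$ but $\eta'$ is small, where $g$ need not have any nice form. But there $g$ is a compactly-perturbed version of $A\eta + Q(\eta') + c$, and the shear still transforms $A\eta + Q(\eta') + c$ into the coordinate function, so the perturbation of $g$ away from $A\eta+Q(\eta')+c$ remains confined to the preimage of $K \times \RR^K$, which is still compact after intersecting with the region where we have \emph{not} already achieved linearity — precisely, outside $K' := (\text{image of } K \times \{|\eta'| \le R\})$ for $R$ large, $g$ composed with the shear is the coordinate $t$. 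One should double-check that $K'$ is compact (it is the image of a compact set under a diffeomorphism) and that the various rotations and shears are genuinely $B$-parametrized fiberwise diffeomorphisms (they are, being linear or affine in the fibers, with no $x$-dependence at all in the model region). With these checks in place, the composite $\phi$ satisfies both conditions of Definition~\ref{defn. linear-at-infinity}, completing the proof.
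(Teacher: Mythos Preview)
Your initial reduction---replacing $f$ by a fiberwise-diffeomorphic function equal to the coordinate $t$ outside a compact set $K$---matches the paper's first step exactly. The gap is in the shear argument. After precomposing $g = f \oplus Q$ with the shear $\phi(x,s,u,\eta') = (x, s - Q(\eta'), u, \eta')$ you obtain
\[
g \circ \phi(x,s,u,\eta') \;=\; f\bigl(x,\, s - Q(\eta'),\, u\bigr) + Q(\eta') \;=\; s + p\bigl(x,\, s - Q(\eta'),\, u\bigr),
\]
where $p := f - t$ is supported in $K$. This equals $s$ exactly outside the set $S = \{(x,s,u,\eta') : (x,\, s - Q(\eta'),\, u) \in K\}$, and $S$ is \emph{not} compact: for every $\eta' \in \RR^K$ the slice $S \cap \{\eta' = \text{const}\}$ is a translate of $K$ in the $s$-direction by $Q(\eta')$, hence nonempty. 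So the problematic region is not where $\eta'$ is small, as you write, but extends to arbitrarily large $\eta'$; your claim that $g \circ \phi$ equals $s$ outside the image of $K \times \{|\eta'| \le R\}$ is false for every $R$. The cutoff variant with $\chi$ does not help either: wherever $\chi \equiv 1$ you are back to the global shear and the same unbounded $S$.

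The paper does not give a self-contained argument for this step; after the same reduction it defers to \cite[Lemma~3.8]{S-T:obstruct}. A correct construction requires more than a shear in the $t$-direction alone---one must also use the $\eta'$-directions (where $\nabla Q$ is available and nonvanishing for $\eta' \neq 0$) to compensate for the failure of $\partial_t f = 1$ over $K$. Your outline does not supply this missing ingredient.
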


\begin{proof}
Before stabilizing $f$, apply the diffeomorphism guaranteed by Definition~\ref{defn. linear-at-infinity} so that, outside a compact set, $f$ equals the function
	\eqnn
	 B \times \RR_t \times \RR^{N-1}  \to \RR,
	\qquad
	(x,t,u)
	\mapsto 
	t.
	\eqnnd
 The remainder of the proof follows from the  proof of \cite[Lemma 3.8]{S-T:obstruct}.
\end{proof}

\subsection{Paths of generating families}
\label{section. paths of gfs}

\begin{defn}
A smooth function $f: [0,1] \times B \times \RR^N \to \RR$  is called a {\em smooth path of generating families} if for every $t \in [0,1]$, $f_t = f(t,-,-): B \times \RR^N \to \RR$ is a generating family for some ($t$-dependent) compact, embedded Legendrian in $J^1 B$. (That is, $f_t$ satisfies Assumption~\ref{assumption. f generic} and the Legendrian immersion from~\eqref{eqn. immersions of sigma f} is an injection.) 
\end{defn}

\begin{rem}
Given a path of generating families, let $\widetilde{\Sigma}$ denote the fiber critical set -- that is, the subset of $[0,1] \times B \times \RR^N$ along which $\del_\eta = 0$ (Notation~\ref{notation. eta}). 
Then the induced map $\widetilde{\Sigma} \to [0,1]$ is automatically a submersion. Indeed, by Assumption~\ref{assumption. f generic}, the tangent space of $\widetilde{\Sigma}$ at a point $(t,b,\eta)$ is the ($(1+\dim B)$-dimensional) kernel of the linear map
	\eqnn
	(T_{t}[0,1] \oplus T_x B \oplus T_\eta \RR^N) \oplus T_0 \RR^N \to T_{0} (T^*\RR^N).
	\eqnnd
As each $f_t$ is a generating family, the vectors with trivial $T_t[0,1]$ component form a $(\dim B)$-dimensional subspace. In particular, the projection to $T_t[0,1]$ has 1-dimensional image. 
Since the map $\widetilde{\Sigma} \to [0,1]$ is a submersion, 
by Ehresmman's lemma, it is a trivial fiber bundle. So the embedding $\widetilde{\Sigma} \to [0,1] \times J^1 B$ has image given by the trace of an isotopy. We conclude that the Legendrians $\{\Lambda_t\}$ generated by $\{f_t\}$ define a smooth Legendrian isotopy of $\Lambda_0$.
\end{rem}

\begin{defn}\label{defn. path of linear at infty gfs}
A smooth function $f: [0,1] \times B \times \RR^N \to \RR$ is called a {\em smooth path of linear-at-infinity generating families} if 
\begin{enumerate}
\item[(i)]  $f$ is a smooth path of generating families, 
\item[(ii)] there exists a smooth function $\phi: [0,1] \times B \times \RR_t \times \RR^{N-1} \to \RR^N$ such that
\begin{enumerate}
\item[(a)] for every $(t,b) \in [0,1] \times B$, the function $\phi(t,b,-)$ is a diffeomorphism from $\RR_t \times \RR^{N-1}$ to $\RR^N$, and 
\item[(b)] outside a compact subset of $[0,1] \times B \times \RR^N$, we have $f_t \circ \phi_t = t$ for every $t \in [0,1]$.
\end{enumerate}
\end{enumerate}\end{defn}

\begin{rem}
It is straightforward to adapt the proof of Proposition~\ref{prop. stabilization preserves linearity} so that given a path of linear-at-infinity generating families, the stabilization of the path (by, say, a $t$-independent quadratic form) is also a path of linear-at-infinity generating families.
\end{rem}

\begin{rem}
There are in fact some subtleties to defining the space of linear-at-infinity generating families, depending on how one incorporates the data of $\phi$. These subtleties will not matter for us in this work, as the paths $\{f_t\}$ we utilize will actually be constant outside a fixed compact subset of $B \times \RR^N$ (see Proposition~\ref{prop:leg-persist}). 

Proposition~\ref{prop. linear at infinity defns are equiv} generalizes to paths of generating families: A path of generating families $f_t$ satisfying Definition~\ref{defn. path of linear at infty gfs} may be transformed to equal a fixed non-zero linear function $A\eta + c$ outside a compact set.
\end{rem}
 
\subsection{Sublevel sets} The linear-at-infinity condition for our generating families allows us to do Morse theoretic
constructions.  Recall that we are restricting our attention to compact, embedded Legendrian submanifolds.  Lemma~\ref{lem:uniform-non0}
guarantees that the length spectrum of  $\leg$ is bounded away from $0$ for either a single Legendrian or for a 1-parameter family of Legendrians.

\begin{notation}[$\lmin,\lmax$]
\label{notation. lmin lmax}
Let
\begin{equation}\label{defn:ReebMinMax}
0< \lmin  \leq \lmax < \infty
\end{equation} 
denote the minimum and maximum lengths of all the Reeb chords of $\leg$.
\end{notation}
 
Proposition~\ref{prop:leg-crit-point}  implies that  
all positive critical values of $\delta_f: B \times \rr^{2N} \to \rr$ are contained in $[\lmin, \lmax]$. 
Given  the geometric importance of the  critical points of $\delta_f$,  
 Morse theory motivates us to study sublevel sets of $\delta_f$. 

\begin{notation}[Sublevel sets $\delta_f^{\leq a}, h^{\leq a}$]
For any real number $a$, we let 
	\eqnn
	\delta_f^{\leq a} := \{ p \in B \times \RR^N \times \RR^N : \delta_f(p) \in (-\infty, a]\}.
	\eqnnd 
More generally, given any function $h$, we let $h^{\leq a}$ denote the $a$-sublevel set,
 (i.e., the subset of the domain along which $h$ has values in $(-\infty,a])$).
\end{notation} 
 
\begin{prop} 
\label{prop. derivative of difference function is well-behaved}
Fix $\lmax<\infin < \infin'$. Then 
the total derivative of $\delta_f$ is bounded away from zero along the preimage $(\delta_f)^{-1}[\infin,\infin']$. 
Likewise, fix $0 < \epsilon<\epsilon'<\lmin$.  Then 
the total derivative of $\delta_f$ is bounded away from zero along the preimage $(\delta_f)^{-1}[\epsilon,\epsilon']$. 
\end{prop}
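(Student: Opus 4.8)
The plan is to reduce everything to the structure of $\delta_f$ near infinity guaranteed by linearity-at-infinity together with the already-established control on the critical values. First I would apply Proposition~\ref{prop. linear at infinity defns are equiv} (or rather Remark~\ref{rem:stab-choice-difference} together with Definition~\ref{defn. linear-at-infinity}) to arrange coordinates so that, outside a compact set $K \subset B \times \RR^N$, the function $f$ has the classical form $(x,\eta) \mapsto A\eta + c$. Then outside $K \times K \subset B \times \RR^{2N}$ the difference function is $\delta_f(x,\eta,\te) = A\te - A\eta$, whose total derivative is the fixed nonzero covector $(0, -A, A)$, hence bounded below in norm (in the relevant metric) by a positive constant. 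So the only place the total derivative of $\delta_f$ could fail to be bounded away from zero is on the compact region; and by Proposition~\ref{prop:leg-crit-point}, on that region $d\delta_f$ vanishes precisely at critical points, all of whose critical values lie in $\{0\} \cup \pm[\lmin,\lmax]$.

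The key steps, in order: (1) fix the coordinates making $f$ classically linear-at-infinity outside a compact $K$; (2) observe $d\delta_f$ is bounded away from zero on the complement of $K \times K$, a statement that is uniform over all of $B \times \RR^{2N} \setminus (K\times K)$ and in particular on its intersection with any closed sublevel/level band; (3) on the compact set $\overline{K \times K}$, note that $\delta_f^{-1}[\infin,\infin']$ and $\delta_f^{-1}[\epsilon,\epsilon']$ are both closed subsets avoiding the critical locus — for the first because $\infin > \lmax$ exceeds all positive critical values and for the second because the band $[\epsilon,\epsilon']$ is squeezed strictly between $0$ and $\lmin$, and all nonzero critical values have absolute value at least $\lmin$, while the value $0$ is excluded since $\epsilon > 0$; (4) since $\|d\delta_f\|$ is a continuous positive function on the compact set $\overline{K\times K} \cap \delta_f^{-1}[\infin,\infin']$ (resp.\ $\overline{K\times K} \cap \delta_f^{-1}[\epsilon,\epsilon']$), it attains a positive minimum there by the extreme value theorem; (5) take the minimum of the two bounds from (2) and (4) on each of the two bands.

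I expect the only genuine subtlety — not really an obstacle — to be bookkeeping about which ambient space and metric one works in: after the fiber-preserving diffeomorphism putting $f$ in classical form, one should fix a metric on $B \times \RR^N$ that is a product of a complete metric on $B$ with the standard one on $\RR^N$ outside a compact set, so that ``total derivative bounded away from zero'' is an honest uniform statement on the noncompact end, and then note this bound is diffeomorphism-independent up to a constant since we only ever restrict to compact regions or to a region where $\delta_f$ is literally an affine function. One should also remark that $\lmax < \infin$ and $\epsilon' < \lmin$ are exactly the inequalities that keep the two bands clear of the critical values; this is where Proposition~\ref{prop:leg-crit-point} and Notation~\ref{notation. lmin lmax} are used. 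No compactness of $B$ is needed, since the end of $B \times \RR^N$ is handled by the explicit affine formula rather than by a compactness argument.
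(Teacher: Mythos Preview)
Your approach is essentially the paper's: use linearity-at-infinity to confine the locus where $d\delta_f$ can be small to a compact set, then invoke Proposition~\ref{prop:leg-crit-point} to see that the bands $[\infin,\infin']$ and $[\epsilon,\epsilon']$ miss all critical values, and conclude by compactness.

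One correction is needed in step~(2). The formula $\delta_f(x,\eta,\te)=A\te-A\eta$ holds only when \emph{both} $(x,\eta)$ and $(x,\te)$ lie outside $K$; on the complement of the relevant compact set (namely $\{(x,\eta,\te):(x,\eta)\in K \text{ and } (x,\te)\in K\}$, the fiber product $K\times_B K$ rather than a literal product $K\times K$) you only know that at least \emph{one} of these pairs lies outside $K$, so $\delta_f$ need not be affine there. What remains true, and suffices, is that if $(x,\eta)\notin K$ then $\partial_\eta\delta_f=-A$, and if $(x,\te)\notin K$ then $\partial_{\te}\delta_f=A$; either way the total derivative has a component equal to the fixed nonzero covector $\pm A$. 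This is exactly how the paper phrases it: the $\eta$-component and the $\te$-component of $d\delta_f$ are each controlled separately, so the total derivative can only approach zero on the compact set where both $(x,\eta)$ and $(x,\te)$ lie in $K$. With this fix your argument goes through.
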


\begin{proof}
By the assumption that $f$ is linear-at-infinity, the $\eta$ component of the derivative of $\delta_f$ only approaches zero in a compact region of $B \times \RR^N_{\eta}$. Likewise, the $\te$ component of the derivative only approaches zero in a compact region of $B \times \RR^N_{\te}$. In particular, the total derivative of $\delta_f$ only approaches zero in a compact region of $B \times \RR^N \times \RR^N$.
By Proposition~\ref{prop:leg-crit-point}, we know that the critical values of $\delta_f$ are constrained to $0$ and the intervals $[\lmin,\lmax], [-\lmax,-\lmin]$.
\end{proof}

\begin{choice}
\label{choice:epsilon-omega} 
Given a generating family $f$ for $\leg$, we choose
$\epsilon$ and $ \infin $ such that   
\begin{equation} \label{ineq:epsilon-infin}
  0 < \epsilon < \lmin \leq \lmax < \infin.
\end{equation}
\end{choice}

  In the remainder of this section and paper, we  assume the reader is familiar with basic ideas from the homotopy theory of spaces.  References can be found at the start of Appendix~\ref{ssec:spectra}.

\begin{lem}\label{lem:gf-cofibration}  
Fix a linear-at-infinity generating family $f$ for $\leg$, and $\epsilon,\infin$ as in Choice~\ref{choice:epsilon-omega}.
The inclusion 
 $$i\co \delta_f^{\leq \epsilon} \hookrightarrow \delta_f^{\leq \infin}$$ 
 is a   cofibration.  
\end{lem}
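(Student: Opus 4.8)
The plan is to produce an explicit deformation retraction of a neighborhood of $\delta_f^{\leq \epsilon}$ inside $\delta_f^{\leq \infin}$ onto $\delta_f^{\leq\epsilon}$, which together with the fact that all spaces in sight are (homotopy equivalent to) CW complexes exhibits the inclusion as a cofibration. Equivalently, and more robustly, I would verify the homotopy extension property directly by constructing a retraction of $\delta_f^{\leq\infin}\times[0,1]$ onto $\big(\delta_f^{\leq\infin}\times\{0\}\big)\cup\big(\delta_f^{\leq\epsilon}\times[0,1]\big)$. The essential geometric input is gradient flow: because $f$ is linear-at-infinity, by Remark~\ref{remark. gradient flow complete for linear-at-infinity function} we may choose a complete Riemannian metric on $B\times\RR^N$ so that the gradient flow of $f$ — and hence the induced flow of $\delta_f$ on $B\times\RR^{2N}$ — is complete, and by Proposition~\ref{prop. derivative of difference function is well-behaved} the derivative of $\delta_f$ is bounded away from zero on the slab $\delta_f^{-1}[\epsilon',\infin']$ for suitable $\epsilon<\epsilon'<\lmin$ and $\lmax<\infin<\infin'$.

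The key steps, in order. First, fix the auxiliary constants $\epsilon<\epsilon'<\lmin\leq\lmax<\infin<\infin'$ and a complete metric as above. Second, rescale the gradient vector field $-\nabla\delta_f$ by a positive smooth function (a bump cutting off where $|\nabla\delta_f|$ is large or small outside the relevant slab, and normalizing so that $\delta_f$ decreases at unit rate along the flow inside $\delta_f^{-1}[\epsilon,\infin]$); call the resulting complete flow $\psi_s$. Using Proposition~\ref{prop:leg-crit-point} (the only critical values of $\delta_f$ are $0$ and points of $\pm[\lmin,\lmax]$, so the region between $\epsilon$ and $\infin$ below $\lmin$ and above $\lmax$ is critical-point-free) one shows $\psi_s$ carries $\delta_f^{\leq\infin}$ into $\delta_f^{\leq\epsilon}$ after flowing for a bounded time, and fixes $\delta_f^{\leq\epsilon}$ pointwise — this is the standard ``pushing down past a regular interval'' argument from Morse theory (the Morse-theoretic lemmas alluded to in Section~\ref{ssec:morse-theory}). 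Third, package this into a strong deformation retraction of $\delta_f^{\leq\infin}$ onto $\delta_f^{\leq\epsilon}$; more carefully, build the retraction $r:\delta_f^{\leq\infin}\times[0,1]\to\big(\delta_f^{\leq\infin}\times\{0\}\big)\cup\big(\delta_f^{\leq\epsilon}\times[0,1]\big)$ witnessing the HEP, using the flow time needed at a given point (which depends continuously on the point, since the derivative bound makes the first-passage time to level $\epsilon$ a continuous function) to interpolate. Fourth, invoke that $\delta_f^{\leq\infin}$ is a smooth manifold with boundary (or an open subset of one, after perturbing $\infin$ to a regular value) and hence has the homotopy type of a CW pair with $\delta_f^{\leq\epsilon}$, so the inclusion of the closed subset is automatically a cofibration; alternatively, one notes a manifold-with-boundary inclusion arising from a regular sublevel set is always a cofibration because it admits a collar.

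The main obstacle I expect is the non-compactness of $B\times\RR^{2N}$: the usual Morse-theoretic ``deform $\delta_f^{\leq\infin}$ down to $\delta_f^{\leq\epsilon}$'' argument assumes compactness to guarantee that flow times are uniformly bounded and that the flow is complete. Here one must lean on the linear-at-infinity hypothesis twice — once (Remark~\ref{remark. gradient flow complete for linear-at-infinity function}) to get a complete gradient flow despite non-compactness, and once (Proposition~\ref{prop. derivative of difference function is well-behaved}) to get the derivative of $\delta_f$ bounded away from zero on the relevant slabs so that the first-passage time to level $\epsilon$ is finite and varies continuously. With those two inputs in hand the non-compactness is tamed and the rest is the standard cofibration/collar argument; the remaining care is purely in choosing the cutoff function so the rescaled field stays complete and genuinely pushes every point of $\delta_f^{\leq\infin}$ below level $\epsilon$ in finite (locally uniformly bounded) time while fixing $\delta_f^{\leq\epsilon}$.
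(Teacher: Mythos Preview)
Your main argument contains a genuine error. In step~2 you claim that the gradient flow carries $\delta_f^{\leq\infin}$ into $\delta_f^{\leq\epsilon}$, and in step~3 you package this as a strong deformation retraction of $\delta_f^{\leq\infin}$ onto $\delta_f^{\leq\epsilon}$. This cannot be correct: by Proposition~\ref{prop:leg-crit-point}, each Reeb chord of $\leg$ contributes a critical point of $\delta_f$ with critical value in $[\lmin,\lmax]\subset(\epsilon,\infin)$. Gradient flow gets stuck at those critical points, so there is no flow carrying all of $\delta_f^{\leq\infin}$ below level~$\epsilon$. Indeed, if such a deformation retraction existed, the quotient $\delta_f^{\leq\infin}/\delta_f^{\leq\epsilon}$ would be contractible and generating family homology would be identically zero---which is the opposite of what the entire paper is built on. You have correctly identified that the slabs $[\epsilon,\lmin)$ and $(\lmax,\infin]$ are critical-value-free, but the interval $[\lmin,\lmax]$ sitting between them is precisely where the interesting topology lives.

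What \emph{does} work is your throwaway ``alternatively'' comment at the end of step~4: since $\epsilon$ is a regular value of $\delta_f$, the boundary $\delta_f^{-1}(\epsilon)$ admits a collar inside $\delta_f^{\leq\infin}$, and a collar is all one needs to verify the homotopy extension property directly. This is exactly the paper's approach: it writes a neighborhood of $A=\delta_f^{\leq\epsilon}$ as $A\cup(\partial A\times[0,2])$ using the gradient flow only locally near level~$\epsilon$ (where there genuinely are no critical points), and then gives an explicit formula for the homotopy extension on the collar, extended by $g_0$ elsewhere. The linear-at-infinity hypothesis is used only to ensure the collar exists (the flow near level $\epsilon$ is well-behaved even though the domain is non-compact), not to flow anything past the Reeb-chord critical points.
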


\begin{proof}
To simplify notation, let us write $A = \delta_f^{\leq \epsilon}$ and $\del A = \delta_f^{-1}(\epsilon)$. We likewise write $B = \delta_f^{\leq \infin}$. We must show that the inclusion $A \to B$ satisfies the homotopy extension property.  By choice, $\epsilon$ is a regular value of $\delta_f$. Thus (by reparametrizing a gradient flow as necessary) there is a neighborhood $U$ of $A$ that one may write as
	\eqnn
	U = A \bigcup (\del A \times [0,2]),
	\qquad
	\del U := \del A \times \{2\} \subset B,
	\eqnnd
where $\del U$ is meant to be suggestive notation (rather than conform to a particular definition of boundary).
Suppose one is given a topological space $X$, a family of continuous maps $\{f_t: A \to X\}_{t \in [0,1]}$, and an extension of $f_0$ to a map $g_0: B \to X$. We will first construct
a homotopy extension of $f_t$ to $A \cup (\partial A \times [0,2])$ such that, for all $t$, the homotopy agrees with $g_0$ on $\partial A \times \{2\}$.  Consider the  map
	$$
	 h: (\del A \times [0,2]_s) \times [0,1]_t \to X,
	\qquad
	(a,s,t) \mapsto
	\begin{cases}
	g_0( (a,{\frac {2}{2-t}}(s-t))), &  s \geq t \\
	f_{t-s}(a), & s \leq t.
	\end{cases}
	$$
Observe that $h$ is continuous, and  $h_t: \partial A \times [0,2]_{s}  \to X$  satisfies 
\begin{enumerate}
\item $h_t(a,0) = f_t(a)$, for all $t \in [0,1]$,
\item $h_t(a,2) = g_0(a,2)$,  for all $t \in [0,1]$, where $(a,2) \in \partial A \times \{2\} \subset B$.
\end{enumerate}
Thus we see that $h$ extends, via $g_0$ for all $t$, to   $B \times [0,1]_t$. This proves the inclusion $A \to B$ is a cofibration, as desired.
\end{proof}

  A basic result from the homotopy theory of spaces is that if  $i\co A \to X$ is a cofibration, then the map
$$\Co(i)  \to X/A$$ is a homotopy equivalence.   Thus we have:
\begin{cor}
Fix a linear-at-infinity generating family $f$ for $\leg$ and $\epsilon,\infin$ (Choice~\ref{choice:epsilon-omega}).
Then we have a natural homotopy equivalence from the mapping cone to the quotient:
$$\Co\left( \delta_f^{\leq \epsilon} \hookrightarrow \delta_f^{\leq \infin} \right)
\stackrel{\simeq}{\longrightarrow}
\delta_f^{\leq \infin}/\delta_f^{\leq \epsilon}.$$
\end{cor}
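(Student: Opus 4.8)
The plan is to read the Corollary off directly from Lemma~\ref{lem:gf-cofibration} together with the standard fact about cofibrations recorded in the sentence immediately preceding it; the only thing worth spelling out is why that fact holds and why the resulting equivalence is natural.

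First, Lemma~\ref{lem:gf-cofibration} tells us that $i\colon \delta_f^{\leq\epsilon}\hookrightarrow\delta_f^{\leq\infin}$ is a cofibration. Abbreviating $A := \delta_f^{\leq\epsilon}$ and $X := \delta_f^{\leq\infin}$, it therefore suffices to produce, for an arbitrary cofibration $i\colon A\hookrightarrow X$, a natural homotopy equivalence $\Co(i)\to X/A$. The natural candidate is the ``collapse the cone'' map: writing $\Co(i) = X\cup_i CA$ with $CA := (A\times[0,1])/(A\times\{1\})$ the cone on $A$, this map restricts to the quotient projection $X\to X/A$ on the $X$-summand and sends all of $CA$ to the basepoint. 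It is well defined because the image $i(A)\subset X$ is precisely the subspace collapsed in forming $X/A$, and it is manifestly natural in the pair $(X,A)$.

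Second, to see that it is a homotopy equivalence: the inclusion $A = A\times\{0\}\hookrightarrow CA$ is a cofibration (collapsing the disjoint slice $A\times\{1\}$ affects no neighborhood of $A\times\{0\}$), so because $i$ is a cofibration the pushout inclusion $CA\hookrightarrow\Co(i)$ is again a cofibration. Since $CA$ is contractible and collapsing a contractible subspace along a cofibration is a homotopy equivalence, the map $\Co(i)\to\Co(i)/CA$ is a homotopy equivalence. Finally, $\Co(i)/CA$ is canonically homeomorphic to $X/A$: inside $\Co(i)$ the subspace $CA$ contains exactly the copy of $i(A)$ glued in from $X$, so collapsing $CA$ yields $X/i(A) = X/A$; and under this identification the composite $\Co(i)\to\Co(i)/CA\cong X/A$ is precisely the collapse map of the previous paragraph.

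The main point is that there is essentially no obstacle here: every piece of genuinely geometric input — in particular the gradient-flow collar yielding the homotopy extension property — was already supplied by Lemma~\ref{lem:gf-cofibration}, and what remains is the point-set bookkeeping above. The only step demanding a moment's care is verifying that $\Co(i)/CA\to X/A$ is a homeomorphism rather than merely a homotopy equivalence, since that is what makes naturality of the equivalence $\Co(i)\xrightarrow{\simeq} X/A$ transparent; this is immediate from the pushout description of $\Co(i)$.
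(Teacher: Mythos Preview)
Your proposal is correct and follows exactly the approach of the paper: the paper simply invokes Lemma~\ref{lem:gf-cofibration} and the standard fact (stated immediately before the Corollary) that for any cofibration $i\colon A\hookrightarrow X$ the collapse map $\Co(i)\to X/A$ is a homotopy equivalence. You have merely supplied the standard proof of that fact, which the paper takes for granted.
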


 \begin{rem}\label{rem:old-lem-B9}    Another useful basic result from the homotopy theory of spaces is that  a  pushout square 
\begin{equation*} 
	\xymatrix{
	W \ar[r] \ar[d] & B \ar[d]
	\\
	A \ar[r] & C
	}
\end{equation*}
such that  $W \to B$  or $W \to A$ is a cofibration is a homotopy pushout square.
\end{rem}

We then see that the associated quotient sublevel sets are invariant outside of $[\lmin,\lmax]$:
 
\begin{prop}\label{prop. omega epsilon independence}
Fix $f: B \times \RR^N \to \RR$. We assume $f$ is linear-at-infinity and generates a Legendrian $\leg \subset J^1B$. Then 
\begin{enumerate}
	\item\label{item. omega invariance} For any $\infin' > \infin > \lmax$,  the inclusion
		$
		\delta_f^{\leq \infin} \to
		\delta_f^{\leq \infin'} 
		$
	is a homotopy equivalence. 
	\item\label{item. epsilon invariance} For any $\lmin > \epsilon' > \epsilon > 0$,  the inclusion
		$
		\delta_f^{\leq \epsilon} \to
		\delta_f^{\leq \epsilon'} 
		$
	is a homotopy equivalence. 
	\item\label{item. omega epsilon invariance} The induced map
		\eqnn
		(\delta_f^{\leq \infin} / \delta_f^{\leq \epsilon})
		\to
		(\delta_f^{\leq \infin'} / \delta_f^{\leq \epsilon'})
		\eqnnd
	is a homotopy equivalence.
\end{enumerate}
\end{prop}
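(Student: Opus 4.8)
The plan is to run the classical Morse-theoretic deformation-retraction argument for parts~\eqref{item. omega invariance} and~\eqref{item. epsilon invariance}, and then to deduce part~\eqref{item. omega epsilon invariance} formally from the homotopy invariance of mapping cones, using the cofibration statement already available in Lemma~\ref{lem:gf-cofibration}.

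For~\eqref{item. omega invariance}: since $\infin,\infin' > \lmax$, Proposition~\ref{prop:leg-crit-point} guarantees that $\delta_f$ has no critical points in $\delta_f^{-1}[\infin,\infin']$, and Proposition~\ref{prop. derivative of difference function is well-behaved} supplies a constant $c>0$ with $|d\delta_f| \geq c$ on this preimage. First I would fix a Riemannian metric on $B \times \RR^{2N}$ for which the negative gradient flow $\varphi_s$ of $\delta_f$ is complete; such a metric exists because $\delta_f$ agrees with a fixed non-zero linear functional outside a compact subset of $B \times \RR^{2N}$ (apply Remark~\ref{remark. gradient flow complete for linear-at-infinity function} to $\delta_f$, using that $f$ is linear-at-infinity). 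Along any trajectory that stays in $\delta_f^{-1}[\infin,\infin']$ one has $\tfrac{d}{ds}\delta_f(\varphi_s(x)) \leq -c^2 < 0$, so each such trajectory exits through the level $\delta_f^{-1}(\infin)$ in time at most $(\infin'-\infin)/c^2$ and cannot exit upward. Reparametrizing the flow in the usual way — holding fixed the points with $\delta_f \leq \infin$ and pushing the points with $\delta_f \in [\infin,\infin']$ down until they reach $\delta_f = \infin$ — yields a deformation retraction of $\delta_f^{\leq \infin'}$ onto $\delta_f^{\leq \infin}$, so the inclusion is a homotopy equivalence. Part~\eqref{item. epsilon invariance} is identical: since $0 < \epsilon < \epsilon' < \lmin$, Proposition~\ref{prop:leg-crit-point} gives no critical points (in particular none with value $0$) in $\delta_f^{-1}[\epsilon,\epsilon']$, and Proposition~\ref{prop. derivative of difference function is well-behaved} again provides the lower bound on $|d\delta_f|$ there.

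For~\eqref{item. omega epsilon invariance}: the argument of Lemma~\ref{lem:gf-cofibration} applies verbatim with $(\epsilon',\infin')$ in place of $(\epsilon,\infin)$ — all that is used is that $\epsilon'$ is a regular value of $\delta_f$ — so both inclusions $\delta_f^{\leq \epsilon} \hookrightarrow \delta_f^{\leq \infin}$ and $\delta_f^{\leq \epsilon'} \hookrightarrow \delta_f^{\leq \infin'}$ are cofibrations. Consider the commuting square whose horizontal arrows are these two cofibrations and whose vertical arrows are the inclusions $\delta_f^{\leq \epsilon} \hookrightarrow \delta_f^{\leq \epsilon'}$ and $\delta_f^{\leq \infin} \hookrightarrow \delta_f^{\leq \infin'}$; by parts~\eqref{item. epsilon invariance} and~\eqref{item. omega invariance} the vertical arrows are homotopy equivalences. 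Pushing out each row against a point and invoking Remark~\ref{rem:old-lem-B9} identifies $\delta_f^{\leq \infin}/\delta_f^{\leq \epsilon}$ and $\delta_f^{\leq \infin'}/\delta_f^{\leq \epsilon'}$ with homotopy pushouts; since homotopy pushouts are invariant under levelwise homotopy equivalence of diagrams, the induced map on quotients is a homotopy equivalence. Equivalently, one may use the Corollary following Lemma~\ref{lem:gf-cofibration} to replace each quotient by the mapping cone $\Co(\delta_f^{\leq \epsilon} \hookrightarrow \delta_f^{\leq \infin})$ and appeal to the fact that the mapping cone is a homotopy functor of arrows.

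The main obstacle is the non-compactness of the sublevel sets: one cannot directly quote the compact Morse lemma, and must instead check that the negative gradient flow is complete and that trajectories starting in the relevant collar regions reach the lower level set in finite time. Both points are handled by the linear-at-infinity hypothesis together with Proposition~\ref{prop. derivative of difference function is well-behaved}: the former furnishes a metric with complete gradient flow, and the latter provides the uniform bound $|d\delta_f| \geq c > 0$ that forces trajectories down in uniformly bounded time. Everything else is the routine bookkeeping of reparametrizing a flow and invoking the standard facts about cofibrations and homotopy pushouts already recorded in the excerpt.
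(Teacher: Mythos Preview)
Your proof is correct and follows essentially the same route as the paper: parts~\eqref{item. omega invariance} and~\eqref{item. epsilon invariance} are exactly the content of the paper's Lemma~\ref{lem:retract} (the non-compact deformation-retraction lemma), whose hypotheses you verify directly via Proposition~\ref{prop. derivative of difference function is well-behaved} and the linear-at-infinity condition; and for part~\eqref{item. omega epsilon invariance} both you and the paper invoke Lemma~\ref{lem:gf-cofibration} and Remark~\ref{rem:old-lem-B9} to identify the quotients with homotopy pushouts and conclude by homotopy invariance. The only difference is that you spell out the Morse-theoretic flow argument in place, whereas the paper packages it into the appendix lemma.
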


\begin{proof}
The proof of \eqref{item. omega invariance} follows from Lemma~\ref{lem:retract} setting $a = \infin$ and $b = \infin'$; for~\eqref{item. epsilon invariance} we set $a = \epsilon$ and $b = \epsilon'$.

For \eqref{item. omega epsilon invariance}, by  Lemma~\ref{lem:gf-cofibration}  the inclusions in \eqref{item. omega invariance} and \eqref{item. epsilon invariance} are cofibrations.  Thus, by Remark~\ref{rem:old-lem-B9},    the quotients are homotopy pushouts (and in particular, homotopy invariant).
\end{proof}
  
\begin{rem}
Proposition~\ref{prop. omega epsilon independence}\eqref{item. omega invariance} implies that for $\infin$ sufficiently large, the inclusion $\delta_f^{\leq \infin} \to B \times \RR^N$ is a homotopy equivalence.
\end{rem}

\begin{rem}
\label{rem. sublevel pair functoriality}
Proposition~\ref{prop. omega epsilon independence} may be interpreted as follows. Consider the partially ordered set
	\eqn\label{eqn. epsilon omega constraints}
	\{(\epsilon,\infin) \, | \,
	0 < \epsilon < \lmin < \lmax < \infin \}
	\subset \RR \times \RR
	\eqnd
ordered by $\leq$ in each factor. Then the assignment
	\eqnn
	(\epsilon,\infin) 
	\mapsto 
	\delta_f^{\leq \infin} / \delta_f^{\leq \epsilon}
	\eqnnd
defines a functor from this poset to the category of pointed topological spaces, and in particular to the $\infty$-category of topological spaces. By Proposition~\ref{prop. omega epsilon independence}\eqref{item. omega epsilon invariance}, this is an essentially constant functor.
\end{rem}

\subsection{Legendrian isotopies and the appearance of stabilizations}
The linear-at-infinity condition survives Legendrian isotopies:
 
\begin{prop} [Path Lifting for Generating
  Families] \label{prop:leg-persist} Suppose $B$ is compact. 
  For $t \in [0,1]$, let $\leg_t \subset J^1B$ be an isotopy of Legendrian submanifolds.  If $\leg_0$ has a linear-at-infinity generating family $f$, then there exists a smooth path of linear-at-infinity generating families $f_t\co B \times \rr^N \to \rr$ for $\leg_t $  (see Section~\ref{section. paths of gfs})
   such that $f_0$ is a stabilization of $f$, and $f_t = f_0$ outside a compact set.
\end{prop}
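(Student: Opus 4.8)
The plan is to carry out the path-lifting essentially by the classical Thurston--Bennequin-type argument for generating families (as in \cite{traynor:gf-polys, S-T:obstruct}), but carefully tracking the linear-at-infinity condition and arranging constancy outside a compact set. First I would set up the ambient picture: since $\Lambda_0$ has a linear-at-infinity generating family $f$, by Proposition~\ref{prop. linear at infinity defns are equiv} I may fiberwise-diffeomorph $f$ so that, outside a compact subset $K_0 \subset B \times \RR^N$, it equals $A\eta + c$ for a fixed non-zero linear functional $A$ and constant $c$. Because $B$ is compact and the isotopy $\{\Lambda_t\}$ is an isotopy of compact embedded Legendrians, the traces $\bigcup_t \Lambda_t$ lie in a compact subset of $J^1B$; in particular the $1$-jet data $(\text{base point}, \text{slope}, z\text{-value})$ of all the $\Lambda_t$ is uniformly bounded. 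This is the input that will let me keep everything constant outside a fixed compact set.

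Next I would construct the path $f_t$ itself. The Legendrian isotopy $\{\Lambda_t\}$ is generated by a (time-dependent) contact Hamiltonian $H_t : J^1 B \to \RR$, which I may cut off to be compactly supported (using the compactness of $\bigcup_t \Lambda_t$) without changing the isotopy near the Legendrians. One then solves the associated Hamilton--Jacobi equation fiberwise to produce, for small time, a deformation of the generating family; iterating over a partition $0 = t_0 < t_1 < \dots < t_m = 1$ fine enough that each sub-isotopy is ``tiny,'' one composes these, at the cost of stabilizing $f$ (each Hamilton--Jacobi step may enlarge the fiber dimension, and one stabilizes to a common $\RR^N$). This is the standard mechanism by which "$f_0$ is a stabilization of $f$" enters the statement. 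Throughout, since $H_t$ is compactly supported in $J^1B$ and the fiber-expansion is controlled, the resulting $f_t$ can be taken to agree with the (stabilized) linear function $A\eta + c$ outside a fixed compact subset of $B \times \RR^N$ independent of $t$ --- this gives clause (ii)(b) of Definition~\ref{defn. path of linear at infty gfs} as well as the final "$f_t = f_0$ outside a compact set" assertion. The fact that each $f_t$ is a genuine generating family for $\Lambda_t$ (Assumption~\ref{assumption. f generic} plus embeddedness of the Legendrian immersion in~\eqref{eqn. immersions of sigma f}) follows because the Hamilton--Jacobi construction is designed to transport the fiber critical set $\Sigma_{f_t}$ along the isotopy, and $\Lambda_t$ is embedded by hypothesis; the remark following the definition of "smooth path of generating families" then repackages this as the desired smooth Legendrian isotopy.

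The main obstacle is the global solvability and gluing of the Hamilton--Jacobi problem while simultaneously (a) preserving the linear-at-infinity normal form outside a \emph{fixed} compact set for all $t$, and (b) keeping the fiber dimension finite after composing the finitely many tiny steps. The cleanest route is to invoke the existing machinery: reduce to the case of a $C^1$-small isotopy, where the deformed Legendrian is again graphical over $\Lambda_0$ in a Weinstein neighborhood, solve there by an explicit fiber-preserving modification of $f$ supported over the compact region where the isotopy is non-trivial (so the $A\eta+c$ tail is literally untouched), and then compose finitely many such steps --- each composition realized as a fiberwise diffeomorphism together with at most one stabilization, appealing to Proposition~\ref{prop. stabilization preserves linearity} and the remark that stabilizing a path of linear-at-infinity generating families yields a path of linear-at-infinity generating families. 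Bootstrapping from $C^1$-small to arbitrary isotopies via the partition $\{t_i\}$, and noting the partition is finite by compactness of $[0,1]$, completes the argument and keeps $N$ finite. I expect the bookkeeping in step (a) --- verifying that the support of every modification stays inside one a priori compact set --- to be the only genuinely delicate point, and it is handled by the uniform bound on $\bigcup_t \Lambda_t$ together with the compact support of the cut-off contact Hamiltonian.
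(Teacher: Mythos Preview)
Your proposal is essentially correct and follows the same route as the paper, which simply cites Chekanov's composition formula \cite{chv:quasi-fns} (and \cite[Appendix]{traynor:helix}) without further detail. What you have written is effectively an outline of the content of those references: break the contact isotopy into $C^1$-small pieces, lift each piece to a deformation of the generating family, and glue.

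One small remark on your bookkeeping: the phrase ``each Hamilton--Jacobi step may enlarge the fiber dimension'' is not quite how the stabilization enters. For a single $C^1$-small step where $\Lambda_{t_{i+1}}$ is graphical over $\Lambda_{t_i}$ in a Weinstein neighborhood, the generating family is modified by adding a compactly supported function in the existing variables --- no new fiber directions appear. The extra fiber variables in Chekanov's argument come instead from the \emph{composition formula} for generating functions of symplectomorphisms (each small piece of the isotopy contributes a generating function on $B \times B$, and composing these adds copies of $B$ as auxiliary variables), which are then absorbed into the $\RR^N$ fiber after a fiberwise diffeomorphism and stabilization. This is a minor rewording of what you wrote, not a gap, and the conclusion --- that $f_0$ is a stabilization of $f$, and that all modifications are supported in a fixed compact set independent of $t$ --- is exactly as you describe.
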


\begin{rem}
Proposition~\ref{prop:leg-persist}  is the only place that stabilizing of generating families is necessary for creating an invariant. Put a different way, Proposition~\ref{prop:leg-persist}  illustrates that stabilization is a useful equivalence relation on generating families (aside from the obvious fact that stabilizing preserves the underlying Legendrian on the nose).
\end{rem}

\begin{proof}
The argument can be proved via
Chekanov's ``composition formula'' \cite{chv:quasi-fns}; see, for example, \cite[Appendix]{traynor:helix}.  
\end{proof}
  
\begin{rem} 
\label{remark. compactifying Rn}
We will often be considering generating families for compact Legendrians in $J^1\rr^n$ -- that is, for non-compact $B$.  Proposition~\ref{prop:leg-persist} will still apply when $B = \RR^n$, as any Legendrian isotopy in $\RR^n$ automatically takes place in $J^1S^n$.
\end{rem}

\subsection{Stabilization and suspension} \label{ssec:morse-theory}
We identified a constant (up to homotopy equivalence)  family of pairs of spaces in Proposition~\ref{prop. omega epsilon independence}. 
We now study the invariance of these pairs with respect to the equivalence operations of Definition~\ref{defn:gf-equiv}.

It is clear that the fiber-preserving diffeomorphisms preserve pairs up to diffeomorphism. We will see that stabilization only preserves pairs up to homotopy equivalence and suspension. (Versions of these statements at the level of homology are proven in \cite[Lemma 4.7]{josh-lisa:cap}.)
Before proceeding, it will be useful to recall a homotopy equivalence of the stabilization of the quotient.

\begin{rem}
\label{remark. sublevel set pair suspension} 
  Given a cofibration $i \co A \hookrightarrow X$, 
 for any closed interval $I$ of positive length, there is a natural homotopy equivalence
$$\Sigma(X/A) \stackrel{\simeq}{\longrightarrow}  X \times I /  \left((A \times I) \cup (X \times \partial I)\right).$$
Furthermore, for any $a< b$, the map
$$\Sigma(X/A) \stackrel{\simeq}{\longrightarrow}  X \times \RR /  \left((A \times \RR) \cup (X \times (-\infty,a] \cup [b,\infty) \right).$$
-- obtained, for example, by choosing a homeomorphism $I \cong [a,b]$ and accordingly including $I$ into $\RR$ -- is a homotopy equivalence.
\end{rem}

\begin{rem}
We have seen (Proposition~\ref{prop:leg-persist}) that Legendrian isotopies necessitate the appearance of stabilizations of generating families. 
Thus, the appearance of suspensions is a hint that spectra are the ``correct'' category in which these invariants take values. See Section~\ref{section. finite dim approx} for related remarks.
\end{rem}

\begin{notation}[$g_{\pm}$]
Let $V$ be any set and let $g: V \to \RR$ be a function.
We let 
	\eqnn
	g_+  \co V \times \rr \to \rr,
	\qquad
	(x,\eta) \mapsto g(x) + \eta^2
	\eqnnd
and
	\eqnn
	g_-  \co V \times \rr \to \rr,
	\qquad
	(x,\eta) \mapsto g(x) - \eta^2
	\eqnnd
denote the stabilizations of $g$. 
\end{notation}

For every pair of real numbers $a<b$, one has the following maps of pairs:
	\begin{align}
	( g^{\leq b}  , g^{\leq a} )
		&\xrightarrow{x \mapsto (x,0)} 
		&(g_+^{\leq b},g_+^{\leq a})
		\label{eqn. positive stabilization map}, \\
	( g^{\leq b} \times \RR  
		, g^{\leq a} \times \RR \bigcup g^{\leq b} \times \{|\eta| \geq \sqrt{b-a}\} )
		&\xrightarrow{x \mapsto x}  
		&(g_-^{\leq b},g_-^{\leq a}).  \label{eqn. negative stabilization map} 
	\end{align}
  Observe that the domain of~\eqref{eqn. negative stabilization map} models the reduced suspension of the pair $(g^{\leq b},g^{\leq a})$; see Remark~\ref{remark. sublevel set pair suspension}.

\begin{lem}\label{lem:stab2-suspension}
\label{lemma. invariance of sublevel spaces}
Fix a function $g: V \to \RR$.
    \begin{enumerate}
        \item\label{item. positive stab does not change sublevel pair} For all $a < b$, the inclusion~\eqref{eqn. positive stabilization map} is a homotopy equivalence of pairs.
        \item\label{item. negative stab is suspension} Now assume the domain of $g$ is a smooth manifold. Further assume that there exists some gradient-like vector field $X$ of $g$ for which 
        	\begin{enumerate}
			\item $X$ is complete,
			\item $X$ is bounded away from zero on $g^{-1}([b,\infty))$, and
			\item $X$ is bounded away from zero on $g^{-1}(a-\varepsilon, a+\varepsilon)$, for some $\varepsilon > 0$. 
			\end{enumerate}
        Then the map~\eqref{eqn. negative stabilization map} 
        is a homotopy equivalence.
	\end{enumerate}
\end{lem}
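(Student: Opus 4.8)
The plan is to treat the two statements separately, each by a deformation-retraction argument. For part~\eqref{item. positive stab does not change sublevel pair}, I would observe that $g_+(x,\eta) = g(x) + \eta^2 \geq g(x)$, so the zero section $\{\eta = 0\}$ is contained in every sublevel set $g_+^{\leq c}$, and the obvious radial homotopy $(x,\eta,s) \mapsto (x, s\eta)$ (for $s \in [0,1]$) carries $g_+^{\leq c}$ into itself and deformation retracts it onto $g^{\leq c} \times \{0\}$ -- because shrinking $\eta$ only decreases the value of $g_+$. This retraction is compatible with the two sublevel values $a < b$ simultaneously, so it gives a strong deformation retraction of the pair $(g_+^{\leq b}, g_+^{\leq a})$ onto the pair $(g^{\leq b} \times \{0\}, g^{\leq a} \times \{0\})$, which is exactly the inclusion~\eqref{eqn. positive stabilization map}. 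Hence that inclusion is a homotopy equivalence of pairs. No smoothness or gradient hypotheses are needed here; this is purely formal.

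For part~\eqref{item. negative stab is suspension}, the idea is that $g_-(x,\eta) = g(x) - \eta^2$, so increasing $|\eta|$ \emph{decreases} the value; thus for fixed $x$ with $g(x) \leq b$, the fiber $\{\eta : g_-(x,\eta) \leq b\}$ is $\{|\eta| \geq \sqrt{g(x)-b}\}$ when $g(x) > b$ is impossible here, and is all of $\RR$ when $g(x) \leq b$ -- so $g_-^{\leq b}$ contains $g^{\leq b} \times \RR$, and more generally the ``waist'' $\{g_-\le b\}$ looks like $g^{\leq b}$ fattened by an $\RR$-factor but pinched off outside $|\eta| \geq \sqrt{b-a}$ in the sense recorded in~\eqref{eqn. negative stabilization map}. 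The key observation is that the domain of~\eqref{eqn. negative stabilization map}, namely $\bigl(g^{\leq b} \times \RR, \; g^{\leq a}\times \RR \cup g^{\leq b}\times\{|\eta|\ge \sqrt{b-a}\}\bigr)$, is (by Remark~\ref{remark. sublevel set pair suspension}, using that $g^{\leq a}\hookrightarrow g^{\leq b}$ is a cofibration -- which follows from the gradient-like field $X$ being bounded away from zero near the regular value $a$) a model for the reduced suspension of $(g^{\leq b}, g^{\leq a})$. So the content is: the pair $(g_-^{\leq b}, g_-^{\leq a})$ deformation retracts onto this suspension model. I would build the retraction by flowing along the gradient-like vector field for $g$ in the $x$-directions while using the completeness hypothesis to push points with $g(x) > a$ outward in the $\eta$-coordinate until their $g_-$-value drops below $a$; concretely, one uses the flow of $X$ reparametrized so that it is defined for all time (completeness) to drag $g_-^{\leq b}$ down onto the region where either $g(x) \leq a$ or $|\eta|$ is large, which is exactly $g_-^{\leq a} \cup (g^{\leq b}\times\{|\eta| \text{ large}\})$. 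The hypotheses (b) and (c) -- $X$ bounded away from zero on $g^{-1}[b,\infty)$ and near the level $a$ -- guarantee that the flow actually moves sublevel sets into each other in finite, uniformly controlled time, so the homotopy is continuous.

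The main obstacle I anticipate is part~\eqref{item. negative stab is suspension}: one must carefully interleave two motions -- the gradient flow of $g$ in the base variable $x$ and the expansion in the stabilizing variable $\eta$ -- so that the resulting homotopy is continuous, respects the subspace (i.e. never pushes a point of $g_-^{\leq a}$ out of $g_-^{\leq a}$), and lands in the correct suspension model rather than something only weakly equivalent to it. The cleanest route is probably to first reduce, via part~\eqref{item. positive stab does not change sublevel pair}-style radial retractions in $\eta$ and the cofibration property, to a standard Morse-theoretic statement that a ``handle subtraction'' realizes sublevel-set change, and then invoke Remark~\ref{remark. sublevel set pair suspension} to identify the target with $\Sigma(g^{\leq b}/g^{\leq a})$; alternatively one can cite the homology-level version in \cite[Lemma 4.7]{josh-lisa:cap} as a template and upgrade it to a genuine homotopy equivalence using the explicit flow. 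I expect the bookkeeping of the flow-time bounds (using hypotheses (a)--(c)) to be the only genuinely delicate point; everything else is a formal consequence of the cofibration and suspension facts already recorded in Remark~\ref{remark. sublevel set pair suspension} and Remark~\ref{rem:old-lem-B9}.
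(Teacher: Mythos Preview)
Your treatment of part~\eqref{item. positive stab does not change sublevel pair} is correct and identical to the paper's: the radial contraction $\eta \mapsto s\eta$ is exactly the homotopy used.

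For part~\eqref{item. negative stab is suspension}, you have assembled the right ingredients (gradient flow of $g$ in the $V$-variable; expansion in the $\eta$-variable) but your organization is muddled, and your stated target for the retraction is off. You write that you would ``drag $g_-^{\leq b}$ down onto the region where either $g(x)\leq a$ or $|\eta|$ is large,'' but that is not the correct image: the ambient space $g_-^{\leq b}$ must retract onto $g^{\leq b}\times\RR$, while it is the \emph{subspace} $g_-^{\leq a}$ that must retract onto $g^{\leq a}\times\RR \cup g^{\leq b}\times\{|\eta|\geq\sqrt{b-a}\}$. Conflating these two targets is what forces you into the awkward ``interleaving'' you flag as the main obstacle.

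The paper avoids this entirely by decoupling the two motions into sequential steps. First, using only hypotheses (a) and (b), flow by $-X$ in the $V$-coordinate (leaving $\eta$ fixed) to retract $g_-^{\leq b}$ onto $g^{\leq b}\times\RR$; since this flow decreases $g$, it carries $g_-^{\leq a}$ into itself and onto $g_-^{\leq a}\cap(g^{\leq b}\times\RR)$. Second, using only hypothesis (c), work entirely in the $\eta$-direction: on the region $g(x)\geq a+\varepsilon$, stretch the interval $\{|\eta|\leq\sqrt{g(x)-a}\}$ out to $\{|\eta|\leq\sqrt{b-a}\}$, then use the absence of critical values near $a$ to push the remainder into $g^{\leq a}\times\RR$. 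Neither step requires coordinating base and fiber simultaneously, so the continuity bookkeeping you anticipated as delicate essentially disappears. Your plan would likely succeed if carried out carefully, but the two-step separation is both simpler and what the paper actually does.
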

\begin{rem}  In particular, Lemma~\ref{lem:stab2-suspension} states that positive stabilization never changes the homotopy type of a sublevel set pair, while (when $b$ is sufficiently large) negative stabilization suspends the homotopy type of a sublevel set pair.
\end{rem}

\begin{proof}[Proof of Lemma~\ref{lem:stab2-suspension}.]
We first prove~\eqref{item. positive stab does not change sublevel pair}. Note that $V \times \RR$ has a strong deformation retraction to $V \times \{0\}$, for example by the straight-line homotopy  $\eta \mapsto (1-t)\eta$ in the $\RR$ coordinate. For $t \in [0,1]$, and for any $c$, we clearly have that $(x,\eta) \in g_+^{\leq c} \implies (x,(1-t)\eta) \in g_+^{\leq c}$. This homotopy retracts the pair $(g_+^{\leq b},g_+^{\leq a})$ to the desired image; see Figure~\ref{fig:retract-plus}.

Now we prove~\eqref{item. negative stab is suspension}. 
Let us first note that $V \times \RR$ strongly deformation retracts to $g^{\leq b} \times \RR$. Here is one construction of the retraction: By the assumption that $\nabla g$ is complete, we can flow by $-\nabla g$ in the $V$ component (while leaving the $\RR$ component fixed), and by the assumption on critical values of $g$, any $x$ with $g(x)>b$ flows to an element $x'$ with $g(x')=b$. An appropriate time- and $g$-dependent flow map, glued to the constant map along $g^{\leq b} \times \RR$, achieves the retraction.
Next, we note that the space
	\eqnn
	g_-^{\leq a} 
	\bigcap
	g^{\leq b} \times \RR
	\eqnnd
deformation retracts to the space
	\eqnn
	g^{\leq a} \times \RR \bigcup g^{\leq b} \times \{|\eta| \geq \sqrt{b-a}\}.
	\eqnnd
Indeed, fix some small $\epsilon >0$ -- then for those $(x,\eta)$ where $g(x) \geq a + \epsilon$, one can expand the interval $[-\sqrt{g(x) - a},\sqrt{g(x)-a}]$ to the interval $[-\sqrt{b-a},\sqrt{b-a}]$; if $\epsilon$ is a priori chosen small enough so there are no critical values near $a$ (which is possible by hypothesis), we may then retract to $g^{\leq a} \times \RR$; see Figure~\ref{fig:retract-minus}.
\end{proof}

\begin{figure}
		\[
			\xy
			\xyimport(8,8)(0,0)
			{
			\includegraphics[width=2in]{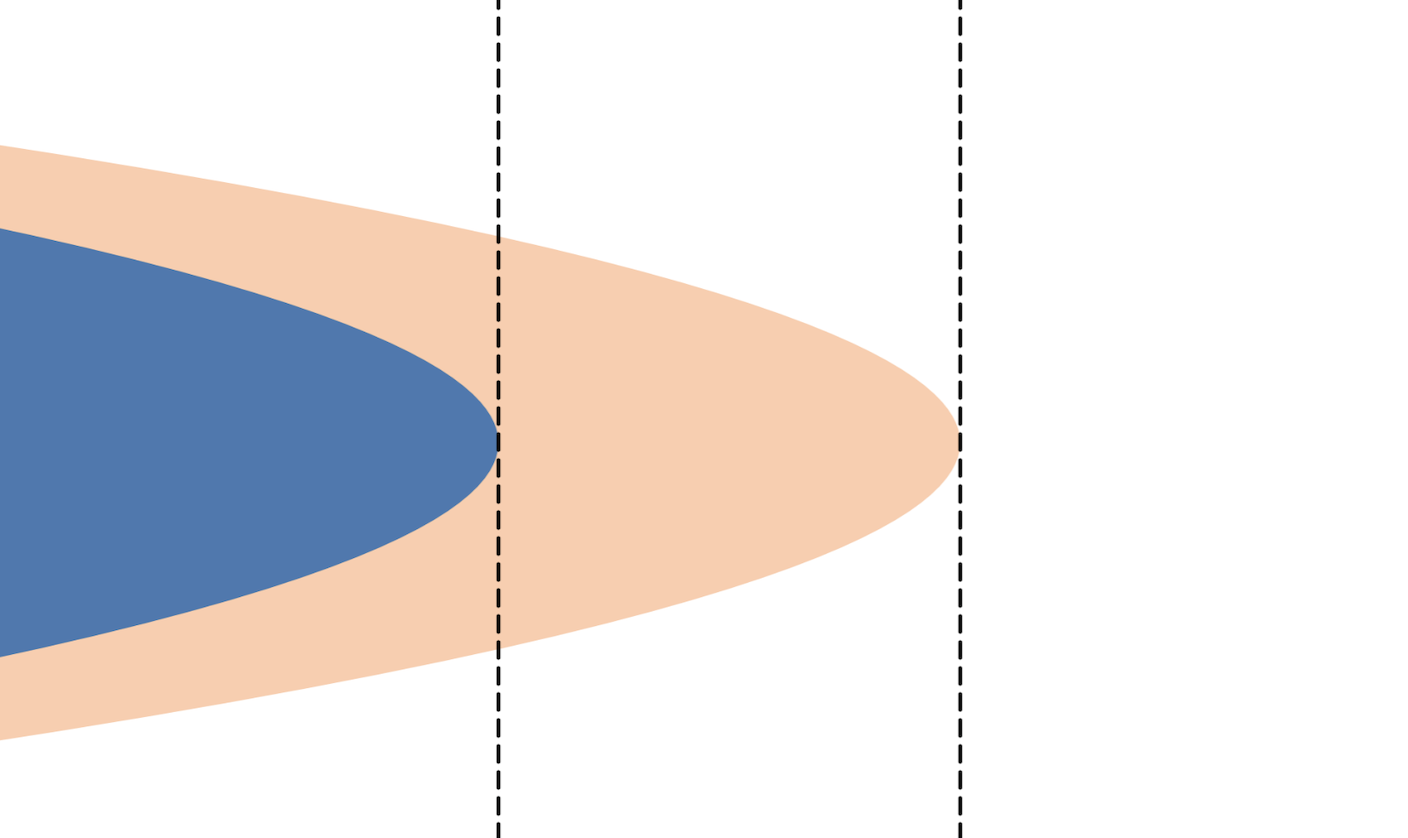}
			\qquad
			\includegraphics[width=2in]{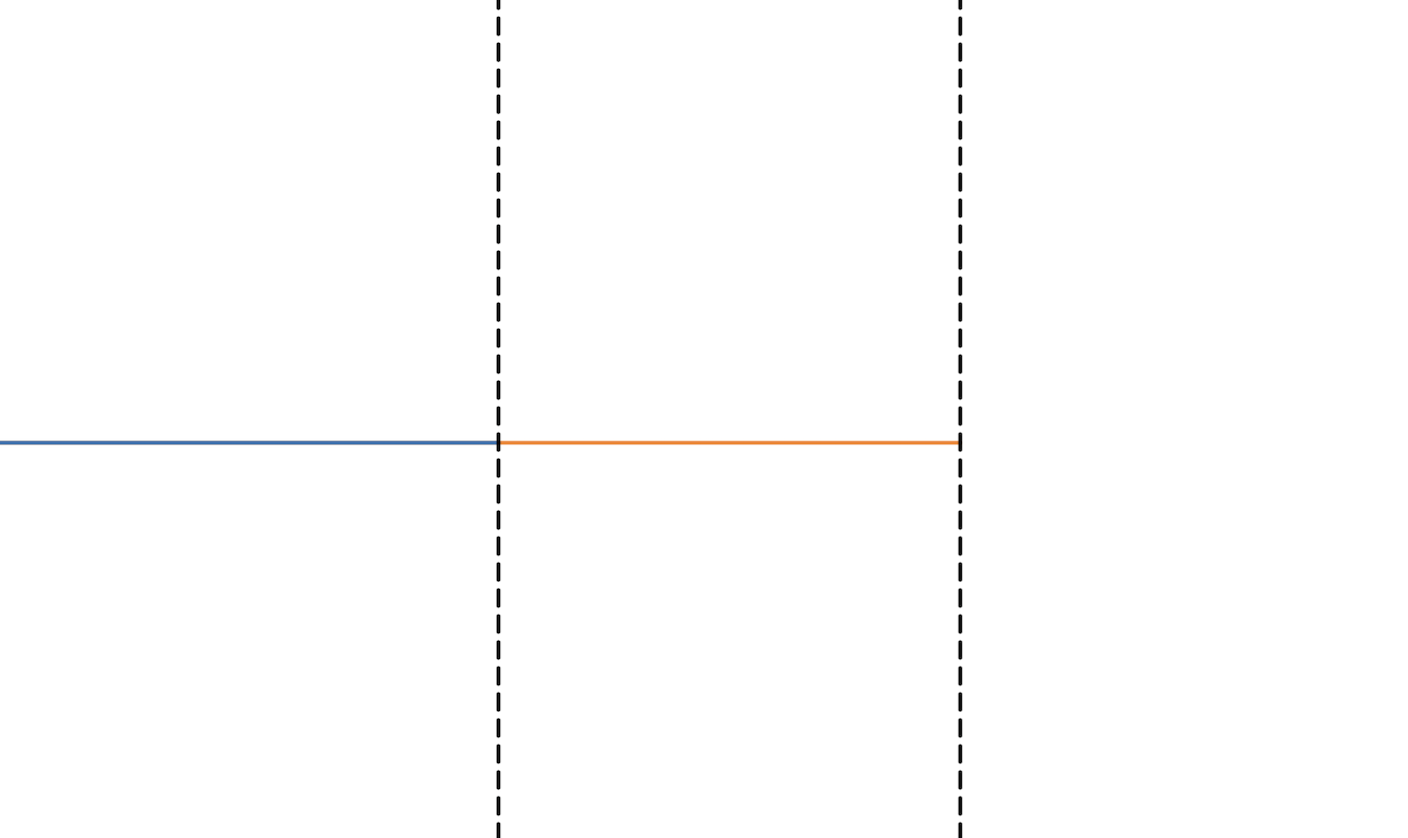}
			}
			,(1,8)*+{g=a}
			,(2.2,8)*+{g=b}
			,(5.2,8)*+{g=a}
			,(6.4,8)*+{g=b}
			\endxy
		\]
	\caption{
	A depiction of the pair $(g_+^{\leq b},g_+^{\leq a})$ -- indicated by the shaded regions on the left -- and of the pair $(g^{\leq b},g^{\leq a})$, indicated by the horizontal lines on the left. Note these are the images of the two pairs under the map $(g,\eta)$ to $\RR^2$, and the vertical dashed lines indicate the loci where $g=a$ and $g=b$. The deformation retraction from the left image to the right image is obtained by retracting the vertical $\eta$ coordinate to zero.
	}
	\label{fig:retract-plus}
\end{figure}

\begin{figure}
		\[
			\xy
			\xyimport(8,8)(0,0)
			{
			\includegraphics[width=1.3in]{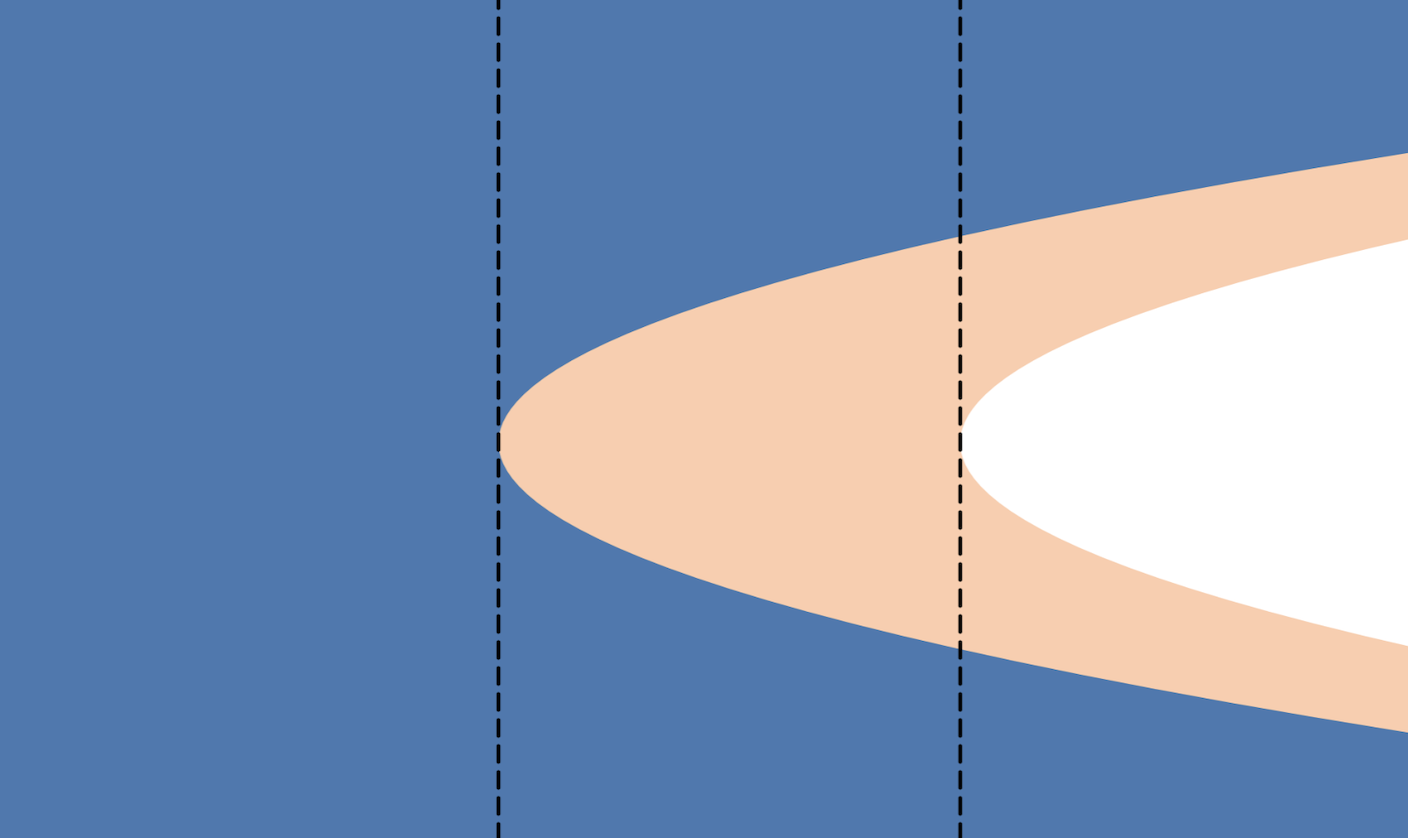}
			\qquad
			\includegraphics[width=1.3in]{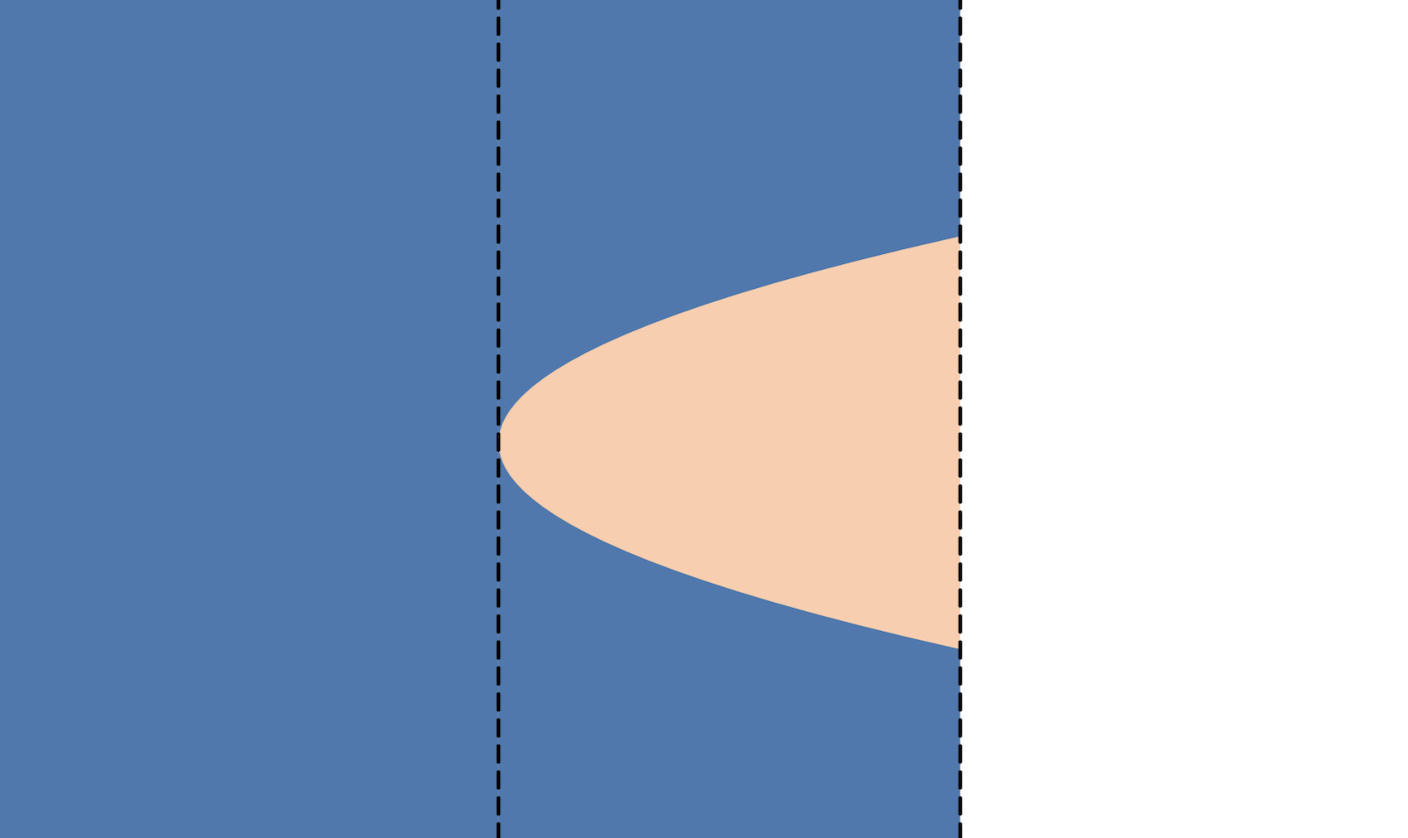}
			\qquad
			\includegraphics[width=1.3in]{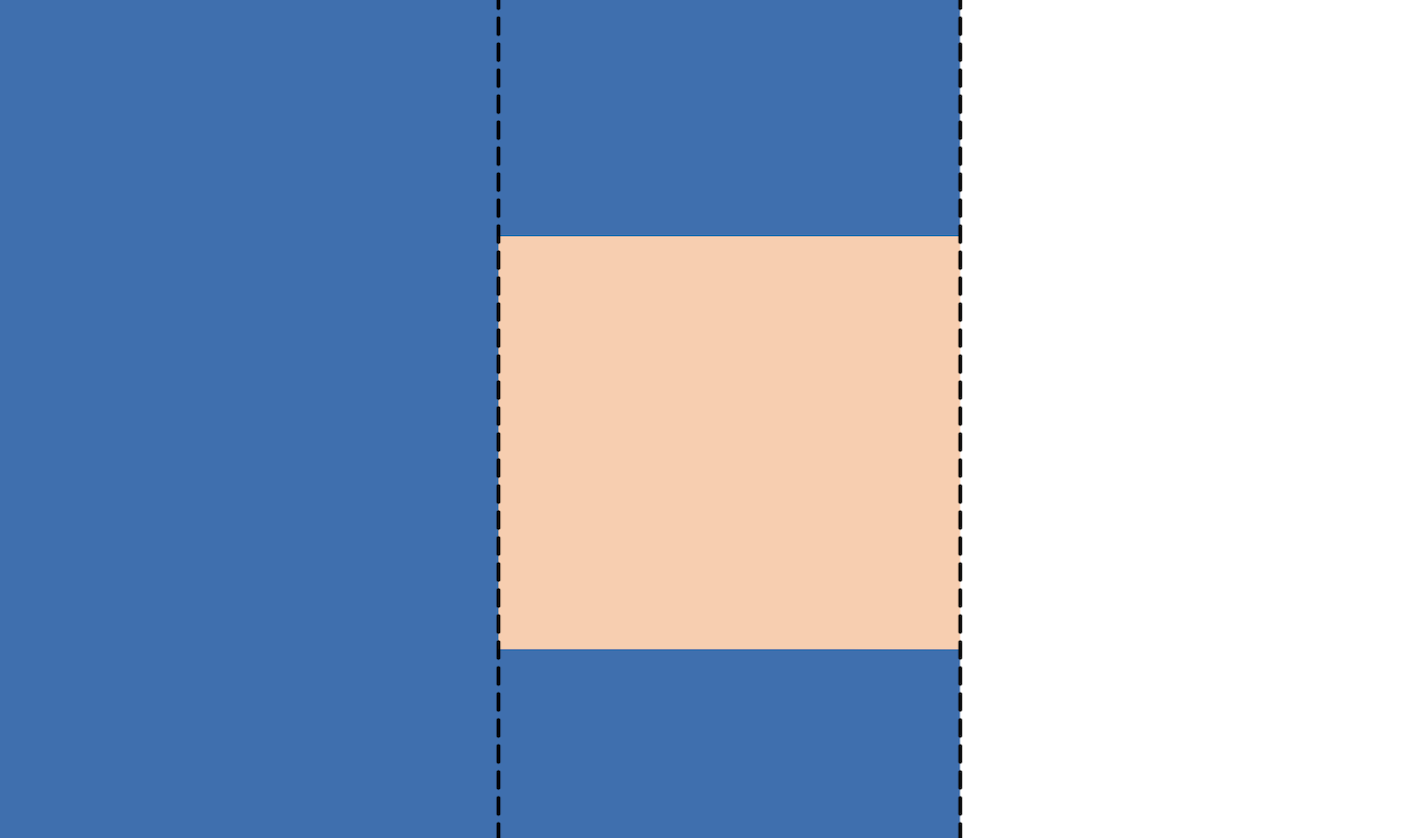}
			}
			\endxy
		\]
	\caption{
	As before, all images here take place in the $(g,\eta)$ plane. 
	The leftmost image is an image of the pair $(g^{\leq b}_{-},g^{\leq a}_{-})$, with the darker shaded region representing $g^{\leq a}_{-}$. The middle image is the result of retracting to the locus where $g \leq b$. The rightmost image is the domain of~\eqref{eqn. negative stabilization map}, obtained by retracting the curved dark region of the middle image to the rectilinear dark region in the rightmost image.
	}
	\label{fig:retract-minus}
\end{figure}

\begin{rem}
\label{remark. suspension compatibility}
We saw that the collection of $\delta^{\leq \infin}/\delta^{\leq \epsilon}$ is constant up to homotopy equivalence in Remark~\ref{rem. sublevel pair functoriality}. We now explore the dependency of the maps~\eqref{eqn. positive stabilization map}  and~\eqref{eqn. negative stabilization map}  on $a,b$ values.
Fix $a<a'<b<b'$. 
To save space, let us write
	\eqnn
	C_{a,b,b'} := g^{\leq a} \times \RR \bigcup g^{\leq b} \times \{|\eta| \geq \sqrt{b'-a}\}
	\eqnnd
so we have natural inclusions fitting into a commutative diagram as follows:
	\eqnn
	\xymatrix{
	C_{a,b,b} \ar[d]
		& C_{a,b,b'} \ar[r]  \ar[l] \ar[d]
		& C_{a,b',b'}  \ar[d] \\
	C_{a',b,b}   
		& C_{a',b,b'}  \ar[r]  \ar[l] 
		& C_{a',b',b'} 
	}
	\eqnnd
and in particular a commuting diagram of pairs
	\eqnn
	\xymatrix{
	(g^{\leq b} \times \RR , C_{a,b,b}) \ar[d]
		&(g^{\leq b} \times \RR ,  C_{a,b,b'} )\ar[r]  \ar[l] \ar[d]
		& (g^{\leq b'} \times \RR , C_{a,b',b'} ) \ar[d] \\
	(g^{\leq b} \times \RR , C_{a',b,b}   )
		& (g^{\leq b} \times \RR , C_{a',b,b'} ) \ar[r]  \ar[l] 
		& (g^{\leq b'} \times \RR , C_{a',b',b'} ),
	}
	\eqnnd
which in turn forms the back face of the following commutative diagram of pairs:
	\eqn
	\label{eqn. naturality of suspension equivalence}
	\xymatrix{
	(g^{\leq b} \times \RR , C_{a,b,b}) \ar[dd] \ar[dr]
		&(g^{\leq b} \times \RR ,  C_{a,b,b'} )\ar[r]  \ar[l] \ar[dd]
		& (g^{\leq b'} \times \RR , C_{a,b',b'} ) \ar[dd] \ar[dr] \\
	\,
		&	(g_-^{\leq b}, g_-^{\leq a}) \ar[rr] \ar@/^/[dd]
		&&	(g_-^{\leq b'}, g_-^{\leq a})  \ar@/^/[dd] \\
	(g^{\leq b} \times \RR , C_{a',b,b}   ) \ar[dr]
		& (g^{\leq b} \times \RR , C_{a',b,b'} ) \ar[r]  \ar[l] 
		& (g^{\leq b'} \times \RR , C_{a',b',b'} ) \ar[dr] \\
	\,
		&	(g_-^{\leq b}, g_-^{\leq a'}) \ar[rr]
		&&	(g_-^{\leq b'}, g_-^{\leq a'}).
	}
	\eqnd
As long as $a'$ and $b'$ are chosen from the neighborhoods of $a$ and $b$ guaranteed in Lemma~\ref{lemma. invariance of sublevel spaces}~\eqref{item. negative stab is suspension}, every map in~\eqref{eqn. naturality of suspension equivalence} is a homotopy equivalence of pairs. (The diagonal maps -- from the back corners to front corners of the diagram -- are equivalences by Lemma~\ref{lemma. invariance of sublevel spaces}.) 
\end{rem}

\begin{rem}[Stabilization induces suspension]
\label{remark. stabilization causes suspension}
Consider $g = \delta_f$ and $a = \epsilon, b = \infin$ for $(\epsilon,\infin)$ satisfying~\eqref{eqn. epsilon omega constraints}.
The hypotheses of Lemma~\ref{lemma. invariance of sublevel spaces} are then satisfied thanks to Proposition~\ref{prop. derivative of difference function is well-behaved}. Moreover, by 
Remark~\ref{rem:stab-choice-difference}, we have that
	\eqnn
	\delta_{f_+} \qquad\text{and}\qquad \delta_{f_-}
	\eqnnd
are both rank 2 stabilizations of 
$\delta_f$ by a quadratic of index $1$.  
Thus, Lemma~\ref{lemma. invariance of sublevel spaces} implies that if we stabilize a generating family $f$ (positively or negatively), then for any choice of $\epsilon,\infin$ from~\eqref{eqn. epsilon omega constraints}, the map~\eqref{eqn. negative stabilization map}  is a homotopy equivalence of pairs. 
Interpreting the domain pair using Remark~\ref{remark. sublevel set pair suspension}, we conclude that stabilization of $f$ causes the sublevel set pair of the difference function $\delta_f$ to undergo a suspension of pairs. 
\end{rem}

  Lemma~\ref{lem:stab2-suspension} together with Remarks~\ref{remark. sublevel set pair suspension} and \ref{remark. stabilization causes suspension} leads to:

\begin{prop}\label{prop:leg-stab} If $f'$ differs from $f$ by a rank $1$ stabilization, then for any compact interval of positive length, ~\eqref{eqn. negative stabilization map} induces a map
	\eqn\label{eqn:leg-stab}
	i^\leg: \left(\delta_{f}^{\leq \infin} \times I, \  \delta_{f}^{\leq \epsilon} \times I \cup \delta_{f}^{\leq \infin} \times \partial I \right)
	\to
	\left(\delta_{f'}^{\leq \infin}, \  \delta_{f'}^{\leq  \epsilon}   \right) 
	\eqnd
inducing a homotopy equivalence
$$\sigma: \Sigma\left(\delta_f^{\leq \infin}/ \delta_f^{\leq \epsilon} \right) \stackrel{\simeq}{\longrightarrow} 
\delta_{f'}^{\leq \infin}/ \delta_{f'}^{\leq \epsilon}.
$$
\end{prop}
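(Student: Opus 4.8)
The plan is to reduce the proposition to the two parts of Lemma~\ref{lem:stab2-suspension}, following the outline already sketched in Remark~\ref{remark. stabilization causes suspension}. First I would record the effect of a rank~$1$ stabilization on difference functions. By Remark~\ref{rem:stab-choice-difference}, if $f' = f_\pm$ is a rank~$1$ stabilization of $f$, then $\delta_{f'}$ equals, up to a fiber-preserving diffeomorphism (the one swapping the $\e$-block with the $\te$-block interchanges the $f_+$ and $f_-$ cases), the function $(x,\e,\e',\te,\te') \mapsto \delta_f(x,\e,\te) + (\te')^2 - (\e')^2$ on $B \times \RR^{2N+2}$. Since fiber-preserving diffeomorphisms carry sublevel pairs to homeomorphic pairs, I may assume $\delta_{f'}$ has this form, and I would then view it as the result of applying to $\delta_f$ one \emph{positive} rank~$1$ stabilization (adjoining the variable $\te'$ with the term $+(\te')^2$) followed by one \emph{negative} rank~$1$ stabilization (adjoining $\e'$ with the term $-(\e')^2$); the two operations are in independent variables, so their order is immaterial.

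Next I would dispatch each of the two stabilizations. For the positive one, Lemma~\ref{lem:stab2-suspension}\eqref{item. positive stab does not change sublevel pair}, applied with $a = \epsilon$ and $b = \infin$, says the inclusion~\eqref{eqn. positive stabilization map} is a homotopy equivalence of pairs, so passing to the positive stabilization does not change the sublevel pair up to homotopy. For the negative one I would apply Lemma~\ref{lem:stab2-suspension}\eqref{item. negative stab is suspension} with $g$ the positive stabilization of $\delta_f$, again with $a = \epsilon$, $b = \infin$. Its hypotheses are met because $f$ is linear-at-infinity: this furnishes a complete gradient-like vector field for $\delta_f$ (Remark~\ref{remark. gradient flow complete for linear-at-infinity function}), which extends to one for the positive stabilization by adjoining the standard field in the new direction; and Proposition~\ref{prop. derivative of difference function is well-behaved}, together with Choice~\ref{choice:epsilon-omega} and Proposition~\ref{prop:leg-crit-point} (which provide a critical-value-free neighborhood of $\epsilon$), guarantees the total derivative is bounded away from zero on $\delta_f^{-1}[\infin,\infty)$ and near $\delta_f^{-1}(\epsilon)$, and one can arrange the gradient-like field to inherit this. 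The lemma then gives that the map~\eqref{eqn. negative stabilization map} is a homotopy equivalence onto $(\delta_{f'}^{\leq\infin},\delta_{f'}^{\leq\epsilon})$.

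To assemble $i^\leg$ and $\sigma$: the source pair of~\eqref{eqn. negative stabilization map}, namely $(\delta_f^{\leq\infin} \times \RR,\ \delta_f^{\leq\epsilon}\times\RR \cup \delta_f^{\leq\infin}\times\{|\eta|\geq\sqrt{\infin-\epsilon}\})$, is by Remark~\ref{remark. sublevel set pair suspension} (using Lemma~\ref{lem:gf-cofibration} to know $\delta_f^{\leq\epsilon} \hookrightarrow \delta_f^{\leq\infin}$ is a cofibration) a model for the reduced suspension of $(\delta_f^{\leq\infin},\delta_f^{\leq\epsilon})$, and the two clauses of Remark~\ref{remark. sublevel set pair suspension} let me replace $\RR$ by an arbitrary compact interval $I$ of positive length without changing the homotopy type of the pair. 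Composing the ($I$-suspended) positive-stabilization equivalence with the negative-stabilization equivalence of~\eqref{eqn. negative stabilization map} produces the map $i^\leg$ of~\eqref{eqn:leg-stab}; passing to quotients and invoking the basic fact that a cofibration's mapping cone computes its quotient yields the homotopy equivalence $\sigma\colon \Sigma(\delta_f^{\leq\infin}/\delta_f^{\leq\epsilon}) \xrightarrow{\ \simeq\ } \delta_{f'}^{\leq\infin}/\delta_{f'}^{\leq\epsilon}$.

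The one step I expect to require genuine care is checking the gradient-like hypotheses (a)--(c) of Lemma~\ref{lem:stab2-suspension}\eqref{item. negative stab is suspension} uniformly for the positive stabilization of $\delta_f$: one must verify that the property of being bounded away from zero is preserved both under adjoining the variable with the extra $+(\te')^2$ term -- where near $\te' = 0$ one plays the $\delta_f$-derivative off against the $2\te'$-derivative -- and over the relevant non-compact sublevel regions, which is precisely where linearity-at-infinity is used. Everything else is a formal splicing of homotopy equivalences of pairs.
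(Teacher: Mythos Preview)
Your proposal is correct and follows essentially the same route as the paper: the paper's proof is simply the observation that Lemma~\ref{lem:stab2-suspension} together with Remarks~\ref{remark. sublevel set pair suspension} and~\ref{remark. stabilization causes suspension} yields the result, and Remark~\ref{remark. stabilization causes suspension} unpacks to exactly the argument you outline (decompose $\delta_{f'}$ as one positive plus one negative rank~$1$ stabilization of $\delta_f$ via Remark~\ref{rem:stab-choice-difference}, invoke both parts of Lemma~\ref{lem:stab2-suspension}, and verify the hypotheses via Proposition~\ref{prop. derivative of difference function is well-behaved}). Your additional care about the gradient-like hypotheses is warranted but does not represent a different approach.
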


\begin{rem}[Naturality of the stabilization-suspension pathway]
\label{remark. naturality of stabilization}
Moreover, we observed in Remark~\ref{rem. sublevel pair functoriality} that the sublevel set pair $\delta_f^{\leq \infin}/\delta_f^{\leq \epsilon}$ is independent of choice of $\epsilon$ and $\infin$ (up to homotopy equivalence of pairs). This constant in the choice of $(\epsilon,\infin)$ is compatible with the suspension maps thanks to Remark~\ref{remark. suspension compatibility}. Indeed, note that the front rectangle of~\eqref{eqn. naturality of suspension equivalence} consists of the homotopy equivalences mentioned in Remark~\ref{rem. sublevel pair functoriality}.
\end{rem}

\section{The generating family spectrum of a Legendrian}
In this section, we define the spectrum of a Legendrian submanifold equipped with a generating family, prove Theorem~\ref{theorem. bound on family dimension}, which gives a lower bound
on the needed fiber dimension for a Legendrian  and generating family within their equivalence class,  and show that homology groups of a spectrum recover the
previously established generating family homology groups (Theorem~\ref{thm:spec-lift}).  Background on homotopy theory of spectra  is included in
  Appendix~\ref{ssec:spectra}
and are referenced throughout this section.

\subsection{Definition}

\begin{defn}[$C(-;\Sphere)$] 
\label{defn:leg-spectrum} Given a Legendrian $\leg \subset J^1B$ with a linear-at-infinity generating family 
	\eqn\label{eqn. generating family domain and codomain}
	f\co B \times \rr^N \to \rr,
	\eqnd
define the sequence
 of functions 
 	\eqnn
	\{f_i\co B \times \rr^i \to \rr \}_{i \geq N}
	\eqnnd
where $f_N = f$, and $f_{i}$ is the rank $1$ stabilization of $f_{i-1}$ by  either $Q_+(\e) = \e^2$ or $Q_-(\e) = -\e^2$.  (See Remark~\ref{rem:plus-or-negative-stab}.) Then for all $i \geq N$,  we have spaces and homotopy equivalences as follows:  
\begin{enumerate}
\item For all $i \geq N$, let $X_{i} = \delta_{f_{i}}^{\leq \infin}/ \delta_{f_{{i}}}^{\leq \epsilon}$, 
 \item  $\Sigma X_{i} \xrightarrow{\simeq} X_{i+1}$ provided by Proposition~\ref{prop:leg-stab}.
 \end{enumerate}
These data define the generating family prespectrum of $(\leg,f)$. The {\bf generating family spectrum of $(\leg, f)$} is the associated spectrum (Construction~\ref{construction. spectrification}), and we denote this spectrum by
 	\eqnn
	\gfc(\leg, f; \mathbb S).
	\eqnnd
 \end{defn}

\begin{rem}\label{rem:plus-or-negative-stab}   
 To define $f_i$, stabilizing by either $Q_+$ or $Q_-$  gives the same end result due the symmetry of $\delta_f$ -- see Remark~\ref{rem:stab-choice-difference}. 
\end{rem}

\begin{rem}
By Remark~\ref{remark. naturality of stabilization}, the generating family spectrum associated to $f$ is naturally independent of the choices of $\epsilon$ and $\infin$ -- as long as $\epsilon$ and $\infin$ satisfy the inequalities in Choice~\ref{choice:epsilon-omega} -- up to equivalence of spectra.
\end{rem}

\begin{rem}
 Recall that a spectrum $X$ is called {\em finite} if, after finitely many suspensions, $X$ is equivalent to the suspension spectrum of a finite CW complex.  Since $\leg$ is compact,
Proposition~\ref{prop:leg-crit-point} and standard Morse theory arguments imply that the space $\delta^{\leq \omega}/\delta^{\leq \epsilon}$ is homotopy equivalent to a CW complex with finitely many cells (in bijection with the positive-length Reeb chords). It follows that $\gfc(\leg,f;\Sphere)$ is a  finite  spectrum.
\end{rem}

\begin{ex}
\label{example. cubic over a point}
Take $B$ to be a point and let $f: B \times \RR \to \RR$ be any cubic function with two distinct critical values. Choosing a diffeomorphism $\RR \to \RR$ which equals $\eta \mapsto \eta^{1/3}$ outside a compact subset, we see that $f$ is linear at infinity (Definition~\ref{defn. linear-at-infinity}). Further, $f$ generates a Legendrian $\leg \subset J^1B \cong \RR$, where $\leg$ is a zero-dimensional manifold consisting of two points. One can compute that for $\epsilon,\omega$ as in Choice~\ref{choice:epsilon-omega}, the quotient space $\delta_{f}^{\leq \infin}/ \delta_{f}^{\leq \epsilon}$ is homotopy equivalent to a two-dimensional sphere (with basepoint given by the quotient locus). 
Because $N = 1$ for this choice of $f$, we see that $C(\leg,f;\Sphere)$ defines a prespectrum beginning at index $N=1$:
	$$
	X_1 \simeq S^2,\qquad X_2 \simeq \Sigma S^2 \simeq S^3,
	\qquad\ldots,\qquad X_i \simeq S^{i+1},\qquad\ldots.
	$$
In particular, $C(\leg,f;\Sphere)$ is a 1-fold suspension of the sphere spectrum, otherwise known as the suspension spectrum of the circle:
	\eqnn
	C(\leg,f;\Sphere) \simeq \Sigma^\infty S^1 \simeq \Sigma \Sigma^\infty S^0 \simeq \Sigma \Sphere =: \Sphere^1.
	\eqnnd
\end{ex}

\subsection{Invariance}

\begin{prop}\label{prop:equiv-class-specta}
   If $f, f'$ are both linear-at-infinity generating families for $\leg$, and $f, f'$ differ by a sequence of
fiber-preserving diffeomorphisms and stabilizations, then the associated spectra are equivalent:
$$\gfc(\leg, f; \mathbb S) \simeq 
 \gfc(\leg, f'; \mathbb S).$$
 \end{prop}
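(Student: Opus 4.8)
The plan is to reduce the statement to the two elementary invariance facts already established: that fiber-preserving diffeomorphisms preserve the sublevel set pairs up to homeomorphism, and that a rank $1$ stabilization suspends the sublevel set pair (Proposition~\ref{prop:leg-stab}). Since any sequence of operations relating $f$ and $f'$ can be decomposed into single fiber-preserving diffeomorphisms and single rank $1$ stabilizations (a rank $K$ stabilization being a composite of $K$ rank $1$ ones, up to a fiber-preserving diffeomorphism reordering coordinates), it suffices to prove the claim when $f$ and $f'$ differ by a single such operation, and then compose the resulting equivalences of spectra.

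First I would handle the fiber-preserving diffeomorphism case. If $f' = f \circ \Phi$ for a fiber-preserving diffeomorphism $\Phi$ of $B \times \RR^N$, then $\Phi \times \Phi$ (acting on the two $\RR^N$ factors, with the same $B$) carries $\delta_{f'}$ to $\delta_f$ on the nose, hence restricts to homeomorphisms $\delta_{f'}^{\leq \epsilon} \cong \delta_f^{\leq \epsilon}$ and $\delta_{f'}^{\leq \infin} \cong \delta_f^{\leq \infin}$ compatible with inclusions. Note $f$ and $f'$ generate the same Legendrian, so they share a common choice of $\epsilon, \infin$ (Choice~\ref{choice:epsilon-omega}); moreover the stabilized families $\{f_i\}$ and $\{f'_i\}$ in Definition~\ref{defn:leg-spectrum} differ at each level by the fiber-preserving diffeomorphism $\Phi \times \id_{\RR^{i-N}}$, so one gets homeomorphisms $X_i \cong X'_i$ commuting with the structure maps $\Sigma X_i \to X_{i+1}$. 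This induces a levelwise equivalence of prespectra, hence an equivalence of the associated spectra (Construction~\ref{construction. spectrification} preserves equivalences).

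For the stabilization case, suppose $f' = f \oplus Q$ for a nondegenerate quadratic form $Q$ of rank $1$. By Remark~\ref{rem:plus-or-negative-stab} the sign of $Q$ is immaterial. Now observe that the stabilization sequence $\{f'_i\}_{i \geq N+1}$ built from $f'$ is, for each $i$, fiber-preserving-diffeomorphic to the tail $\{f_i\}_{i \geq N+1}$ of the stabilization sequence built from $f$ (both are obtained from $f$ by $i-N$ successive rank $1$ stabilizations; the orderings of the quadratic summands differ only by a permutation of fiber coordinates, which is a fiber-preserving diffeomorphism). Hence the prespectrum of $(\leg,f')$ agrees, up to the already-treated diffeomorphism equivalences, with the shifted prespectrum $\{X_{i+1}\}_{i \geq N}$ of $(\leg,f)$. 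Since a prespectrum and its shift define the same spectrum (this is essentially the content of Proposition~\ref{prop:leg-stab} assembled with Construction~\ref{construction. spectrification} — the structure maps $\Sigma X_i \to X_{i+1}$ are the very maps used to spectrify), we conclude $\gfc(\leg,f';\Sphere) \simeq \gfc(\leg,f;\Sphere)$.

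I expect the main obstacle to be purely bookkeeping rather than conceptual: one must check that the diffeomorphisms and suspension equivalences chosen at each stabilization level are compatible with the structure maps $\Sigma X_i \to X_{i+1}$ of Definition~\ref{defn:leg-spectrum}, i.e.\ that the relevant squares commute up to coherent homotopy, and that when $f$ and $f'$ generate Legendrians related by isotopy (rather than literally equal) one may choose $\epsilon,\infin$ uniformly — but this last point is exactly Lemma~\ref{lem:uniform-non0}, and in the present statement $f$ and $f'$ generate the \emph{same} $\leg$, so even that subtlety is absent. The coherence of the suspension equivalences with the change-of-$(\epsilon,\infin)$ equivalences is already recorded in Remark~\ref{remark. naturality of stabilization}, so assembling everything into a genuine map of prespectra (and not merely a levelwise collection of equivalences) should go through without new ideas.
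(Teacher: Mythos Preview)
Your approach is the same as the paper's: treat fiber-preserving diffeomorphisms by observing they induce diffeomorphisms of the sublevel pairs compatible with the stabilization maps, and treat a single stabilization by recognizing the prespectrum of $f'$ as a subsequence of that of $f$, then invoke Proposition~\ref{prop:prespectra-spectra-maps}.

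One slip to fix in the stabilization case. You describe the prespectrum of $f'$ as ``the shifted prespectrum $\{X_{i+1}\}_{i \geq N}$'' and then assert that ``a prespectrum and its shift define the same spectrum.'' As stated this is false: in the paper's conventions (Notation~\ref{notation:shift-spec}) the shift places $X_{i+1}$ at index $i$ and spectrifies to $\Sigma Y$, not to $Y$. What is actually going on is that $\dim f' = N+1$, so by Definition~\ref{defn:leg-spectrum} the prespectrum of $f'$ is indexed starting at $N+1$, with $X'_j \cong X_j$ for each $j \geq N+1$ and the \emph{same absolute indices}; it is the \emph{tail} of the prespectrum of $f$, not a shift. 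The identity maps $X'_j \to X_j$ then constitute a map of prespectra that is a levelwise equivalence, and Proposition~\ref{prop:prespectra-spectra-maps} gives the desired equivalence of spectra --- exactly as the paper argues. Your parenthetical (``the structure maps $\Sigma X_i \to X_{i+1}$ are the very maps used to spectrify'') is the right intuition, but it justifies tail-invariance of spectrification, not shift-invariance.
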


 \begin{proof}   If $f'$ differs from $f$ by fiber-preserving diffeomorphism, then, there is an immediate diffeomorphism of pairs  
 	$$\left(\delta_{f}^{\leq \infin}, \delta_{f}^{\leq \epsilon}\right) \cong
 \left(\delta_{f'}^{\leq \infin}, \delta_{f'}^{\leq \epsilon}\right)$$
compatible with the stabilization maps, so the associated spectra are equivalent. 
Further, if $f'$ is a stabilization of $f$, then (up to homotopy equivalence of pointed spaces) the sequence of spaces defining the generating family prespectrum for $f'$ is a subsequence of those spaces defining the prespectrum of $f$, so the spectra are equivalent by Proposition~\ref{prop:prespectra-spectra-maps}.
\end{proof}

Further, Legendrian isotopies induce equivalences of generating family spectra. 

\begin{thm} \label{thm:iso-spectra} 
Fix a compact embedded Legendrian
$\leg \subset J^1B$
and a linear-at-infinity generating family $f$ for $\leg$.
Fix a path $\leg_t$ of Legendrians in $J^1B$ with $\leg_0 = \leg$. Then for any path of generating families $f_t$ for $\leg_t$ as in Proposition~\ref{prop:leg-persist},
there exists an equivalence of spectra
$$\gfc(\leg, f; \mathbb S) \stackrel{\simeq}{\longrightarrow} \gfc(\leg_t, f_t; \mathbb S), \quad \forall t \in [0,1].$$
\end{thm}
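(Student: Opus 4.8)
The plan is to reduce everything to the case of a very small isotopy, where the generating families $f_t$ are constant outside a fixed compact set of $B \times \RR^N$, and then to show that such a family of Legendrians induces a family of sublevel-set pairs all of the same homotopy type — compatibly with the stabilization–suspension structure maps. First I would invoke Proposition~\ref{prop:leg-persist} to replace $f$ by a stabilization (this changes the spectrum only up to the canonical equivalence of Proposition~\ref{prop:equiv-class-specta}) and to obtain a smooth path $f_t$ of linear-at-infinity generating families with $f_t = f_0$ outside a compact $K \subset B \times \RR^N$. Then Lemma~\ref{lem:uniform-non0} gives a single $\epsilon > 0$ with $\ell(\leg_t) \cap (0,\epsilon) = \emptyset$ for all $t$, and compactness of $[0,1]$ (together with the linear-at-infinity condition) gives a single $\infin$ with $\lmax(\leg_t) < \infin$ for all $t$; so Choice~\ref{choice:epsilon-omega} can be made uniformly in $t$.

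The heart of the argument is then: for the difference functions $\delta_{f_t}\co B \times \RR^{2N} \to \RR$, the inclusion $\delta_{f_0}^{\leq \epsilon} \hookrightarrow \delta_{f_0}^{\leq \infin}$ and $\delta_{f_t}^{\leq \epsilon} \hookrightarrow \delta_{f_t}^{\leq \infin}$ have naturally homotopy-equivalent quotients. The cleanest way I would do this is to note that $\{\delta_{f_t}\}$ forms a smooth path of functions that are constant outside a fixed compact subset of $B \times \RR^{2N}$ (since $f_t$ is constant outside $K$), so by Proposition~\ref{prop. derivative of difference function is well-behaved} applied uniformly the total derivative of $\delta_{f_t}$ is bounded away from zero on $(\delta_{f_t})^{-1}[\epsilon,\epsilon']$ and on $(\delta_{f_t})^{-1}[\infin,\infin']$, uniformly in $t$. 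One can then run the parametrized version of the retraction arguments underlying Lemma~\ref{lem:retract}/Proposition~\ref{prop. omega epsilon independence}: a fiberwise gradient-like vector field for $\delta_{f_t}$, chosen smoothly in $t$ and complete (Remark~\ref{remark. gradient flow complete for linear-at-infinity function}), produces a continuous family of deformation retractions. Concretely this exhibits $\bigsqcup_t \left(\delta_{f_t}^{\leq\infin}/\delta_{f_t}^{\leq\epsilon}\right) \to [0,1]$ as a fibration whose fibers are pointed and of the homotopy type established at $t=0$; pulling back along any path in $[0,1]$ gives the equivalence $X_N(\leg) \simeq X_N(\leg_t)$ of the degree-$N$ spaces.

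Finally I would check that these equivalences are compatible with the structure maps $\Sigma X_i \xrightarrow{\simeq} X_{i+1}$ of Proposition~\ref{prop:leg-stab}. This is because the stabilization $f_t \oplus Q_\pm$ is again a path of linear-at-infinity generating families constant outside a compact set (the remark following Definition~\ref{defn. path of linear at infty gfs}), so the same parametrized retraction applies at every stabilization level, and the suspension maps~\eqref{eqn. negative stabilization map} are natural in $t$ by construction — this naturality is exactly the content of Remark~\ref{remark. suspension compatibility} and Remark~\ref{remark. naturality of stabilization}, now carried out in the $t$-parameter. Hence the family $t \mapsto \gfc(\leg_t,f_t;\Sphere)$ is a path in the space of prespectra realizing a single equivalence class, and passing to associated spectra (Construction~\ref{construction. spectrification}, Proposition~\ref{prop:prespectra-spectra-maps}) yields the asserted equivalence $\gfc(\leg,f;\Sphere) \xrightarrow{\simeq} \gfc(\leg_t,f_t;\Sphere)$ for every $t$.

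The main obstacle I expect is the parametrized Morse theory: making the deformation retractions genuinely continuous in $t$ (not just fiberwise existent), so that one really gets a fibration over $[0,1]$ rather than just a levelwise homotopy equivalence for each $t$ separately. This requires choosing the gradient-like vector fields and the cutoff/rescaling functions smoothly in $t$ and using that all the relevant ``bounded away from zero'' estimates can be taken uniform over the compact interval $[0,1]$ — which is where the uniform choice of $\epsilon,\infin$ and the compact-support control on $f_t$ are essential.
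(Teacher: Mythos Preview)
Your proposal is correct and follows essentially the same route as the paper. The only difference is one of packaging: where you sketch a parametrized gradient-flow/fibration argument over $[0,1]$ by hand, the paper has already isolated this step as the Critical Non-Crossing Lemma (Lemma~\ref{lem:crit-non-crossing}), whose hypotheses are verified via Proposition~\ref{prop:leg-crit-point}(2) and Lemma~\ref{lem:uniform-non0}, and then invokes Proposition~\ref{prop:prespectra-spectra-maps} exactly as you do. The obstacle you flag (continuity in $t$ of the retractions) is precisely what that lemma handles, by subdividing the path and using inclusions of sublevel sets rather than a global fibration.
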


\begin{proof}  
By Proposition~\ref{prop:leg-persist}, we know
that the path $\leg_t$ lifts to a path of linear-at-infinity generating families $f_t\co B \times \rr^N \to \rr$, where $f_0$ is a stabilization
of $f$ (and $N$ is some large integer). One thus obtains a path of difference functions $\delta_t \co B \times \rr^{2N} \to \rr$. 
By Proposition~\ref{prop:equiv-class-specta}, the spectra $\gfc(\leg, f; \mathbb S)$ and $\gfc(\leg, f_0; \mathbb S)$ are equivalent. 
By Proposition~\ref{prop:leg-crit-point}(2) and Lemma~\ref{lem:uniform-non0}, the family $\delta_t$ of difference functions satisfies the hypotheses of Lemma~\ref{lem:crit-non-crossing}.
Thus  we get a homotopy equivalence between the spaces in the prespectra associated to $f_0$ and $f_t$. Thus, 
the spectra $\gfc(\leg, f_0; \mathbb S)$ and 
$\gfc(\leg, f_t; \mathbb S)$ are equivalent by Proposition~\ref{prop:prespectra-spectra-maps}.
\end{proof}

\subsection{Proof of Theorem~\ref{theorem. bound on family dimension} } \label{ssec:dim-bound}
\begin{proof}[Proof]
Suppose $f' : B \times \RR^M \to \RR$ is a generating family equivalent to $f$ (up to stabilizations and fiber-preserving diffeomorphism and Legendrian isotopy). 
Then by Definition~\ref{defn:leg-spectrum}, 
the $+M$-fold suspension $\Sigma^{M} \gfc(\leg,f;\Sphere)$ -- see Notation~\ref{notation:shift-spec} -- is the suspension spectrum (Definition~\ref{defn:suspension-spec}) of the pointed space
 $\delta_{f'}^{\leq \omega}/\delta_{f'}^{\leq \epsilon}$.   
\end{proof}

\subsection{Recovering generating family homology}
\label{section. GFH from GS spectra}

In this section, we omit the coefficient abelian group $A$ from our homologies. The results are true regardless of choice of $A$.

\begin{defn} \label{defn:GF-homology} Given a Legendrian $\leg \subset J^1B$ with linear-at-infinity generating family $f\co B \times \rr^N \to \rr$,
the {\bf generating family homology groups} 
are defined as
 $$GFH_{k}(\leg, f) := H_{k+N}\left(\delta_f^{\leq \infin}, \delta_f^{\leq \epsilon}\right).$$
As before, $\epsilon$ and $\infin$ are from Choice~\ref{choice:epsilon-omega}.
\end{defn} 

\begin{notation}
\label{notation. c =0 grading}
Implicit in the notation $GFH_k$ is that we are using the $c=1$ grading convention -- see~\eqref{eqn. gfh is lch}. 
For the $c=0$ convention, we will explicitly include a superscript and set
 \eqn\label{eqn. c = 0 grading GFH}
 GFH_{k}^{c=0}(\leg, f) := H_{k+N+1}\left(\delta_f^{\leq \infin}, \delta_f^{\leq \epsilon}\right).
 \eqnd
\end{notation}
 
\begin{rem} \label{rem:GFH-indexing} Generating family homology for Legendrians have their roots in the generating family homology groups of links defined in \cite{traynor:gf-polys, lisa-jill}; 
these papers restrict to the setting of Legendrian links where each component has a unique {\em quadratic}-at-infinity generating family, up to fiber-preserving diffeomorphism and stabilization, and show
 that generating family homology is an effective invariant.
The version of generating family homology
for a single component Legendrian was defined in \cite{f-r}. 

Given a generating family $f: B \times \RR^N \to \RR$, one can index the $k$th generating family homology group to be either 
$$H_{k+N+1}\left(\delta_f^{\leq \infin}, \delta_f^{\leq \epsilon}\right) \quad{\text{or}} \quad
H_{k+N}\left(\delta_f^{\leq \infin}, \delta_f^{\leq \epsilon}\right).$$
To remove confusion, we have placed the superscript $c=0$ to indicate the first of these conventions~\eqref{eqn. c = 0 grading GFH} -- a convention we only use when this superscript is explicitly shown.
As demonstrated in \cite{f-r}, by choosing the $k+N+1$ option, indices match with 
linearized contact homology $LCH_k(\leg,\epsilon)$, in the sense that for $\leg \subset \rr^3$, for every linear-at-infinity generating family $f$ of $\leg$, there
exists an augmentation $\epsilon_f$ such that 
$$GFH_k^{c=0}(\leg, f; \ZZ/2\ZZ) \cong LCH_k(\leg, \epsilon_f; \ZZ/2\ZZ).$$ 
On the other hand, the $k+N$ convention -- which is the $c=1$ grading convention in~\eqref{eqn. gfh is lch}, and for which we never display a superscript ``$c=1$''  -- has its benefits (Remark~\ref{remark. grading shift in LCH}).
  As pointed out to us by the referee, the $c = 1$ convention is consistent with the usual grading induced by the bar construction; see Remark~\ref{rem:spectral lift conjectures}.
 \end{rem}

\begin{proof}[Proof of Theorem~\ref{thm:spec-lift} ]
We have that
	\begin{align}
	H_k(C(\leg,f;\Sphere))
		& := \colim_{i \to \infty} \widetilde{H}_{N+k+i} \Sigma^i ( \delta_{f}^{\leq \omega}/ \delta_{f}^{\leq \epsilon} )
		\label{homology-computation-1}\\
		& \cong \widetilde{H}_{N+k} \left( \delta_{f}^{\leq \omega}/ \delta_{f}^{\leq \epsilon} \right)
		\label{homology-computation-2}\\
		& \cong H_{N+k} \left(  \delta_{f}^{\leq \omega} , \delta_{f}^{\leq \epsilon}  \right)
		\label{homology-computation-3}\\
		& =: GFH_{k}(\leg,f) 
		\label{homology-computation-4}
	\end{align}
where $\widetilde{H}$ denotes reduced homology.
Here, \eqref{homology-computation-1} is the definition of homology of a (pre)spectrum -- see Definitions~\ref{defn. homology of prespectrum} and~\ref{defn. homology of spectrum}. 
The isomorphism~\eqref{homology-computation-2} is a consequence of the fact that (for $i$ large enough) the maps $\Sigma X_i \to X_{i+1}$ in Definition~\ref{defn:leg-spectrum} are homotopy equivalences by Proposition~\ref{prop:leg-stab}; this renders the sequential colimit constant up to isomorphism, meaning the colimit is computed at any stage (which we take to be $i=0$). 
The isomorphism~\eqref{homology-computation-3} is a standard result from algebraic topology. See, for example, \cite[Proposition 2.22]{Hatcher}. Namely, the quotient map 
$$q: \left(\delta_f^{\leq \infin}, \delta_f^{\leq \epsilon}\right) \to
\left( \delta_f^{\leq \infin}/ \delta_f^{\leq \epsilon}, \delta_f^{\leq \epsilon}/ \delta_f^{\leq \epsilon} 
\right)$$
induces isomorphisms
$$H_n\left(\delta_f^{\leq \infin}, \delta_f^{\leq \epsilon}\right) \xrightarrow{\cong}
H_n \left( \delta_f^{\leq \infin}/ \delta_f^{\leq \epsilon}, \delta_f^{\leq \epsilon}/ \delta_f^{\leq \epsilon} \right)
\cong 
\widetilde{H}_n \left( \delta_f^{\leq \infin}/ \delta_f^{\leq \epsilon} \right), \quad \forall n.$$
Finally,~\eqref{homology-computation-4} is the definition of $GFH$ (Definition~\ref{defn:GF-homology}).
  \end{proof}
 
 \section{Lagrangian fillings and sheared difference functions}
Fix a Legendrian $\leg$ with a generating family $f$. In this section, we assume it is possible to extend $\leg$ to a Lagrangian filling $L$,  and it is  also possible to extend 
 $f$ by an appropriately compatible generating family $F$ for $L$. 
In this special situation, we show that the spectrum $C(\leg, f; \mathbb S)$ reflects the stable topology of the filling (Theorem~\ref{thm:suspension computation}). 
 Proving this involves defining, from $F$, 
  a ``sheared difference function'' and showing that  restricting this sheared difference function to particular domains 
recovers topological information of the filling. 
 This section heavily builds off the constructions in \cite[Section 4]{S-T:obstruct}. 
  
\subsection{Fillings}
 A Lagrangian filling of a Legendrian  can be viewed as an extension of a Legendrian $\leg \subset J^1B$  to a Lagrangian submanifold $L$ inside the symplectization of $J^1B$ -- the symplectic manifold $\rr \times J^1B$ with symplectic form $d(e^s(\alpha))$, where $\alpha = dz - ydx$ defines the contact structure on $J^1B$.

\begin{defn} \label{defn:fill} 
Fix a Legendrian $\leg \subset J^1 B$.
A \dfn{Lagrangian filling} of $\leg$ is a properly embedded Lagrangian submanifold $\sclag  \subset \rr \times J^1B$ such that, for some $s_-, s_+ \in \rr$,
$$\begin{aligned}
\sclag  \cap \left((-\infty, s_-] \times J^1B\right)  &= \emptyset, \quad \text{and }\\
\sclag \cap \left([s_+, \infty) \times J^1B\right) &=  [s_+, \infty) \times \leg.
\end{aligned}$$
 \end{defn}

\begin{rem}
 A Legendrian submanifold $\leg \subset J^1B$ gives rise to a {\bf Lagrangian
cylinder} $Z_\leg = \rr \times \leg$.
A Lagrangian filling, by definition, has a cylindrical end coinciding with $Z_\leg$.
\end{rem}

\begin{rem}\label{rem. can set s-plus to zero}
By applying a
translation in the $\rr$-coordinate of $\rr \times J^1B$, which is a conformal symplectic transformation and thus preserves Lagrangians, we can always assume $s_+ = 0$.
\end{rem}

\subsection{Moving to cotangent bundles}

We will apply the technique of generating families to study Lagrangian fillings.  To use this technique, we need to do a change of coordinates so that we are working in a cotangent bundle.
\begin{notation}
We let $(t,T)$ denote coordinates on $T^*\RR_{>0}$ (so $t >0$ and $T \in T^*_t \RR_{>0}$) and  $(q,p)$ denote local coordinates on $T^*B$ (so $p \in T^*_q B)$. Accordingly, we let 
	\eqn\label{eqn. tqup}
	(t,q,T,p)	
	\eqnd
be (local) coordinates on $T^*(\rr_{>0} \times B)$. We will utilize the following primitive 1-form:
	\eqnn
	\lambda_{0} = -Tdt -pdq.
	\eqnnd 
The derivative of $\lambda_0$ is (one convention for) the canonical symplectic form on $T^*(\rr_{>0} \times M)$.
\end{notation}

\begin{notation}
To study Lagrangian fillings using generating families, we identify $\rr \times J^1B$ with  $T^*(\rr_{>0} \times B)$ by the symplectomorphism
\begin{equation} \label{eqn:id}
  \begin{split}
    \theta\co \rr \times J^1B &\to T^*(\rr_{>0} \times B) \\
    (s,x,y,z) &\mapsto (e^s, x, z, e^sy).
  \end{split}
\end{equation}
(See~\eqref{eqn. tqup} for the coordinates on the codomain.) A direct calculation shows that $\theta^*(\lambda_{0}) = e^s\alpha + df$, where $f:\rr \times J^1B \to \rr$ is given by $f(s,x,y,z) = -e^sz$, and thus $\theta$ preserves exact Lagrangian submanifolds.  We let 
	\eqn\label{eqn. clat and sclag}
	\clag := \theta(\sclag).
	\eqnd
 We relabel 
 	\eqnn
	e^{s_-} = t_-
	\qquad
	\text{and}
	\qquad
	e^{s_+} = t_+.
	\eqnnd
\end{notation}

\begin{rem} \label{rem:cylinderical-set t-plus to zero}
Observe that the cylindrical end of $\sclag$ becomes a 
conical end for $\clag$: the  non-varying  $\{s = \text{constant}\}$ Legendrian slices of $\sclag$ are mapped to 
$\{t = \text{positive constant}\}$ slices of $\clag$ with projections to $T^*B$ whose $p$-coordinates  expand with $t$. 
By Remark~\ref{rem. can set s-plus to zero}, we can always assume that $\clag$ is conical on $\{ t >  t_+ = 1 \}$. 
\end{rem}

For a Lagrangian filling $\sclag$ of $\leg$,  
we will be interested in the situation where $\ \clag = \theta(\sclag)  \subset T^*(\rp \times B)$  has a generating family $F$ that is an ``extension" of
a generating family $f$ for $\leg$ in the following sense.

\begin{defn} \label{defn:compatible}   
Suppose $\sclag$ is a Lagrangian filling of $\leg$ that is cylindrical over $\leg$ for $s \in [0,\infty)$ (see Remark~\ref{rem. can set s-plus to zero}),
 $f\co B \times \rr^N \to \rr$ is a  linear-at-infinity generating family for $\leg$, and
$F\co  (\rp \times B) \times \rr^N \to \rr$ is a generating family for $\clag$.
We then say that $(\sclag,F)$ {\bf is a  filling of} $(\leg,f)$ if there exists $0< t_- < 1$ such that
$$F(t, x, \eta)  = 
\begin{cases}
    t  f (x, \eta), &t \geq 1 \\
  t A(\e), &t \leq t_-, 
  \end{cases}$$ 
  where   $A(\e)$ is a non-zero linear function.
 Furthermore, we will say that $(\sclag,F)$ is a {\bf linearly-controlled} filling if 
  there exists a compact set $K \subset B \times \rr^N$ with complement $K^c$ such that
   $$F|_{(0, \infty) \times K^c} = t A(\e).$$
  \end{defn}

\begin{rem}\label{rem:sli}  
 In the terminology of \cite[Definition 4.3]{S-T:obstruct}, if $(\sclag,F)$ is a linearly-controlled filling of $(\leg, f)$, then 
 $F$ is ``slicewise linear-at-infinity."  This analytic condition will guarantee that 
Morse-theoretic arguments from Section~\ref{ssec:morse-theory} will apply. 
\end{rem}
 
\begin{rem}
Just as we generalized Definition~\ref{defn. classical linear-at-infinity} to Definition~\ref{defn. linear-at-infinity}, there is a natural reformulation of linear control that is preserved under fiberwise diffeomorphisms. We do not pursue this here.
\end{rem}

\subsection{Sheared difference functions} We saw in Section~\ref{ssec:diff} that the difference function $\delta_f$
associated to a generating family $f$ of a Legendrian $\leg$ captures the dynamically important Reeb chords of $\leg$. 
For our Lagrangian  $\clag \subset T^*(\rp \times B)$ with a conical end over the Legendrian $\leg$, 
we will be able to capture the topology of the filling 
\begin{equation} \label{eqn:cpct-filling}
L_0 = \sclag \cap \{s \leq 0\}
\end{equation}
 {\it and} the Reeb chords in the Legendrian boundary $\leg$
through a 	``sheared" difference function   denoted as $\Delta_F$. 
This will be the sum of the standard difference function associated to a
generating family $F(t,x,\e)$ for $\clag$ and a Hamiltonian $H(t)$.   
 The following definition is \cite[Definition 4.4]{S-T:obstruct} simplified since we are assuming $\sclag$ is a filling that is cylindrical for  $s \in [0,\infty)$.

\begin{choice}[$u$]\label{choice. u for shearing}
Choose $u$ such that
$$ 1 < u <  \min 
\left\{   \sqrt{ 1 + \lmin\,}, 2 
\right\}.$$
(For $\lmin$, see Notation~\ref{notation. lmin lmax}.)
\end{choice}

\begin{rem} \label{rem:sigma-possible}
  The upper bounds on $u$ will be used in the proofs of Lemmas~\ref{lem:Infin-mu} and \ref{lem:lambda-sigma}, where some analysis is done
for functions that will allow us to see $(\Delta_F^{\leq \Infin}, \Delta_F^{\leq -\mu})$ as a relative mapping cone. In particular, we will use that $u <  \sqrt{1 + \lmin\,}$ implies that
$0 < u^2 -1  <  \lmin$. 
\end{rem}

\begin{defn}[Shearing Functions] \label{defn:shear-H} 
Fix a Lagrangian filling  $\sclag$ of $\leg$  and let $\clag \subset T^*(\rp \times B)$ denote the corresponding Lagrangian~\eqref{eqn. clat and sclag}. We assume $t_+ = 1$ as in 
Remark~\ref{rem:cylinderical-set t-plus to zero} and that we have fixed $u$ as in Choice~\ref{choice. u for shearing}. We then let  
	\eqnn
	\mathcal{H}\left(\clag \right)
	\eqnnd
be the set of 
  decreasing, smooth functions $H\co \rr_{>0} \to \rr$ 
  satisfying  
  	\eqnn
  	H(t) = 
  	\begin{cases}
        0, & t \leq 1\\
    	- \frac{1}{2}( t - 1)^2, & t \geq u;
  	\end{cases}
  	\qquad\text{and}
	\qquad
  	\text{$H''(t) < 0$ on $(1,u)$}.
	\eqnnd
See Figure~\ref{fig:H-dH}.  We will call any $H \in \mathcal{H}(\clag)$ a \dfn{shearing function}.
\end{defn}

\begin{figure}
 \centerline{\includegraphics[height=1in]{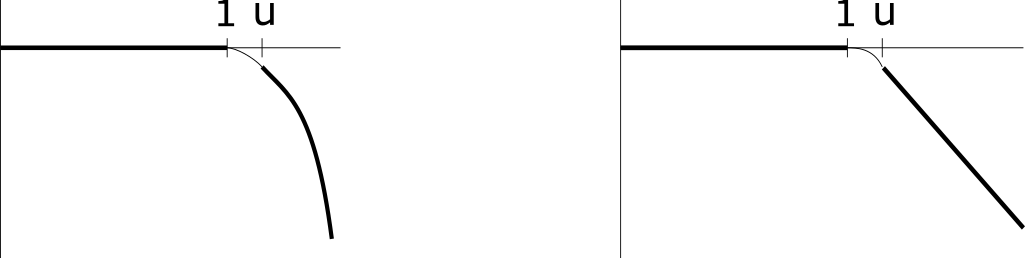}}
  \caption{A schematic picture of a shearing function $H(t)\in \mathcal H(\clag)$ and its derivative $H'(t)$.
  }
  \label{fig:H-dH}
\end{figure}

For $H \in \mathcal{H}\left( \clag\right)$, let $X_H$ denote the associated Hamiltonian vector field, using the convention $\iota_{X_H} \infin = -dH$. If $\phi_H^1$ denotes the time-1 flow of this vector field and $F$ generates $\clag$, then   $F(t,x,\e) + H(t)$ generates $\phi_H^1(\clag)$.   
In parallel to the definition of the difference function $\delta_f$ in Definition~\ref{defn:leg-diff}, a shearing function $H \in \mathcal{H}\left( \clag \right)$ may be used to define a ``sheared" difference function:

\begin{defn} \label{defn:pert-diff} Suppose $(\sclag,F)$ is a filling of $(\leg, f)$, where $f$ is linear-at-infinity.  Then 
given  $H \in \mathcal{H}\left( \clag \right)$, the \dfn{sheared difference function} $\Delta_F\co \rp \times B \times \rr^{N} \times \rr^{N} \to \rr$ is defined as: \begin{equation}  
  \Delta_F(t,x,\e,\te) = F(t,x,\te) + H(t) - F(t,x,\e).
\end{equation}
\end{defn}

\begin{rem}
  We may apply a  fiber-preserving diffeomorphism so that $f(x, \e)$ agrees with the linear function $A(\e)$ outside a compact set.  Having done this, 
observe that for any filling $(\sclag,F)$ of $(\leg,f)$,  
\begin{equation} \label{eqn:compat} 
\Delta_F(t,x,\e,\te) = \begin{cases}
 t \delta_f (x, \e, \te) + H(t), & t \geq 1\\
 t A(\e,\te), & t \leq t_-,
\end{cases}
\end{equation}
where $\delta_f$ is the difference function  for $(\leg, f)$, and $A(\e,\te) = A(\te) - A(\e)$ is a non-zero linear function. 
\end{rem}

In parallel to Proposition~\ref{prop:leg-crit-point},  the critical points of $\Delta_F$ detect information about the intersection points of
$\clag$ and $\phi^1_H({\clag})$:

\begin{prop} \label{prop:crit-pt-wgh} \cite[Proposition 4.5]{S-T:obstruct}  
Suppose that $(\sclag, F)$ is a linearly-controlled filling of $(\leg, f)$ (Definition~\ref{defn:compatible}) and  $H \in \mathcal{H}(\clag)$ (Definition~\ref{defn:shear-H}). 
Then 
\begin{enumerate}
	\item There is a one-to-one correspondence between intersection points in ${\clag} \cap \phi^1_H\left({\clag}\right)$ and critical points of $\Delta_F$.  

	\item Moreover, 
	\eqn\label{eqn. reeb chords via sheared function}
	\left( {\clag} \cap \phi^1_H\left({\clag}\right) \cap \{ t \in (u, \infty)\} \right)
	=
	\left( {\clag} \cap \phi^1_H\left({\clag}\right) \cap \{ t \in [u, \lmax + 1]\}\right),
	\eqnd
and there is a one-to-one correspondence between Reeb chords $\gamma$ of $\leg$ and the set~\eqref{eqn. reeb chords via sheared function}. In fact, the critical value of the point corresponding to the Reeb chord $\gamma$ has $t$-coordinate given by $\ell(\gamma)+1$, where $\ell(\gamma)$ is the length of the Reeb chord (Notation~\ref{notation:length}), and critical value
$$ \ell(\gamma) + \frac{\ell(\gamma)^2}{2} > 0.$$
	\item All other critical points 
lie in the critical submanifold with boundary $$C=\left\{(t,x,\e,\e) \;:\; (t,x,\e) \in \Sigma_F \text{ with }
    t \in [t_-, 1] \right\};$$   $C$ is diffeomorphic 
      to $L_0 = \sclag \cap \{ s \in [s_-, 0] \}$,  has critical value $0$, and, for generic $F$, is non-degenerate of index $N$.  
\end{enumerate}
\end{prop}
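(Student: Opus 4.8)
The plan is to prove Proposition~\ref{prop:crit-pt-wgh} by analyzing the critical point equations for $\Delta_F$ region by region, using the explicit slicewise formula~\eqref{eqn:compat}. Since this is cited as \cite[Proposition 4.5]{S-T:obstruct}, the strategy mirrors that reference, but I would reorganize the proof around the three $t$-regimes dictated by the shearing function $H$ and the slicewise structure of $F$: the conical region $t \le t_-$, the ``collar'' region $t \in [t_-,1]$ where $H\equiv 0$, and the genuinely sheared region $t \ge 1$.

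First I would write out $d\Delta_F = 0$. The fiber-variable equations are $\partial_\eta F(t,x,\eta) = 0$ and $\partial_{\te} F(t,x,\te)=0$, so $(t,x,\eta)$ and $(t,x,\te)$ both lie in $\Sigma_F$; the $x$-equation says the two points of $\clag$ over $(t,x)$ have equal $p$-coordinates (i.e. lie on a common Reeb-type chord in the $T^*B$ directions); and the $t$-equation reads $\partial_t F(t,x,\te) + H'(t) - \partial_t F(t,x,\eta) = 0$. Interpreting $\partial_t F = -T$ along $\Sigma_F$, this is precisely the condition that the two points, after applying the Hamiltonian time-$1$ flow $\phi_H^1$ (which translates the $T$-coordinate by $-H'$ on each slice, since $H$ depends only on $t$), coincide. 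This establishes the bijection in (1) between $\crit(\Delta_F)$ and $\clag \cap \phi_H^1(\clag)$ — the standard generating-family-for-intersection-points dictionary, here applied to $F$ and $F + H\circ t$.

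Next, for (2) and (3), I would split on the value of $t$ at a critical point. If $t \le t_-$: by~\eqref{eqn:compat}, $\Delta_F = tA(\eta,\te)$ with $A$ a nonzero linear function, so $\partial_\eta \Delta_F, \partial_{\te}\Delta_F$ never vanish — no critical points here. If $t \in [t_-,1]$: here $H \equiv 0$ and $H' \equiv 0$, so the $t$-equation becomes $\partial_t F(t,x,\te) = \partial_t F(t,x,\eta)$; combined with the other equations and the fact that $F$ generates the \emph{embedded} Lagrangian $\clag$, the two points of $\clag$ must coincide, forcing $\eta = \te$ (using that the fiber-critical set maps injectively to $\clag$). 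This identifies the critical locus in this range with $C = \{(t,x,\eta,\eta) : (t,x,\eta)\in\Sigma_F,\ t\in[t_-,1]\}$, which via the diffeomorphism $\Sigma_F \cong \clag$ is exactly $L_0$; the critical value is $\Delta_F = F + 0 - F = 0$. The index-$N$ nondegeneracy (in the Morse--Bott sense, normal to $C$) for generic $F$ follows from the local model: normal to the diagonal $\eta = \te$, $\Delta_F$ looks like $F(t,x,\te) - F(t,x,\eta)$, whose Hessian on $\Sigma_F \times \Sigma_F$ transverse directions is a nondegenerate form of signature $(N,N)$ — half from the $\te$-direction, half from $\eta$, with the $\eta$-direction contributing the index $N$ (cf.\ the computation in Proposition~\ref{prop:leg-crit-point} and its proof via Remark~\ref{rem:stab-choice-difference}). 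Finally, for $t \ge 1$: use~\eqref{eqn:compat}, $\Delta_F = t\delta_f(x,\eta,\te) + H(t)$. For $t$ with no critical points of $\delta_f$ other than the zero locus, nontrivial critical points of $\Delta_F$ require $\delta_f$ to have a genuine Reeb-chord critical point, which by Proposition~\ref{prop:leg-crit-point} occurs at critical value $\ell(\gamma)>0$ (for the chord $\gamma$) or $-\ell(\gamma)<0$ (for the reverse). Writing the $t$-equation $\delta_f(x,\eta,\te) + H'(t) = 0$ and solving: at a Reeb-chord point $\delta_f = \ell(\gamma)$, so $H'(t) = -\ell(\gamma)$; since $H'(t) = -(t-1)$ for $t\ge u$, this gives $t = \ell(\gamma)+1$, which lies in $[u,\lmax+1]$ because $\ell(\gamma)\ge\lmin > u^2-1$ (Choice~\ref{choice. u for shearing} and Remark~\ref{rem:sigma-possible}) and $\ell(\gamma)\le\lmax$; this proves~\eqref{eqn. reeb chords via sheared function} and the bijection with Reeb chords. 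The critical value is then $\Delta_F = t\ell(\gamma) + H(t) = (\ell(\gamma)+1)\ell(\gamma) - \tfrac12\ell(\gamma)^2 = \ell(\gamma) + \tfrac{\ell(\gamma)^2}{2} > 0$. The reverse chords ($\delta_f = -\ell(\gamma)$) would need $H'(t) = \ell(\gamma) > 0$, impossible since $H$ is decreasing; so they contribute nothing in $t\ge1$, which is why one gets a one-to-one (not two-to-one) correspondence, unlike Proposition~\ref{prop:leg-crit-point}(2).

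The main obstacle I expect is the boundary behavior at $t = 1$ and $t = t_-$: one must check carefully that no critical points are lost or spuriously created at the seams where the piecewise formula~\eqref{eqn:compat} is glued (i.e.\ that on the transition region $t\in(t_-,1)$ the slicewise-linear tail genuinely has no critical points, using linear control, and that at $t=1$ the chords of $\delta_f$ at critical value in $(0,u^2-1)$ — if any existed — are excluded by Choice~\ref{choice. u for shearing}). This is exactly where the bounds $u < \sqrt{1+\lmin}$ and $\lmin>0$ enter, and where the \emph{linearly-controlled} hypothesis (as opposed to merely linear-at-infinity slicewise) does real work in ruling out critical points escaping to infinity in the fiber directions uniformly in $t$. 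The transversality/genericity statement for the index of $C$ is routine Morse--Bott theory once the local normal form is identified, so I would relegate it to a short lemma or cite the parallel computation in \cite{S-T:obstruct}.
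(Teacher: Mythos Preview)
The paper does not give its own proof of this proposition: it is stated with a citation to \cite[Proposition 4.5]{S-T:obstruct}, and only Remark~\ref{rem:Reeb-crit-pts} records the single computation that a Reeb chord of length $\ell(\gamma)$ produces a critical point where $-H'(t) = \ell(\gamma)$. Your reconstruction is correct and follows precisely the approach of the cited reference: write out $d\Delta_F = 0$, interpret the $\eta,\te,x,t$ equations as the intersection condition $\clag \cap \phi_H^1(\clag)$, and then split on the three $t$-regimes determined by the slicewise formula~\eqref{eqn:compat} and the shape of $H$. Your computation of the critical value $\ell(\gamma) + \tfrac{1}{2}\ell(\gamma)^2$ and the $t$-coordinate $\ell(\gamma)+1$ is exactly the content of Remark~\ref{rem:Reeb-crit-pts}, and your exclusion of solutions in $t\in(1,u)$ via $-H'(t) \le u-1 < u^2 - 1 < \lmin$ is the intended use of Choice~\ref{choice. u for shearing}.

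One small point: your interpretation ``$\partial_t F = -T$ along $\Sigma_F$'' carries a sign depending on the convention for $\lambda_0$, but this is immaterial to the bijection in (1), which is just the standard generating-family dictionary for intersections. The boundary concern you flag at $t=1$ is harmless: $H'(1)=0$ forces $\delta_f = 0$ there, giving exactly $\partial C$, consistent with the $t\in[t_-,1]$ analysis.
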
	

\begin{rem} \label{rem:Reeb-crit-pts}
 Calculations, as shown in the proof of \cite[Proposition 4.5]{S-T:obstruct},  show that one gets a critical point corresponding to the Reeb chord with length $\ell(\gamma)$ when
$-H'(t) = \ell(\gamma)$.
 \end{rem}

\subsection{Sublevel spaces over the conical end}
From Proposition~\ref{prop:crit-pt-wgh}, we understand the critical values of $\Delta_F$. 
The overall strategy of this section is to carefully choose positive constants $\mu, \Omega$ such that it is possible to  realize the pair $(\Delta_F^{\leq \Infin}, \Delta_F^{\leq -\mu})$ as a relative mapping cone.  To do this,
we will argue that over $[u, \infty)$, the pair of spaces can be identified with the relative cone on $(\delta_f^{\leq \infin}, \delta_f^{\leq \epsilon})$.

\begin{choice}[$\Infin$, $\mu$] \label{choice:mu-Infin} 
Given a linearly-controlled filling $(\sclag,F)$ of $(\leg, f)$, 
for $H \in \mathcal H(\clag)$, choose $\mu, \Infin > 0$    such that
\begin{equation}   \label{eqn:mu-Omega-bounds}
\begin{aligned}
0<   &\mu <    \min \left\{\lmin, \frac{(u-1)^2}{2} \right\},\\
    \lmax + \frac{\lmax^2}{2} < & \Infin.
    \end{aligned}
\end{equation}
(For $\lmin, \lmax$, see Notation~\ref{notation. lmin lmax}.)
\end{choice}

\begin{notation}
It will be convenient to work over subsets corresponding to intervals in the $\rp$-coordinate.
For $J  \subset \rp$, we use the shorthand
  \begin{gather*} \label{eqn:Delta-restrict}
    {\Delta_F}|_J = \Delta_F|_{\{(t,x,\e, \te)\;:\; t \in J\}}, \\
\Delta_F^{\leq \alpha}|_J = \Delta_F^{\leq \alpha} \cap \{t \in J\} = \left\{(t,x,\e,\te)\;:\; t \in J,\, \Delta_F(t,x,\e,\te) \leq \alpha \right\}.
\end{gather*}
\end{notation}

\begin{notation}[$\lambda_\alpha(t)$]
To identify the fibers of $\Delta_F^{\leq \alpha}|_J$ over $t \in J \subset [1, \infty)$, consider the function
\begin{equation}\label{eqn:lambda}
\lambda_\alpha(t) := \frac{1}{t}(\alpha-H(t)).
\end{equation}
We call $\lambda_\alpha(t)$ the {\bf  $\alpha$-level $\Delta$-$\delta$ translation function}.
\end{notation}

The name for this function is explained by the following remark.

\begin{rem}
Since $\clag$ is conical over $\{t \in [1,\infty)\}$, and  $F$  is a conical extension of $f$, if $J \subset [1, \infty)$,    Equation~(\ref{eqn:compat})  shows that we have:
\begin{equation} \label{eqn:end-level}
\begin{aligned}
  \Delta_F^{\leq \alpha}|_J &= \left\{ (t,x,\e,\te) \;:\; t \in J,\  \delta_f(x,\e,\te) \leq \frac{1}{t}\left(\alpha - H(t)  \right) = \lambda_\alpha(t) \right\}. \\  
\end{aligned}
\end{equation}
That is, the fiber of $\Delta_F^{\leq \alpha}|_J$ above $t \in J$ is 
	\eqnn
	\Delta_F^{\leq \alpha}|_{t \in J} = \{t\} \times \delta_f^{\leq \lambda_{\alpha}(t)}
	\cong
	\delta_f^{\leq \lambda_{\alpha}(t)}.
	\eqnnd
\end{rem}

We will do some basic analysis to understand the $\Delta$-$\delta$ translation functions
$\lambda_\alpha(t)$ for $t \geq 1$, when $\alpha = \Infin, -\mu$ (Choice~\ref{choice:mu-Infin} ).

\begin{lem} \label{lem:Infin-mu} 
  The $\Delta$-$\delta$ translation functions 
have limiting behavior  $$\lim_{t \to \infty} \lambda_{\alpha}(t) = \infty,$$
for any $\alpha$.  For any $\alpha < 0$, $\lambda_{\alpha}(t)$ is increasing on $(0, \infty)$. 
Furthermore, for $u$ as in Choice~\ref{choice. u for shearing} and $\Omega, \mu$ as in Choice~\ref{choice:mu-Infin}, we have:
\begin{enumerate}
	\item\label{item. Omega regular value} $\lambda_\Infin(t)  > \lmax$, for all $t \in (0,\infty).$
	\item\label{item. mu regular value} $-\lmin < \lambda_{-\mu}(1) <  0 <  \lambda_{-\mu}(u) < \lmin$.
 \end{enumerate}
 \end{lem}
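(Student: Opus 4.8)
The plan is to reduce the entire statement to one-variable calculus for $H$ and $\lambda_\alpha$. The object that controls everything is the auxiliary function
\[
g(t) := H(t) - t\,H'(t),
\]
which appears because differentiating $\lambda_\alpha(t) = \tfrac1t(\alpha - H(t))$ by the quotient rule gives $\lambda_\alpha'(t) = t^{-2}\bigl(g(t) - \alpha\bigr)$. So the first step is to record the elementary properties of $g$. Since $H$ is smooth, so is $g$; on $(0,1]$ we have $H\equiv 0$, hence $g\equiv 0$ there; and $g'(t) = -t\,H''(t)$ is strictly positive on $(1,\infty)$ because $H'' < 0$ on $(1,u)$ and $H''\equiv -1$ on $[u,\infty)$. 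Therefore $g$ is non-negative on $(0,\infty)$, vanishes exactly on $(0,1]$, increases strictly on $[1,\infty)$, equals $\tfrac12(t^2-1)$ on $[u,\infty)$, and tends to $+\infty$ as $t\to\infty$.

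With these facts in hand, the first two assertions follow instantly. For the limit: when $t\geq u$ one has $\lambda_\alpha(t) = \tfrac{\alpha}{t} + \tfrac{(t-1)^2}{2t}$, whose second summand tends to $+\infty$ while the first tends to $0$. For monotonicity when $\alpha<0$: from $g\geq 0$ we get $\lambda_\alpha'(t)\geq -\alpha\,t^{-2} > 0$ on all of $(0,\infty)$, so $\lambda_\alpha$ is strictly increasing.

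For item~\eqref{item. Omega regular value} I would locate the global minimum of $\lambda_\Infin$. Since $g$ is continuous, vanishes on $(0,1]$, and increases strictly to $+\infty$, there is a unique $t^\ast$ with $g(t^\ast) = \Infin$; and because $\Infin > \lmax + \tfrac{\lmax^2}{2} > \tfrac12(u^2-1) = g(u)$ --- using $u^2-1 < \lmin \leq \lmax$ from Remark~\ref{rem:sigma-possible} --- we get $t^\ast > u$, hence $t^\ast = \sqrt{2\Infin+1}$ from the explicit formula for $g$. The sign of $\lambda_\Infin' = t^{-2}(g-\Infin)$ then shows $\lambda_\Infin$ decreases strictly on $(0,t^\ast]$ and increases strictly on $[t^\ast,\infty)$, so its minimum is $\lambda_\Infin(t^\ast)$. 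Rewriting $g(t^\ast)=\Infin$ as $\Infin - H(t^\ast) = -t^\ast H'(t^\ast)$ gives $\lambda_\Infin(t^\ast) = -H'(t^\ast) = t^\ast - 1 = \sqrt{2\Infin+1}-1$, and the bound $\Infin > \lmax + \tfrac{\lmax^2}{2}$ forces $2\Infin+1 > (\lmax+1)^2$, so this minimum exceeds $\lmax$. For item~\eqref{item. mu regular value} I would simply evaluate: $\lambda_{-\mu}(1) = -\mu$, which lies in $(-\lmin,0)$ since $0<\mu<\lmin$; and $\lambda_{-\mu}(u) = \tfrac1u\bigl(\tfrac12(u-1)^2 - \mu\bigr)$, which is positive because $\mu < \tfrac12(u-1)^2$ and is at most $\tfrac{(u-1)^2}{2u} < \tfrac12(u^2-1) < \tfrac12\lmin < \lmin$ (using $u>1$ and $u^2-1 < \lmin$).

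The only step that calls for any real care is the identification of the minimizing point $t^\ast$ in item~\eqref{item. Omega regular value} together with the inequality $t^\ast > u$; this is what licenses the explicit formula for $g$ and hence the clean closed form for the minimum value. Everything else is routine bookkeeping with the piecewise description of $H$.
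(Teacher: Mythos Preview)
Your proof is correct and follows essentially the same approach as the paper: both introduce the auxiliary function $g(t)=H(t)-tH'(t)$, establish its monotonicity and nonnegativity, locate the unique critical point $t^\ast$ of $\lambda_\Infin$ via $g(t^\ast)=\Infin$, and compute $\lambda_\Infin(t^\ast)=-H'(t^\ast)$. The only differences are cosmetic: you solve explicitly for $t^\ast=\sqrt{2\Infin+1}$ and compare $\sqrt{2\Infin+1}-1$ to $\lmax$, while the paper instead compares $t^\ast$ to $\lmax+1$ and uses monotonicity of $-H'$; and your bound $\lambda_{-\mu}(u)\leq \tfrac{(u-1)^2}{2u}<\tfrac12(u^2-1)<\tfrac12\lmin$ is slightly tighter and avoids invoking $u<2$ directly, whereas the paper uses $u<2$ to pass from $\tfrac{(u-1)^2}{2}$ to $\tfrac{u-1}{2}$.
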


\begin{proof}  Direct calculations show that, for any $\alpha$, by construction of $H(t)$ in Definition~\ref{defn:shear-H}, 
$$\lim_{t \to \infty} \lambda_{\alpha}(t) = \lim_{t \to \infty}\frac{\alpha - H(t)}{t}  = \infty,$$
 and the derivative of $\lambda_\alpha(t)$ is
\begin{equation}\label{eqn:lambda-prime}
\lambda_\alpha'(t) = \frac{H(t) - t H'(t) - \alpha}{t^2}.
\end{equation}
By construction of $H$, 
\begin{equation}\label{eqn:H-Hprime}
H(t) - tH'(t) \geq 0, \quad t \in (0, \infty),
\end{equation}
since $H(t) - t H'(t)= 0$ on $t \leq 1$, and $H(t) - t H'(t)$ is strictly increasing on $t > 1$.
 To see this last statement, observe that 
$\frac{d}{dt}\left(H(t) - tH'(t)\right) = -tH''(t)$,  
and since $H''(t) < 0$ for $t > 1$,  $H(t) - tH'(t)$  is strictly increasing on $(1,\infty)$. 

When $\alpha = -\mu < 0$,  since $H(t) - tH'(t) \geq 0$, we see 
that $\lambda_{-\mu}'(t) > 0$, and thus 
$\lambda_{-\mu}$ is  strictly increasing on $(0, \infty)$. Furthermore,  Choice~\ref{choice:mu-Infin} guarantees 
 $$-\lmin < \lambda_{-\mu}(1) = -\mu < 0, \quad \text{ since } 0 <  \mu < \lmin.$$
 We also know that by Choice~\ref{choice. u for shearing} and Choice~\ref{choice:mu-Infin} that
 $$\begin{aligned}
 \lambda_{-\mu}(u) = \frac{1}{u} (-\mu - H(u)) 
 &=  \frac{1}{u} \left(-\mu + \frac{(u-1)^2}{2} \right) \\
  &<   \left(-\mu + \frac{u-1}{2} \right), \quad \text{ since } 1< u < 2,   \mu < \frac{(u-1)^2}{2}  \\
   &< \left(-\mu + \frac{\lmin}{2} \right), \quad \text{ since } u <    \sqrt{\lmin + 1\,} < \lmin + 1  \\
  &<  \frac{\lmin}{2}, \quad \text{ since } \mu  > 0\\
  &< \lmin.
 \end{aligned} $$

When $\alpha = \Infin$, we find that $\lambda_\Infin(t)$ is decreasing when $H(t) - tH'(t)< \Infin$,  and increasing when $H(t) - tH'(t) > \Infin$. Since $H(t) - tH'(t)$ is strictly
increasing when $t > 1$, there will be a unique $t_c > 1$ such that $H(t_c) - tH'(t_c) = \Infin$.  We want to show that $\lambda_{\Infin}(t_c) > \lmax$.  Since $H(t_c) - tH'(t_c) = \Infin$,
$$\lambda_{\Infin}(t_c) = \frac{\Infin- H(t_c)}{t_c} = -H'(t_c).$$
As mentioned in Remark~\ref{rem:Reeb-crit-pts},  at $\overline t := \lmax + 1$, $-H'(\overline{t}) = \lmax$.  Since  $-H'(t)$ is an increasing function, it suffices to show that
$\overline{t} < t_c$. We next use the fact that $H(t) - tH'(t)$ is an increasing function to argue that $t_c > \overline t := \lmax + 1$. 
By construction of $H$, on $(u, \infty)$,
 $H(t) - tH'(t) = \frac12(t+1)(t-1)$, and thus, since, by Choice~\ref{choice:mu-Infin}, $\lmax + \frac{\lmax^2}{2} < \Infin$,
 $$H(\overline t) - tH'(\overline t) =  \frac12(\lmax + 2)\lmax = \lmax + \frac{\lmax^2}{2} < \Infin.$$
 Thus $u < \overline t = \lmax + 1 <  t_c$, and, since $-H'(t)$ agrees with  $t-1$ on $t > u$,  
 $$\lambda_{\Infin}(t_c) =  -H'(t_c) > -H'(\overline t) = \overline t -1 = \lmax,$$
 as desired.
      \end{proof}

 On the path to showing that $(\Delta_F^{\leq \Infin}, \Delta_F^{\leq -\mu})$ is a mapping cone, the following lemma will be used to 
understand sublevel sets over $\{t \in [1,u]\}$.    Recall Remark~\ref{rem:sigma-possible} tells us that $0 < u^2 -1 <  {\lmin}$.

 \begin{lem}\label{lem:lambda-sigma} If $\sigma$ is chosen such that
 $u^2 -1  < \sigma < {\lmin}$, then for  $t \in [1,u]$, $\lambda_{\sigma}$ is decreasing, and $0 <\lambda_{\sigma}(t) < {\lmin}$.
 \end{lem}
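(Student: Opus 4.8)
Here is a plan for proving Lemma~\ref{lem:lambda-sigma}.

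The plan is to read everything off the explicit derivative formula~\eqref{eqn:lambda-prime},
	\eqnn
	\lambda_\sigma'(t) = \frac{H(t) - tH'(t) - \sigma}{t^2},
	\eqnnd
combined with the monotonicity facts about $H(t) - tH'(t)$ already established in the proof of Lemma~\ref{lem:Infin-mu}: this quantity vanishes for $t \le 1$, is strictly increasing on $(1,\infty)$, and on $(u,\infty)$ equals $\tfrac12(t+1)(t-1)$. First I would show $\lambda_\sigma$ is decreasing on $[1,u]$. Since $H(t) - tH'(t)$ is increasing on $[1,u]$, its supremum there is attained at $t = u$, where, using that the quadratic branch $H(t) = -\tfrac12(t-1)^2$ (and hence $H'(t) = -(t-1)$) governs $H$ at $t = u$ by continuity, one computes $H(u) - uH'(u) = \tfrac12(u^2 - 1)$. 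By hypothesis $\sigma > u^2 - 1 > \tfrac12(u^2 - 1)$ (and $u^2 - 1 > 0$ by Choice~\ref{choice. u for shearing}), so $H(t) - tH'(t) - \sigma < 0$ throughout $[1,u]$, whence $\lambda_\sigma'(t) < 0$ on that interval.

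Given this monotonicity, the two bounds follow by evaluating at the endpoints. At $t = 1$ we have $H(1) = 0$, so $\lambda_\sigma(1) = \sigma$; since $\sigma < \lmin$ this yields the upper bound $\lambda_\sigma(t) \le \lambda_\sigma(1) = \sigma < \lmin$ for all $t \in [1,u]$. At $t = u$ we have $H(u) = -\tfrac12(u-1)^2$, so $\lambda_\sigma(u) = \tfrac1u\big(\sigma + \tfrac12(u-1)^2\big) > 0$ because $\sigma > 0$; hence $\lambda_\sigma(t) \ge \lambda_\sigma(u) > 0$ for all $t \in [1,u]$. Combining gives $0 < \lambda_\sigma(t) < \lmin$ on $[1,u]$, as claimed.

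I do not expect any serious obstacle here; the argument is routine bookkeeping with the piecewise description of $H$ from Definition~\ref{defn:shear-H}. The only point requiring mild care is the evaluation at the junction $t = u$ — one must use that $H$ and $H'$ agree with the quadratic branch at $t = u$ (by continuity) and invoke the monotonicity of $H(t) - tH'(t)$ from the proof of Lemma~\ref{lem:Infin-mu} rather than re-deriving it, so that the bound $\tfrac12(u^2-1) < \sigma$ can be applied uniformly on $[1,u]$.
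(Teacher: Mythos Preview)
Your proposal is correct and follows essentially the same argument as the paper's proof: both use the derivative formula~\eqref{eqn:lambda-prime}, invoke the monotonicity of $H(t)-tH'(t)$ on $[1,u]$ to bound it by $H(u)-uH'(u)=\tfrac12(u^2-1)<u^2-1<\sigma$, and then read off the bounds on $\lambda_\sigma$ from its endpoint values $\lambda_\sigma(1)=\sigma$ and $\lambda_\sigma(u)=\tfrac1u(\sigma+\tfrac12(u-1)^2)$.
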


 \begin{proof}  By Equation~(\ref{eqn:lambda-prime}), to show that $\lambda_\sigma$ is decreasing, it suffices to show that 
 $$H(t) - tH'(t) < \sigma.$$
 As in the argument for Equation~(\ref{eqn:H-Hprime}), $H(t) - tH'(t)$ is increasing on $[1, u]$. Furthermore, by the construction of $H$ on $[u, \infty)$, and
 the  hypothesis that
 $u^2 -1< \sigma$, we have that 
 $$H(u) - uH'(u) = \frac12 (u+1)(u-1)< u^2 - 1 < \sigma.$$
 Thus $\lambda_\sigma(t)$ is strictly decreasing on $[1,u]$, and $\lambda_\sigma(t)$  obtains a maximum at $1$ with value
 $$\lambda_\sigma(1) = \sigma < {\lmin}, $$
 and a minimum at $u$, which satisfies 
 $$\lambda_\sigma(u) = \frac{1}{u}\left( \sigma + \frac{1}{2}(u-1)^2 \right) > 0.$$
  \end{proof}

\subsection{Important pairs associated to a filling}
We will be interested in studying the pair  $\left(\Delta_F^{\leq \Infin}, \Delta_F^{\leq -\mu}\right)$, where $\Infin, \mu$ satisfy the inequalities in Choice~\ref{choice:mu-Infin}.
We can apply the analysis of the $\Delta$-$\delta$ translation functions $\lambda_\Infin$, $\lambda_{-\mu}$ to understand this pair on  $\{t = u\}$, $\{t \geq u\}$, $\{t \leq u\}$, as well as the entire domain $\{t \in (0, \infty)\}$.

\begin{lem} \cite[Lemma 6.2]{S-T:obstruct} \label{lem:pos-end} 
\begin{enumerate}
\item There is a diffeomorphism of pairs
$$
    \left(\Delta_F^{\leq \Infin}|_{\{u\}}, \Delta_F^{\leq -\mu}|_{\{u\}} \right) \cong \left(\delta_f^{\leq \infin}, \delta_f^{\leq \epsilon}\right).    $$
\item Moreover, for all $v$ sufficiently large, there is a homotopy equivalence 
 $$
    {\rho}\co \left(\Delta_F^{\leq \Infin}|_{[u, \infty)}, \Delta_F^{\leq -\mu}|_{[u, \infty)}\right) \to 
    \left(\delta_{f}^{\leq \infin} \times [u,v], \delta_f^{\leq \epsilon} \times [u,v] \cup \delta_f^{\leq \infin} \times\{v\} \right),   
$$
  with ${\rho}|_{\Delta_F^{\leq \Infin}|_{\{u\}}}  \simeq \id$.
  \end{enumerate}
  \end{lem}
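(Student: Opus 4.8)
\emph{Proof plan.} The idea is to follow \cite[Lemma 6.2]{S-T:obstruct}, extracting everything from the fiberwise description~\eqref{eqn:end-level} of $\Delta_F^{\leq\alpha}$ over the conical range $\{t\geq 1\}$ together with the quantitative analysis of the $\Delta$-$\delta$ translation functions in Lemma~\ref{lem:Infin-mu}. For part (1): by~\eqref{eqn:end-level} the slice $\Delta_F^{\leq\alpha}|_{\{u\}}$ is canonically $\{u\}\times\delta_f^{\leq\lambda_\alpha(u)}$, so the pair in question is identified with $\bigl(\delta_f^{\leq\lambda_\Infin(u)},\delta_f^{\leq\lambda_{-\mu}(u)}\bigr)$. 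By Lemma~\ref{lem:Infin-mu}\eqref{item. Omega regular value}--\eqref{item. mu regular value} we have $\lambda_\Infin(u)>\lmax$ and $0<\lambda_{-\mu}(u)<\lmin$, while by Proposition~\ref{prop:leg-crit-point} the critical values of $\delta_f$ are exactly $0$ and the values in $[\lmin,\lmax]\cup[-\lmax,-\lmin]$. Since $f$ is linear-at-infinity, $\delta_f$ carries a complete gradient-like flow whose speed is bounded below along $\delta_f^{-1}$ of any compact interval of regular values (Proposition~\ref{prop. derivative of difference function is well-behaved} and Remark~\ref{remark. gradient flow complete for linear-at-infinity function}). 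Flowing for appropriate times, first in a $\delta_f$-band around $[\lmax,\infin]$ and then in a disjoint band around $[\epsilon,\lambda_{-\mu}(u)]$, produces an ambient diffeomorphism of $B\times\RR^{2N}$ carrying $\delta_f^{\leq\lambda_\Infin(u)}$ onto $\delta_f^{\leq\infin}$ and $\delta_f^{\leq\lambda_{-\mu}(u)}$ onto $\delta_f^{\leq\epsilon}$, i.e.\ the asserted diffeomorphism of pairs.

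For part (2), the first step is to trivialize the ``big'' piece. The total space $\Delta_F^{\leq\Infin}|_{[u,\infty)}$ fibers over $[u,\infty)$ with fiber $\delta_f^{\leq\lambda_\Infin(t)}$ over $t$, and by Lemma~\ref{lem:Infin-mu}\eqref{item. Omega regular value} every $\lambda_\Infin(t)$ exceeds $\lmax$; since $\delta_f$ has no critical values above $\lmax$, a $t$-parametrized gradient flow of $\delta_f$ gives a diffeomorphism $\Phi\co\Delta_F^{\leq\Infin}|_{[u,\infty)}\xrightarrow{\cong}\delta_f^{\leq\infin}\times[u,\infty)$ over $[u,\infty)$ restricting over $\{u\}$ to the diffeomorphism of part (1). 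Because $\Phi$ is built from gradient flows of $\delta_f$ it sends sublevel sets of $\delta_f$ to sublevel sets fiberwise, so $\Phi(\Delta_F^{\leq-\mu}|_{[u,\infty)})=\{(p,t)\,:\,\delta_f(p)\leq\psi(t)\}$ for a continuous $\psi\co[u,\infty)\to(0,\infin)$ with $\psi(u)=\epsilon$; and since $\lambda_{-\mu}$ is increasing with $\lambda_{-\mu}(t)\to\infty$ (Lemma~\ref{lem:Infin-mu}), one gets $\psi(t)\to\infin$, hence $\psi(t)>\lmax$ for all large $t$. Replacing $\psi$ by its running maximum (which only adds points that can be swept leftward in $t$ into the original region, a homotopy equivalence of pairs) we may assume $\psi$ non-decreasing. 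Now choose $v$ large enough that $\psi(v)>\lmax$; then $\psi(t)>\lmax$ for all $t\geq v$, and over the range where $\psi(t)>\lmax$ the inclusion $\delta_f^{\leq\psi(t)}\hookrightarrow\delta_f^{\leq\infin}$ of fibers is an equivalence (no intervening critical values). One then writes down a deformation retraction of $(\delta_f^{\leq\infin}\times[u,\infty),\{\delta_f\leq\psi(t)\})$ onto $(\delta_f^{\leq\infin}\times[u,v],\,\delta_f^{\leq\epsilon}\times[u,v]\cup\delta_f^{\leq\infin}\times\{v\})$: it collapses $[v,\infty)$ onto $\{v\}$ while using the gradient flow of $\delta_f$ to compensate so as to stay inside the respective sublevel sets, and, over $[u,v]$, it uses the Morse-theoretic deformation retractions of Lemma~\ref{lem:stab2-suspension} (valid by Proposition~\ref{prop. derivative of difference function is well-behaved}) to normalize the sub-fiber to the constant fiber $\delta_f^{\leq\epsilon}$ below the Reeb band and to slide the ``staircase'' portion $\epsilon<\delta_f(p)\leq\psi(t)$ up and onto the top face $\delta_f^{\leq\infin}\times\{v\}$. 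By construction it fixes the $t=u$ slice, so pre-composing with $\Phi$ yields $\rho$ with $\rho|_{\{u\}}\simeq\id$.

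The main obstacle is the last step: constructing that deformation retraction so that it is \emph{simultaneously} compatible with the total space and with the sub-space as the sub-fiber $\delta_f^{\leq\psi(t)}$ passes through the Reeb-chord band $[\lmin,\lmax]$ of critical values of $\delta_f$ and then opens up to the full fiber, all while fixing the $t=u$ slice so that parts (1) and (2) match. This is exactly the kind of Morse-theoretic interpolation underlying Remark~\ref{remark. sublevel set pair suspension} and Lemma~\ref{lem:stab2-suspension}, and the separations recorded in Choice~\ref{choice. u for shearing} and Choice~\ref{choice:mu-Infin} (distilled into Lemma~\ref{lem:Infin-mu}: $\lambda_\Infin$ bounded below by $\lmax$, and $\lambda_{-\mu}$ strictly increasing from below $\lmin$ to $+\infty$) are precisely what make these retractions go through. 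These bookkeeping details are carried out in full in \cite[Lemma 6.2]{S-T:obstruct}, to which we refer for the verification.
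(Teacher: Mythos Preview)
Your argument is correct and follows essentially the same route as the paper: both proofs use the fiberwise description~\eqref{eqn:end-level} together with the bounds from Lemma~\ref{lem:Infin-mu} to identify the slice at $t=u$, and then exploit that $\lambda_{-\mu}$ is strictly increasing with $\lambda_{-\mu}(v)>\lmax$ for large $v$ to build the deformation retraction over $[u,\infty)$. Two small remarks: the ``running maximum'' step is unnecessary, since $\lambda_{-\mu}$ is already strictly increasing by Lemma~\ref{lem:Infin-mu} and your trivialization $\Phi$ can be chosen to only reparametrize $\delta_f$-values above $\lmax$ (leaving $\psi=\lambda_{-\mu}$ monotone); and your citation of Lemma~\ref{lem:stab2-suspension} is slightly off---that lemma concerns stabilization and suspension, whereas the retractions you need are the basic Morse deformations of Lemma~\ref{lem:retract} (the paper defers these details to \cite[Lemmas~5.6 and~5.8]{S-T:obstruct}).
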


\begin{proof}   As mentioned in Equation~(\ref{eqn:end-level}),
 $$\Delta_F^{\leq \alpha}|_{[u, \infty)} = \left\{ \left(t, \delta_f^{\leq \lambda_{\alpha}(t)} \right): t \in [u,\infty) \right\},$$    
 for $\lambda_{\alpha} (t)$ as defined in Equation~(\ref{eqn:lambda}).
Now we use our analysis of the functions $\lambda_{-\mu}(t)$ and $\lambda_{\Infin}(t)$
  for $t \geq u$.  By Lemma~\ref{lem:Infin-mu},  $$0 < \lambda_{-\mu}(u) < \lmin, \qquad \lmax  < \lambda_{\Infin}(u),$$
  which gives rise to the diffeomorphism
 $$ \left(\Delta_F^{\leq \Infin}|_{\{u\}}, \Delta_F^{\leq -\mu}|_{\{u\}} \right) \cong \left(\delta_f^{\leq \infin}, \delta_f^{\leq \epsilon}\right).$$
 Furthermore, by Lemma~\ref{lem:Infin-mu} we  know that $\lambda_{-\mu}(t)$ is strictly increasing on $t \geq u$, and,  for $v$ sufficiently large,
  $\lmax < \lambda_{-\mu}(v)$.  After applying some fiberwise
  homotopy equivalences, as in \cite[Lemma 5.8]{S-T:obstruct}, we can apply \cite[Lemma 5.6]{S-T:obstruct} to construct a deformation retraction 
  $$ {\rho}\co \left(\Delta_F^{\leq \Infin}|_{[u, v]}, \Delta_F^{\leq -\mu}|_{[u, v]}\right)  \to 
  \left(\delta_{f}^{\leq \infin} \times [u,v], \delta_f^{\leq \epsilon} \times [u,v] \cup \delta_f^{\leq \infin} \times\{v\} \right).$$
  \end{proof}

 Our next lemma studies the pair $\left(\Delta_F^{\leq \Infin}, \Delta_F^{\leq -\mu}\right)$ on $\{t \leq u\}$.  Here we see that the pair can be
 identified with a pair that can be identified with a quotient of a trivial disk bundle over the Lagrangian filling $L$. For the hypothesis of this lemma, recall
 that by Remark~\ref{rem:sigma-possible} our restrictions on $u$ guarantees that $ 0 < u^2 -1 < {\lmin}$.
   Portions of the proof of the following lemma employ standard Conley index theory, \cite{Conley}.  

\begin{lem}   \cite[Lemma 6.3, Lemma 6.5]{S-T:obstruct} \label{lem:neg-end}
For any $\sigma > 0$ such that
\begin{equation}
  u^2 -1  < \sigma < {\lmin},
\end{equation}
\begin{enumerate}
\item there exists a deformation retraction
$${\rho}\co \left(\Delta_F^{\leq \Infin}|_{(0, u]}, \Delta_F^{\leq -\mu}|_{(0, u]}\right)
    \to 
    \left(\Delta_F^{\leq \sigma}|_{(0, u]},  \Delta_F^{\leq -\mu}|_{ (0, u]} \right).
    $$

\item Let $D^N$ denote a trivial $N$-dimensional disk bundle over $L_0$ (Equation~\ref{eqn:cpct-filling}) and $S^{N-1}$ the associated sphere bundle.
Then there exists a homotopy equivalence 
    $$ \left(\Delta_F^{\leq \sigma}|_{(0, u]},  \Delta_F^{\leq -\mu}|_{ (0, u]} \right) 
    \simeq \left( D^N(L_0),  S^{N-1}(L_0) \cup D^N(\partial L_0) \right).$$
\end{enumerate}
\end{lem}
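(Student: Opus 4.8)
The plan is to establish both parts as the direct analogues of \cite[Lemma 6.3 and Lemma 6.5]{S-T:obstruct}, exploiting the simplification that our filling is conical over $\{t\geq 1\}$: this gives the closed formula $\Delta_F = t\,\delta_f + H(t)$ for $t\geq 1$, so that over $t\in[1,u]$ the sublevel set $\Delta_F^{\leq\alpha}$ has fiber $\delta_f^{\leq\lambda_\alpha(t)}$, whose behavior is governed by Lemmas~\ref{lem:Infin-mu} and~\ref{lem:lambda-sigma}. The structural fact underlying both parts is that over the slab $\{t\in(0,u]\}$ the only critical points of $\Delta_F$ are those of the critical submanifold $C$, at critical value $0$: by Proposition~\ref{prop:crit-pt-wgh} the Reeb-chord critical points occur at $t$-coordinate $\ell(\gamma)+1$, and $\ell(\gamma)\geq\lmin > u^2-1 > u-1$ (see Remark~\ref{rem:sigma-possible}) forces $\ell(\gamma)+1 > u$, so none of them lie in $\{t\leq u\}$; the remaining critical points are precisely $C\subset\{t_-\leq t\leq 1\}$, of critical value $0$.

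For part (1): since $-\mu < 0 < \sigma$, there are no critical values of $\Delta_F$ in $[\sigma,\Infin]$ over $\{t\in(0,u]\}$, so I would apply the standard Morse-theoretic deformation retraction — fix a gradient-like vector field $X$ for $\Delta_F$, reparametrize so that $d\Delta_F(X)\equiv -1$ on $\{\sigma\leq\Delta_F\leq\Infin\}\cap\{t\leq u\}$ (admissible because that region is critical-point-free, and the linear-at-infinity behavior of $F$ together with $\Delta_F = tA(\e,\te)\to 0$ as $t\to 0$ confine the relevant flow lines), and flow for time at most $\Infin-\sigma$. This fixes $\Delta_F^{\leq\sigma}|_{(0,u]}$ pointwise, hence also $\Delta_F^{\leq-\mu}|_{(0,u]}$, so it is a deformation retraction of pairs. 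The only point needing care is that the flow must stay in $\{t\leq u\}$: on $\{t=u\}$ one has $\partial_t\Delta_F = \delta_f + H'(u) = \delta_f-(u-1)$, and on $\{t=u\}\cap\{\Delta_F\geq\sigma\}$ the identity $\Delta_F = u\,\delta_f-\tfrac12(u-1)^2$ forces $\delta_f\geq\lambda_\sigma(u) = \tfrac1u(\sigma+\tfrac12(u-1)^2) > u-1$, the last inequality being exactly $\sigma > \tfrac12(u^2-1)$ (a consequence of $\sigma > u^2-1$); hence $\partial_t\Delta_F > 0$ there, and choosing the $t$-component of $X$ to be a negative multiple of $\partial_t\Delta_F$ near $\{t=u\}$ makes the descending flow move strictly into $\{t<u\}$.

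For part (2): here one runs the Conley-index analysis near $C$. First handle the slab $\{t\in[1,u]\}$, which contains no critical points of $\Delta_F$: by Lemma~\ref{lem:lambda-sigma} the fibers $\delta_f^{\leq\lambda_\sigma(t)}$ of $\Delta_F^{\leq\sigma}$ form a nested family with threshold in $(0,\lmin)$, and by Lemma~\ref{lem:Infin-mu} the fibers $\delta_f^{\leq\lambda_{-\mu}(t)}$ of $\Delta_F^{\leq-\mu}$ are nested with threshold increasing through $0$; together with a retraction of the purely linear slab $\{t\leq t_-\}$, this lets one deformation retract the pair $(\Delta_F^{\leq\sigma},\Delta_F^{\leq-\mu})|_{(0,u]}$ onto a pair supported on a neighborhood of $C$. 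Since $C\cong L_0$ is a nondegenerate critical submanifold-with-boundary of index $N$ with $\partial C\cong\leg=\partial L_0$ (Proposition~\ref{prop:crit-pt-wgh}(3)), the Morse lemma for nondegenerate critical submanifolds — standard Conley-index theory, cf.~\cite{Conley}, as in \cite[Lemma 6.5]{S-T:obstruct} — identifies $\Delta_F^{\leq\sigma}$ near $C$ with the disk bundle of the negative normal bundle $\nu^-C$ and $\Delta_F^{\leq-\mu}$ with its sphere bundle; a fiberwise Hessian computation at the diagonal gives $\nu^-C\cong E^-\oplus E^+\cong\underline{\RR^N}$, where $E^\pm$ are the negative and positive eigenbundles of the fiber Hessian, so the disk bundle is the trivial $D^N(L_0)$. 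Finally, tracking how the retracted slab $\{t\in[1,u]\}$ attaches along $\partial C$ — over $\{t=1\}$ it is $\delta_f^{\leq\lambda_\sigma(1)} = \delta_f^{\leq\sigma}\simeq D^N(\leg)$, contributing the summand $D^N(\partial L_0)$ of the exit set and glued to $S^{N-1}(L_0)$ along $S^{N-1}(\partial L_0)$ — assembles the pair $(D^N(L_0),\ S^{N-1}(L_0)\cup D^N(\partial L_0))$.

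The main obstacle is the bookkeeping of part (2): making precise the parametrized normal form near the critical submanifold-with-boundary $C$, the trivialization of $\nu^-C$ for a possibly non-orientable $L_0$, and the identification of the exit set as $S^{N-1}(L_0)\cup D^N(\partial L_0)$. This is exactly where \cite[Lemma 6.3 and Lemma 6.5]{S-T:obstruct} do the work, and I would import their constructions essentially verbatim, our conical-end hypothesis merely supplying the explicit formulas that drive the slab analysis.
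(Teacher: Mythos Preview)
Your overall architecture matches the paper's, but two steps in the sketch do not go through as written.

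In part~(1), the claim that ``$\Delta_F = tA(\e,\te)\to 0$ as $t\to 0$'' confines the flow is false: $A$ is a nonzero linear function and hence unbounded in $(\e,\te)$, so $\{\Delta_F\geq\sigma\}$ meets every slice $\{t=\text{const}\}$ for $t<t_-$, and on that intersection $\partial_t\Delta_F = A(\e,\te)\geq\sigma/t>0$, so the negative gradient has strictly negative $\partial_t$-component and drives points toward $t=0^+$. The paper repairs this by introducing a cutoff $\tau\co(0,1]\to[0,1]$ with $\tau\equiv 0$ on $(0,w]$ and $\tau\equiv 1$ on $[t_-,1]$, and replacing the gradient on $\{t\leq t_-\}$ by the gradient-like field $X=\tau(t)\,A(\e,\te)\,\partial_t + t\,\grad A$; one checks $\langle X,\grad\Delta_F\rangle=\tau(t)A^2+t^2\|\grad A\|^2>0$ (since the linear function $A$ has no critical points), and now $-X$ is tangent to $\{t=w\}$.

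In part~(2), your trivialization ``$\nu^-C\cong E^-\oplus E^+$'' via the fiber Hessian of $F$ does not work: nothing in Assumption~\ref{assumption. f generic} forces $\hess_\e F$ to be nondegenerate along $\Sigma_F$, so $E^-\oplus E^+$ can have rank strictly less than $N$, and in any case the full Hessian of $\Delta_F$ has $(t,x)$--$\e$ cross terms that your block-diagonal picture ignores. The paper does not decompose the eigenbundle this way; it invokes Corollary~\ref{cor:neg-index-trivial}, whose proof uses the swap involution $(\e,\te)\mapsto(\te,\e)$. This sends the difference function to its negative, hence interchanges the positive and negative eigenbundles along the diagonal; the map $\id+d(\swap)$ then carries the negative eigenbundle isomorphically onto the normal bundle of $\Sigma_F$ inside $(\rr_{>0}\times B)\times\rr^N$, which is trivial by Proposition~\ref{prop. trivial normal}. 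Deferring the details to \cite{S-T:obstruct} is reasonable, but the mechanism you sketched is not the one that actually establishes triviality.
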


\begin{proof}  
Fix a    product Riemannian structure  on $(0, u] \times B \times \rr^{2N}$. 
 The idea for the map $\rho$ is to follow the negative gradient vector field of $\Delta_F$ until
we reach level $\sigma$.
We need to be sure that this vector field is integrable on our domain, which amounts to
checking that the vector field is parallel to or inward-pointing along the sets $\{t = w \} \cup \{t = u\}$,
for all sufficiently small $w>0$ .

Fix $w$ such that  $0 < w < t_-$.  
As noted in Equation~(\ref{eqn:compat}), for $t < t_-$,  $\Delta_F(t,x,\e,\te) = tA(\e, \te)$, where $A(\e, \te)$ is a non-zero linear function,
and for $t \geq 1$, $\Delta_F(t,x,\e,\te) = t\delta_f(x,\e,\te) + H(t)$. 
Thus we have
$$ \grad \Delta_F(t, x, \e, \te)   = \begin{cases}
 A(\e,\te)   \pd{}{t} + t \grad A(\e,\te), & t \leq t_- \\
 \left( \delta_{f}(x,\e,\te) + H'(t) \right) \pd{}{t} + t \grad \delta_f(x,\e,\te), & t \geq 1.
\end{cases}$$
 On $\{t \in (0, t_-]\}$, we will modify the gradient of $\Delta_F$ to one that is integrable   
by ``removing" the $\pd{}{t}$ portions as we approach the set  $\{t \in (0, w]\}$.
   Choose $\tau \co (0,1] \to [0,1]$ to be a smooth function with $\tau|_{[t_-, 1]} = 1$
 and $\tau^{-1}\{ 0 \} = (0, w]$.
 Then let $X$ be the vector field
 $$ X (t, x, \e, \te)  = \begin{cases}
 \left( \tau(t) A(\e,\te)  \right) \pd{}{t} + t \grad A(\e,\te), & t \leq t_- \\
\grad \Delta_F(t, x, \e, \te), &t \geq t_-.\\
\end{cases}$$
By construction, $X$ is a gradient-like vector field for $\Delta_F|_{(0, 1]}$ when
  $t \in [t_-, 1]$.  
   When $t \leq t_-$, 
 $$
 \begin{aligned}
 \langle X, \grad \Delta_F \rangle 
 &= \langle X,      A(\e,\te)   \pd{}{t} + t \grad A(\e,\te)      \rangle \\
& = 
 \tau(t) \left( A(\e,\te)  \right)^2 + t^2 \| \grad A \|^2 \geq 0.
 \end{aligned}$$
Since the non-zero linear function $A$ will not have any critical points, we see that 
 $X$ is a gradient-like vector field for $\Delta|_{(0, t_-]}$. 

We cannot apply this argument to modify the gradient of $\Delta_F$ near $\{ t = u \}$ since
 $\delta_f$ will have critical points.  Here we can do a direct argument using the 
 assumption that $  u^2 - 1 < \sigma$. When $t \in [1, u]$,
  $\Delta_F(t,x,\e,\te) = t\delta_f(x,\e,\te) + H(t)$, and due to the convexity condition on $H(t)$
$$\frac{\partial}{\partial t} \Delta_F(t,x,\e,\te) = \delta_f(x,\e,\te) + H'(t) > \delta_f(x,\e,\te) - (u-1).
$$
When $\Delta_F(t,x,\e,\te) > \sigma$, and $t \in [1,u]$,  since $H(t) \leq 0$, we have that
$$t \delta_f(x,\e,\te) + H(t) > \sigma \implies t\delta_f(x,\e,\te) > \sigma.$$
Thus 
$$\delta_f(x,\e,\te) > \frac{\sigma}{t} \geq \frac{\sigma}{u}, \quad \text{ since }  1 \leq t \leq u.$$
Thus we find that on $\{t \in [1,u]\}$,
$$
\frac{\partial}{\partial t} \Delta_F(t,x,\e,\te) > \delta_f(x,\e,\te) - (u-1) \geq  \frac{\sigma}{u} - (u-1),$$
which will be positive since $\sigma > (u+1)(u-1) > u(u-1)$.  
The positivity of $\frac{\partial}{\partial t} \Delta_F$ on $\{ t \in [1,u]\}$ guarantees that $-\grad \Delta_F$ is
inward pointing on $\{t = u\}$.
\vskip .1in
  Lastly we will sketch how to show that if $\sigma < \lmin$, there is a homotopy equivalence
    \begin{equation*} \left(\Delta_F^{\leq \sigma}|_{(0, u]},  \Delta_F^{\leq -\mu}|_{(0, u]} \right) 
    \simeq \left(D^N(L_0),  \left(S^{N-1}(L_0) \cup D^N(\partial L_0)\right) \right).
    \end{equation*}
    As an overview of the strategy, we first show that there is a homotopy equivalence 
    \begin{equation}\label{eqn:step1} \left(\Delta_F^{\leq \sigma}|_{(0, u]},  \Delta_F^{\leq -\mu}|_{ (0, u]} \right)
    \simeq 
    \left(\Delta_F^{\leq \sigma}|_{[t_-, 1]},  \Delta_F^{\leq -\mu}|_{ [t_-, 1]} \cup \Delta_F^{\leq \sigma}|_{\{1\}} \right)
   \end{equation}
and then apply a Morse-Bott argument to construct a homotopy equivalence   
\begin{equation} \left(\Delta_F^{\leq \sigma}|_{[t_-, 1]},  \Delta_F^{\leq -\mu}|_{ [t_-, 1]} \cup \Delta_F^{\leq \sigma}|_{\{1\}} \right)
\simeq
\left(D^N(L_0),  \left(S^{N-1}(L_0) \cup D^N(\partial L_0)\right) \right).
\end{equation}
To verify (\ref{eqn:step1}), first observe that since $L$ is a Lagrangian filling, on $\{t \in (0,t_-]\}$, $\Delta_F(t,x,\e,\te) = tA(\e,\te)$.  
  We now apply the analysis of the $\lambda_{\sigma}, \lambda_{-\mu}$ functions from Lemmas~\ref{lem:lambda-sigma} and \ref{lem:Infin-mu}.
After doing fiberwise flows,  the arguments \cite[Lemma 5.4, Corollary 5.5]{S-T:obstruct} show that there is
 a deformation retraction
$$\left(\Delta_F^{\leq \sigma}|_{(0, t_-]},  \Delta_F^{\leq -\mu}|_{ (0, t-]} \right) \to \left(\Delta_F^{\leq \sigma}|_{\{t_-\}},  \Delta_F^{\leq -\mu}|_{\{t_-\}} \right).$$
For $t \in [1,u]$, after applying a fiberwise homotopy equivalence, we can apply 
\cite[Lemma 5.6]{S-T:obstruct} to construct a deformation retraction
 $$\left(\Delta_F^{\leq \sigma}|_{[1, u]},  \Delta_F^{\leq -\mu}|_{ [1,u]} \right) \to
\left(\Delta_F^{\leq \sigma}|_{\{1\}} \times [1,u],  \Delta_F^{\leq -\mu}|_{\{1\}} \times [1,u] \cup  
\Delta_F^{\leq \sigma}|_{\{u\}}\right).$$
Again by the analysis of $\lambda_{\sigma}$, we see that we have a homotopy equivalence 
$$\left(\Delta_F^{\leq \sigma}|_{\{1\}} \times [1,u],  \Delta_F^{\leq -\mu}|_{\{1\}} \times [1,u] \cup  
\Delta_F^{\leq \sigma}|_{\{u\}}\right)
\simeq \left(\Delta_F^{\leq \sigma}|_{\{1\}},  \Delta_F^{\leq \sigma}|_{\{1\}} \right).
$$
Combining the above analysis for $t < t_-$ and $t > 1$ gives the desired homotopy equivalence stated in (\ref{eqn:step1}). 

Now we apply a Morse-Bott argument to analyze the topology as we pass through the critical level $0$ on the
way up from $\Delta_F^{\leq -\mu}|_{[t_-, 1]}$ to $\Delta_F^{\leq \sigma}|_{[t_-, 1]}$. Recall that there is a 
non-degenerate critical
submanifold with boundary  
$(C,\partial C) \subset \left( \{ t \in [t_-, 1] \}, \{ t = 1\} \right)$, which is diffeomorphic to $(L_0,\partial L_0)$, of index $N$ and critical value $0$. 
We employ a simple modification of the standard constructions of Morse-Bott theory to allow for critical submanifolds with boundary.
The argument in \cite[Lemma 6.5]{S-T:obstruct} explains that  the effect of passing through the critical level is to attach an $N$-disk  bundle over $C$ to $\Delta^{-\mu}_{[t_-, 1]}$ along its unit sphere bundle, and this $N$-disk bundle is isomorphic to the negative-eigenvalue bundle associated to the Hessian of $\Delta_F$, which by Corollary~\ref{cor:neg-index-trivial}, is trivial.
We thus obtain a homotopy equivalence between the pairs
  $$\left(\Delta^{\sigma}_{[t_-, 1]}, \Delta^{-\mu}_{[t_-, 1]} \cup \Delta_{\{1\}}^\sigma \right) \simeq 
   \left(D^N(L_0), S^{N-1}(L_0) \cup D^N(\partial L_0)\right).$$
\end{proof}

Our last lemma tells us that on the full domain $\{t \in (0, \infty)\}$,  our pair
$\left(\Delta_F^{\leq \Infin}|_{(0,\infty)}, \Delta_F^{\leq -\mu}|_{(0, \infty} \right)$ is a ``trivial" pair.

\begin{lem}   \label{lem:contractible pair for (-oo,oo)}
 There is a deformation retraction of  $\Delta_F^{\leq \Infin}|_{(0,\infty)}$ to 
 $\Delta_F^{\leq -\mu}|_{(0,\infty)}$.
\end{lem}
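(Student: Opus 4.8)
The plan is to build the deformation retraction from the negative gradient flow of $\Delta_F$, modified near the critical locus so that no trajectory stalls. First I would record the critical structure from Proposition~\ref{prop:crit-pt-wgh}: on the domain $\{t\in(0,\infty)\}$ the critical values of $\Delta_F$ are $0$, attained along the index-$N$ critical submanifold (with boundary) $C\cong L_0$ lying in $\{t\in[t_-,1]\}$, together with the positive values $\ell(\gamma)+\frac{1}{2}\ell(\gamma)^2$, attained at isolated points whose $t$-coordinate is $\ell(\gamma)+1\in(u,\lmax+1]$. By Choice~\ref{choice:mu-Infin} every one of these values lies in the open interval $(-\mu,\Infin)$, so a bare ``no critical values'' argument is unavailable; the whole point is to exploit the $t$-coordinate, along which $\Delta_F$ is forced to decrease near each piece of the critical locus.

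The three local facts that make this work are: (i) at a Reeb critical point $t=\ell(\gamma)+1\ge u$ we have $\partial_t^2\Delta_F=H''(t)<0$ (Definition~\ref{defn:shear-H}), so the point is a strict local maximum of $\Delta_F$ in the $t$-direction, and moving in the direction $+\partial/\partial t$ strictly decreases $\Delta_F$ while pushing off the critical point; (ii) along $C$ one has $\Delta_F(t,x,\e,\e)=t\,\delta_f(x,\e,\e)+H(t)=H(t)$ for $t\ge 1$, and since $H$ is strictly decreasing with $H(1)=0$, combining the $+\partial/\partial t$ push with the gradient descent in the $2N$ normal directions carries a neighborhood of $C$ past its boundary $\partial C\subset\{t=1\}$ into levels $<0$; (iii) for $t\le t_-$ one has $\Delta_F=t\,A(\e,\te)$ with $A$ a fixed nonzero linear functional (Equation~\eqref{eqn:compat}), hence no critical points there, and the flow of $-\nabla_{\mathrm{fib}}\Delta_F$ drives $\Delta_F\to-\infty$ at fixed $t>0$ with no need to decrease $t$.

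With these in hand I would construct, by a partition of unity, a smooth vector field $Y$ on $\{t\in(0,\infty)\}\times B\times\RR^{2N}$ that equals $-\nabla\Delta_F$ away from a small neighborhood of the critical locus and, near the critical locus, equals $-\nabla\Delta_F$ plus a positive multiple of $\partial/\partial t$ (supported in $\{t<\lmax+2\}$, so that $t$ stays bounded along $Y$-trajectories), arranged so that the $\partial/\partial t$-component of $Y$ is everywhere $\ge 0$, so that $d\Delta_F(Y)\le 0$ everywhere, and so that $Y$ has no zeros on $\{\Delta_F>-\mu\}$ with $d\Delta_F(Y)<0$ there away from the isolated critical points. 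Completeness of the trajectories we use follows from the $t$-boundedness together with the linear-at-infinity behavior of $f$ in the fiber, as in Remark~\ref{remark. gradient flow complete for linear-at-infinity function}. Since $Y$ has no equilibria in $\{\Delta_F>-\mu\}$ and $\Delta_F$ strictly decreases along $Y$ near the regular level $-\mu$, every trajectory through a point $p\in\Delta_F^{\le\Infin}|_{(0,\infty)}$ meets $\{\Delta_F=-\mu\}$ in a finite, continuously varying time $\tau(p)$ (set $\tau(p)=0$ when $\Delta_F(p)\le-\mu$); then $r(p,s):=\phi^Y_{\,s\tau(p)}(p)$ is the desired strong deformation retraction of $\Delta_F^{\le\Infin}|_{(0,\infty)}$ onto $\Delta_F^{\le-\mu}|_{(0,\infty)}$, and it stays in $\{t>0\}$ because $Y$ never lowers $t$ except where $\Delta_F=t\,A(\e,\te)$, where $t$ is kept bounded away from $0$ by fact (iii).

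The main obstacle is assembling (i)--(iii) into a single globally defined smooth $Y$ that simultaneously never increases $\Delta_F$, has no zeros on $\{\Delta_F>-\mu\}$, and generates a complete flow with $t$ bounded along the relevant trajectories; this is a careful but routine partition-of-unity construction. It is exactly here that the bounds of Choice~\ref{choice. u for shearing} and Choice~\ref{choice:mu-Infin} get used in an essential way — they force the Reeb critical $t$-values into the region $t>u$ where $H''<0$, and make $-\mu$ and $\Infin$ regular values. The most delicate bookkeeping is near $C$: matching the Morse--Bott normal form of Proposition~\ref{prop:crit-pt-wgh} with the $\partial/\partial t$-push across $\partial C$. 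This is the contact-geometric analogue of flowing the unstable manifold of $C$ out through the cylindrical end, in the spirit of the Conley-index arguments of~\cite{Conley} used elsewhere in this section.
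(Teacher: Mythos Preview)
Your approach differs from the paper's and has a genuine gap. The central problem is the assertion that one can arrange $d\Delta_F(Y)\le 0$ everywhere by adding a $+\partial/\partial t$ correction near the critical locus. At an isolated Reeb critical point $(t_c,x_c,\e_c,\te_c)$ the $t$-direction is a strict local maximum, so along the ray $t\mapsto(t,x_c,\e_c,\te_c)$ one has $\partial_t\Delta_F>0$ for $t$ slightly below $t_c$; there $|\nabla\Delta_F|^2=(\partial_t\Delta_F)^2$ is $O((t_c-t)^2)$ while the correction term $c\chi\,\partial_t\Delta_F$ is $O(t_c-t)$, forcing $d\Delta_F(Y)>0$ close to the critical point. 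On the Morse--Bott submanifold $C\subset\{t\le 1\}$ one has $\partial_t\Delta_F=H'(t)=0$, so $d\Delta_F(Y)=0$ identically on $C$ --- contradicting your own requirement that $d\Delta_F(Y)<0$ away from the \emph{isolated} critical points. Without a genuine Lyapunov inequality the finite-arrival-time function $\tau(p)$ is not well-defined; nothing prevents trajectories from oscillating between regions where $\Delta_F$ increases and decreases. (The side claim that the $\partial_t$-component of $Y$ is everywhere $\ge 0$ already fails for $-\nabla\Delta_F$ on $\Delta_F^{\le\Infin}$, and in any case does not imply $d\Delta_F(Y)\le 0$.)

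The paper sidesteps all of this by deforming the \emph{function} rather than constructing a flow. Using the linearly-controlled hypothesis it writes $\Delta_F=D^c_t+tA(\e,\te)+H(t)$ with $D^c_t$ compactly supported in $B\times\RR^{2N}$ for each $t$, then linearly scales $D^c_t$ to zero through the family $\Delta_s=(1-s)D^c_t+tA+H_s$, and observes that $\Delta_1=tA+H_1$ has no critical points at all (since $A$ is a nonzero linear functional). The Critical Non-Crossing Lemma (Lemma~\ref{lem:crit-non-crossing}) then gives the homotopy equivalence of sublevel-set pairs for $\Delta_0=\Delta_F$ and $\Delta_1$, and the retraction for $\Delta_1$ is immediate. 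No analysis of the dynamics near the critical set is needed.
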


\begin{proof}   
We can write $f(x, \e) = f^c(x, \e) + A(\e)$, where $f^c(x, \e)$ is compactly supported. Then 
$$\Delta_F(t, x, \e, \te) = D^c_t(x, \e, \te) + tA(\e, \te) + H(t),$$
where $D^c_t: B \times \rr^N \times \rr^N \to \rr$ is compactly supported, for all $t$,  vanishes when $t \leq t_-$ and agrees with $t(f^c(x, \te) - f^c(x,\e))$ for $t \geq 1$,
and $A(\e,\te) = A(\te) - A(\e)$. 
Then consider 
$$\Delta_s(t, x, \e, \te) = (1-s) D^c_t(x, \e, \te) + tA(\e, \te) + H_{s}(t),$$
for $H_{s}(t) \in \mathcal{H}\left(\clag_s \right)$ chosen with respect to the  (singular) Lagrangian $\clag_s$ generated by $(1-s)(D^c_t(x, \e)) + A(\e)$.
Choose paths $\Infin_s$ and $\mu_{s}$ such that $\Infin_0 = \Infin$, $\mu_0 = \mu$, and all critical values of $\Delta_s$ lie in $[-\mu_s, \Infin_s]$.  Notice that $\Delta_1(t, x, \e, \te) = tA(\e, \te) + H_{1}(t)$, and hence has no critical values.  As in the proofs of Lemma~\ref{lem:pos-end} and Lemma~\ref{lem:neg-end}, we choose $w, v \in \rr_{>0}$ such that
$0< w < t_-$ and $u < v$ satisfies $\lambda_{-\mu}(v) > \lmax$.
If we can show that there exists 
 an integrable, gradient-like vector field $X_s$ for $\Delta_s$ on $[w, v] \times B \times \rr^{2N}$, then the Critical Non-Crossing Lemma~\ref{lem:crit-non-crossing} implies that 
$$
 \left((\Delta_0)_{[w, v]}^{\leq \Infin_0}, (\Delta_0)_{[w, v]}^{\leq -\mu_0} \right) \\
\simeq \left((\Delta_1)_{[w, v]}^{\Infin_1}, (\Delta_1)_{[w, v]}^{-\mu_1} \right). $$
 Then the fact that $\Delta_1$ has no critical values implies
 $$
 \left((\Delta_1)_{[w, v]}^{\Infin_1}, (\Delta_1)|_{[w, v]}^{-\mu_1} \right) \simeq
\left((\Delta_1)_{[w, v]}^{-\mu_1}, (\Delta_1)_{[w, v]}^{-\mu_1} \right),
 $$
 as desired.  The construction of the integrable, gradient-like vector field $X_s$ for
$\Delta_s$ on $[w, v] \times B \times \rr^{2N}$ is as in the argument in the proof of Lemma~\ref{lem:neg-end}.

\end{proof}

\section{Lifting the Seidel isomorphism (Theorem~\ref{thm:suspension computation})}\label{sec:proof} 
We first outline the strategy of the proof (executed in Section~\ref{section. proof of main theorem}) to orient the reader.
Background results on homotopy theory are included in the appendices, and are referenced throughout.
  
Fix a linearly-controlled filling $(\sclag,F)$ of $(\leg,f)$, a shearing function $H \in \mathcal{H}\left(\claghiro \right)$ for $u>1$, and constants
$\Infin, \mu$. We let $N$ be the same integer $N$ appearing in the domain of $f$~\eqref{eqn. generating family domain and codomain}.
We will define four pointed spaces $W_N, A_N, B_N, C_N$ for which one has a {pushout square}	\eqnn
	\xymatrix{
	W_N \ar[r] \ar[d] & B_N \ar[d]
	\\
	A_N \ar[r] & C_N
	}
	\eqnnd
(i.e., $C_N$ is the union of $A_N$ and $B_N$ along $W_N$). In fact, this pushout square lives over the elementary pushout square:  
 	\eqnn
	\xymatrix{
	\{u\} \ar[r] \ar[d] & [u,\infty) \ar[d]
	\\
	(0,u] \ar[r] & (0, \infty)
	}
	\eqnnd
Further, each arrow in our pushout square of pointed spaces will be a cofibration.  This implies that our pushout square is a homotopy pushout square of pointed spaces. We will see that Lemma~\ref{lem:stab2-suspension} implies that
 stabilizing $F$ (i.e., letting $N \to \infty$) induces a pushout square of  prespectra:
 	\eqnn
	\xymatrix{
	W\ar[r] \ar[d] & B \ar[d]
	\\
	A \ar[r] & C.
	}
	\eqnnd
	Any homotopy pushout square of prespectra gives rise to
	 a
 long exact sequence of homotopy groups:
  \eqnn
	\ldots \to \pi_k(W) \to \pi_k(A) \oplus \pi_k(B) \to \pi_k(C) \to \ldots .
	\eqnnd
 In our situation, we will see that
 \begin{itemize}
 \item $B$ and $C$ are trivial (Corollaries~\ref{corollary. B is contractible} and~\ref{corollary. C is contractible}),
 \item the spectrum  associated to 
$W$ is equivalent to $\gfc(\leg,f;\Sphere)$ (Corollary~\ref{corollary. W is gf spectrum}), and
 \item the spectrum associated to 
$A$ is equivalent to $\Sigma^\infty(L_0/\leg)$ (Corollary~\ref{corollary. L mod Lambda}).
 \end{itemize}
 So an application of Whitehead's theorem 
 then implies
the equivalence of the spectra associated to $W$  and $A$. 
This concludes the outline of the proof of Theorem~\ref{thm:suspension computation} (Section~\ref{section. proof of main theorem}). 

 \subsection{Constants, Families, and Stabilizations}
Throughout the next subsections, we will make the following assumptions and choices.

\begin{assume} \label{assume:spec}$\text{ }$
\begin{enumerate}
\item $(\sclag, F)$ is a linearly-controlled filling of $(\leg, f)$ such that
\begin{enumerate} 
\item $\sclag$ is cylindrical over $[0,\infty)$, and thus $\clag = \theta(\sclag) \subset T^*(\rr_{>0} \times M)$ is conical over $[1,\infty)$;
\item  $F\co \rr_{>0} \times M \times \rr^N \to \rr$ is a generating family for $\clag$.
  \end{enumerate}
\item From $F$, we construct the family 
$$\{F_i\}_{i \geq N}$$
defined as 
 $F_{N} = F$, and for $i \geq N+1$,  $F_{i}$ is the rank $1$ stabilization of $F_{i-1}$ by either $Q_+(\e) = \e^2$ or $Q_-(\e)=-\e^2$. 
 \end{enumerate}
 \end{assume}
  \noindent
  Recall that, by Definition~\ref{defn:compatible}, $f = F|_{t = 1}\co M \times \rr^N \to \rr$ is a generating family for $\leg$, and $F|_{t = t_0} = t_0 f$, when $t_0 \geq 1$.

 \begin{choice} \label{choice:spec}$\text{ }$
 \begin{enumerate}
 \item The constant $u > 1$ is chosen sufficiently close to $1$ in order to satisfy the inequalities specified in  Choice~\ref{choice. u for shearing}.
 \item From a shearing function $H \in \mathcal{H}\left(\clag \right)$  as specified in Definition~\ref{defn:shear-H}, for all $i$, we construct the sheared difference function $\Delta_{F_i}: \rr_{>0} \times B \times \rr^{2N} \to \rr$
   as in Definition~\ref{defn:pert-diff}.
 \item The constants $\Infin$ and  $\mu$ are chosen to satisfy the inequalities in Choice~\ref{choice:mu-Infin}. 
\end{enumerate}
\end{choice}
\noindent
By Remark~\ref{rem:stab-choice-difference} with either choice of stabilization for $F_i$, $\Delta_{F_i}$ is well defined up to fiber-preserving diffeomorphism.

In our construction of the spectra, we will be using the following stabilization argument, which parallels that for Proposition~\ref{prop:leg-stab}.

\begin{prop}\label{prop:fill-stab} 
For Assumptions~\ref{assume:spec} and for all Choice~\ref{choice:spec},
if 
$F_i'$ differs from $F_i$ by a rank $1$ stabilization, then for all $J \subset \rr_{>0}$, there is a homotopy equivalence
$$\sigma^J: \Sigma\left(\Delta_{F_i}^{\leq \Infin}|_J/ \Delta_{F_i}^{\leq -\mu} |_J \right)\stackrel{\simeq}{\longrightarrow} 
\Delta_{F_i'}^{\leq \Infin}|_J/ \Delta_{F_i'}^{\leq-\mu}|_J.
$$
\end{prop}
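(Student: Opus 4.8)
The statement is the ``filling'' analogue of Proposition~\ref{prop:leg-stab}, with $\delta_f$ replaced by the sheared difference function $\Delta_{F_i}$ and with everything restricted to a subset $\{t \in J\} \subset \rr_{>0} \times B \times \rr^{2N}$. The plan is to reduce the claim to Lemma~\ref{lem:stab2-suspension}~\eqref{item. negative stab is suspension} applied to the function $g = \Delta_{F_i}$ on the manifold $\{t \in J\} \times B \times \rr^{2N}$ (more precisely, on an open neighborhood thereof, or with $J$ a closed interval so the domain is a manifold with corners — the Morse-theoretic arguments in Section~\ref{ssec:morse-theory} were already set up to tolerate this, cf.\ the use in Lemma~\ref{lem:neg-end}), exactly as Proposition~\ref{prop:leg-stab} was deduced from Lemma~\ref{lem:stab2-suspension} via Remarks~\ref{remark. sublevel set pair suspension} and~\ref{remark. stabilization causes suspension}.

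\textbf{Key steps.} First, I would record the stabilization bookkeeping: by the identity in Remark~\ref{rem:stab-choice-difference}, applied fiberwise in $t$, a rank-$1$ stabilization of $F_i$ by $Q_\pm$ changes $\Delta_{F_i}$ by a rank-$2$ stabilization by a quadratic form of index $1$ — i.e.\ $\Delta_{F_i'}(t,x,\e,\e',\te,\te') = \Delta_{F_i}(t,x,\e,\te) \pm (\te')^2 \mp (\e')^2$ — and the two sign conventions are interchanged by the fiber-preserving diffeomorphism swapping $\e' \leftrightarrow \te'$, so $\Delta_{F_i'}$ is well-defined up to fiber-preserving diffeomorphism (this is Choice~\ref{choice:spec}'s final remark). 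So it suffices to treat $\Delta_{F_i'} = (\Delta_{F_i})_-$ in the notation for $g_-$. Second, I would verify the hypotheses (a)--(c) of Lemma~\ref{lem:stab2-suspension}~\eqref{item. negative stab is suspension} with $g = \Delta_{F_i}|_J$, $a = -\mu$, $b = \Infin$: completeness of a gradient-like vector field follows from linear control of $F$ together with the behavior of $H$ (cf.\ Remark~\ref{remark. gradient flow complete for linear-at-infinity function} and the vector-field constructions in Lemma~\ref{lem:neg-end} and Lemma~\ref{lem:contractible pair for (-oo,oo)}); boundedness away from zero of the total derivative near levels $\Infin$ and $-\mu$ follows from Proposition~\ref{prop:crit-pt-wgh}, which pins the positive critical values to $\{\ell(\gamma) + \ell(\gamma)^2/2\}$ (all $\geq \lmin + \lmin^2/2 < \Infin$ by Choice~\ref{choice:mu-Infin}) and the only other critical value to $0$, together with the linear-at-infinity analysis exactly as in Proposition~\ref{prop. derivative of difference function is well-behaved}; note $-\mu$ is itself a regular value and no critical value lies near it since $0 < \mu < \lmin$. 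When $J$ has nonempty boundary, one additionally arranges (as in the proof of Lemma~\ref{lem:neg-end}) that the chosen gradient-like vector field is tangent to or inward-pointing along $\{t \in \partial J\}$, so the flow stays in the domain. Third, with these hypotheses in hand, Lemma~\ref{lem:stab2-suspension}~\eqref{item. negative stab is suspension} gives that the map~\eqref{eqn. negative stabilization map} is a homotopy equivalence of pairs
$$\left( \Delta_{F_i}^{\leq \Infin}|_J \times \RR,\ \Delta_{F_i}^{\leq -\mu}|_J \times \RR \cup \Delta_{F_i}^{\leq \Infin}|_J \times \{|\eta| \geq \sqrt{\Infin + \mu}\}\right) \xrightarrow{\ \simeq\ } \left( \Delta_{F_i'}^{\leq \Infin}|_J,\ \Delta_{F_i'}^{\leq -\mu}|_J \right).$$
Fourth, passing to quotients and using Remark~\ref{remark. sublevel set pair suspension} to identify the left-hand quotient with the reduced suspension $\Sigma\!\left(\Delta_{F_i}^{\leq \Infin}|_J / \Delta_{F_i}^{\leq -\mu}|_J\right)$ — this uses that $\Delta_{F_i}^{\leq -\mu}|_J \hookrightarrow \Delta_{F_i}^{\leq \Infin}|_J$ is a cofibration, which follows as in Lemma~\ref{lem:gf-cofibration} since $-\mu$ is a regular value — yields the claimed homotopy equivalence $\sigma^J$.

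\textbf{Main obstacle.} The routine part is the stabilization algebra; the genuine work is checking that the Morse-theoretic hypotheses of Lemma~\ref{lem:stab2-suspension} hold uniformly over the restricted domain $\{t \in J\}$, especially when $J$ is a half-line like $[u,\infty)$ or $(0,u]$ or a closed interval with corners. One must produce a single gradient-like vector field for $\Delta_{F_i}|_J$ that is simultaneously complete, bounded below near the relevant levels, and compatible with $\partial J$; this is precisely the kind of careful, piecewise construction carried out in Lemmas~\ref{lem:neg-end} and~\ref{lem:contractible pair for (-oo,oo)}, and the present proof should simply invoke (or lightly adapt) those constructions rather than redo them. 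A secondary subtlety is that $(0,u]$ is not compact, so completeness near $t \to 0^+$ must be arranged as in Lemma~\ref{lem:neg-end} by damping the $\partial/\partial t$ component — but again this is already in the toolkit.
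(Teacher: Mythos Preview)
Your proposal is correct and follows essentially the same route as the paper: identify $\Delta_{F_i'}$ (up to fiber-preserving diffeomorphism) as a rank-$2$, index-$1$ stabilization of $\Delta_{F_i}$ via Remark~\ref{rem:stab-choice-difference}, invoke Lemma~\ref{lem:stab2-suspension} to obtain the homotopy equivalence of pairs (the paper writes the explicit inclusion $(t,x,\e,\tau)\mapsto(t,x,\e,0,c\tau)$ with $c>\sqrt{\Omega+\mu}$), and then pass to quotients using Remark~\ref{remark. sublevel set pair suspension}. The paper's proof is much terser---it simply asserts the lemma applies---whereas you spell out the verification of the gradient-like vector field hypotheses and the boundary/noncompactness issues for $J$; this extra care is warranted but not a different strategy.
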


\begin{proof} Suppose  
$F'\co \rr_{>0} \times B \times \rr^{N+1} \to \rr$ is a 
rank $1$ stabilization of $F$. Then for $J \subset \rr$, we have  
$$\begin{aligned}
\Delta_F&\co J \times B  \times \rr^{2N} \to \rr,\\
\Delta_{F'}&\co J \times B \times \rr^{2N} \times \rr^2 \to \rr.
\end{aligned}
$$
The argument in Remark~\ref{rem:stab-choice-difference} shows  we can assume that up to a fiber-preserving diffeomorphism 
$\Delta_{F'} = \Delta_F + Q(\e_1, \e_2)$, where $Q(\e_1, \e_2) = \e_1^2 - \e_2^2$.  Then  
Lemma~\ref{lem:stab2-suspension}   tells us that
for  $I = [-1,1]$, the inclusion $i: J \times B \times \rr^{2N} \times I  \to J \times B \times \rr^{2N} \times \rr^2$ {defined using
 the maps in Equations~\eqref{eqn. positive stabilization map}, \eqref{eqn. negative stabilization map}} given by 
$$i(t, x, \e, \tau) = (t, x, \e, 0, c\tau), \qquad   {c > {\sqrt{\Omega + \mu}}}$$
induces a homotopy equivalence
\begin{equation}
\iota^J: \left(\Delta_F^{\leq \Infin}|_J \times I, \Delta_F^{\leq -\mu}|_J \times I \bigcup \Delta_F^{\leq \Infin}|_J \times \partial I\right) \to 
\left(\Delta_{F'}^{\leq \Infin}|_J, \Delta_{F'}^{\leq -\mu}|_J \right).
\label{eqn:shear-stab}
\end{equation}
Thus, via Remark~\ref{remark. sublevel set pair suspension}    we get an induced map
$$\sigma^J: \Sigma\left(\Delta_F^{\leq \Infin}|_J/ \Delta_F^{\leq -\mu}|_J \right) \stackrel{\simeq}{\longrightarrow} 
\Delta_{F'}^{\leq \Infin}|_J/ \Delta_{F'}^{\leq -\mu}|_J.
$$
\end{proof}

\begin{prop} \label{lem:inclusions-stabs} Given Assumptions~\ref{assume:spec},  for all Choice~\ref{choice:spec}, if $J_1 \subset J_2 \subset (0, \infty)$, then
for all $i \geq N$, there is a map between quotients
$$\Delta_{F_i}^\Infin|_{J_1}/ \Delta_{F_i}^{-\mu}|_{J_1} \to \Delta_{F_i}^\Infin|_{J_2}/ \Delta_{F_i}^{-\mu}|_{J_2} $$
 that fits into the following diagram that commutes up to homotopy:
$$\label{eqn:changing-J}
	\xymatrix{
	\Sigma\left(\Delta_{F_i}^{\leq \Infin}|_{J_1} / \  \Delta_{F_i}^{\leq -\mu}|_{J_1}   \right) 
	\ar[r] 
	 \ar[d] 
	 & 
	\Sigma  \left(\Delta_{F_i}^{\leq \Infin}|_{J_2}/ \Delta_{F_i}^{\leq -\mu}|_{J_2}   \right) 
	  \ar[d] 	\\
	\left(\Delta_{F_{i+1}}^{\leq \Infin}|_{J_1} / \  \Delta_{F_{i+1}}^{\leq -\mu}|_{J_1}   \right)  
	\ar[r] 	& 
	\left(\Delta_{F_{i+1}}^{\leq \Infin}|_{J_2} / \  \Delta_{F_{i+1}}^{\leq -\mu}|_{J_2}   \right)   .
	}
$$
\end{prop}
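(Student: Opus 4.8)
The plan is to build the horizontal maps $\Delta_{F_i}^{\leq\Infin}|_{J_1}/\Delta_{F_i}^{\leq-\mu}|_{J_1} \to \Delta_{F_i}^{\leq\Infin}|_{J_2}/\Delta_{F_i}^{\leq-\mu}|_{J_2}$ simply as the maps induced on quotients by the inclusion of subspaces $\{t\in J_1\}\hookrightarrow\{t\in J_2\}$; because $J_1\subset J_2$, restriction to $\{t\in J_i\}$ clearly sends $\Delta_{F_i}^{\leq\alpha}|_{J_1}$ into $\Delta_{F_i}^{\leq\alpha}|_{J_2}$ for $\alpha\in\{\Infin,-\mu\}$, so this descends to a well-defined map of pairs and hence of quotients. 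The vertical maps are exactly the stabilization-suspension maps $\sigma^{J_1}$ and $\sigma^{J_2}$ produced by Proposition~\ref{prop:fill-stab} (applied with $J=J_1$ and $J=J_2$ respectively). So the content of the proposition is that the square commutes up to homotopy.

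First I would recall how $\sigma^J$ is constructed in the proof of Proposition~\ref{prop:fill-stab}: after a fiber-preserving diffeomorphism one has $\Delta_{F_{i+1}} = \Delta_{F_i} + Q$ with $Q(\e_1,\e_2) = \e_1^2 - \e_2^2$, and $\sigma^J$ is the composite of the suspension-isomorphism from Remark~\ref{remark. sublevel set pair suspension} with the map $\iota^J$ of~\eqref{eqn:shear-stab} induced by the explicit inclusion $i(t,x,\e,\tau) = (t,x,\e,0,c\tau)$. The key observation is that every ingredient in this construction is \emph{natural in the $t$-variable}: the fiber-preserving diffeomorphism, the quadratic form $Q$, the constant $c$, and the inclusion formula $i$ are all independent of $t$, so they are literally the same over $J_1$ as over $J_2$ and commute strictly with the inclusion $\{t\in J_1\}\hookrightarrow\{t\in J_2\}$. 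The only place homotopies enter is in identifying the domain of~\eqref{eqn. negative stabilization map} with an honest reduced suspension (Remark~\ref{remark. sublevel set pair suspension}) and in the deformation retractions hidden inside Lemma~\ref{lem:stab2-suspension}; these homotopies can be chosen to not involve the $t$-coordinate at all, so they too restrict compatibly. Thus I would argue that the square of maps of pairs
\begin{equation*}
\xymatrix{
\left(\Delta_{F_i}^{\leq\Infin}|_{J_1}\times I,\ \Delta_{F_i}^{\leq-\mu}|_{J_1}\times I\cup\Delta_{F_i}^{\leq\Infin}|_{J_1}\times\partial I\right) \ar[r]\ar[d] & \left(\Delta_{F_i}^{\leq\Infin}|_{J_2}\times I,\ \cdots\right) \ar[d] \\
\left(\Delta_{F_{i+1}}^{\leq\Infin}|_{J_1},\ \Delta_{F_{i+1}}^{\leq-\mu}|_{J_1}\right) \ar[r] & \left(\Delta_{F_{i+1}}^{\leq\Infin}|_{J_2},\ \Delta_{F_{i+1}}^{\leq-\mu}|_{J_2}\right)
\end{equation*}
commutes \emph{on the nose} (the vertical maps being the $\iota^J$ of~\eqref{eqn:shear-stab}), and then pass to quotients and invoke the naturality of the suspension equivalence of Remark~\ref{remark. sublevel set pair suspension} (precisely the kind of naturality already recorded, for the Legendrian case, in Remark~\ref{remark. suspension compatibility} via diagram~\eqref{eqn. naturality of suspension equivalence}) to get the stated homotopy-commuting square.

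The main obstacle, and the only genuinely technical point, is the naturality of the suspension equivalence $\Sigma(X/A)\simeq X\times I/(A\times I\cup X\times\partial I)$ with respect to maps of cofibration pairs $(X_1,A_1)\to(X_2,A_2)$: one must check that the homotopy equivalences of Remark~\ref{remark. sublevel set pair suspension} can be chosen compatibly, which amounts to choosing the deformation retractions in the proof of Lemma~\ref{lem:stab2-suspension}\eqref{item. negative stab is suspension} to be natural in the inclusion $J_1\hookrightarrow J_2$. Since the retractions there are built from a gradient-like vector field for $\Delta_{F_i}+Q$ whose flow preserves each slice $\{t=\mathrm{const}\}$ (the $Q$-direction retractions do not touch $t$), and since the inclusion $\{t\in J_1\}\hookrightarrow\{t\in J_2\}$ is flow-equivariant in the relevant sense, this compatibility holds; I would spell this out by noting that one can run the identical retraction over $J_2$ and observe it restricts to the chosen retraction over $J_1$. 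Everything else — that the horizontal maps are well-defined, that they are cofibrations, that the strict square of pairs commutes — is immediate from the definitions, so once the naturality of the suspension identification is in hand the homotopy-commutativity of~\eqref{eqn:changing-J} follows.
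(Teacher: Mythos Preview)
Your proposal is correct and follows essentially the same approach as the paper: both define the horizontal maps via the inclusion of pairs $\{t\in J_1\}\hookrightarrow\{t\in J_2\}$ and then verify that the square of pairs involving the $\iota^{J_1},\iota^{J_2}$ maps from~\eqref{eqn:shear-stab} commutes. You are in fact more careful than the paper, which simply asserts the diagram of pairs ``commutes up to homotopy'' as a ``straightforward check''; you correctly observe that this square commutes \emph{strictly} (since the formula $(t,x,\e,\tau)\mapsto(t,x,\e,0,c\tau)$ is independent of $J$) and isolate the only genuine source of homotopy---the naturality of the suspension identification of Remark~\ref{remark. sublevel set pair suspension}---which the paper leaves implicit.
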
 

\begin{proof} The map of quotients
$$\Delta_{F_i}^\Infin|_{J_1}/ \Delta_{F_i}^{-\mu}|_{J_1} \to \Delta_{F_i}^\Infin|_{J_2}/ \Delta_{F_i}^{-\mu}|_{J_2} $$
is induced by the inclusion 
$$(\Delta_{F_i}^\Infin|_{J_1}, \Delta_{F_i}^{-\mu}|_{J_1}) \to \left(\Delta_{F_i}^\Infin|_{J_2}, \Delta_{F_i}^{-\mu}|_{J_2}\right). $$

Suppose  
$F_{i+1}\co \rr_{>0} \times B \times \rr^{i+1} \to \rr$ is a rank $1$ stabilization of $F_i$.   
The desired result follows from a straightforward check that shows that the maps $\iota^{J_i}$ defined in 
Equation~(\ref{eqn:shear-stab}) fit into
the following diagram of pairs, which commutes up to homotopy: 
{\scriptsize{
\eqnn
	\xymatrix{
	\left(\Delta_{F_i}^{\leq \Infin}|_{J_1} \times I, \  \Delta_{F_i}^{\leq -\mu}|_{J_1}  \times I \cup \Delta_{F_i}^{\leq \Infin}|_{J_1}  \times \partial I \right) 
	\ar@{^{(}->}[r]^{i}
	\ar[d]_{\iota^{J_1}}^\simeq 
	& 
	\left(\Delta_{F_i}^{\leq \Infin}|_{J_2} \times I, \  \Delta_{F_i}^{\leq -\mu}|_{J_2}  \times I \cup \Delta_{F_i}^{\leq \Infin}|_{J_2}  \times \partial I \right)  
	  \ar[d]^{\iota^{J_2}}_\simeq
	\\
	\left(\Delta_{F_{i+1}}^{\leq \Infin}|_{J_2} , \  \Delta_{F_{i+1}}^{\leq -\mu}|_{J_2}   \right)  
	\ar@{^{(}->}[r]_{i}
		& 
	\left(\Delta_{F_{i+1}}^{\leq \Infin}|_{J_2} , \  \Delta_{F_{i+1}}^{\leq -\mu}|_{J_2}   \right).  
	}
\eqnnd
}}
 \end{proof}

\subsection{The prespectrum $W$} Our first spectrum will be associated to the point $u \in \rr_{>0}$ and is defined in parallel to Definition~\ref{defn:leg-spectrum}.
\begin{defn}[$W_N$ and $W$]
\label{defn:W}  
Given Assumption~\ref{assume:spec}(1) and Choice~\ref{choice:spec}, 
we define the pointed space $W_N$  to be the quotient
$$
	W_N := \left(\Delta_F^{\leq \Infin}|_{\{u\}}\right) / \left(\Delta_F^{\leq -\mu}|_{\{u\}} \right).
$$
Given the family $\{F_i\}_{i \geq N}$ from Assumption~\ref{assume:spec}(2), we define a prespectrum 
$$W = \{(W_i, \sigma_i^{u})\}_{i \geq N}$$ 
as follows. 
\begin{enumerate}
\item $W_{i} = \left(\Delta_{F_i}^{\leq \Infin}|_{\{u\}}\right) / \left(\Delta_{F_i}^{\leq -\mu}|_{\{u\}} \right)$, 
  \item  $\sigma_{i}^{u}: \Sigma W_{i} \to W_{i+1}$  provided by Proposition~\ref{prop:fill-stab}.
 \end{enumerate}
\end{defn}

The following proposition is key in establishing that the spectrum associated to $W$ is equivalent to the generating family spectrum $\gfc(\leg, f; \Sphere)$.

\begin{prop} Given Assumptions~\ref{assume:spec} and Choice~\ref{choice:spec}, 
there is a  homotopy equivalence
	\begin{equation}\label{eqn:W}
\rho_{N} \co 
W_N = \left( \Delta_F^{\leq \Infin}|_{\{u\}}\right) / \left(\Delta_F^{\leq -\mu}|_{\{u\}} \right)
   \stackrel{\simeq}{\longrightarrow} \delta_f^{\leq \infin} / \delta_f^{\leq \epsilon}. 
	\end{equation}
	Moreover, for all $i \geq N$, $F_i$ induces a generating family $f_i$ of $\leg$ such that the corresponding maps
$\rho_{i}$ commute with stabilization: 
 if $F_{i+1}$ is a rank $1$ stabilization of $F_i$, then $F_{i+1}$ induces a rank $1$ stabilization $f_{i+1}$ of $f_i$ such that
the following diagram commutes up to homotopy:
\begin{equation}\label{eqn:W-GFH} 
	\xymatrix{
	\Sigma W_i
	\ar[r]^{\Sigma \rho_{i}\quad}_{\simeq\quad} \ar[d]_{\sigma_{i}^{u}}^{\simeq} & 
	 \Sigma \left(  \delta_{f_i}^{\leq \infin} / \delta_{f_i}^{\leq \epsilon}  \right)
	  \ar[d]^{\sigma_{i}^\leg}_{\simeq}
	\\
	W_{i+1} \ar[r]_{\rho_{{i+1}\quad}}^{\simeq\quad}
	 & 
	  \delta_{f_{i+1}}^{\leq \infin} / \delta_{f_{i+1}}^{\leq \epsilon},
	 }
\end{equation}
for $\sigma_i^\leg$ as in Definition~\ref{defn:leg-spectrum}.
\end{prop}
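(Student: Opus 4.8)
The statement has two parts: the existence of the single equivalence $\rho_N$ in~\eqref{eqn:W}, and the compatibility~\eqref{eqn:W-GFH} of the $\rho_i$ with the stabilization maps. For the first part, the plan is to unwind the definition of $W_N$ and quote Lemma~\ref{lem:pos-end}(1), which already gives a diffeomorphism of pairs $\left(\Delta_F^{\leq \Infin}|_{\{u\}}, \Delta_F^{\leq -\mu}|_{\{u\}}\right) \cong \left(\delta_f^{\leq \infin}, \delta_f^{\leq \epsilon}\right)$; passing to quotients yields the homotopy equivalence $\rho_N$. Concretely, over $\{t = u\}$ the sheared difference function restricts (via Equation~\eqref{eqn:compat}) to $u\,\delta_f + H(u)$, so $\Delta_F^{\leq \alpha}|_{\{u\}} = \{u\} \times \delta_f^{\leq \lambda_\alpha(u)}$, and Lemma~\ref{lem:Infin-mu} parts~\eqref{item. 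Omega regular value} and~\eqref{item. mu regular value} ensure $\lambda_\Infin(u) > \lmax$ and $0 < \lambda_{-\mu}(u) < \lmin$, so that by Proposition~\ref{prop. omega epsilon independence} the pair over $\{u\}$ is homotopy equivalent (indeed diffeomorphic after a rescaling of $\delta_f$) to the standard pair $(\delta_f^{\leq \infin}, \delta_f^{\leq \epsilon})$.

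For the second part, I would first observe that the family $\{F_i\}$ restricts over $\{t = 1\}$ to a family $\{f_i\}$ of generating families for $\leg$: since $F_i|_{t=1} = f_i$ by Definition~\ref{defn:compatible}, and each $F_{i+1}$ is a rank $1$ stabilization of $F_i$ by $Q_\pm$, restricting to $t=1$ shows $f_{i+1}$ is the corresponding rank $1$ stabilization of $f_i$ (the stabilization is in the fiber variables, which are untouched by restricting the $t$-coordinate). Hence the $\{f_i\}$ are precisely the kind of sequence appearing in Definition~\ref{defn:leg-spectrum}, and the maps $\sigma_i^\leg$ make sense. Next I would compare the two suspension-inducing maps: $\sigma_i^u$ is built in Proposition~\ref{prop:fill-stab} (with $J = \{u\}$) from the inclusion~\eqref{eqn:shear-stab}, which in turn comes from the maps~\eqref{eqn. positive stabilization map} and~\eqref{eqn. negative stabilization map} applied to the quadratic-stabilized $\Delta_{F_{i}}$; meanwhile $\sigma_i^\leg$ is built in Proposition~\ref{prop:leg-stab} from the same maps~\eqref{eqn. negative stabilization map} applied to $\delta_{f_i}$. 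Under the identification of Lemma~\ref{lem:pos-end}(1) restricted to $\{u\}$, the function $\Delta_{F_i}|_{\{u\}}$ is (up to the positive rescaling by $u$ and the constant $H(u)$) just $\delta_{f_i}$, and the difference-function-of-a-stabilization computation in Remark~\ref{rem:stab-choice-difference} is what governs both sides. So the square~\eqref{eqn:W-GFH} commutes up to homotopy because both vertical maps are, after the horizontal identifications, literally the same map~\eqref{eqn. negative stabilization map} (or homotopic to it, after the rescaling isotopy of $\delta$-level functions); I would make this precise by exhibiting the explicit homotopy between $c \mapsto u c$ and $c \mapsto c$ on sublevel sets, using that both $\infin$ and $\epsilon$ can be chosen as regular values throughout (Proposition~\ref{prop. omega epsilon independence} and Lemma~\ref{lem:Infin-mu}).

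The main obstacle I anticipate is \emph{bookkeeping the rescaling constant $u$ and the shift $H(u)$ carefully enough to get a genuine commuting-up-to-homotopy square}, rather than just commuting objects. The equivalence $\rho_i$ is not the identity but involves the affine reparametrization $c \mapsto u c + H(u)$ of the value of $\delta_{f_i}$; one must check that this reparametrization is compatible with the fiberwise stabilization maps~\eqref{eqn. positive stabilization map}--\eqref{eqn. negative stabilization map}, i.e. that stabilizing and then rescaling the difference function agrees (up to homotopy of pairs, with controlled regular values) with rescaling and then stabilizing. This is a diagram-chase at the level of pairs of spaces, and the key input is that the constants $\epsilon, \infin, \mu, \Infin$ and $u$ were chosen (Choices~\ref{choice. u for shearing}, \ref{choice:mu-Infin}, \ref{choice:epsilon-omega}) so that every relevant level is a regular value and lies on the correct side of $[\lmin, \lmax]$, so all the intermediate inclusions are cofibrations and homotopy equivalences by Lemma~\ref{lem:gf-cofibration}, Lemma~\ref{lem:retract}, and Proposition~\ref{prop. omega epsilon independence}. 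Granting that, the homotopy-commutativity of~\eqref{eqn:W-GFH} follows, and the naturality of all the equivalences involved (Remark~\ref{remark. naturality of stabilization}, Remark~\ref{remark. suspension compatibility}) upgrades the squares into a map of prespectra once one passes $i \to \infty$.
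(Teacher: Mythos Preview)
Your proposal is correct and follows essentially the same approach as the paper: identify $W_N$ with the $\delta_f$-sublevel quotient via the translation-function analysis (Lemma~\ref{lem:Infin-mu} and Lemma~\ref{lem:pos-end}(1)), and then verify the compatibility square~\eqref{eqn:W-GFH} by comparing the explicit inclusion maps $\iota^{\{u\}}$ and $i^\leg$ underlying $\sigma_i^u$ and $\sigma_i^\leg$. The paper dispatches your ``main obstacle'' (the affine reparametrization by $u$ and $H(u)$) exactly as you suggest, by noting that the $\rho_i'$ are fiberwise gradient-flow deformations which commute up to homotopy with the stabilization inclusions $(\ldots,\tau) \mapsto (\ldots,0,c\tau)$---so your anticipated bookkeeping is precisely what is needed and is handled by the regular-value choices you cite.
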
 

\begin{proof} 
Recall that 
$$\left( \Delta_F^{\leq \Infin}|_{\{u\}},  \Delta_F^{\leq -\mu}|_{\{u\}}\right) =
\left( 
\left\{ \left(u, \delta_f^{\leq \lambda_{\Infin}}(u)\right) \right\}
\right), \ 
\left( 
\left\{ \left(u, \delta_f^{\leq \lambda_{-\mu}}(u)\right) \right\}
\right).
$$
Our analysis in Lemma~\ref{lem:Infin-mu} of the $\Delta$-$\delta$ translation function $\lambda_\alpha$, for $\alpha = \Infin, -\mu$ shows that the fiberwise gradient flow of $\Delta_F|_{\{u\}}$ gives
rise to the homotopy equivalence between pairs
$$\rho_N'\co\left(\Delta_F^{\leq \Infin}|_{\{u\}}, \Delta_F^{\leq -\mu}|_{\{u\}} \right) \stackrel{\simeq}{\longrightarrow} \left(\delta_f^{\leq \infin}, \delta_f^{\leq \epsilon}\right),$$
which gives rise to the homotopy equivalence between the quotients in Equation~\ref{eqn:W}.

Suppose   
$F_{i+1}\co \rr_{>0} \times B \times \rr^{i+1} \to \rr$ is a rank $1$ stabilization of $F_i$; this induces a rank $1$ stabilization, $f_i$, of $f$; observe that the
 functions
$\lambda_{\alpha}(t)$, $\alpha = \Infin, -\mu$ are unchanged under stabilization of $F_i$.  
We have 
$$\begin{aligned}
\Delta_{F_i}\co& \rr_{>0} \times B \times \rr^{2i} \to \rr, \\
\Delta_{F_{i+1}}\co& \rr_{>0} \times B \times \rr^{2i} \times \rr^2 \to \rr.
\end{aligned}
$$ 
To verify the commutativity of Diagram (\ref{eqn:W-GFH}), it suffices to verify that the maps $i^\leg$ and $\iota^J$ defined in Equations~(\ref{eqn:leg-stab}) and
(\ref{eqn:shear-stab}) fit into the following 
the following diagram of pairs that commutes up to homotopy: 
\eqnn
	\xymatrix{
	\left(\Delta_{F_i}^{\leq \Infin}|_{\{u\}} \times I, \  \Delta_{F_i}^{\leq -\mu}|_{\{u\}}  \times I \cup \Delta_{F_i}^{\leq \Infin}|_{\{u\}}  \times \partial I \right) 
	\ar[r]^{\qquad\quad\Sigma \rho_{i}'}_{\qquad\quad\simeq} \ar[d]_{\iota^J}^\simeq & 
	\left(\delta_{f_i}^{\leq \infin} \times I, \  \delta_{f_i}^{\leq -\mu} \times I \cup \delta_{f_i}^{\leq \infin} \times \partial I \right) 
	  \ar[d]^{i^\leg}_\simeq
	\\
	\left(\Delta_{F_{i+1}}^{\leq \Infin}|_{\{u\}} , \  \Delta_{F_{i+1}}^{\leq -\mu}|_{\{u\}}   \right)  
	\ar[r]_{\qquad \rho_{i+1}'}^{\qquad\simeq} & 
	\left(\delta_{f_{i+1}}^{\leq \infin}, \  \delta_{f_{i+1}}^{\leq -\mu}   \right).
	}
\eqnnd
 As described in the proofs of Propositions~\ref{prop:leg-stab} and \ref{prop:fill-stab}, for  $c > \sqrt{\Infin + \mu}$, $\iota^J$ is induced by the inclusion map,  
$$\begin{aligned}
\Delta_{F_i}^{\leq \alpha}|_{\{u\}} \times I  &\to  \Delta_{F_{i+1}}^{\leq \alpha}|_{\{u\}} \\
(u,x,\e,\te, \tau) &\mapsto (u,x,\e,\te, 0,  c\tau), ,
\end{aligned}
$$
and, similarly,  $i^\leg$ is induced by the inclusion map
$$\begin{aligned} 
\delta_{f_i}^{\leq \alpha} \times I &\to  \delta_{f_{i+1}}^{\leq \alpha} \\
(x,\e,\te, \tau) &\mapsto (x,\e,\te, 0, c\tau).
\end{aligned}
$$
A straightforward check shows that, up to homotopy, $i^J_*$, $i^\leg$ commute with the maps $\Sigma \rho_{i}'$, $\rho_{i+1}'$ given by fiberwise deformations.
\end{proof} 
 
The maps on the left-hand side of Diagram~(\ref{eqn:W-GFH}) define the prespectrum $W$, while the maps on the 
of the right-hand side 
define the generating family prespectrum of $f$ from Definition~\ref{defn:leg-spectrum}. 
 Thus, Proposition~\ref{prop:prespectra-spectra-maps} gives:

\begin{cor}\label{corollary. W is gf spectrum}
The spectrum associated to the prespectrum $W$ is equivalent to the generating family spectrum of $f$, $\gfc(\leg, f; \Sphere)$.
\end{cor}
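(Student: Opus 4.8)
The statement to prove is Corollary~\ref{corollary. W is gf spectrum}: the spectrum associated to the prespectrum $W$ is equivalent to $\gfc(\leg,f;\Sphere)$.

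The plan is to invoke the comparison criterion for prespectra (cited as Proposition~\ref{prop:prespectra-spectra-maps}), which says that a levelwise-compatible system of homotopy equivalences between two prespectra induces an equivalence of their associated spectra. So first I would recall that by Definition~\ref{defn:leg-spectrum}, the generating family prespectrum of $f$ is precisely the system $\{(\delta_{f_i}^{\leq \infin}/\delta_{f_i}^{\leq \epsilon}, \sigma_i^{\leg})\}_{i \geq N}$, where the $f_i$ are the successive rank-$1$ stabilizations of $f$ and the structure maps $\sigma_i^{\leg}$ come from Proposition~\ref{prop:leg-stab}. Meanwhile, by Definition~\ref{defn:W}, the prespectrum $W$ is the system $\{(W_i, \sigma_i^u)\}_{i \geq N}$ with $W_i = \Delta_{F_i}^{\leq \Infin}|_{\{u\}}/\Delta_{F_i}^{\leq -\mu}|_{\{u\}}$ and structure maps $\sigma_i^u$ from Proposition~\ref{prop:fill-stab}.

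Next, I would observe that the immediately preceding proposition has already done all the real work: it produces, for each $i \geq N$, a homotopy equivalence $\rho_i \co W_i \xrightarrow{\simeq} \delta_{f_i}^{\leq \infin}/\delta_{f_i}^{\leq \epsilon}$ (note that the $F_i$ induce stabilizations $f_i$ of $f$, and the translation functions $\lambda_{\Infin},\lambda_{-\mu}$ are stabilization-invariant so the indices match up), and moreover shows that the square~\eqref{eqn:W-GFH} relating $\Sigma\rho_i$, $\rho_{i+1}$, $\sigma_i^u$, and $\sigma_i^{\leg}$ commutes up to homotopy. This is exactly the datum of a map of prespectra which is a levelwise equivalence — i.e., the $\rho_i$ assemble into a map $W \to \gfc(\leg,f;\Sphere)$ (at the prespectrum level) that is an equivalence on each space. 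Therefore Proposition~\ref{prop:prespectra-spectra-maps} applies verbatim and yields an equivalence of the associated spectra.

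The only genuine subtlety — the step I would treat most carefully, though it is already handled in the preceding proposition — is verifying the homotopy-commutativity of~\eqref{eqn:W-GFH}, i.e., that the suspension-coordinate bookkeeping (the inclusion $\tau \mapsto (\ldots,0,c\tau)$ with $c > \sqrt{\Infin+\mu}$) for the filling side matches the corresponding inclusion on the Legendrian side, compatibly with the fiberwise-gradient-flow homotopy equivalences $\rho_i'$. Since that verification is provided, the corollary is immediate; the proof in the text correctly just cites Proposition~\ref{prop:prespectra-spectra-maps} after noting that the left-hand maps of~\eqref{eqn:W-GFH} define $W$ and the right-hand maps define the generating family prespectrum of $f$.
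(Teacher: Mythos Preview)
Your proposal is correct and follows exactly the paper's own argument: identify the left and right columns of diagram~\eqref{eqn:W-GFH} with the structure maps of $W$ and of the generating family prespectrum respectively, then apply Proposition~\ref{prop:prespectra-spectra-maps} using the levelwise equivalences $\rho_i$ and the homotopy-commutativity of~\eqref{eqn:W-GFH} supplied by the preceding proposition.
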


\subsection{The prespectrum $A$}

In parallel to Definition~\ref{defn:W}, we now construct a spectrum associated to $J = (0, u] \subset \rr_{>0}$.
\begin{defn}[$A_N$ and $A$]
\label{defn:A} 
Given Assumptions~\ref{assume:spec}(1), 
we define the pointed space $A_N$ to be the quotient
	\eqnn
	A_N:=
	\left(\Delta_F^{\leq \Infin}|_{(0, u]}\right) / \left(  \Delta_F^{\leq -\mu}|_{ (0, u]} \right).
	\eqnnd
Given Assumptions~\ref{assume:spec}(2), we construct a prespectrum 
\begin{enumerate}
\item $A_{i} = \left(\Delta_{F_i}^{\leq \Infin}|_{(0,u]}\right) / \left(\Delta_{F_i}^{\leq -\mu}|_{(0,u]} \right)$, 
  \item  $\sigma_{i}^{(0,u]}: \Sigma A_{i} \to A_{i+1}$ provided by Proposition~\ref{prop:fill-stab}.
 \end{enumerate}
\end{defn}

\begin{ex} \label{ex:quotient-suspensions}  Given a submanifold with boundary $L$, we have the following homotopy equivalences ($\simeq$) and homeomorphisms ($\cong$):
$$\begin{aligned}
\Sigma(L/\partial L) & \simeq L \times I / (\partial L \times I \cup L \times \partial I); \\
\Sigma^2(L/\partial L) &\simeq (L \times I) \times I / \left((\partial L \times I \cup L \times \partial I) \times I\right) \cup (L \times I) \times \partial I\\
&\cong L \times I^2 / (\partial L \times I^2 \cup L \times \partial I^2)\\
&\cong D^2(L) / (D^2(\partial L) \cup \partial D^2(L)) \\
&=D^2(L) / (D^2(\partial L) \cup S^{1}(L)),
\end{aligned}
$$
where $D^2(L), S^1(L)$ are the trivial $2$-dimensional disk and $1$-dimensional sphere bundles over $L$.
More generally, for all $i \geq 1$, we find a homotopy equivalence
$$\zeta_i: \Sigma^i(L/\partial L) \simeq D^i(L) / (D^i(\partial L) \cup S^{i-1}(L)),$$
where $D^i(L), S^{i-1}(L)$ are the trivial $i$-dimensional disk and $(i-1)$-dimensional sphere bundles over $L$.
 \end{ex}

\begin{prop}  Given Assumptions~\ref{assume:spec} and Choice~\ref{choice:spec},
there is a homotopy equivalence
$$
\beta_N \co A_N
   \stackrel{\simeq}\longrightarrow D^N(L_0) / \left(D^N(\partial L_0) \cup S^{N-1}(L_0) \right),
    $$
    where $D^N(L_0)$ and $S^{N-1}(L_0)$ denote the trivial $N$-dimensional disk and
    $(N-1)$-dimensional sphere bundles over the compact end of the  Lagrangian filling $L_0$.
    Moreover, for all $i \geq N$, there is a  homotopy equivalence $\beta_{i}$
          that  commutes with stabilization:
    if $F_{i+1}$ is a rank $1$ stabilization of $F_i$, then 
   the following diagram commutes  up to homotopy
\begin{equation}\label{eqn:A-filling} 
	\xymatrix{
	\Sigma  A_i
	\ar[r]^{\Sigma \beta_{i}\qquad\qquad\qquad}_{\simeq\qquad\qquad\qquad} \ar[d]_{\sigma_{i}^{(0,u]}}^{\simeq} & 
	\Sigma\left( D^i(L) / \left( D^i(\partial L) \cup S^{i-1}(L) \right) \right)
	   \ar[d]^{\zeta_i}_{\simeq}
	\\
	  A_{i+1} \ar[r]_{\beta_{{i+1}}\qquad\qquad\qquad}^{\simeq\qquad\qquad\qquad}
	 & 
	 \left( D^{i+1}(L) \right)/ \left( D^{i+1}(\partial L)\cup S^{i}(L) \right),
	 }
	\end{equation}
where $\zeta_i$ is the homeomorphism defined in   Example~\ref{ex:quotient-suspensions}.
     \end{prop}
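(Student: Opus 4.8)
The plan is to combine the two sublevel-set computations already carried out (Lemma~\ref{lem:neg-end} and its proof, which identifies the pair over $(0,u]$ with the trivial disk/sphere-bundle pair over $L_0$) with the stabilization machinery (Proposition~\ref{prop:fill-stab} and, on the topological side, the homotopy equivalences $\zeta_i$ of Example~\ref{ex:quotient-suspensions}). First I would produce $\beta_N$: Lemma~\ref{lem:neg-end} gives homotopy equivalences of pairs
\[
\left(\Delta_F^{\leq \Infin}|_{(0,u]},\Delta_F^{\leq-\mu}|_{(0,u]}\right)
\xrightarrow{\ \rho\ }
\left(\Delta_F^{\leq\sigma}|_{(0,u]},\Delta_F^{\leq-\mu}|_{(0,u]}\right)
\xrightarrow{\ \simeq\ }
\left(D^N(L_0),\,S^{N-1}(L_0)\cup D^N(\partial L_0)\right),
\]
for a choice of $\sigma$ with $u^2-1<\sigma<\lmin$ (legitimate by Remark~\ref{rem:sigma-possible}); passing to quotients gives $\beta_N\colon A_N\xrightarrow{\simeq}D^N(L_0)/(D^N(\partial L_0)\cup S^{N-1}(L_0))$. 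I would then set $\beta_i$ for $i>N$ to be the analogous equivalence for the stabilized function $F_i$, noting that each $F_i$ is again a linearly-controlled filling datum (the stabilization by a quadratic does not disturb the linear-control condition, as in Proposition~\ref{prop. stabilization preserves linearity}), and that stabilizing $F$ stabilizes the induced filling $L$-side disk bundle dimension exactly by one.

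The core of the argument is the homotopy-commutativity of the square~\eqref{eqn:A-filling}. I would attack this by lifting both vertical maps to the level of \emph{pairs of spaces}, exactly as in the proof of the proposition defining $W$ and $\rho_i$. On the left, $\sigma_i^{(0,u]}$ is, by Proposition~\ref{prop:fill-stab}, induced (via Remark~\ref{remark. sublevel set pair suspension}) by the inclusion-of-pairs map $\iota^{(0,u]}$ of~\eqref{eqn:shear-stab}, which on representatives sends $(t,x,\e,\te,\tau)\mapsto(t,x,\e,\te,0,c\tau)$ for $c>\sqrt{\Infin+\mu}$. On the right, the map $\zeta_i$ is the explicit homeomorphism of Example~\ref{ex:quotient-suspensions}, which on the disk-bundle picture is the ``append a coordinate'' inclusion $D^i(L_0)\times I\hookrightarrow D^{i+1}(L_0)$ rescaled. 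So it suffices to check that the square of pairs
\[
\xymatrix{
\left(\Delta_{F_i}^{\leq\Infin}|_{(0,u]}\times I,\ \Delta_{F_i}^{\leq-\mu}|_{(0,u]}\times I\cup\Delta_{F_i}^{\leq\Infin}|_{(0,u]}\times\partial I\right)
\ar[r]^{\qquad\qquad\Sigma\beta_i'}\ar[d]_{\iota^{(0,u]}}
&
\left(D^i(L_0)\times I,\ (\cdots)\right)\ar[d]^{\zeta_i}
\\
\left(\Delta_{F_{i+1}}^{\leq\Infin}|_{(0,u]},\ \Delta_{F_{i+1}}^{\leq-\mu}|_{(0,u]}\right)\ar[r]_{\qquad\qquad\beta_{i+1}'}
&
\left(D^{i+1}(L_0),\ (\cdots)\right)
}
\]
commutes up to homotopy of pairs, where $\beta_i'$ denotes the pair-level equivalence underlying $\beta_i$. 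Both composites are built from (i) fiberwise gradient-type deformations (from $\rho$, the Morse--Bott retraction, and the $\lambda_\sigma$-analysis) that are supported in the original $B\times\RR^{2i}$ fibers and leave the new stabilizing coordinates alone, and (ii) the coordinate-append inclusion in the stabilizing direction; since these two operations act on disjoint sets of coordinates, they commute on the nose, and the homotopies assembling $\rho$ and the Morse--Bott retraction can be taken constant in the stabilizing coordinate. This is the same ``disjoint-coordinate commutation'' argument used for Diagram~\eqref{eqn:W-GFH}, so I would phrase it as a straightforward check modeled on that proof.

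\textbf{Expected main obstacle.} The delicate point is \emph{not} the $\iota$-versus-$\zeta$ bookkeeping but rather ensuring that the chain of homotopy equivalences producing $\beta_i$ (gradient flow $\rho$ down to level $\sigma$; the fiberwise retractions over $(0,t_-]$ and $[1,u]$; the Morse--Bott handle-attachment identification at critical level $0$; and the trivialization of the negative-eigenbundle via Corollary~\ref{cor:neg-index-trivial}) can be chosen \emph{compatibly} across $i$, i.e.\ so that stabilizing $F_i$ and then running the chain agrees up to homotopy with running the chain and then stabilizing. Concretely one must verify that the trivialization of the negative normal bundle of the critical submanifold $C\cong L_0$ — which for $F_{i+1}$ is the index-$(i+1)$ bundle and is the index-$i$ bundle for $F_i$ direct-sum a trivial line — can be chosen to be the direct sum of the chosen trivialization for $F_i$ with the obvious line. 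This is true because the stabilization adds the quadratic $\e_1^2-\e_2^2$, contributing exactly one new negative eigendirection spanned by a global coordinate vector field; so the compatible trivialization exists, but writing this out carefully (and checking it threads through the Morse--Bott homotopy equivalence of \cite[Lemma~6.5]{S-T:obstruct}) is the step that requires the most care. Everything else reduces to the disjoint-coordinate commutation already exploited for $W$.
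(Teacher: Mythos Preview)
Your proposal is correct and follows essentially the same approach as the paper: invoke Lemma~\ref{lem:neg-end} to produce $\beta_N$ via the gradient-flow/Morse--Bott chain, and then verify the stabilization square~\eqref{eqn:A-filling} by lifting to pairs and checking that $\iota^{(0,u]}$ commutes with $\zeta_i$ through the $\beta_i'$ maps. The paper's proof is actually terser than yours---it dispatches the commutativity as a ``straightforward check'' without naming the disjoint-coordinate mechanism or the trivialization-compatibility issue you flag; your identification of the latter as the point requiring most care is apt and goes slightly beyond what the paper makes explicit.
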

    
    \begin{proof} 
    The claimed map $\beta_N$ follows directly from the homotopy equivalence of pairs given in Lemma~\ref{lem:neg-end}: as shown in the proof of Lemma~\ref{lem:neg-end}, the idea for $\beta_N$  is to first follow the flow of a gradient-like vector field for $\Delta_F$ until
we reach level $\sigma$.  Then, applying our analysis of the $\Delta$-$\delta$ translation function $\lambda_{\sigma}(t)$ for $t \in (0,t_-] \cup [t_+, u]$, we show that there is a homotopy equivalence 
  $$\left(\Delta_F^{\leq \sigma}|_{(0, u]},  \Delta_F^{\leq -\mu}|_{ (0, u]} \right)
    \simeq 
    \left(\Delta_F^{\leq \sigma}|_{[t_-, 1]},  \Delta_F^{\leq -\mu}|_{ [t_-, 1]} \cup \Delta_F^{\leq \sigma}|_{\{1\}} \right), 
  $$
and lastly we apply a Morse-Bott argument to construct a homotopy equivalence   
$$ \left(\Delta_F^{\leq \sigma}|_{[t_-, 1]},  \Delta_F^{\leq -\mu}|_{ [t_-, 1]} \cup \Delta_F^{\leq \sigma}|_{\{1\}} \right)
\simeq
\left(D^N(L),   D^N(\partial L) \cup S^{N-1}(L) \right).
$$

   Suppose  
$F_{i+1}\co \rr_{>0} \times M \times \rr^{N+1} \to \rr$ is a rank $1$ stabilization of $F_i$. 
    stabilizing $F_i$ will not affect the functions $\lambda_a(t)$, for $a = \Infin, -\mu, \sigma$.
    To verify the commutativity of Diagram (\ref{eqn:A-filling}), 
it suffices  to verify that the map $\iota^{(0,u]}$ defined in Equation~(\ref{eqn:shear-stab}) 
 makes the following commutative diagram of pairs:
 {\scriptsize
\eqnn
	\xymatrix{
	\left(\Delta_{F_i}^{\leq \Infin}|_{(0,u]} \times I, \  \Delta_{F_i}^{\leq -\mu}|_{(0,u]}  \times I \cup \Delta_{F_i}^{\leq \Infin}|_{\{(0,u]\}}  \times \partial I \right) 
	\ar[r]^{\qquad\qquad\quad\Sigma \widetilde\beta_i}_{\qquad\qquad\quad\simeq} \ar[d]_{\iota^{(0,u]}}^\simeq & 
	\Sigma \left(D^{i}(L),  D^{i}(\partial L) \cup S^{i-1}(L)  \right)
	  \ar[d]^{\zeta_i}_\simeq
	\\
	\left(\Delta_{F_{i+1}}^{\leq \Infin}|_{(0,u]} , \  \Delta_{F_{i+1}}^{\leq -\mu}|_{(0,u]}   \right)  
	\ar[r]_{\qquad \widetilde\beta_{i+1}}^{\qquad\simeq} & 
	\left(D^{i+1}(L),  D^{i+1}(\partial L) \cup S^{i}(L) \right)}
\eqnnd
}
As mentioned in the Proof of Proposition~\ref{prop:fill-stab},
the map $\iota^{(0,u]}$ is induced by the inclusion map
$$\begin{aligned}
\Delta_F^{\leq \alpha}|_{\{u\}} \times I  &\to  \Delta_{F'}^{\leq \alpha}|_{\{u\}} \\
(u,x,\e,\te, \tau) &\mapsto (u,x,\e,\te, 0,  c\tau),
\end{aligned}
$$
 for an appropriate constant $c$.
A straightforward check shows that the maps $\iota^{(0,u]}$, $\zeta_{i}$ commute up to homotopy with the maps $\Sigma \widetilde \beta_{F_i}$, $\widetilde \beta_{F_{i+1}}$ whose
constructions are outlined in the previous paragraph. 
    \end{proof} 
	
The maps on the left-hand side of Diagram~(\ref{eqn:A-filling}) define the prespectrum $A$, while the maps on the 
of the right-hand side 
define the $N$-tail of the prespectrum that defines the suspension spectrum $\Sigma^\infty(L,\partial L)$; see   Example~\ref{ex:quotient-suspensions} and Definition~\ref{defn:suspension-spec}. 
 Thus, Proposition~\ref{prop:prespectra-spectra-maps} gives:

\begin{cor}\label{corollary. L mod Lambda}
The spectrum associated to the prespectrum $A$ is equivalent to the suspension spectrum of the quotient space $L_0/\partial L_0 $, $\Sigma^\infty(L_0/\partial L_0)$.
\end{cor}

\subsection{The prespectrum $B$}

In parallel to Definitions~\ref{defn:W} and \ref{defn:A}, we now construct a spectrum associated to $J = [u, \infty) \subset \rr_{>0}$.
\begin{defn}[$B_N$ and $B$]
\label{defn:B} Given Assumptions~\ref{assume:spec}(1), 
we define the pointed space $B_N$ to be the quotient
	\eqnn
	 B_N:= \left(\Delta_{F}^{\leq \Omega}|_{[u,\infty)} \right)/ \left(\Delta_{F}^{\leq -\mu}|_{[u,\infty)}\right).
	\eqnnd
Given Assumptions~\ref{assume:spec}(2), we construct a prespectrum 
$$B = \left\{\left(B_i, \sigma_i^{[u,\infty)}\right)\right\}_{i \geq N}$$ as follows:
\begin{enumerate}
\item $B_{i} = \left(\Delta_{F_i}^{\leq \Infin}|_{[u,\infty)}\right) / \left(\Delta_{F_i}^{\leq -\mu}|_{[u,\infty)} \right)$,
 \item  $\sigma_{i}^{[u,\infty)}: \Sigma B_{i} \to B_{i+1}$ provided by Proposition~\ref{prop:fill-stab}.
 \end{enumerate}
 \end{defn}

\begin{prop} For all $N$, $B_N$  
is contractible. 
\end{prop}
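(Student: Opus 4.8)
The claim is that $B_N = \left(\Delta_{F}^{\leq \Omega}|_{[u,\infty)}\right)/\left(\Delta_F^{\leq -\mu}|_{[u,\infty)}\right)$ is contractible, i.e., that the inclusion $\Delta_F^{\leq -\mu}|_{[u,\infty)} \hookrightarrow \Delta_F^{\leq \Omega}|_{[u,\infty)}$ is a homotopy equivalence (so the quotient is contractible, provided the inclusion is also a cofibration, which follows as in Lemma~\ref{lem:gf-cofibration} since $-\mu$ is a regular value of $\Delta_F$ on this region). The key input is Lemma~\ref{lem:pos-end}(2): there is a homotopy equivalence of pairs
$$
\rho \co \left(\Delta_F^{\leq \Infin}|_{[u,\infty)}, \Delta_F^{\leq -\mu}|_{[u,\infty)}\right) \to \left(\delta_f^{\leq \infin} \times [u,v], \ \delta_f^{\leq \epsilon}\times[u,v] \cup \delta_f^{\leq \infin}\times\{v\}\right).
$$
So $B_N$ is homotopy equivalent to the quotient $\left(\delta_f^{\leq \infin}\times[u,v]\right)/\left(\delta_f^{\leq\epsilon}\times[u,v]\cup\delta_f^{\leq\infin}\times\{v\}\right)$.

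\textbf{Main step.} I would then show this latter quotient is contractible. Write $X = \delta_f^{\leq \infin}$, $A = \delta_f^{\leq\epsilon}$, and consider $\left(X\times[u,v]\right)/\left(A\times[u,v]\cup X\times\{v\}\right)$. The subspace being collapsed, $A\times[u,v]\cup X\times\{v\}$, deformation retracts onto $X\times\{v\}$ — push everything in the $[u,v]$-direction up to $v$ while keeping it inside $A\times[u,v]\cup X\times\{v\}$ (the $A$-part slides up the cylinder, the rest is already at $v$). Meanwhile $X\times[u,v]$ also deformation retracts onto $X\times\{v\}$, compatibly. Hence the pair $\left(X\times[u,v], A\times[u,v]\cup X\times\{v\}\right)$ deformation retracts to the pair $\left(X\times\{v\}, X\times\{v\}\right)$, whose quotient is a point. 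Since the collapsed subspace is a cofibration in $X\times[u,v]$ (it is a CW pair up to homotopy, or one checks the homotopy extension property directly from the product structure and the fact that $\partial A\subset X$ and $\{v\}\subset[u,v]$ are cofibrations), the quotient is homotopy equivalent to the mapping cone, and the deformation retraction of pairs induces a homotopy equivalence of quotients. Therefore $B_N \simeq \ast$.

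\textbf{Alternative, cleaner phrasing.} One can instead observe directly that $X\times\{v\}$ is a strong deformation retract of $A\times[u,v]\cup X\times\{v\}$ that extends to a strong deformation retract of $X\times[u,v]$ onto $X\times\{v\}$; collapsing $X\times\{v\}$ to a point in $X\times[u,v]$ then gives a space that deformation retracts to a point, and this is exactly $B_N$ up to the homotopy equivalence from Lemma~\ref{lem:pos-end}(2). This is, in fact, precisely the statement that ``coning off $X$ from one end of the cylinder $X\times[u,v]$ and collapsing the other end together with $A\times[u,v]$'' yields a contractible space — geometrically $B_N$ is the (pointed) mapping cylinder of the identity on $X/A$, or equivalently $C(X/A)$ rel nothing, which is contractible.

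\textbf{Anticipated obstacle.} The only real subtlety is bookkeeping with the cofibration/homotopy-extension-property hypotheses needed to pass from ``deformation retraction of pairs'' to ``homotopy equivalence of quotients'' — one must make sure $A\times[u,v]\cup X\times\{v\}\hookrightarrow X\times[u,v]$ is a cofibration. This follows from the gluing lemma for cofibrations applied to the product decomposition (using that $A\hookrightarrow X$ is a cofibration, as in Lemma~\ref{lem:gf-cofibration}, and that $\{u\},\{v\}\hookrightarrow[u,v]$ are cofibrations), so it is routine but should be stated. Everything else is a standard deformation-retraction argument and should be quick.
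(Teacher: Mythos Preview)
Your proof is correct and follows essentially the same approach as the paper: both invoke Lemma~\ref{lem:pos-end}(2) to identify the pair with $\left(\delta_f^{\leq \infin}\times[u,v],\ \delta_f^{\leq \epsilon}\times[u,v]\cup\delta_f^{\leq \infin}\times\{v\}\right)$, and then recognize the resulting quotient as the cone $\Co\left(\delta_f^{\leq \infin}/\delta_f^{\leq \epsilon}\right)$, which is contractible. The paper's version is terser (it simply asserts the cone identification), while you spell out the deformation retraction and the cofibration bookkeeping more carefully.
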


\begin{proof} 
As shown in Lemma~\ref{lem:pos-end},
for sufficiently large $v > u$, there is a homotopy equivalence 
$$
    \left(\Delta^{\leq \Infin}_{[u, \infty)}, \Delta^{\leq -\mu}_{[u, \infty)}\right) \simeq
    \left(\delta_{f}^{\leq \infin} \times [u,v], \delta_f^{\leq \epsilon} \times [u,v] \cup \delta_f^{\leq \infin} \times\{v\} \right).  
$$
It follows that $B_N = \Co\left(\delta_f^{\leq \infin}/\delta_f^{\leq \epsilon}\right)$
and is thus contractible. 
 \end{proof} 

By example~\ref{example. trivial spectrum}, we have:

\begin{cor}\label{corollary. B is contractible}
The spectrum associated to $B$ is a trivial spectrum.
\end{cor}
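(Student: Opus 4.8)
The plan is to derive this corollary directly from the immediately preceding proposition — which shows $B_N$ is contractible for every $N$ — together with the spectrification principle recorded in Example~\ref{example. trivial spectrum}.

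Recall $B = \{(B_i,\sigma_i^{[u,\infty)})\}_{i\geq N}$, and that for each $i\geq N$ the space $B_i = \bigl(\Delta_{F_i}^{\leq \Infin}|_{[u,\infty)}\bigr)/\bigl(\Delta_{F_i}^{\leq -\mu}|_{[u,\infty)}\bigr)$ is of exactly the form treated in the preceding proposition (with $F_i$ in place of $F$; this is permissible since each $F_i$ is again the generating family of a linearly-controlled filling up to stabilization, and Lemma~\ref{lem:pos-end} is insensitive to stabilization). Hence that proposition supplies a homotopy equivalence $B_i \simeq \Co\bigl(\delta_{f_i}^{\leq \infin}/\delta_{f_i}^{\leq \epsilon}\bigr)$, and since the reduced cone on any pointed space deformation retracts onto its cone point, each $B_i$ is contractible. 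Thus $B$ is a prespectrum every one of whose spaces is weakly contractible.

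It then remains to observe that such a prespectrum spectrifies to a trivial spectrum. Concretely, the homotopy groups of the associated spectrum are computed as the sequential colimit $\pi_k = \colim_i \pi_{k+i}(B_i)$, and every term vanishes because $B_i \simeq \ast$; a spectrum with all homotopy groups zero is, by Whitehead's theorem for spectra, equivalent to the zero spectrum, i.e., to the suspension spectrum of a point. This is precisely the assertion of Example~\ref{example. trivial spectrum}, which we simply cite. I anticipate no genuine obstacle here: the substantive work is already contained in the preceding proposition, and what remains is a formal unwinding of definitions.
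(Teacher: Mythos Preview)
Your proposal is correct and follows essentially the same approach as the paper: the preceding proposition already asserts that $B_i$ is contractible for every index, and the paper then simply cites Example~\ref{example. trivial spectrum} to conclude. Your version just unpacks slightly more of the reasoning (why the proposition applies at each stabilization level, and why a prespectrum of contractible spaces spectrifies to the zero spectrum), which is fine but not strictly necessary.
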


\subsection{The prespectrum $C$}

In parallel to Definitions~\ref{defn:W}, \ref{defn:A}, and \ref{defn:B}, we now construct a spectrum associated to $J = (0, \infty) = \rr_{>0}$.

\begin{defn}[$C_N$ and $C$]
\label{defn:C}
Given Assumptions~\ref{assume:spec}(1), 
we define the pointed space $C_N$ to be  the quotient
	\eqnn
	C_N:= \left(\Delta_{F}^{\leq \Omega}|_{(0,\infty)} \right)/ \left(\Delta_{F}^{\leq -\mu}|_{(0,\infty)}\right).
	\eqnnd
Given Assumptions~\ref{assume:spec}(2), we construct the prespectrum
	$$C= \left\{\left(C_i, \sigma_i^{(0,\infty)}\right)\right\}_{i \geq N}$$ as follows:
\begin{enumerate}
\item $C_{i} = \left(\Delta_{F_i}^{\leq \Infin}|_{(0,\infty)}\right) / \left(\Delta_{F_i}^{\leq -\mu}|_{(0,\infty)} \right)$,
 \item  $\sigma_{i}^{(0,\infty)}: \Sigma C_{i} \to C_{i+1}$ provided by Proposition~\ref{prop:fill-stab}.
 \end{enumerate}
\end{defn}

\begin{prop}
For all $N$, $C_N$ is contractible.
\end{prop}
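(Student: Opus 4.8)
The statement to be proven is that $C_N = \left(\Delta_F^{\leq \Infin}|_{(0,\infty)}\right)/\left(\Delta_F^{\leq -\mu}|_{(0,\infty)}\right)$ is contractible. The plan is to invoke Lemma~\ref{lem:contractible pair for (-oo,oo)}, which asserts precisely that there is a deformation retraction of $\Delta_F^{\leq \Infin}|_{(0,\infty)}$ onto $\Delta_F^{\leq -\mu}|_{(0,\infty)}$. In other words, the inclusion of the subspace into the total space is a homotopy equivalence. A standard fact from the homotopy theory of spaces (see the discussion preceding the corollary in Section~\ref{ssec:morse-theory}, and the cofibration results of Lemma~\ref{lem:gf-cofibration}) is that when $A \hookrightarrow X$ is a cofibration which is also a homotopy equivalence, the quotient $X/A$ is contractible.

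The key steps, in order, are as follows. First, verify that the pair $\left(\Delta_F^{\leq \Infin}|_{(0,\infty)}, \Delta_F^{\leq -\mu}|_{(0,\infty)}\right)$ is a ``good pair'' in the sense that the inclusion is a cofibration; this follows exactly as in Lemma~\ref{lem:gf-cofibration}, using that $-\mu$ is a regular value of $\Delta_F$ along the relevant region and that one can construct a collar neighborhood via a reparametrized gradient flow of $\Delta_F$ (the needed integrability of the gradient-like vector field over compact $t$-intervals is furnished in the proof of Lemma~\ref{lem:neg-end}, and over the conical end by linearity-at-infinity as in Proposition~\ref{prop. derivative of difference function is well-behaved}). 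Second, apply Lemma~\ref{lem:contractible pair for (-oo,oo)} to conclude that the inclusion $\Delta_F^{\leq -\mu}|_{(0,\infty)} \hookrightarrow \Delta_F^{\leq \Infin}|_{(0,\infty)}$ is a homotopy equivalence. Third, combine these two facts: the mapping cone $\Co\left(\Delta_F^{\leq -\mu}|_{(0,\infty)} \hookrightarrow \Delta_F^{\leq \Infin}|_{(0,\infty)}\right)$ is both homotopy equivalent to the quotient $C_N$ (by the cofibration property) and contractible (since the cone on a homotopy equivalence is contractible). Hence $C_N$ is contractible.

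The main obstacle — though it is essentially already discharged by the lemmas cited above — is verifying that the inclusion is genuinely a cofibration over the \emph{non-compact} parameter range $(0,\infty)$, rather than over a compact sub-interval. Over a compact interval the collar construction is routine; near $t \to 0$ one uses that $\Delta_F(t,x,\e,\te) = tA(\e,\te)$, which has no critical points and whose sublevel sets are fiberwise half-spaces, so the $-\mu$-level is a regular hypersurface there; near $t \to \infty$ one uses the linear-at-infinity behavior of $f$ and the explicit form $\Delta_F = t\delta_f + H(t)$, together with Lemma~\ref{lem:Infin-mu} guaranteeing $\lambda_{-\mu}(t)$ stays bounded away from the positive critical values of $\delta_f$, so $-\mu$ remains a regular value uniformly. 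Gluing these regular collars to the compact-interval collar produces the required neighborhood, establishing the homotopy extension property exactly as in the proof of Lemma~\ref{lem:gf-cofibration}. With this in hand, the corollary recording that the spectrum associated to $C$ is trivial follows from Example~\ref{example. trivial spectrum}, in parallel with Corollary~\ref{corollary. B is contractible}.
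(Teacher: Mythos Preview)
Your proposal is correct and follows the same approach as the paper: both invoke Lemma~\ref{lem:contractible pair for (-oo,oo)} as the key input. The paper's proof is simply ``This follows immediately from Lemma~\ref{lem:contractible pair for (-oo,oo)}'', relying on the fact that the (strong) deformation retraction constructed there descends directly to a contraction of the quotient $C_N$ to its basepoint; your detour through the cofibration property and mapping cones is valid but more than strictly necessary.
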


\begin{proof}
This follows immediately from Lemma~\ref{lem:contractible pair for (-oo,oo)}.
\end{proof}

By example~\ref{example. trivial spectrum}, we have:

\begin{cor}\label{corollary. C is contractible}
The spectrum associated to $C$  is a trivial spectrum.
\end{cor}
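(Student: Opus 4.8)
The final statement to prove is:

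\begin{prop}
For all $N$, $C_N$ is contractible.
\end{prop}

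\textbf{Plan.} The strategy is to reduce this proposition entirely to Lemma~\ref{lem:contractible pair for (-oo,oo)}, which has already been established. Recall that $C_N$ was defined as the quotient $\left(\Delta_{F}^{\leq \Omega}|_{(0,\infty)}\right) / \left(\Delta_{F}^{\leq -\mu}|_{(0,\infty)}\right)$, and that Lemma~\ref{lem:contractible pair for (-oo,oo)} asserts the existence of a deformation retraction of the ambient space $\Delta_F^{\leq \Infin}|_{(0,\infty)}$ onto the subspace $\Delta_F^{\leq -\mu}|_{(0,\infty)}$. The first step I would take is to invoke this lemma directly to produce the deformation retraction $r_s$ (a homotopy from the identity on $\Delta_F^{\leq \Infin}|_{(0,\infty)}$ to a retraction onto $\Delta_F^{\leq -\mu}|_{(0,\infty)}$, fixing the subspace pointwise throughout).

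The second step is a standard point-set-topology observation: a deformation retraction of a pair $(X, A)$ onto $A$ induces a based null-homotopy of the quotient $X/A$ — i.e., $X/A$ is contractible. Concretely, the deformation retraction $r_s \colon X \to X$ descends to the quotient because it sends $A$ into $A$ at every time $s$ (in fact it fixes $A$), giving a based homotopy $\bar r_s \colon X/A \to X/A$ from the identity to the constant map at the basepoint $[A]$. This is the entire content; there is nothing deeper to check. I would phrase this crisply, perhaps citing the fact recorded earlier in the paper (the surrounding discussion of cofibrations and quotients in Section~\ref{ssec:morse-theory}) that such quotients behave well, or simply noting it is elementary.

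\textbf{Main obstacle.} There is essentially no obstacle here: all the real work — constructing an integrable gradient-like vector field for the deformed family $\Delta_s$ on $[w,v] \times B \times \RR^{2N}$ and applying the Critical Non-Crossing Lemma to conclude the deformation retraction exists — was carried out in the proof of Lemma~\ref{lem:contractible pair for (-oo,oo)}. The only thing one might pause over is making sure the deformation retraction of spaces genuinely descends to a \emph{based} homotopy on the quotient and that the basepoint (the image of $\Delta_F^{\leq -\mu}|_{(0,\infty)}$) is preserved, but since the retraction fixes the subspace pointwise this is automatic. Accordingly, the proof is one line: apply Lemma~\ref{lem:contractible pair for (-oo,oo)} and pass to the quotient.

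\begin{proof}
This follows immediately from Lemma~\ref{lem:contractible pair for (-oo,oo)}: the deformation retraction of $\Delta_F^{\leq \Infin}|_{(0,\infty)}$ onto $\Delta_F^{\leq -\mu}|_{(0,\infty)}$ fixes the latter subspace pointwise, hence descends to a based null-homotopy of the quotient $C_N = \left(\Delta_{F}^{\leq \Omega}|_{(0,\infty)}\right) / \left(\Delta_{F}^{\leq -\mu}|_{(0,\infty)}\right)$.
\end{proof}
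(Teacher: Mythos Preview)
Your proposal is correct and takes essentially the same approach as the paper: the paper's proof of the preceding Proposition is literally the single sentence ``This follows immediately from Lemma~\ref{lem:contractible pair for (-oo,oo)},'' and the Corollary itself is then deduced (without further argument) from Example~\ref{example. trivial spectrum}, which says a prespectrum with contractible spaces yields a trivial spectrum. Your write-up even supplies slightly more detail than the paper by spelling out why the deformation retraction descends to a based null-homotopy of the quotient.
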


\subsection{Proof of Theorem~\ref{thm:suspension computation}} \label{section. proof of main theorem}
Given our Assumptions~\ref{assume:spec} and Choice~\ref{choice:spec}, for each $i \geq N$, we have the following 
commuting square of pairs:
	\eqn\label{eqn. pushout of pairs of spaces ABCW}
	\xymatrix{
	\left(\Delta_{F_i}^{\leq \Infin}|_{\{u\}} , \Delta_{F_i}^{\leq -\mu}|_{\{u\}} \right) \ar[r] \ar[d] & \left(\Delta_{F_i}^{\leq \Omega}|_{[u,\infty)} , \Delta_{F_i}^{\leq -\mu}|_{[u,\infty)}\right) \ar[d]\\
	\left(\Delta_{F_i}^{\leq \Infin}|_{(0, u]} ,   \Delta_{F_i}^{\leq -\mu}|_{ (0, u]} \right) \ar[r] & \left(\Delta_{F_i}^{\leq \Omega}|_{(0,\infty)}  , \Delta_{F_i}^{\leq -\mu}|_{(0,\infty)}\right).
	}
	\eqnd
We obtain a pushout square of pointed spaces   by passing to the associated quotients. We denoted these quotient spaces by $W_i, A_i, B_i,$ and $C_i$ in Definitions
\ref{defn:W},
\ref{defn:A},
\ref{defn:B}, and
\ref{defn:C}, respectively, so we obtain a pushout square of spaces
	\begin{equation} \label{eqn:square-i}
	\xymatrix{
	W_i \ar[r] \ar[d] & B_i  \ar[d]\\
	A_i \ar[r] & C_i.
	}
	\end{equation}

 We prove in Lemma~\ref{lemma. why a cofibration} that this pushout square is a homotopy pushout.
 By Lemma~\ref{lem:inclusions-stabs}, each map in the pushout square of pairs~\eqref{eqn. pushout of pairs of spaces ABCW} is compatible with stabilizations, and thus 
we obtain a commuting square of prespectra:
	\eqnn
	\xymatrix{
	W \ar[r] \ar[d] & B  \ar[d]\\
	A \ar[r] & C.
	}
	\eqnnd
	Any homotopy pushout square of prespectra gives rise to a homotopy pushout square of spectra; see Remark~\ref{remark. spectrification preserves colimits}.
	Because every  pushout square of spectra is also a pullback square of spectra (Theorem~\ref{thm:pushout=pullback}) one obtains an associated long exact sequence of homotopy groups:
	\eqnn
	\ldots \to \pi_k(W) \to \pi_k(A) \oplus \pi_k(B) \to \pi_k(C) \to \ldots,
	\eqnnd
see Proposition~\ref{prop:pull-back-spectra-homotopy}.
By Corollaries~\ref{corollary. C is contractible}, \ref{corollary. B is contractible}, the homotopy groups of $B$ and $C$ are trivial. We conclude that 
 	\eqnn
	W \to A
	\eqnnd
is an equivalence of spectra (Definition~\ref{defn. equivalence of spectra}). 
Corollary~\ref{corollary. W is gf spectrum} identifies the spectrum associated to $W$ with 
$\gfc(\leg,f;\Sphere)$, while Corollary~\ref{corollary. L mod Lambda} identifies the spectrum associated to $A$ with the suspension spectrum $\Sigma^\infty(L/\Lambda)$.   \qed

It remains to prove:

\begin{lem}\label{lemma. why a cofibration}  The pushout square 
\eqref{eqn:square-i} is a homotopy pushout square.
\end{lem}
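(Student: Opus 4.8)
The plan is to verify that the square~\eqref{eqn:square-i} arises from a pushout of pointed spaces in which one of the two parallel legs is a cofibration; by Remark~\ref{rem:old-lem-B9} this suffices to conclude it is a homotopy pushout square. Concretely, the square~\eqref{eqn:square-i} is obtained from the pushout square of pairs~\eqref{eqn. pushout of pairs of spaces ABCW} by passing to quotients, and this square of pairs itself lives over the elementary pushout square of intervals
	\eqnn
	\xymatrix{
	\{u\} \ar[r] \ar[d] & [u,\infty) \ar[d]\\
	(0,u] \ar[r] & (0,\infty).
	}
	\eqnnd
First I would record that the inclusion $\{u\} \hookrightarrow (0,u]$ is a cofibration of spaces (it is the inclusion of an endpoint of a half-open interval, which is a deformation retract of a neighborhood), and more relevantly that it induces a cofibration at the level of the relevant sublevel-set pairs: the inclusion
	\eqnn
	\left(\Delta_{F_i}^{\leq \Infin}|_{\{u\}}, \Delta_{F_i}^{\leq -\mu}|_{\{u\}}\right)
	\hookrightarrow
	\left(\Delta_{F_i}^{\leq \Infin}|_{(0,u]}, \Delta_{F_i}^{\leq -\mu}|_{(0,u]}\right)
	\eqnnd
is the inclusion of the fiber over $u$ into the total space over $(0,u]$, and by the same regular-value/gradient-flow argument used in Lemma~\ref{lem:gf-cofibration} (reparametrizing the $t$-coordinate near $u$) this inclusion has the homotopy extension property. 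Passing to the associated quotient spaces, the map $W_i \to A_i$ is then a cofibration of pointed spaces.

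Next I would check that the square~\eqref{eqn:square-i} is a genuine pushout of pointed spaces. This is because quotients commute with pushouts: the spaces $\Delta_{F_i}^{\leq \Infin}|_J$ for $J \in \{\{u\}, [u,\infty), (0,u], (0,\infty)\}$ form a pushout square (since $(0,\infty) = (0,u] \cup [u,\infty)$ with intersection $\{u\}$, and the sublevel sets are defined by restricting $t$ to these subsets), and likewise the subspaces $\Delta_{F_i}^{\leq -\mu}|_J$ form a compatible pushout square; quotienting the first square by the second yields exactly~\eqref{eqn:square-i}, and quotients (being themselves pushouts, hence left adjoints in the relevant sense) preserve the pushout. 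Having a pushout square of pointed spaces one of whose parallel legs ($W_i \to A_i$, equivalently $B_i \to C_i$) is a cofibration, Remark~\ref{rem:old-lem-B9} applies verbatim and gives that~\eqref{eqn:square-i} is a homotopy pushout square.

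The main obstacle, and the step requiring the most care, is the cofibration claim: one must be sure that restricting the sheared difference function to the half-open interval $(0,u]$ and then taking the fiber over the closed endpoint $u$ genuinely produces a cofibration after passing to sublevel-set quotients — the subtlety being that the sublevel sets vary with $t$ and one is not literally taking a product with $(0,u]$. Here I would lean on the fact established in Lemma~\ref{lem:Infin-mu} that $\lambda_\Infin$ and $\lambda_{-\mu}$ are monotone (indeed $\lambda_{-\mu}$ is increasing) near $t=u$, so that over a small collar $(u-\epsilon, u]$ the pair $\left(\Delta_{F_i}^{\leq \Infin}|_J, \Delta_{F_i}^{\leq -\mu}|_J\right)$ is homeomorphic to a product of the fiber over $u$ with a half-open interval (via a fiberwise gradient flow, as in the proof of Lemma~\ref{lem:pos-end}); the homotopy extension property then follows from the product case together with the standard fact that a pushout of a cofibration is a cofibration. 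The remaining verifications — that the square of restricted sublevel sets is a pushout, and that quotients preserve it — are formal and I would state them without elaboration.
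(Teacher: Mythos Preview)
Your overall strategy is the same as the paper's: verify that one leg of the square~\eqref{eqn:square-i} is a cofibration and invoke Remark~\ref{rem:old-lem-B9}. The paper chooses the arrow $W_i \to B_i$ rather than your $W_i \to A_i$, but as the paper itself notes, all four arrows are in fact cofibrations, so the choice is not the issue.

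The genuine gap is in your argument that $W_i \to A_i$ is a cofibration. Your collar/product-near-$u$ argument is insufficient once you pass to quotients. The basepoint of $A_i$ is the image of the \emph{entire} subspace $\Delta_{F_i}^{\leq -\mu}|_{(0,u]}$, not just its restriction to a collar $(u-\epsilon,u]$. Consequently, any open neighborhood of $W_i$ in $A_i$ must lift to an open neighborhood of
\[
\Delta_{F_i}^{\leq \Omega}|_{\{u\}} \;\cup\; \Delta_{F_i}^{\leq -\mu}|_{(0,u]}
\]
inside $\Delta_{F_i}^{\leq \Omega}|_{(0,u]}$, and any deformation retraction onto $W_i$ must retract such a neighborhood onto that union. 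Your product structure over $(u-\epsilon,u]$ only handles the first piece of the union; it says nothing about a neighborhood of $\Delta_{F_i}^{\leq -\mu}|_{(0,u-\epsilon]}$, nor about how the two pieces are compatibly retracted. The invocation of ``pushout of a cofibration is a cofibration'' does not help here, since the relevant square of inclusions (the $-\mu$ sublevel sets into the $\Omega$ sublevel sets, over $\{u\}$ and over $(0,u]$) is not a pushout.

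The paper resolves exactly this difficulty (for the leg $W_i \to B_i$) by constructing an explicit neighborhood $\tilde U$ of $\Delta_{F_i}^{\leq \Omega}|_{\{u\}} \cup \Delta_{F_i}^{\leq -\mu}|_{[u,\infty)}$ and an explicit deformation retraction: one first builds, in the $(t,\delta_{f_i})$-plane, a region $U$ and a vector field $v$ with carefully controlled horizontal and vertical components, and then lifts $v$ along the projection $(t,x,\eta,\tilde\eta)\mapsto (t,\delta_{f_i}(x,\eta,\tilde\eta))$ using the normalized gradient of $\delta_{f_i}$. The point is that the retraction must simultaneously push the $t$-coordinate toward $u$ \emph{and} push the $\delta_{f_i}$-value below $\lambda_{-\mu}$ in a compatible way; this is what the $2$-dimensional picture accomplishes and what your one-dimensional collar cannot. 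If you want to salvage the $W_i \to A_i$ leg, you would need an analogous construction over $(0,u]$, including the modifications of the gradient-like vector field near $t=0$ already carried out in Lemma~\ref{lem:neg-end}.
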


For this, we will choose a neighborhood $\tilde U$ of 
$\Delta^{\leq \Omega}_{F_i|\{u\}}$
inside
$\Delta^{\leq \Omega}_{F_i|[u,\infty)}$
and construct a deformation retraction of $\tilde U$ onto 
$\Delta^{\leq \Omega}_{F_i|\{u\}}
\bigcup
\Delta^{\leq -\mu}_{F_i|[u,\infty)}$.
The argument is a slight modification of the arguments in Lemmas~ in~\cite[Lemmas~5.4 and~5.6]{S-T:obstruct} -- the idea is to lift a deformation retraction evident in $\RR^2$ to a deformation retraction taking place in $\Delta^{\leq \Omega}_{F_i|[u,\infty)}$.

We set some notation.  First recall that  by Lemma~\ref{lem:Infin-mu},  $0 < \lambda_{-\mu}(u) < \lmin$.   Also straightforward calculations from Definition~\ref{defn:shear-H} and \eqref{eqn:lambda} show that 
 $\lambda_{\Omega}(t)$ is concave up along $[u,\infty)$ with a global minimum occurring at some $t > u$.  This makes it possible to make the following choice.

\begin{choice}[$\epsilon$]
Choose a real number $\epsilon$ satisfying:	
\newenvironment{eps-choice}{
		\renewcommand*{\theenumi}{(E\arabic{enumi})}
		\renewcommand*{\labelenumi}{(E\arabic{enumi})}
		\enumerate
	}{
		\endenumerate
	}
	\begin{eps-choice}
	\item $\epsilon>0$.
	\item $\epsilon$ is small enough so that $\lambda_{-\mu}(t)$  is a regular value of $\delta_{f_i}$ for all $t \in [u,u+ \epsilon)$. 

	\item\label{item. global min of lambda Omega} $\epsilon$ is small enough so that the global minimum of $\lambda_{\Omega}(t)$ along the interval $[u,\infty)$ is attained where $ t> u + \epsilon$. 

	\end{eps-choice}	
\end{choice}

\begin{choice}[$\tilde \lambda$]
\label{choice. tilde lambda}
We choose a smooth function
	\eqnn
	\tilde \lambda : [u,\infty) \to \RR
	\eqnnd
such that (see Figure~\ref{figure. cofibration retraction}):
\newenvironment{tildelambda-choice}{
		\renewcommand*{\theenumi}{(L\arabic{enumi})}
		\renewcommand*{\labelenumi}{(L\arabic{enumi})}
		\enumerate
	}{
		\endenumerate
	}
	\begin{tildelambda-choice}
	\item\label{item. tilde lambda starts high} $\lambda_\Omega(u) < \tilde \lambda(u)$. 
	\item\label{item. tilde lambda convex} $\tilde \lambda$ is concave up.
	\item\label{item. tilde lambda min} $\tilde \lambda$ attains a global minimum at $t = u+\epsilon/2$. 

	\item\label{item. tilde lambda values sandwiched} For all $t > u + \epsilon/2$, $\lambda_{-\mu}(t) < \tilde\lambda(t) < \lambda_{\Omega}(t)$.
	\item\label{item. no critical values} For all $t \in [u+\epsilon/4,u+\epsilon]$, $\lambda_{-\mu}(u) < \tilde\lambda(t) < \lmin$. 

	\item\label{item.unique crossing}  There is a unique $t_0 \in [u,\epsilon/2)$ such that $\tilde\lambda(t_0)=\lambda_\Omega(t_0)$. 
	\end{tildelambda-choice}	
\end{choice}

\noindent
Observe that \ref{item. tilde lambda convex}and \ref{item. tilde lambda min} guarantee that $\tilde\lambda(t)$ is decreasing along the interval $[u,u+\epsilon/2)$, and increasing along $(u+\epsilon/2,\infty)$.  Condition~\ref{item. no critical values} guarantees that for all $t \in [u+\epsilon/4,u+\epsilon]$, there are no critical values of  $\delta_{f_i}$  in   $[\lambda_{-\mu}(t),\tilde\lambda(t)]$.)

\begin{notation}[$U$]\label{notation. U not U tilde}
Consider the (neither open nor closed in $\RR^2$) region 
	\eqn\label{eqn. U locus}
	U := \{(t,y) \, | \, t \geq u \text{ and } y < \tilde\lambda(t) \text{ and } y \leq \lambda_\Omega(t)\} \}
	\subset
	\RR^2.
	\eqnd
	
 By \ref{item.unique crossing}, we
can accordingly define $U$ piecewise:
	\eqnn
	U = 
	\{ t < t_0 \text{ and } y \leq \lambda_\Omega(t) \}
	\bigcup
	\{ t \geq t_0 \text{ and } y < \tilde\lambda(t) \}.
	\eqnnd
\end{notation}

\begin{notation}[$K$ and $v$]
\label{notation. K and v}
Now consider the subset
	\eqnn
	K := 
	\{t = u \text{ and } y \leq \lambda_\Omega(u)\}
	\bigcup
	\{t \geq u \text{ and } y \leq \lambda_{-\mu}(t)\} \subset U.
	\eqnnd
As $K$ and $U$ are both contractible, there exists a deformation retraction of $U$ onto $K$. Even better, there exists a smooth vector field $v$ on $U$, which we write as
	\eqn\label{eqn. v phi psi}
	v(t,y) = \phi(t,y)\del_t + \psi(t,y)\del_y,
	\eqnd
satisfying the following. 
(See Figure~\ref{figure. cofibration retraction}.)
\newenvironment{v-choice}{
		\renewcommand*{\theenumi}{(V\arabic{enumi})}
		\renewcommand*{\labelenumi}{(V\arabic{enumi})}
		\enumerate
	}{
		\endenumerate
	}
	\begin{v-choice}
	\item\label{item. v at top} If $y \geq {\frac 1 2}(\underline{\ell} + \tilde\lambda(u+\epsilon/2))$ or $y \leq {\frac 1 2} \lambda_{-\mu}(t)$,  then $v(t,y)$ has no vertical components. (That is, $\psi(t,y) = 0$.)
	\item\label{item. v at bottom} Along the region ${\frac 1 2} \lambda_{-\mu}(t) \leq y \leq {\frac 1 2}(\underline{\ell} + \tilde\lambda(u+\epsilon/2))$,  $v(t,y)$ has non-positive vertical components. (That is, $\psi(t,y) \leq 0$ on this region.) 
	\item\label{item. v horizontally} If $t > \epsilon/2$, $v(t,y)$ has positive $\del_t$ component (so $\phi(t,y) > 0$). If $t < \epsilon/2$, $v(t,y)$ has negative $\del_t$ component (so $\phi(t,y) < 0$). 
	\item\label{item. v retracts} There is a continuous non-negative function $\tau : U \times [0,1] \to \RR$ so that the time-$\tau$ flow by $v$
		\eqnn
		( (t,y),s) \mapsto \flow_v^{\tau( (t,y),s)}(t,y)
		\eqnnd
	exhibits a strong deformation retraction of $U$ to $K$. (In particular, $\tau((t,y),s) = 0$ if $(t,y) \in K$.)
	\end{v-choice}
\end{notation}

\begin{rem}[$v$ leaves $U$ invariant]
Let us check that the conditions~\ref{item. v at top}, \ref{item. v at bottom}, and \ref{item. v horizontally} guarantee that for all non-negative $\tau$ vanishing along $K \times [0,1]$,   the time-$\tau$ flow of $v$ leaves $U$ invariant:
	\eqn\label{eqn. v leaves U invariant}
	((t,y),s) \in U  \times [0,1]
	\implies \flow_v^{\tau((t,y),s)}(t,y) \in U.
	\eqnd
Note we know $\psi$ is a non-positive function by \ref{item. v at top} and \ref{item. v at bottom}.
We address \eqref{eqn. v leaves U invariant} in three cases:

\begin{enumerate}
\item If $t < \epsilon/2$, then $\phi<0$ function by \ref{item. v horizontally}. 
Because both $\tilde \lambda$ and $\lambda_\Omega$ are concave up, and have negative derivative when $t<\epsilon/2$, decreasing the $t$ coordinate preserves the inequality 
$y <  \tilde \lambda(t),
y \leq \lambda_\Omega(t)$ in~\eqref{eqn. U locus}. Decreasing (or keeping fixed) the $y$ coordinate also preserves this inequality. Because $\tau$ vanishes along $K \times [0,1]$ -- and in particular along $\{y=u\} \times [0,1]$ -- the condition $t \geq u$ is also preserved. This proves \eqref{eqn. v leaves U invariant} in this case.

\item If $t > \epsilon/2$, then $\phi>0$ by \ref{item. v horizontally}. 
Because $\tilde \lambda$ is concave up, and has positive derivative when $t>\epsilon/2$, increasing  the $t$ coordinate preserves the inequality $y <  \tilde \lambda(t)$. (Note that when $t > \epsilon/2$, $ \tilde \lambda(t) < \lambda_\Omega(t)\} $ by \ref{item. tilde lambda values sandwiched}.) Decreasing (or keeping fixed) the $y$ coordinate also preserves this inequality. This proves \eqref{eqn. v leaves U invariant} in this case. 

\item If $t = \epsilon/2$, \ref{item. v horizontally} guarantees that $\phi(t,y)=0$. Thus, $v(t,y)$ is some non-positive multiple of $\del_y$, and decreasing (or keeping fixed) the $y$ coordinate preserves  the inequality $y <  \tilde \lambda(t)$.
\end{enumerate}
\end{rem}

\begin{figure}
		\[
			\xy
			\xyimport(8,8)(0,0)
			{
			\includegraphics[width=5in]{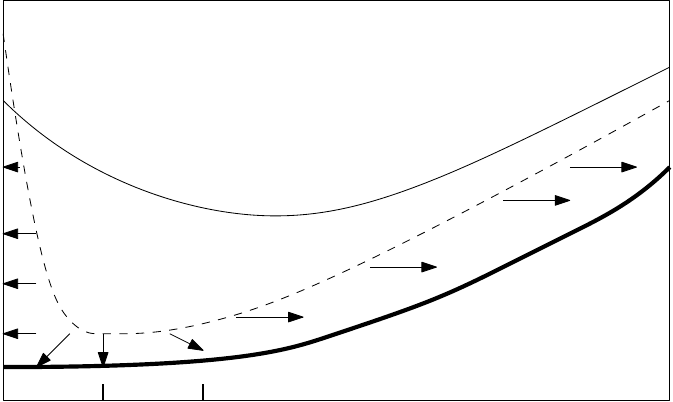}
			}
			,(7.7,-0.3)*{t}
			,(0.2,-0.3)*{u}
			,(1.2,-0.3)*{u+\epsilon/2}
			,(2.45,-0.3)*{u+\epsilon}
			,(-0.2,7.7)*{\delta_{f_i}}
			\endxy
		\]
	\caption{The dashed curve is the graph of $\tilde \lambda$. 
	The thick solid curve is the graph of $\lambda_{-\mu}$; the region along and below this curve is the image of $\Delta_{F_i}^{\leq -\mu}|_{[u,\infty)}$.  The thinner solid curve is the graph of $\lambda_{\Omega}$.  We invite the reader to verify conditions \ref{item. tilde lambda starts high} through \ref{item. tilde lambda values sandwiched}, along with condition~\ref{item. global min of lambda Omega}.	 Drawn using arrows is the vector field $v$. 
	}
	\label{figure. cofibration retraction}
\end{figure}

\begin{proof}[Proof of Lemma~\ref{lemma. why a cofibration}.]
By Remark~\ref{rem:old-lem-B9}, it suffices to show that either of the initial arrows in (\ref{eqn:square-i}) is a cofibration. We give the argument for the top rightward-pointing arrow. (In fact, all four arrows are cofibrations, but we do not need this fact.) 

To see that the inclusion in question is a cofibration, it suffices to show that the subset
	\eqn\label{eqn. B_N cofibration}
	{\frac
	{\Delta_{F_i}^{\leq \Omega}|_{\{u\}}}
	{\Delta_{F_i}^{\leq -\mu}|_{\{u\}}}
	}
	\subset
	{\frac
	{\Delta_{F_i}^{\leq \Omega}|_{[u,\infty)}}
	{\Delta_{F_i}^{\leq -\mu}|_{[u,\infty)}}
	}
	\eqnd
admits a neighborhood deformation retract.

Now we define $\tilde U$ to be
	\eqnn
	\tilde U :=  \left\{
	(t,x,\eta,\tilde\eta) \in \Delta_{F_i}^{\leq \Omega}|_{[u,\infty)}  \, \text{ s.t. } \, \delta_{f_i}(x,\eta,\tilde\eta) < \tilde\lambda(t)
	\right\}.
	\eqnnd
We will exhibit a deformation retraction of $\tilde U$ to
	\eqnn
	\tilde K := 
	\Delta^{\leq \Omega}_{F_i|\{u\}}
	\bigcup
	\Delta^{\leq -\mu}_{F_i|[u,\infty)}. 
	\eqnnd
For this, consider the projection
	\eqn\label{eqn. pi projection U tilde}
	\pi: \RR \times B \times \RR^i \times \RR^i\to U,
	\qquad
	(t,x,\eta,\tilde\eta) \mapsto (t,\delta_{f_i}(x,\eta,\tilde\eta)).
	\eqnd
By design, the preimages of $U$ and $K$ (Notation~\ref{notation. U not U tilde} and Notation~\ref{notation. K and v}) are
	\eqnn
	\pi^{-1}(U) = \tilde U 
	\qquad\text{and}\qquad
	\pi^{-1}(K) = \tilde K.
	\eqnnd
Choose a Riemannian metric on $B \times \RR^i \times \RR^i$ and define the following vector field on $\tilde U \subset \RR \times B \times \RR^i \times \RR^i$:
	\eqnn
	\tilde v
	:= (\phi \circ \pi) \del_t \bigoplus (\psi \circ \pi) {\frac {\nabla \delta_{f_i}} {||\nabla \delta_{f_i}||^2} }.
	\eqnnd
Here, $\phi$ and $\psi$ are the functions on $U$ from~\eqref{eqn. v phi psi}. 
The direct sum notation is utilizing the splitting $T_{t,x,\eta,\tilde\eta}(\tilde U ) \cong T_t \RR \oplus T_{x,\eta,\tilde\eta}(B \times \RR^i \times \RR^i)$. 
(Let us assuage the reader concerned by the division by $|| \nabla \delta_{f_i}||^2$. By~\ref{item. v at top}, the function $\psi \circ \pi$ is non-zero only at points $(t,x,\eta,\tilde\eta)$ satisfying the inequality 
	\eqnn
	0 
	< {\frac 1 2} \lambda_{-\mu}(t)
	< \delta_{f_i}(x,\eta,\tilde\eta) 
	< {\frac 1 2}(\underline{\ell} + \tilde\lambda(u+\epsilon/2))
	<\underline{\ell}.
	\eqnnd
In particular, $\delta_{f_i}$ has no critical points in the support of $\psi \circ \pi$.)
By definition of $\pi$, we see that the derivative  satisfies
	\eqnn
	D\pi(\del_t) = \del_t
	\qquad\text{and}\qquad
	D\pi\left({\frac {\nabla \delta_{f_i}} {||\nabla \delta_{f_i}||^2} }\right) = \del_y \qquad\text{ (wherever $\nabla \delta_{f_i} \neq 0$) }.
	\eqnnd
Thus, 
	\eqnn
	D\pi\left(\tilde v(t,x,\eta,\tilde\eta)\right) = v\left(t,\delta_{f_i}(x,\eta,\tilde\eta)\right)
	\eqnnd
where $v$ is the vector field from~\eqref{eqn. v phi psi}. In particular, $\tilde v$ is a lift of $v$ to $\tilde U$. 
Defining $\tilde \tau$ by the composition
	\eqnn
	\tilde\tau: \tilde U \times [0,1] \xrightarrow{\pi \times \id_{[0,1]}}
	U \times [0,1] \xrightarrow{\tau} \RR
	\eqnnd
(see \ref{item. v retracts} for $\tau$) we find that $\pi$ intertwines the time-$\tilde\tau$ flow of $\tilde v$ with the time-$\tau$ flow of $v$. By \ref{item. v retracts}, we conclude that the time-$\tilde \tau$ flow of $\tilde v$ exhibits a deformation retraction of $\tilde U$ to $\tilde K$, as desired.
\end{proof}	

\appendix

\section{Some homotopy equivalences from Morse theory}

  In this section, we review some basic homotopy equivalences that arise through Morse-theoretic arguments.

The following lemma is an extension of the key deformation lemma in Morse theory to non-closed domains.
The proof is a standard variation on the usual proof for closed domains, for example, \cite[Section 3]{milnor:morse}.

\begin{lem} \label{lem:retract}
Let $B$ be a smooth manifold. Given a smooth function $g\co B \times \rr^N \to \rr$, if there
is a complete, gradient-like vector field $X$ for $g$ such that $X(g)$ is bounded away from $0$ on the set $g^{-1}[a, b]$, then the sublevel set $g^a$ is a deformation retract of the sublevel set $g^b$.  
\end{lem}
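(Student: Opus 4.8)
The plan is to mimic the classical deformation lemma in Morse theory (as in \cite[Section 3]{milnor:morse}), but replace the use of compactness of level sets with the two hypotheses we have been handed: completeness of $X$ and the uniform lower bound on $X(g)$ over $g^{-1}[a,b]$. First I would reparametrize the flow of $X$ so that it moves at unit speed with respect to the value of $g$. Concretely, let $\psi_s$ denote the (globally defined, by completeness) flow of $X$, and consider the rescaled vector field $Y := X / X(g)$ on the open set where $a - \delta < g < b + \delta$, cut off smoothly to vanish outside $g^{-1}[a,b]$ after fattening slightly; since $X(g)$ is bounded away from $0$ on $g^{-1}[a,b]$, this $Y$ is smooth and complete there (completeness of the cutoff field follows because its support has $g$-values in a bounded interval and trajectories cannot escape to infinity in finite time once the field is compactly supported in the $g$-direction and the original flow was complete). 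Along a trajectory of $-Y$ we have $\frac{d}{ds} g(\gamma(s)) = -1$ wherever the cutoff is active, so the flow decreases $g$ at constant unit rate exactly over the band $g^{-1}[a,b]$.

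The key steps, in order, are: (1) produce the rescaled, cutoff gradient-like field $Y$ as above and verify it generates a complete flow $\phi_s$; (2) define the candidate deformation retraction $r\co g^b \times [0,1] \to g^b$ by $r(p,\tau) = \phi_{\tau \cdot \max(0,\,g(p)-a)}(p)$, so that points already in $g^a$ are fixed for all $\tau$, and any point $p$ with $g(p) \in [a,b]$ flows down until its $g$-value reaches $a$ at $\tau = 1$; (3) check that $r$ is continuous (the exponent $\tau\cdot\max(0,g(p)-a)$ is continuous in $(p,\tau)$, and $\phi$ is continuous by smooth dependence on initial conditions), that $r(\cdot,0) = \mathrm{id}_{g^b}$, that $r(g^b,1) \subseteq g^a$, and that $r(\cdot,\tau)$ fixes $g^a$ pointwise for every $\tau$; (4) observe that the flow never increases $g$, so $r$ indeed maps $g^b \times [0,1]$ into $g^b$ and in fact into $g^b$, giving a strong deformation retraction. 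This yields that $g^a$ is a deformation retract of $g^b$, which is the assertion.

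The main obstacle I expect is purely the non-compactness bookkeeping: in the classical proof one uses compactness of $g^{-1}[a,b]$ both to get a uniform lower bound on $X(g)$ (handed to us here) and to guarantee completeness of the rescaled flow (not automatic here, since the rescaling $1/X(g)$ could blow up near the ends of the band, and trajectories a priori might run off to infinity in the $\RR^N$-directions). The hypothesis that $X$ itself is \emph{complete} is exactly what rescues completeness of $Y$: after cutting off so that $Y$ is supported in $g^{-1}[a - \delta, b + \delta]$ with $g$-values in a bounded interval, any integral curve of $Y$ is, up to reparametrization, a restriction of an integral curve of $X$, hence defined for all time; one only needs to check the reparametrization (time-change) function does not reach infinity in finite $s$, which follows because $X(g)$ is bounded below on the support. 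So the real content of the proof is assembling the cutoff and time-change carefully and confirming continuity of the resulting retraction; everything else is the standard Morse-theoretic argument.
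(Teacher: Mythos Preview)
Your proposal is correct and is precisely the standard variation on Milnor's deformation lemma that the paper has in mind; indeed, the paper gives no detailed proof at all, only the sentence ``The proof is a standard variation on the usual proof for closed domains, for example, \cite[Section 3]{milnor:morse}.'' Your identification of the two non-compactness issues (uniform lower bound on $X(g)$, completeness of the rescaled flow) and how the hypotheses dispatch them is exactly the point, and your reparametrization argument for completeness of $Y=\rho X$ with $\rho$ bounded is the right one.
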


We generalize the above lemma to discuss 1-parameter families of smooth functions and intervals such that throughout the time interval the paths of the critical values of the functions do not cross the endpoints of the intervals.  
This result is sometimes referred to as ``critical non-crossings" and at the level of homology was used in~\cite{traynor:gf-polys} and in~\cite[Lemma 2.4]{S-T:obstruct}.

\begin{lem}[Critical Non-Crossings]
\label{lem:crit-non-crossing}  Given 
\begin{enumerate}
\item a continuous $1$-parameter family of functions
  $g_s\co B \times \rr^{N} \to \rr$, $s \in [0, 1]$, that agree outside a compact set, and
  \item continuous paths $\alpha, \beta\co [0,1] \to \rr$, with $\alpha(s) \leq \beta(s)$, and $\epsilon > 0$ such that, for all $s$, 
  there exists a  complete,   gradient-like vector field
$X_s$ for $g_s$ such that $X_s(g_s)$ is bounded away from $0$ on 
 $$g_s^{-1}\left( [\alpha(s) - \epsilon, \alpha(s) + \epsilon] \cup [\beta(s) - \epsilon, \beta(s) + \epsilon] \right),$$
 \end{enumerate}
there is then a homotopy equivalence
$$\left(g_0^{\leq \beta(0)}, g_0^{\leq \alpha(0)}\right) \simeq \left(g_1^{\leq \beta(1)}, g_1^{\leq \alpha(1)}\right).$$
\end{lem}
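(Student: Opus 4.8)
\textbf{Proof proposal for Lemma~\ref{lem:crit-non-crossing}.}

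The plan is to reduce the $1$-parameter statement to the single-deformation-retract statement of Lemma~\ref{lem:retract} by a standard compactness-and-patching argument, keeping careful track of the relative (pair) structure throughout. First I would observe that the hypothesis is an open condition in $s$: for each fixed $s_0 \in [0,1]$, the bound ``$X_{s_0}(g_{s_0})$ is bounded away from $0$ on $g_{s_0}^{-1}([\alpha(s_0)-\epsilon,\alpha(s_0)+\epsilon]\cup[\beta(s_0)-\epsilon,\beta(s_0)+\epsilon])$'' persists, after possibly shrinking $\epsilon$ and using that the $g_s$ agree outside a fixed compact set (so the relevant locus is contained in a fixed compact region plus the controlled-at-infinity part), for all $s$ in some open interval $I_{s_0} \ni s_0$; moreover one can arrange that $\alpha(s),\beta(s)$ vary within $\epsilon/2$ of $\alpha(s_0),\beta(s_0)$ on $I_{s_0}$. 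By compactness of $[0,1]$ we extract a finite subcover $I_{s_0},\dots,I_{s_k}$ and a partition $0=\tau_0<\tau_1<\cdots<\tau_m=1$ so that each $[\tau_{j},\tau_{j+1}]$ lies in a single $I_{s_i}$. It then suffices to produce, for each $j$, a homotopy equivalence of pairs $(g_{\tau_j}^{\leq\beta(\tau_j)},g_{\tau_j}^{\leq\alpha(\tau_j)}) \simeq (g_{\tau_{j+1}}^{\leq\beta(\tau_{j+1})},g_{\tau_{j+1}}^{\leq\alpha(\tau_{j+1})})$ and compose; so I may assume from the outset that the whole interval $[0,1]$ is one such small interval, i.e.\ there is a single gradient-like $X$ (we may take $X = X_{s_0}$, which remains gradient-like for all nearby $g_s$ since being gradient-like is an open condition) with $X(g_s)$ bounded away from $0$ on $g_s^{-1}([\alpha(s)-\epsilon/2,\alpha(s)+\epsilon/2]\cup[\beta(s)-\epsilon/2,\beta(s)+\epsilon/2])$ for all $s$, and with $\alpha,\beta$ nearly constant.

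With this local reduction in hand, the core step is the ``level-set transport'' construction. Fix constants $a_-<a_+$ with $a_-<\alpha(s)<a_+$ for all $s$ and $a_+$ still inside the $\epsilon/2$-window, and similarly $b_-<b_+$ around $\beta(s)$. I would then build, by integrating a suitably rescaled and cut-off version of $X$ together with the $s$-direction vector field $\partial_s$ on $[0,1]\times B\times\RR^N$, a fiber-preserving isotopy $\Psi_s$ of $B\times\RR^N$ (with $\Psi_0 = \mathrm{id}$) carrying $\{g_0 \le \alpha(0)\}$ onto $\{g_s \le \alpha(s)\}$ and $\{g_0 \le \beta(0)\}$ onto $\{g_s \le \beta(s)\}$ \emph{simultaneously}. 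Concretely: on the region where $g_s$ lies in the $\alpha$-window one flows along $X$ for a time chosen (using that $X(g_s)$ is bounded below there) to exactly compensate the motion of the sublevel value $\alpha(s)$; likewise near $\beta(s)$; and away from both windows one does nothing. Because the two windows are disjoint (as $\alpha(s)\le\beta(s)$ and the windows have radius $\epsilon/2<$ the gap, shrinking $\epsilon$ further if needed — or, if the windows are not disjoint because $\alpha(s)$ and $\beta(s)$ are close, one simply treats the union as a single window and the argument is the same), these two local flows do not interfere, and gluing to the identity across the compact-agreement region and across $\infty$ yields a well-defined family of fiber diffeomorphisms. Here completeness of $X$ is exactly what guarantees the flow exists for the required (finite, uniformly bounded) times; this is the only place completeness is used. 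Evaluating at $s=1$ gives a homeomorphism of pairs $(g_0^{\le\beta(0)},g_0^{\le\alpha(0)}) \cong (g_1^{\le\beta(1)},g_1^{\le\alpha(1)})$, which is in particular a homotopy equivalence of pairs; composing over the finite partition proves the lemma. (One can alternatively phrase the single-interval step as two applications of Lemma~\ref{lem:retract} — once to see each of $g_0^{\le\beta(0)}$, $g_0^{\le\alpha(0)}$ deformation retracts onto a common ``core'' independent of $s$, compatibly — but the explicit isotopy is cleaner and automatically respects the pair.)

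The main obstacle I anticipate is \emph{not} any one analytic estimate but the bookkeeping needed to make the transport isotopy genuinely fiber-preserving and compatible with the pair structure at the endpoints of the $\epsilon$-windows, where the cut-off functions turn the $X$-flow on and off: one must check that a point which, at time $s$, has $g_s$-value exactly $\alpha(s)$ is carried to a point with $g_{s'}$-value exactly $\alpha(s')$, uniformly, so that the images of the sublevel sets are exactly the new sublevel sets (not merely homotopy equivalent to them), and that the same flow does the right thing at the $\beta$-level. This is a standard ``moving the level set'' argument in Morse theory (cf.\ the proof of \cite[Section 3]{milnor:morse} and the deformation lemma \cite[Lemma 2.4]{S-T:obstruct}), but the simultaneous control of two levels for a \emph{family} of functions, plus the need to patch across the non-compact directions using the ``agree outside a compact set'' hypothesis and the controlled-at-infinity behavior, is where the real care goes. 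Everything else — the compactness reduction, openness of the gradient-like and boundedness conditions — is routine.
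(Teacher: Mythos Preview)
Your proposal is correct, and the compactness reduction to small $s$-intervals matches the paper exactly. The core single-interval step, however, is handled differently. You build an explicit parametric isotopy $\Psi_s$ by integrating a cut-off version of $X$ together with $\partial_s$, obtaining a genuine homeomorphism of pairs. The paper instead uses an $\epsilon/3$ nested-inclusion trick: after arranging $\|g_1-g_0\|,\ |\alpha(1)-\alpha(0)|,\ |\beta(1)-\beta(0)| < \epsilon/3$, one has chains of inclusions $g_0^{\le c-\epsilon/3}\subset g_1^{\le c}\subset g_0^{\le c+\epsilon/3}\subset g_1^{\le c+2\epsilon/3}$, and Lemma~\ref{lem:retract} supplies vertical deformation retractions identifying each term with the standard pair; the resulting zigzag $\phi_1,\phi_2,\phi_3$ satisfies $\phi_2\phi_1\simeq\mathrm{id}$ and $\phi_3\phi_2\simeq\mathrm{id}$, so $\phi_1$ is a homotopy equivalence of pairs. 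Your approach yields a stronger conclusion (diffeomorphism of pairs) and is more geometric, at the cost of the bookkeeping you correctly flag; the paper's argument is shorter and purely homotopy-theoretic, invoking only Lemma~\ref{lem:retract} rather than any new flow construction. Your parenthetical alternative at the end is in fact exactly the paper's route.
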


\begin{proof} The argument parallels that for \cite[Proposition 6.6]{traynor:shomology}.
Hypothesis (2) implies that  
there are no critical values of $g_s$ in $(\alpha(s) - \epsilon, \alpha(s) + \epsilon) \cup
(\beta(s) - \epsilon, \beta(s) + \epsilon)$.
Hypothesis (1) guarantees that, by breaking the path into smaller segments, we can 
assume that for all $s\in [0,1]$, 
\begin{equation} \label{eqn:dist-gap}
\min\left\{ \| g_{1} - g_{0} \|, |\beta(1) - \beta(0)|, |\alpha(1) - \alpha(0)|\right\}  < \frac{\epsilon}{3},
\end{equation}
where $\| \cdot \|$ denotes the sup norm.
Observe that for all $c$,
$$g_{0}^{\leq c - \epsilon/3} \subset  g_{1}^{\leq c} \subset g_{0}^{\leq c + \epsilon/3} \subset g_{1}^{\leq c+2\epsilon/3}.$$
These inclusions give rise to inclusions of pairs on the top line in the following diagram:
{\tiny
\eqnn
	\xymatrix{
	\left(g_0^{\leq \beta(0) - \epsilon/3}, g_0^{\leq \alpha(0) - \epsilon/3}\right) \ar@{^{(}->}[r]\ar[d]^{\cong}
	&\left(g_1^{\leq \beta(0) }, g_1^{\leq \alpha(0) }\right) \ar@{^{(}->}[r]\ar[d]^{\cong}
	& \left(g_0^{\leq \beta(0) + \epsilon/3}, g_0^{\leq \alpha(0) + \epsilon/3}\right)\ar@{^{(}->}[r]\ar[d]^{\cong} 
	&\left(g_1^{\leq \beta(0) + 2\epsilon/3 }, g_1^{\leq \alpha(0)+ 2\epsilon/3}\right)\ar[d]^{\cong}\\
	\left(g_0^{\leq \beta(0)}, g_0^{\leq \alpha(0) }\right)  \ar[r]^{\phi_1}
	& \left(g_1^{\leq \beta(1)}, g_1^{\leq \alpha(1) }\right)   \ar[r] ^{\phi_2}
	&\left(g_0^{\leq \beta(0)}, g_0^{\leq \alpha(0) }\right) \ar[r] ^{\phi_3}
	& \left(g_1^{\leq \beta(1)}, g_1^{\leq \alpha(1) }\right).	}
\eqnnd}
The vertical diffeomorphisms are provided by deformations guaranteed by hypothesis (2) and Lemma~\ref{lem:retract}.  The resulting maps $\phi_1, \phi_2, \phi_3$ satisfy the homotopy
equivalences 
$\phi_2 \circ \phi_1 \simeq \operatorname{id}, \phi_3 \circ \phi_2 \simeq \operatorname{id}$, $\phi_3 \simeq \phi_1$. 
 \end{proof}

\section{Some homotopy theory of spectra} \label{ssec:spectra}
  We assume the reader is familiar with basic ideas from the homotopy theory of spaces, and in particular with pushout and pullback squares, cofibrations, homotopy pushout and pullback squares, and the reduced suspension of a pointed space.  Sample texts for these background notions are \cite{May, dugger, Lurie-HTT, Hatcher} as well as Appendix B of  the  first arXiv  version of this paper \cite{tanaka-traynor-v1}. In this appendix, we gather background material on the homotopy theory of spectra.  

Somewhat confusingly, there are various definitions (i.e., models) of spectra that give rise to the same theory. For example, the notion found in the popular textbook of Hatcher \cite[Section 4.F]{Hatcher} agrees with what many would now call a prespectrum (see for example \cite[Chapter 22]{May}).  

\subsection{Spectra}
Given a pointed topological space $A$, we recall that $\Omega A$ denotes the based loop space of $A$ -- the space of continuous paths that begin and end at the basepoint. While mapping spaces normally do not admit CW structures for reasons of point-set topology, they are always homotopy equivalent to CW complexes as long as 
the domain is compact and the codomain is homotopy equivalent to a CW complex~\cite{milnor-spaces-having-homotopy-type-of-CW-complex}. A similar statements holds for relative mapping spaces, and in particular, $\Omega A$ is homotopy equivalent to a CW complex.  

\begin{defn}
A {\bf spectrum}, sometimes also called an $\Omega$-spectrum, is the data of the following:
\begin{enumerate}
	\item For every $i \geq 0$, a pointed topological space $Y_i$ homotopy equivalent to a CW complex.
	\item For every $i \geq 0$, a homotopy equivalence $Y_i \to \Omega Y_{i+1}$.
\end{enumerate}
Given two spectra, a {\bf map} $f: Y \to Y'$ is the data of the following:
\begin{enumerate}
	\item For every $i \geq 0$, a continuous map $f_i: Y_i \to Y_i'$ respecting basepoints.
	\item For every $i \geq 0$, a homotopy making the following diagram commute up to homotopy:
		\eqnn
		\xymatrix{
		Y_i \ar[r] \ar[d]_{f_i} & \Omega Y_{i+1} \ar[d]^{\Omega f_{i+1}} \\
		Y'_i \ar[r] & \Omega Y'_{i+1}.
		}
		\eqnnd
\end{enumerate}
\end{defn}

\begin{rem}  For spectra,  starting the indexing of the spaces at $0$ is convenient and the standard model.  However,
the information of a spectrum is completely recoverable from a collection of indices that limit to $\infty$: if  $Y_j$ is known, then $Y_{j-1}$ is determined by
our condition that $Y_{j-1} \simeq \Omega Y_j$. 
\end{rem} 

\begin{defn} \label{defn:spec-homotopy}
Given a spectrum $Y$ and an integer $j \in \ZZ$, one can define the {\bf $j^{th}$ homotopy group of $Y$} as follows:
	\eqnn
	\pi_{j}(Y) := \pi_{j+i}(Y_{i}).
	\eqnnd
\end{defn}

So for example, $\pi_k(Y) \cong \pi_k(Y_0)$, and $\pi_{-3}(Y) \cong \pi_0(Y_3)$. Note that $\pi_j(Y)$ is well-defined up to the group isomorphisms dictated by  the homotopy equivalences $Y_i \to \Omega Y_i$.    We will later see how one can define the homology groups of a spectrum; see Section~\ref{ssec:homology-pre-spectra}.

\begin{defn}\label{defn. equivalence of spectra}
An {\bf equivalence} of spectra is a map $Y \to Y'$ that induces an isomorphism on all homotopy groups.
\end{defn}

It is a theorem that every equivalence admits an inverse up to homotopy. See for example \cite[Remark~1.4.3.8]{higher-algebra}.

\begin{defn} 
A spectrum $Y$ is called {\bf trivial} if all its homotopy groups vanish. 
\end{defn}

\begin{ex}
\label{example. trivial spectrum}
Define a spectrum $Y$ by choosing, for each $i$, $Y_i$ to be some contractible CW complex. Then $Y$ is trivial. Any map from $Y$ to any other trivial spectrum is an equivalence (Definition~\ref{defn. equivalence of spectra}). 
Similarly, suppose $X$ is a prespectrum (see Definition~\ref{defn. prespectrum}) for which every space $X_i$ is contractible. Then the spectrum associated to $X$ (Construction~\ref{construction. spectrification}) is trivial -- indeed, the homotopy colimit of~\eqref{eqn. spectrification diagram} must have trivial homotopy groups.   
\end{ex}

\subsection{Prespectra}
Prespectra are one convenient tool for producing spectra; for example, the invariants in our work arise as prespectra. 

\begin{defn}\label{defn. prespectrum}
 A {\bf prespectrum} $X$ consists of the following data: 
 \begin{enumerate}
 \item For all $i \geq 0$, a pointed space $X_{i}$ homotopy equivalent to a CW complex, and
 \item For all $i \geq 0$, a continuous map
 	$\Sigma X_{i} \to X_{{i+1}}$
	respecting basepoints. 
 \end{enumerate}
\end{defn}

 Although we have started by indexing for a prespectrum to start at $i = 0$,  the next construction shows
that one obtains a well-defined spectrum even if the indexing of the prespectrum starts at any finite $i = N>0$.

\begin{construction}[The spectrum associated to a prespectrum]
\label{construction. spectrification}
Given a prespectrum $X$, for all $i$, one has natural maps of pointed topological spaces
	\eqn\label{eqn. spectrification diagram}
	X_i \to \Omega X_{i+1} \to \Omega^2 X_{i+2} \to \ldots
	\eqnd
by utilizing the adjoint map to $\Sigma X_j \to X_{j+1}$. (A map $\Sigma X_j \to X_{j+1}$ produces 
an $X_j$'s-worth of loops in $X_{j+1}$ -- i.e., a map $X_j \to \Omega X_{j+1}$.)  Let $Y_i$ denote the homotopy colimit of the diagram~\eqref{eqn. spectrification diagram}. (One can model this as a mapping telescope if one likes; see~\cite{dugger}.)  Then the natural map $Y_i \to \Omega Y_i$ is a homotopy equivalence. 
 If the indexing of the  prespectrum $X$ begins at $N > 0$, then the above procedure produces $Y_i$, $i \geq N$; by setting $Y_{N-k} := \Omega^{k}Y_N$ for $0 \leq k \leq N$, one thus obtains a spectrum $Y$ with indexing starting at $i = 0$. We call $Y$ {\bf the spectrum associated to the prespectrum} $X$, or the  {\bf spectrification} of $X$.
\end{construction}

\begin{rem}
In fact, every spectrum $Y$ arises from some prespectrum. To see this, let $X_i := Y_i$ and define the maps $\Sigma X_i \to X_{i+1}$ to be adjoint to the maps $Y_i \to \Omega Y_{i+1}$. Indeed, if $Y$ is a spectrum, it is easy to see that the spectrification of this adjoint prespectrum is (equivalent to) $Y$. In other words, spectrification is an idempotent operation that takes prespectra to spectra.
\end{rem}

\begin{rem}
Given a prespectrum $X$, we have maps between homotopy groups
\begin{equation} \nonumber
	\pi_k(X_N) \to \pi_{k+1}(\Sigma X_N) 
	\to 
	\pi_{k+1}(X_{N+1}) \to 
	\pi_{k+2}(\Sigma X_{N+1})
	\to \dots .
\end{equation}
It is easy to verify that the colimit of the above sequence of groups is isomorphic to $\pi_k$ of the spectrum associated to $X$; see Definition~\ref{defn:spec-homotopy}.
\end{rem}

\begin{defn}[Suspension spectra]
 \label{defn:suspension-spec} 
 Given a pointed space $A$ homotopy equivalent to a CW complex, we can consider
the prespectrum 
 given by 
 $$
 X_{i} = \Sigma^i A
 $$
with the natural equivalences $\Sigma(X_i) \xrightarrow{\cong} X_{i+1}$. 
The associated spectrum is called the {\bf suspension spectrum of $A$}, and we denote it by
 $\Sigma^\infty A$.
 \end{defn}
 
\begin{ex}[The sphere spectrum $\Sphere$] \label{ex:sphere-spectrum}
The suspension spectrum of $S^0$ is often written $\Sphere$. 
 This spectrum, which plays a fundamental
role in stable homotopy theory,   
 is called the {\bf sphere spectrum}.
\end{ex}

\begin{defn} \label{not:prespectra-maps}
Fix prespectra $X$ and $X'$.  A {\bf map} $h$ from $X$ to $X'$ consists of:
\begin{enumerate}
\item For  all sufficiently  large   $i$, a base-point preserving continuous map $h_i: X_{i}\to X'_{i}$, and 
\item For all sufficiently  large $i$,
a homotopy $H_i: \Sigma X_{i} \times [0,1] \to X'_{i+1}$ 
that makes the following diagram homotopy-commutative:
	\eqn\label{eqn. suspension homotopy}
	\xymatrix{
\Sigma X_{i} \ar[d]_{\Sigma h_i} \ar[r] &X_{i+1}    \ar[d]^{h_{i+1}}  \\
\Sigma X'_{i}  \ar[r] & X'_{i+1}.
   }
   	\eqnd
\end{enumerate}
\end{defn}

\begin{rem}
By the $\Omega$-$\Sigma$ adjunction, the data of the homotopy in~\eqref{eqn. suspension homotopy} is equivalent to the data of a homotopy making the following commute:
	\eqnn
	\xymatrix{
	X_i \ar[r] \ar[d]^{h_i} & \Omega X_{i+1} \ar[d]^{\Omega h_{i+1}} \\
	X_i' \ar[r] & \Omega X'_{i+1}.
	}
	\eqnnd
In particular, by applying the spectrification construction of Remark~\ref{construction. spectrification}, one sees that a map of prespectra induces a map of the associated spectra.
We note that, in some works, one demands that a map of prespectra actually be defined on a cofinal subdiagram of CW complexes. (See for example \cite[Chapter 2]{adams-blue-book}  and \cite{beaudry-campbell}.) Because prespectra are (as the name suggests) objects we use to arrive at actual spectra, this distinction will not matter for us. Indeed, in any reasonably defined $\infty$-category of prespectra, one can recover the actual $\infty$-category of spectra via an $\infty$-categorical localization along the weak equivalences (i.e., those maps inducing isomorphisms on homotopy groups). So while our definition of prespectra does not have the same collection (even up to $\pi_0$) of all morphisms of spectra, indeed every map of prespectra does give rise to a map of spectra -- which is the only fact we will use.
For further details, we refer the reader to~\cite[Definition~3.1.13]{gepner-chapter}. 
\end{rem}

The following is a sufficient condition for a map $h$ between prespectra to induce an equivalence of the associated spectra:

\begin{prop}\label{prop:prespectra-spectra-maps} 
Let $h: X \to X'$ be a map of prespectra such that, for all $i$ large, $h_i: X_i \to X_i'$ is a homotopy equivalence. Then $h$ induces an equivalence of the  spectra  associated to $X$, $X'$.
\end{prop}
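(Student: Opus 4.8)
\textbf{Proof proposal for Proposition~\ref{prop:prespectra-spectra-maps}.}

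The plan is to reduce the statement to the fact that a homotopy colimit of a sequence of homotopy equivalences is a homotopy equivalence, applied level-by-level after passing through the spectrification construction (Construction~\ref{construction. spectrification}). First I would recall that the spectrum $Y$ associated to a prespectrum $X$ has $i$-th space $Y_i$ given by the homotopy colimit of the telescope $X_i \to \Omega X_{i+1} \to \Omega^2 X_{i+2} \to \cdots$ in~\eqref{eqn. spectrification diagram}, with the analogous description for $Y'$ from $X'$. A map $h: X \to X'$ of prespectra, by the $\Omega$-$\Sigma$ adjunction discussed in the remark following Definition~\ref{not:prespectra-maps}, gives for each large $i$ a homotopy-commuting ladder
	\eqnn
	\xymatrix{
	X_i \ar[r] \ar[d]^{h_i} & \Omega X_{i+1} \ar[r] \ar[d]^{\Omega h_{i+1}} & \Omega^2 X_{i+2} \ar[r] \ar[d]^{\Omega^2 h_{i+2}} & \cdots \\
	X_i' \ar[r] & \Omega X_{i+1}' \ar[r] & \Omega^2 X_{i+2}' \ar[r] & \cdots
	}
	\eqnnd
so that $h$ induces a map $Y_i \to Y_i'$ on the homotopy colimits (and for indices $i < N$ below the start of the prespectra, one uses $Y_{N-k} = \Omega^k Y_N$, on which the induced map is $\Omega^k$ of the map $Y_N \to Y_N'$).

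Next I would observe that the hypothesis ``$h_i$ is a homotopy equivalence for all large $i$'' implies that each vertical map $\Omega^j h_{i+j}$ in the ladder above is a homotopy equivalence for $j$ large enough (since $\Omega$ preserves homotopy equivalences of spaces homotopy equivalent to CW complexes). Truncating the telescope to start at a large enough index does not change the homotopy colimit up to canonical equivalence, so we may assume every rung of the ladder is a homotopy equivalence. The key input is then the standard fact that a map of sequential diagrams which is a levelwise weak equivalence induces a weak equivalence on homotopy colimits (equivalently, on mapping telescopes); see~\cite{dugger}. Applying this for each $i$ shows the induced map $Y_i \to Y_i'$ is a homotopy equivalence of spaces for every $i$. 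A map of spectra inducing a homotopy equivalence on each space in particular induces isomorphisms on all homotopy groups $\pi_j(Y) := \pi_{j+i}(Y_i)$ (Definition~\ref{defn:spec-homotopy}), hence is an equivalence of spectra by Definition~\ref{defn. equivalence of spectra}.

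The only mild subtlety -- and the step I would be most careful about -- is the bookkeeping of ``for all sufficiently large $i$'': a map of prespectra in the sense of Definition~\ref{not:prespectra-maps} need only be defined in high degrees, and the two prespectra $X$, $X'$ may have their indexing starting at different integers, so one must check that the induced maps on the telescopes $Y_i$ are compatible with the structure maps $Y_i \to \Omega Y_{i+1}$ (this is automatic from the homotopy-commuting ladder) and that the extension to low indices via $\Omega^k$ is consistent. This is routine once one notes that the homotopy colimit only depends on a cofinal tail of the telescope, so the choice of where $h$ is defined and where the indexing begins is immaterial. I do not anticipate any essential obstacle beyond this cofinality check; everything else is the levelwise-equivalence-implies-colimit-equivalence principle together with the definition of equivalence of spectra.
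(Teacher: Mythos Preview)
Your proposal is correct and follows essentially the same approach as the paper: write down the homotopy-commuting ladder of telescopes, observe that the vertical maps $\Omega^j h_{i+j}$ are homotopy equivalences for large indices, and conclude that the induced maps $Y_j \to Y_j'$ on the homotopy colimits are equivalences, hence induce isomorphisms on all homotopy groups. The paper's proof is terser and omits the cofinality bookkeeping you spell out, but the argument is the same.
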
 

\begin{proof} By hypothesis, 
for $i$ large, the vertical arrows below are equivalences:
	\eqnn
	\xymatrix{
	X_i \ar[d]^{\simeq}
		\ar[r] &\Omega X_{i+1} \ar[d]^{\simeq}
		\ar[r] & \ldots \ar[d]^{\simeq} \\
	X_i' 
		\ar[r] &\Omega X_{i+1}'
		\ar[r] & \ldots  
	}.
	\eqnnd
Let $Y$ denote the spectrum associated to $X$. For every $j \geq 0$, the colimit of the rows of the above diagram enjoy induced homotopy equivalences
	\eqnn
	Y_j \xrightarrow{\simeq} Y_j'
	\eqnnd
hence isomorphisms of homotopy groups $\pi_\ast(Y) \to \pi_\ast(Y')$.
\end{proof}

\begin{cor} \label{cor:quotient-stab-spec} Given a manifold $L$ with boundary $\del L$, consider a prespectrum $Q$ given by 
 $$\begin{aligned}
  Q_{N} &= D^N(L)/\left(D^N(\partial L) \cup S^{N-1}(L) \right), \quad Q_i = \Sigma Q_{i-1}, \quad \forall i > N
  \end{aligned}
  $$
  where $D^N(L), S^{N-1}(L)$ are the trivial $N$-dimensional disk and $(N-1)$-dimensional sphere bundles over $L$, and the maps $\Sigma Q_i \xrightarrow{\cong} Q_{i+1}$ are as in Example~\ref{ex:quotient-suspensions}.
  Then the spectrum associated to $Q$ is equivalent to the suspension spectrum of $L/\partial L$.
 \end{cor}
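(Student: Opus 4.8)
\textbf{Proof plan for Corollary~\ref{cor:quotient-stab-spec}.}
The plan is to exhibit a map of prespectra from $Q$ to a suspension prespectrum of $L/\partial L$ that is a levelwise homotopy equivalence in all sufficiently large degrees, and then invoke Proposition~\ref{prop:prespectra-spectra-maps}. First I would recall from Example~\ref{ex:quotient-suspensions} the homotopy equivalences
	\eqnn
	\zeta_i : \Sigma^{i}(L/\partial L) \xrightarrow{\simeq} D^{i}(L)/\left(D^{i}(\partial L) \cup S^{i-1}(L)\right),
	\eqnnd
which for $i = N$ gives a homotopy equivalence $\Sigma^{N}(L/\partial L) \xrightarrow{\simeq} Q_N$, and for general $i \geq N$ exhibits $Q_i \simeq \Sigma^{i}(L/\partial L)$. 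The key point is that these $\zeta_i$ are compatible with the structure maps: the square appearing in the bottom of diagram~\eqref{eqn:A-filling} (or equivalently the compatibility built into Example~\ref{ex:quotient-suspensions}) shows that the homeomorphism $\Sigma Q_i \xrightarrow{\cong} Q_{i+1}$ used to define the prespectrum $Q$ agrees, under $\zeta_i$ and $\zeta_{i+1}$, with the standard suspension map $\Sigma\Sigma^{i}(L/\partial L) \xrightarrow{\cong} \Sigma^{i+1}(L/\partial L)$.

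Concretely, I would let $X'$ denote the suspension prespectrum of $A := L/\partial L$, so $X'_i = \Sigma^{i}(L/\partial L)$ for $i \geq N$ (with the indexing beginning at $N$, which is harmless by Construction~\ref{construction. spectrification}), whose associated spectrum is $\Sigma^\infty(L/\partial L)$ by Definition~\ref{defn:suspension-spec}. Then I would define a map of prespectra $h : X' \to Q$ by setting $h_i := \zeta_i$, and checking that the homotopy-commutativity condition~\eqref{eqn. suspension homotopy} of Definition~\ref{not:prespectra-maps} is exactly the compatibility of $\zeta_i$ with the suspension/structure maps noted above. Since each $h_i = \zeta_i$ is a homotopy equivalence for all $i \geq N$, Proposition~\ref{prop:prespectra-spectra-maps} yields that $h$ induces an equivalence of associated spectra, i.e.\ $\Sigma^\infty(L/\partial L) \simeq$ (the spectrum associated to $Q$), as claimed.

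I do not expect any serious obstacle here: the content is entirely bookkeeping with the equivalences $\zeta_i$ of Example~\ref{ex:quotient-suspensions}. The only mildly delicate point is confirming that $\zeta_i$ genuinely intertwines the \emph{homeomorphism} $\Sigma Q_i \xrightarrow{\cong} Q_{i+1}$ (which uses the $D^{i}/(D^i(\partial)\cup S^{i-1})$ model) with the ordinary reduced-suspension map on $\Sigma^{i}(L/\partial L)$; this is precisely the commuting diagram recorded in Example~\ref{ex:quotient-suspensions} (and used again in diagram~\eqref{eqn:A-filling}), so it amounts to citing that computation rather than redoing it.
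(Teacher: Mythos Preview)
Your approach is correct and is exactly what the paper does: it cites Proposition~\ref{prop:prespectra-spectra-maps} together with Example~\ref{ex:quotient-suspensions} and nothing more. One minor bookkeeping point: since $Q_i$ is \emph{defined} as $\Sigma Q_{i-1} = \Sigma^{i-N}Q_N$ (rather than as $D^i(L)/(D^i(\partial L)\cup S^{i-1}(L))$), the cleanest choice is $h_i := \Sigma^{i-N}\zeta_N$, after which the squares~\eqref{eqn. suspension homotopy} commute on the nose; your $h_i := \zeta_i$ differs from this only by the identifications of Example~\ref{ex:quotient-suspensions}, so the argument goes through either way.
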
 
\noindent
The proof follows immediately from Proposition~\ref{prop:prespectra-spectra-maps} and  Example~\ref{ex:quotient-suspensions}.

\begin{ex} \label{ex:homotopy-suspension-stabilized-homotopy} 
For any pointed topological space $X$, we have the sequence of homotopy groups
\begin{equation} 
\pi_k(X) \to \pi_{k+1}(\Sigma X) \to \pi_{k+2} (\Sigma^2 X) \to \dots
\label{eqn:Freudenthal} 
\end{equation}
The {\bf stable homotopy groups} of a space $X$ are defined as a colimit:
$$\pi_k^{s}(X) := \col_{j} \pi_{k+j} \Sigma^j(X).$$
We see that the 
homotopy groups of the suspension spectrum of a space $X$, $\Sigma^\infty(X)$, agree with the stabilized
homotopy groups of the space $X$:
$$\pi_k(\Sigma^\infty(X) ) = \pi_k^{s}(X).$$
A foundational result in stable homotopy theory is the Freudenthal Suspension Theorem \cite[Corollary 4.24]{Hatcher}, which implies that the $j$th maps in~\eqref{eqn:Freudenthal}  are all isomorphisms for large enough $j$; thus the colimit is computed at a finite stage of the sequence.
\end{ex}

\begin{notation}[Suspension] \label{notation:shift-spec}
 Given a prespectrum $X$, we define the 
{\bf suspension of $X$}, denoted $\Sigma X$, by shifting indices:
	\eqnn
	(\Sigma X)_i := (X)_{i+1}
	\eqnnd
Likewise, the suspension of a spectrum $Y$, denoted $\Sigma Y$,  is defined by $(\Sigma Y)_i = Y_{i+1}$.
There is a natural notion of $\Sigma^{-1} X$ as well:
	\eqnn
	(\Sigma^{-1}X)_i := (X)_{i-1}.
	\eqnnd
One can now define $\Sigma^j X$ for any $j \in \ZZ$ (and $\Sigma^j Y$ for any spectrum $Y$).
Observe that for $j < 0$, $\Sigma^j X$ may be thought of as having indexing starting at $-j > 0$.
\end{notation}

\begin{rem}
One often denotes $\Sigma^{-1} X$ also as $\Omega X$. Note that the operations $\Omega, \Sigma$ are not inverse operations for (pointed) spaces, but they are for spectra and prespectra. 
\end{rem}

\begin{ex}[Shifts of the sphere spectrum] \label{ex:suspension-Sn}
$\Sigma^\infty$ commutes with suspensions (of spaces and of spectra). In particular, we have that $\Sigma^\infty(S^n) \simeq \Sigma^n \Sphere$. 
\end{ex}

\begin{ex}   \label{ex:homotopy-calculations} 
By construction, the  homotopy groups of  the spectrum $\Sphere = \Sigma^\infty(S^0)$ agree
with the stable homotopy groups of $S^0$:
$$\pi_k (\Sphere) = \pi_k\left(\Sigma^\infty\left(S^0\right)\right) := \col_{j} \pi_{k+j}\left(S^j\right) = \pi_k^{s}\left(S^0\right).$$
A result of Serre tells us that
$\pi_k^s(S^0)$ is a finite group for all $k > 0$. 
The stable homotopy groups of $S^0$, $\pi_k^s(S^0)$, have been calculated for $k \leq 84$~\cite{IWX}. 
 For example:
\begin{center}
    \begin{tabular}{ | m{3em} ||c |c |c |c |c |c |c |c| c| c|} 
    \hline
    $k$ &  $\leq -1$ &0 & 1 & 2 & 3 & 4 & 5 & 6 & 7 & 8 \\ 
     \hline 
     $\pi_k^{s}(S^0)$ 
      & 0 & $\ZZ$ & ${\ZZ}{/2}$& ${\ZZ}{/2}$ & ${\ZZ}{/24}$ & 0 & 0 & $\ZZ/2$ & $\ZZ/240$ & $\ZZ/2 \times \ZZ/2$
      \\ 
     \hline  
    \end{tabular}
\end{center}

It is easy to check that shifting a spectrum shifts the homotopy groups. So for example, $\Sigma^\infty S^3$ has the following homotopy groups:
\begin{center}
\begin{tabular}{ | m{3em} ||c |c |c |c |c |c |c |c| c| c| } 
\hline
$k$ &  $\leq 2$ & 3 & 4 & 5 & 6 & 7 & 8 & 9 & 10 & 11 \\ 
 \hline 
 $\pi_{k}^s(S^3)$  & 0 & $\ZZ$ & $\ZZ/2$& $\ZZ/2$ & $\ZZ/{24}$ & 0 & 0 & $\ZZ/2$ & $\ZZ/240$ & $\ZZ/2 \times \ZZ/2$
  \\ 
 \hline 
\end{tabular}
\end{center}
\end{ex}

\begin{ex}[$\Sigma^\infty(T^2)$]\label{example. suspension of torus}
One can construct $\Sigma T^2$ by suspending the usual CW structure of the torus. In particular, $\Sigma T^2$ is obtained by gluing $D^3$ to the wedge sum $S^2 \vee S^2$ along a map $\del D^3 = S^2 \to S^2 \vee S^2$ homotopic to $aba^{-1}b^{-1} = 0 \in \pi_2(S^2 \vee S^2) \cong H_2(S^2 \vee S^2)$ (Here, $a$ and $b$ represent generators for the homology of each copy of $S^2$ in the wedge sum). Because $D^3$ is attached to $S^2 \vee S^2$ along a null-homotopic map, we conclude there is a homotopy equivalence
	\eqnn
	\Sigma T^2 \simeq S^2 \vee S^2 \vee S^3.
	\eqnnd
Tracing through the definitions, and noting that $\Sigma$ commutes with $\vee$ (thereby defining the wedge sum of spectra -- which we often denote by $\oplus$) we thus find 
$$\Sigma^\infty(T^2) \simeq \Sigma^{\infty}(S^1) \oplus \Sigma^{\infty}(S^1) \oplus \Sigma^\infty(S^2).$$
\end{ex} 

\subsection{Definitions of homology for prespectra and spectra} \label{ssec:homology-pre-spectra}

\begin{defn}[Homology of a prespectrum]\label{defn. homology of prespectrum}
Fix a coefficient abelian group $A$ and a prespectrum $X$. Then for all $i \in \ZZ$, the $i^{th}$ homology group of $X$ with coefficients in $A$ is defined to be the sequential colimit (i.e., direct limit) of abelian groups:
	\begin{equation}
	\label{eqn. homology of prespectrum}
	H_i(X;A)
	:=
	\colim_{j \to \infty} \widetilde{H}_{i+j}(X_j;A)
	\end{equation}
where $\widetilde{H}_{i+j}(X_j;A)$ is the ordinary reduced homology of a pointed topological space. 
\end{defn}

\begin{rem}
Each step in the sequential colimit diagram of Definition~\ref{defn. homology of prespectrum} is obtained as the composition
\begin{equation} 
\label{eqn:hom-lim}
	 \widetilde{H}_{i+j}(X_j;A)
	 \cong \widetilde{H}_{i+j+1}(\Sigma X_j;A)
	 \to \widetilde{H}_{i+j+1}(X_{j+1};A)
\end{equation} 
where the first isomorphism is the suspension isomorphism for homology, and the second map is the pushforward along the defining maps of $X$.
\end{rem}

\begin{rem}
Note that when $i$ is a negative number, the colimit is taken over all $j$ such that $j \geq |i|$. For example, when $i = -3$, 
	$$
	H_{-3}(X;A) := \colim \left( \widetilde{H}_0(X_3;A)
	\to
	\widetilde{H}_1(X_4;A)
	\to
	\widetilde{H}_2(X_5;A)
	\to \ldots \right).
	$$
\end{rem}

When $Y$ is a prespectrum, the homotopy equivalences $Y_i \to \Omega Y_{i+1}$ have adjoints $\Sigma Y_i \to Y_{i+1}$. We can then again use the sequence of
maps as in Equation~(\ref{eqn:hom-lim})   to define:

\begin{defn}[Homology of a spectrum]\label{defn. homology of spectrum}
Fix a coefficient abelian group $A$ and a prespectrum $Y$. Then the $i^{th}$ homology group of $Y$ with coefficients in $A$ is defined to be the  sequential colimit:	
	\begin{equation}
	\label{eqn. homology of spectrum}
	H_i(Y;A)
	:=
	\colim_{j \to \infty} \widetilde{H}_{i+j}(Y_j;A)
	\end{equation}
where $\widetilde{H}_{i+j}(Y_j;A)$ is the ordinary reduced homology of a pointed topological space. 
\end{defn}

Let us sketch a proof of the following:

\begin{prop}
Let $Y$ be the spectrum associated to a prespectrum $X$. Then there is a natural isomorphism between the homology groups~\eqref{eqn. homology of prespectrum} and~\eqref{eqn. homology of spectrum}.
\end{prop}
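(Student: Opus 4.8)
The plan is to compare the two colimit systems directly. Recall that the spectrum $Y$ associated to the prespectrum $X$ has $j$-th space $Y_j = \hocolim(X_j \to \Omega X_{j+1} \to \Omega^2 X_{j+2} \to \cdots)$, via the adjoints to the structure maps $\Sigma X_k \to X_{k+1}$ (Construction~\ref{construction. spectrification}). The definition~\eqref{eqn. homology of spectrum} of $H_i(Y;A)$ is $\colim_{j} \widetilde H_{i+j}(Y_j;A)$, while~\eqref{eqn. homology of prespectrum} is $\colim_j \widetilde H_{i+j}(X_j;A)$. First I would observe that reduced homology commutes with sequential homotopy colimits of spaces (mapping telescopes): since $Y_j$ is the mapping telescope of the diagram $\{\Omega^k X_{j+k}\}_{k\geq 0}$, we have
	$$
	\widetilde H_{i+j}(Y_j;A) \cong \colim_{k\to\infty} \widetilde H_{i+j}(\Omega^k X_{j+k};A).
	$$
Next, I would use the loop-suspension adjunction at the level of homology: for a pointed space $A'$ that is homotopy equivalent to a CW complex, the counit $\Sigma \Omega A' \to A'$ induces, after applying the suspension isomorphism, a natural map $\widetilde H_{n}(\Omega A';A) \to \widetilde H_{n+1}(A';A)$, and iterating gives $\widetilde H_{i+j}(\Omega^k X_{j+k};A) \to \widetilde H_{i+j+k}(X_{j+k};A)$.

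The key step is then to compare the two double-colimit systems. On one hand,
	$$
	H_i(Y;A) = \colim_j \colim_k \widetilde H_{i+j}(\Omega^k X_{j+k};A);
	$$
on the other, $H_i(X;A) = \colim_m \widetilde H_{i+m}(X_m;A)$. The maps $\widetilde H_{i+j}(\Omega^k X_{j+k};A)\to \widetilde H_{i+j+k}(X_{j+k};A) = \widetilde H_{i+m}(X_m;A)$ with $m = j+k$ assemble into a map of directed systems, hence a map $H_i(Y;A) \to H_i(X;A)$ (after checking naturality in both $j$ and $k$, which is a routine diagram chase using naturality of the suspension isomorphism and of the counit $\Sigma\Omega(-)\to(-)$). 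For the inverse, I would use that the structure maps $X_j \to \Omega X_{j+1}$ are themselves the $k=1$ entries (up to reindexing) in the telescope diagram for $Y_j$, so each $\widetilde H_{i+m}(X_m;A)$ maps compatibly into $\colim_k \widetilde H_{i+m}(\Omega^k X_{m+k};A) = \widetilde H_{i+m}(Y_m;A)$, giving a map $H_i(X;A) \to H_i(Y;A)$. A cofinality argument — the diagonal $\{(j,j)\}$ is cofinal in the product poset indexing the double colimit, and the telescope maps interleave with the prespectrum structure maps — shows these two maps are mutually inverse.

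The main obstacle I expect is bookkeeping the reindexing cleanly: one must check that the composite $\colim_k \widetilde H_{i+j}(\Omega^k X_{j+k}) \to \colim_m \widetilde H_{i+m}(X_m)$ is compatible as $j$ varies (not just for fixed $j$), which requires that the adjunction maps $\widetilde H_{*}(\Omega^k X_{j+k}) \to \widetilde H_{*+k}(X_{j+k})$ intertwine the telescope structure maps in $k$ with the prespectrum structure maps $\Sigma X_{j+k}\to X_{j+k+1}$. This is precisely the statement that the triangle
	$$
	\widetilde H_{i+j}(\Omega^k X_{j+k}) \to \widetilde H_{i+j}(\Omega^{k+1}X_{j+k+1}), \qquad \widetilde H_{i+j}(\Omega^k X_{j+k}) \to \widetilde H_{i+j+k}(X_{j+k}) \to \widetilde H_{i+j+k+1}(X_{j+k+1})
	$$
commute, which follows from the compatibility of the suspension isomorphism with the adjunction counit. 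Once this compatibility is verified, the rest is a standard cofinality argument for colimits over filtered posets, and naturality of the resulting isomorphism in $X$ is automatic since every map in sight is natural.
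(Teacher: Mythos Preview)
Your proposal is correct and takes essentially the same approach as the paper: both arguments rest on (i) the fact that reduced homology commutes with the mapping telescope defining $Y_j$, so $\widetilde H_{i+j}(Y_j)\cong \colim_k \widetilde H_{i+j}(\Omega^k X_{j+k})$, and (ii) the compatibility between the counit maps $\Sigma^k\Omega^k X_{j+k}\to X_{j+k}$ and the prespectrum structure maps. The paper packages this as an explicit element chase---defining the single map $H_i(X)\to H_i(Y)$ induced by $X_j\to Y_j$ and then proving surjectivity and injectivity by walking through a ``staircase'' diagram of spaces $\Sigma^a\Omega^b X_{j+a+b}$---whereas you phrase it as constructing mutually inverse maps via a cofinality argument on the double-indexed system; the reindexing bookkeeping you flag as the main obstacle is exactly what the paper's staircase diagram encodes.
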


\begin{proof}
For brevity, we fix, and omit from our notation, the coefficient $A$.
Fixing $i$, let us recall that (by definition of direct limit) $H_i(X)$ is isomorphic to the following quotient abelian group:
	$$
	{\frac 
	{\text{$a$ such that for some $j$, $a \in H_{i+j}(X_j)$}}
	{
		\text{
		$a \sim a'$  
		if $a'$ is the image of $a$ under the map 
		$H_{i+j}(X_j) \to H_{i+j'}(X_{j'})$
		}
	}.
	}
	$$
We can parse the maps $\Sigma Y_k \to Y_{k+1}$ of the associated prespectrum as arising from the diagram
	$$
	\xymatrix{
	\Sigma X_k \ar[r]  & \Sigma \Omega X_{k+1} \ar[r] \ar[d] & \Sigma \Omega^2 X_{k+2} \ar[r] \ar[d] & \ldots\ar[d]  \\
	&	    X_{k+1} \ar[r] & \Omega X_{k+2} \ar[r] & \ldots
	}
	$$
where the colimit of the top row is the space $\Sigma Y_k$, and the colimit of the bottom row gives $Y_{k+1}$. The composition $\Sigma X_k \to \Sigma \Omega X_{k+1}\to X_{k+1}$ in the above diagram is the map arising in the definition of the prespectrum $X$. Because homology respects sequential colimits, we see that the map on homology $\widetilde{H}_\ast (\Sigma Y_k) \to \widetilde{H}_\ast (Y_{k+1})$ is induced by taking the homology $\widetilde{H}_\ast$ of every space appearing in the above diagram, then examining the map between the direct limit of  (the homology of) each row.

So we define a group homomorphism $H_i(X) \to H_i(Y)$ as follows. Given an element $[a]$ in $H_i(X)$, so that $[a]$ is represented by some element $a \in \widetilde{H}_{i+j}(X_j)$ for some $j$, one has a well-defined element $f([a])$ of $\widetilde{H}_{i+j}(Y_j)$ arising from the map $X_j \to Y_j$. (This map arises because, by definition, $Y_j$ is the colimit of $X_j \to  \Omega X_{j+1} \to \ldots$. The well-definedness of $f([a])$ follows from the previous paragraph.) 

To prove $f$ is a surjection, choose an element $[b]$ of $H_i(Y)$, which by definition is represented by some element of $\widetilde{H}_{i+j}(Y_j)$, and hence (for some $n$) by an element $b' \in\widetilde{H}_{i+j}\Omega^n X_{j+n}$. On the other hand, we have a commuting diagram (generalizing the above two-row diagram)
	$$
	\xymatrix{
	\Sigma^n X_j \ar[r] & \Sigma^n \Omega X_{j+1} \ar[d]  \ar[r] & \ldots \ar[r] \ar[d] & \ldots \ar[r] \ar[d]  & \Sigma^n \Omega^n X_{j+n} \ar[d] \ar[r] & \ldots \ar[d]\\
		& \Sigma^{n-1} X_{j+1} \ar[r] &\ldots \ar[r] \ar[d] & \ldots \ar[r] \ar[d] & \ldots \ar[r] \ar[d] & \ldots \ar[d]  \\
		&& \ldots   \ar[r]& \ldots  \ar[d] \ar[r]& \ldots \ar[r]  \ar[d]& \ldots  \ar[d] \\
		&&  & \Sigma X_{j+n-1} \ar[r]  & \Sigma \Omega X_{j+n} \ar[r] \ar[d] & \ldots  \ar[d] \\
			&&&& X_{j+n} \ar[r] & \ldots .
	}
	$$
$\Sigma^n b'$ maps to some element $b \in \widetilde{H}_{i+j+n}(X_{j+n})$ by the composite map $\Sigma^n \Omega^n X_{j+n} \to X_{j+n}$ gleaned from the relevant column above.
Thus, under the composition $\Sigma^n Y_j \to \Sigma^{n-1} Y_{j+1} \to \ldots \to Y_{j+n}$, we see that $b'$ is identified with the element in $\widetilde{H}_{i+j+n} Y_{j+n}$ arising from $b$. This shows that $f$ is a surjection.

Suppose $f([a]) = 0$. Choosing a representative $a \in \widetilde{H}_{i+j}(X_j)$, $f([a])$ is represented by the corresponding element $a'$ in $\widetilde{H}_{i+j}(Y_j)$. That $f([a])=0$ means that for some finite $n$, $\Sigma^n a'$ is in the kernel of the map on homology of the composition $\Sigma^n Y_j \to \Sigma^{n-1} Y_{j+1} \to \ldots \to Y_{j+n}$, which means that (using the staircase-shaped diagram above) $\Sigma^n a'$ is sent to zero under the map $\Sigma^n X_j \to \Omega^k X_{j+n+k}$ for some finite $k$. In particular, $\Sigma^{n+k}a'$ is sent to zero under the composite map $\Sigma^{n+k} X_j \to \Sigma^k \Omega^k X_{j+n+k} \to X_{j+n+k}$, which is equal to the obvious composition of structure maps of $X$. Thus, $[a] = 0$ in $H_i(X)$.
\end{proof}

\subsection{Homotopy pushouts of spectra} 
A homotopy pushout square of spectra consists of a diagram of spectra commuting up to homotopy
 $$\xymatrix{
 W \ar[d] \ar[r] & B \ar[d] \\
 A \ar[r] &  C
 }$$
 satisfying the universal mapping-out property with respect to any other spectrum $Z$:
 $$
	\xymatrix{
	W \ar[rr] \ar[ddrrr] \ar[d] & & B \ar[d] \ar[ddr]&
	\\
	A \ar[rr]\ar[drrr] & & C\ar@{-->}[rd]  & \\
	& & & Z.
	}
$$

At the level of spaces, pushout squares need not be pullback squares.    However, an important
property of spectra is that pushout and pullback squares coincide.

 \begin{thm}\label{thm:pushout=pullback} For spectra, homotopy pushout squares are homotopy pullback squares.
  \end{thm}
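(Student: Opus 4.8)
The plan is to reduce the statement to the classical fact that, for $\Omega$-spectra, the suspension functor $\Sigma$ is an equivalence with inverse $\Omega$ (so spectra form a \emph{stable} $\infty$-category in disguise), and then exploit that a square is a homotopy pushout if and only if the associated ``total cofiber'' is trivial, while it is a homotopy pullback if and only if the ``total fiber'' is trivial. First I would recall the construction of the homotopy cofiber (mapping cone) and homotopy fiber of a map of spectra: given $g \colon Y \to Y'$, one forms $\mathrm{Cofib}(g)$ levelwise using the reduced mapping cone on each space $Y_i \to Y_i'$ (which is again an $\Omega$-spectrum up to the usual fibrant replacement, since $\Omega$ is exact on the relevant homotopy cofiber sequences of spaces once we are in the stable range), and dually $\mathrm{Fib}(g)$ levelwise using homotopy fibers. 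The key input — which I would either cite or prove in a short lemma — is that for spectra the natural comparison map $\mathrm{Fib}(g) \to \Omega\,\mathrm{Cofib}(g)$ is an equivalence; equivalently, the Puppe cofiber sequence $Y \to Y' \to \mathrm{Cofib}(g) \to \Sigma Y \to \dots$ and the fiber sequence $\Omega Y' \to \mathrm{Fib}(g) \to Y \to Y'$ agree after the shift identification $\Sigma Y \simeq \Sigma^{-1,-1}\dots$, because on each space the connecting map in the long exact sequence of homotopy groups is the same, and a map of spectra inducing isomorphisms on all $\pi_\ast$ is an equivalence (Definition~\ref{defn. equivalence of spectra} and the cited fact that such maps admit homotopy inverses).

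Next, given a commuting-up-to-homotopy square
$$
\xymatrix{
W \ar[r] \ar[d] & B \ar[d] \\
A \ar[r] & C,
}
$$
I would express both conditions in terms of a single iterated construction. Let $P$ denote the homotopy pushout of $A \leftarrow W \to B$ and let $Q$ denote the homotopy pullback of $A \to C \leftarrow B$; there are canonical maps $P \to C$ and $W \to Q$. The square is a homotopy pushout exactly when $P \to C$ is an equivalence, and a homotopy pullback exactly when $W \to Q$ is an equivalence. The classical ``$3\times 3$'' argument shows $\mathrm{Cofib}(W \to A)$ is equivalent to $\mathrm{Cofib}(B \to P)$, hence the square is a homotopy pushout iff $\mathrm{Cofib}(W\to A) \to \mathrm{Cofib}(B \to C)$ is an equivalence; dually the square is a homotopy pullback iff $\mathrm{Fib}(W \to A) \to \mathrm{Fib}(B \to C)$ is an equivalence. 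By the lemma of the previous paragraph applied to the two vertical maps $W \to A$ and $B \to C$, we have natural equivalences $\mathrm{Fib}(W \to A) \simeq \Omega\,\mathrm{Cofib}(W \to A)$ and $\mathrm{Fib}(B \to C) \simeq \Omega\,\mathrm{Cofib}(B \to C)$, compatibly with the maps induced by the square. Since $\Omega$ is an equivalence on spectra, a map of spectra is an equivalence iff its $\Omega$ is; therefore $\mathrm{Cofib}(W\to A) \to \mathrm{Cofib}(B \to C)$ is an equivalence iff $\mathrm{Fib}(W \to A) \to \mathrm{Fib}(B \to C)$ is. Combining, the square is a homotopy pushout iff it is a homotopy pullback.

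The main obstacle I anticipate is purely model-theoretic bookkeeping rather than conceptual: making the levelwise mapping cone and homotopy fiber constructions well-defined \emph{within} the category of $\Omega$-spectra (they a priori produce prespectra, so one must spectrify, i.e., pass through Construction~\ref{construction. spectrification}), and checking that spectrification commutes with these operations well enough that the long exact sequences of homotopy groups behave as claimed. In particular I would need to verify carefully that $\pi_\ast$ of the levelwise cofiber, after spectrification, sits in the expected long exact sequence $\dots \to \pi_k(Y) \to \pi_k(Y') \to \pi_k(\mathrm{Cofib}(g)) \to \pi_{k-1}(Y) \to \dots$, which follows from the Blakers–Massey / Freudenthal range (Example~\ref{ex:homotopy-suspension-stabilized-homotopy}) applied levelwise and taking colimits, but this is the step requiring the most care. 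An alternative, perhaps cleaner, route is to invoke that the category of spectra modeled here is equivalent to the stable $\infty$-category $\spectra$, in which the statement ``finite limits and finite colimits agree'' is essentially the \emph{definition} of stability; then Theorem~\ref{thm:pushout=pullback} is immediate from \cite{higher-algebra}. I would likely present the hands-on argument for self-containedness and remark on the $\infty$-categorical shortcut.
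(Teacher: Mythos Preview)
Your proposal is correct, and in fact more detailed than what the paper does. The paper does not actually give a proof of this theorem; immediately after the statement it simply writes: ``This fact can be deduced, for example, from \cite[Definition 1.1.1.9, Definition 1.4.2.8, Corollary 1.4.2.17, Definition 1.4.3.1]{higher-algebra} or found in the discussion following \cite[Definition 2.2.1]{SS-uniqueness}.'' In other words, the paper takes exactly the ``alternative, cleaner route'' you mention at the end --- citing that spectra form a stable $\infty$-category in the sense of Lurie, where the coincidence of finite homotopy limits and colimits is essentially definitional --- and leaves it at that.

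Your hands-on argument via the equivalence $\mathrm{Fib}(g) \simeq \Omega\,\mathrm{Cofib}(g)$ and the $3\times 3$ reduction is the standard direct proof and is correct in outline; it buys self-containedness at the cost of the model-theoretic bookkeeping you correctly flag (spectrifying the levelwise cofiber, verifying the long exact sequence via colimits over the Freudenthal range). Since the paper is content to cite the result, either your full argument or just the citation would be acceptable here; if you include the hands-on version, you are going beyond what the authors chose to do.
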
 
This fact can be deduced, for example, from \cite[Definition 1.1.1.9,
Definition 1.4.2.8,
Corollary 1.4.2.17,
Definition 1.4.3.1] {higher-algebra} or found in the discussion following \cite[Definition 2.2.1]{SS-uniqueness}.

\begin{rem}
\label{remark. spectrification preserves colimits}
Further, the spectrification operation is a left adjoint  in an adjunction between $\infty$-categories, so it sends homotopy pushout squares (of prespectra) to homotopy pushout squares (of spectra) -- see for example Remark~3.1.15 of~\cite{gepner-chapter}.
\end{rem}

\subsection{Mapping spaces and homotopy groups}
\label{section. homotopy groups via mapping spaces}
It turns out that the functor sending a pointed space $A$ to its suspension spectrum $\Sigma^\infty A$ is a left adjoint in an adjunction between $\infty$-categories, with right adjoint given by sending a spectrum $Y$ to $Y_0$. In particular, we have a homotopy equivalence of spaces
	\eqnn
	\hom(S^0, Y_0) \simeq \hom(\Sphere,Y)
	\eqnnd
where the lefthand side is the space of pointed continuous maps, and the righthand side is the mapping space of spectra. The above being a homotopy equivalence, we have isomorphisms of abelian groups; and also noting that $\hom(S^0,Y_0) \simeq Y_0$,
	\eqnn
	\pi_k Y \cong \pi_k Y_0 \simeq \pi_k \hom(\Sphere,Y),
	\qquad
	k \geq 0.
	\eqnnd
In other words, the homotopy groups of spectra are computed via homotopy groups of mapping spaces out of the sphere spectrum. By shifting, we also obtain homotopy groups in non-positive degrees as well:
	\eqnn
	\pi_k Y \cong \pi_0 Y_{-k} \simeq \pi_0 \hom(\Sigma^k \Sphere, Y),
	\qquad
	k \leq 0
	\eqnnd

\subsection{Long exact sequences}
 \begin{prop} \label{prop:pull-back-spectra-homotopy}
Suppose one has a homotopy pullback square (or, by Theorem~\ref{thm:pushout=pullback}, a homotopy pushout square) of spectra
	\eqnn
	\xymatrix{
	W \ar[r] \ar[d] & B \ar[d] \\
	A \ar[r] & C.
	}
	\eqnnd
Then the induced maps on homotopy groups
	\eqnn
	\pi_i W \to \pi_iA \oplus \pi_i B \to \pi_i C
	\eqnnd
admit connecting maps $\pi_i C \to \pi_{i-1}W$ fitting into a long exact sequence.
\end{prop}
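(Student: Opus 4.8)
\textbf{Proof proposal for Proposition~\ref{prop:pull-back-spectra-homotopy}.}

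The plan is to reduce the statement to the (well-known) long exact sequence in homotopy groups associated to a fibration sequence of spectra. First I would use Theorem~\ref{thm:pushout=pullback} to pass freely between the pushout and pullback presentations, and observe that for spectra every homotopy pullback/pushout square can be rewritten so that one of the maps is a fibration (replace $B \to C$ by an equivalent fibration, which does not change the homotopy pullback). Since the square is a homotopy pullback, the induced map $W \to A \times_C B$ is an equivalence, hence $W$ is (equivalent to) the total space of the pullback fibration over $A$ with fiber $\mathrm{Fib}(B \to C)$. Thus there is a fibration sequence of spectra
\eqnn
F \to W \to A, \qquad F \simeq \mathrm{Fib}(B \to C),
\eqnnd
and, separately, a fibration sequence $F \to B \to C$ with the \emph{same} fiber $F$ (up to equivalence), compatibly.

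Next I would invoke the standard fact that a fibration sequence of spectra $F \to E \to D$ induces a long exact sequence of homotopy groups
\eqnn
\cdots \to \pi_i F \to \pi_i E \to \pi_i D \xrightarrow{\partial} \pi_{i-1}F \to \cdots
\eqnnd
which, unlike the space-level situation, continues in \emph{both} directions without truncation because $\pi_i$ of a spectrum is defined for all $i \in \ZZ$ (Definition~\ref{defn:spec-homotopy}); each $\pi_i$ is computed as $\pi_{i+k}$ of a space for $k \gg 0$, where the classical fibration long exact sequence of spaces applies in the stable range. Applying this to the two fibration sequences above and splicing them together via the common fiber term $\pi_\ast F$ produces the Mayer--Vietoris sequence: the connecting map $\pi_i C \to \pi_{i-1}W$ is the composite $\pi_i C \xrightarrow{\partial} \pi_{i-1}F \to \pi_{i-1}W$, and exactness at $\pi_i A \oplus \pi_i B$ is the usual diagram-chase (an element mapping to $0$ in $\pi_i C$ has matching images in $\pi_i F$, which lift to an element of $\pi_i W$). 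One can also phrase the whole argument cohomologically via the mapping-space description in Section~\ref{section. homotopy groups via mapping spaces}: apply $\pi_\ast \hom(\Sigma^k \Sphere, -)$ to the square, use that $\hom(\Sigma^k\Sphere,-)$ preserves homotopy pullbacks, and extract the long exact sequence of the resulting homotopy pullback square of spaces.

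The main obstacle — really the only point requiring care — is the bidirectionality and the fact that pushout equals pullback: one must make sure the long exact sequence does not terminate, which is precisely where Theorem~\ref{thm:pushout=pullback} is essential (for spaces the Mayer--Vietoris sequence for a pushout and the fibration sequence for a pullback are different gadgets, and neither extends infinitely). Everything else is a translation of classical fibration-sequence arguments into the stable setting, using that homotopy groups of spectra are defined in all degrees and that $\Omega$ and $\Sigma$ are mutually inverse on spectra (so $\mathrm{Fib}$ and $\mathrm{Cofib}$ agree up to a shift, giving the pushout-to-pullback identification explicitly). I would cite \cite{higher-algebra} (stability of the $\infty$-category of spectra) for the identification of fiber and cofiber sequences and for the existence of the long exact sequence, rather than reprove these foundational facts.
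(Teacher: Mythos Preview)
Your proposal is correct, and in fact the alternative you sketch at the end --- applying $\hom(\Sigma^j\Sphere,-)$ to the square, using that this preserves homotopy pullbacks, and reading off the long exact sequence of the resulting pullback square of spaces --- is precisely the paper's proof. The paper makes this its \emph{only} argument: fix $j$, observe that $\hom(\Sphere[j],-)$ sends the pullback square of spectra to a pullback square of spaces, invoke the classical long exact sequence of a homotopy pullback of spaces, identify $\pi_i\hom(\Sphere[j],X)\cong\pi_{i+j}X$, and then vary $j$ to extend the sequence arbitrarily far in the negative direction.

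Your primary route --- replacing $B\to C$ by a fibration, extracting two fibration sequences with common fiber $F$, and splicing them via a Mayer--Vietoris diagram chase --- is also valid but is more work than needed here. It requires you to set up fibration sequences of spectra and argue their long exact sequences extend bidirectionally, and then carry out the splicing argument carefully. The paper's mapping-space approach sidesteps all of this: the bidirectionality comes for free from shifting $j$, and one never has to talk about fibers or splice anything. What your approach buys is perhaps a more explicit description of the connecting map (as a composite through $\pi_{\ast}F$), but for the purposes of this paper that is not needed.
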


\begin{proof}
Fix an integer $j$, and let $\Sphere[j] = \Sigma^j \Sphere$ denote the $j^{th}$ suspension of the sphere spectrum;  see Notation~\ref{notation:shift-spec}.
 By definition of homotopy pullback (of spectra), the homotopy-commuting square of mapping spaces  
	\eqnn
	\xymatrix{
	\hom(\Sphere[j],W)  \ar[r] \ar[d] & \hom(\Sphere[j],B)  \ar[d] \\
	\hom(\Sphere[j],A)  \ar[r] & \hom(\Sphere[j],C)
	}
	\eqnnd
is a homotopy pullback square (of spaces). It is thus classical that one obtains a long exact sequence involving the homotopy groups of these mapping spaces. By Section~\ref{section. homotopy groups via mapping spaces}, we know that for any spectrum $X$, we have natural isomorphisms
	\eqnn
	\pi_{i+j} X \cong \pi_{0}\hom(\Sphere[i+j],X)
	\cong \pi_{i}\hom(\Sphere[j],X),
	\eqnnd
so the long exact sequence of homotopy groups of the above spaces ends as follows:
	\eqnn
	\ldots \to \pi_{1+j} W \to \pi_{1+j} A \oplus \pi_{1+j} B \to \pi_{1+j} C \to
	\pi_j W \to \pi_j A \oplus \pi_j B \to \pi_j C.
	\eqnnd
By varying the value of $j$, we can extend this exact sequence to more and more negative values of $j$, thus obtaining the desired long exact sequence.
\end{proof}

\begin{ex}
We saw in Example~\ref{example. suspension of torus} that the suspension of the torus splits as a wedge sum $\Sigma(S^1 \vee S^1 \vee S^2)$. Wedge sums are trivial pushouts of pointed spaces, and $\Sigma^\infty$ preserves pushouts, so $\Sigma^\infty T^2$ is a direct sum/trivial pushout  of spectra $\Sphere^1,\Sphere^1$, and $\Sphere^2$. We conclude
  \eqnn
  \pi_k\left(\Sigma^\infty(T^2)\right) \cong \pi_{k}(\Sphere^1) \oplus \pi_{k}(\Sphere^1) \oplus\pi_{k}(\Sphere^2)
  \eqnnd
 \end{ex}

 \vfill
 \pagebreak

\section{Identifying a normal bundle}
In this section, we identify the negative index-bundle associated to a Morse-Bott difference function (Corollary~\ref{cor:neg-index-trivial}) to a trivial vector bundle. This is a necessary step (see proof of Lemma~\ref{lem:neg-end}) in proving the spectral lift of the Seidel isomorphism.

First, fix a Legendrian immersion $X \immto J^1B$ (or a Lagrangian immersion $X \immto T^*B$). We will say that this immersion {\em admits a generating family} if there exists a generic (Assumption~\ref{assumption. f generic}) generating family $f: B \times \RR^N \to \RR$ and a diffeomorphism $X \cong \Sigma_f$ such that this diffeomorphism intertwines the given immersions of $X$ with the immersions~\eqref{eqn. immersions of sigma f}.

\begin{rem}
Independently, Giroux \cite{Giroux:gf-exist} and Latour \cite{Latour} showed that, for a closed manifold $X$, a Legendrian immersion $X \immto J^1B$ admits a generating family if and only if the associated stable Gauss map (for the immersion $X \immto T^*B$) is trivial. (See also Proposition~2.6 of~\cite{acgk}.) Our aims here are more modest, instead classifying the stable normal class of the natural projection $X \to B$.
\end{rem}

Suppose $X \immto J^1B$ (or $X \immto T^*B$) is an immersed Legendrian (or Lagrangian) and consider the map
	$X \to B$
obtained as the composition $X \immto J^1B \to B$ (or the composition $X \immto T^*B \to B$). For all $N \geq 0$, composition with the inclusion $B \cong B \times \{ {\bf 0}\} \subset B \times \RR^N$ induces the smooth map
	\eqn\label{eqn. X to B times R^N fiber constant map}
	X \to B \times \RR^N.
	\eqnd

 \begin{prop}\label{prop. trivial normal}
Fix a smooth (possibly non-compact) manifold $X$. If $X \immto J^1B$ admits a generating family, then for some $N \geq 0$, the induced map~\eqref{eqn. X to B times R^N fiber constant map} is homotopic rel $B$ (that is, homotopic through maps that respect the projection to $B$) to a smooth embedding with trivial normal bundle. 
 \end{prop}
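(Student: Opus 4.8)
The idea is to recognize the fiber critical set $\Sigma_f \subset B \times \RR^N$ as the desired embedding, and to understand its normal bundle via the genericity hypothesis. Suppose $f \co B \times \RR^N \to \RR$ is a generating family realizing the immersion $X \immto J^1B$, so that we have a diffeomorphism $X \cong \Sigma_f$ intertwining the given immersion with~\eqref{eqn. immersions of sigma f}. Recall from~\eqref{eqn. sigma f as a subset} that $\Sigma_f = \{(q,\eta) \mid \partial_\eta f(q,\eta) = 0\}$, and that by Assumption~\ref{assumption. f generic}, $\mathbf 0$ is a regular value of $\partial_\eta f \co B \times \RR^N \to \RR^N$. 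Thus $\Sigma_f$ is a smooth submanifold of $B \times \RR^N$ of codimension $N$, cut out transversally as the zero locus of $\partial_\eta f$. First I would observe that the composition $\Sigma_f \hookrightarrow B \times \RR^N \to B$ is exactly the projection $X \to B$ underlying the immersion, and that the inclusion $\Sigma_f \hookrightarrow B \times \RR^N$ is an embedding respecting the projection to $B$. The remaining task is twofold: (1) show that the fiber-constant map~\eqref{eqn. X to B times R^N fiber constant map} is homotopic rel $B$ to this embedding $\Sigma_f \hookrightarrow B \times \RR^N$, possibly after increasing $N$; and (2) show the normal bundle of $\Sigma_f$ is trivial.

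For step (2): since $\Sigma_f$ is the transverse zero locus of the section $\partial_\eta f$ of the trivial bundle $(B\times\RR^N)\times\RR^N \to B\times\RR^N$, its normal bundle in $B \times \RR^N$ is canonically isomorphic to the restriction of that trivial rank-$N$ bundle, hence trivial. (Concretely, the differential of $\partial_\eta f$ along the fiber directions gives the identification $\nu_{\Sigma_f} \cong \Sigma_f \times \RR^N$; this is precisely the content that transversality provides.) Indeed, one can note that the fiberwise Hessian $\partial_\eta^2 f$, restricted to a normal slice, is nondegenerate along $\Sigma_f$ — this is what regularity of $\mathbf 0$ for $\partial_\eta f$ amounts to — and this nondegenerate pairing between the normal bundle and the trivial $\RR^N$ factor furnishes the trivialization. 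So the normal bundle of the embedding is trivial on the nose.

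For step (1): the honest statement is that the two maps $X \to B \times \RR^N$ — the fiber-constant one $x \mapsto (\pi(x), \mathbf 0)$ and the embedding $x \mapsto (q(x), \eta(x))$ identifying $X$ with $\Sigma_f$ — agree after composing with the projection to $B$, but may differ in the $\RR^N$ coordinate. These are in general not homotopic rel $B$ for a fixed $N$: the $\eta$-coordinate of $\Sigma_f$ need not be fiberwise null-homotopic. The fix is stabilization. After a rank-$K$ stabilization $f \rightsquigarrow f \oplus Q$ (Definition~\ref{defn:gf-equiv}), the fiber critical set is unchanged as a subset of $B \times \RR^N \subset B \times \RR^{N+K}$ (the new quadratic coordinates vanish along $\Sigma_{f\oplus Q}$), but now $\Sigma_f$ sits inside a larger Euclidean fiber $\RR^{N+K}$, and one has more room to perform a fiberwise straight-line homotopy that slides the $\eta$-coordinate to $\mathbf 0$ — at the cost, a priori, of leaving the image of the embedding. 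The cleanest route is to first observe that $\Sigma_f \hookrightarrow B\times\RR^N$ and the fiber-constant map $X \to B \times \RR^N$ are fiberwise homotopic as \emph{maps} (through the straight-line homotopy $t \mapsto (q(x), t\eta(x))$, which respects the projection to $B$), so~\eqref{eqn. X to B times R^N fiber constant map} is already homotopic rel $B$ to \emph{a} smooth embedding — namely $\Sigma_f \hookrightarrow B \times\RR^N$ — provided only that the claim asks for homotopy to an embedding with trivial normal bundle rather than an embedding onto the image of the fiber-constant map. Re-reading the statement, that is exactly what is claimed, so step (1) is essentially immediate once we use the straight-line homotopy in reverse: the fiber-constant map is homotopic rel $B$ to the inclusion of $\Sigma_f$, which is an embedding with trivial normal bundle by step (2).

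\textbf{Main obstacle.} The subtle point — and the place I would be most careful — is ensuring the straight-line homotopy $t \mapsto (q(x), t\eta(x))$ genuinely lands in the category of maps ``rel $B$'' and that the endpoint is a smooth embedding with the normal bundle identified \emph{as a bundle over $X$}, not just abstractly trivial; this requires checking that the diffeomorphism $X \cong \Sigma_f$ is compatible with the smooth structure used to compute $\nu_{\Sigma_f}$, and that no stabilization is actually needed for the homotopy (only transversality is needed for the normal-bundle triviality). If one does need to increase $N$ to make the target manifold large enough for a generic-position/smoothing argument, the stabilization invariance of $\Sigma_f$ guarantees this costs nothing. I expect the bookkeeping around ``rel $B$'' and the identification of $\nu_{\Sigma_f} \cong \underline{\RR^N}$ via the fiberwise Hessian to be the only real content; the homotopy itself is a one-line straight-line argument.
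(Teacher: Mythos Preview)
Your approach is essentially the same as the paper's: both take the embedding $\Sigma_f \hookrightarrow B \times \RR^N$ as the desired embedding, trivialize its normal bundle via the regularity of $\mathbf{0}$ for $\partial_\eta f$ (the paper phrases this as the induced map $N\Sigma_f \to T((\RR^N)^\vee)$ being a fiberwise isomorphism, you as the standard ``transverse zero locus of a section of a trivial bundle'' fact---these are the same observation), and both connect to the fiber-constant map via the straight-line homotopy $t\eta$ in $\RR^N$, with no stabilization actually needed.

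One small correction: your parenthetical remarks about the fiberwise Hessian $\partial_\eta^2 f$ are confused. Regularity of $\mathbf{0}$ for $\partial_\eta f \co B \times \RR^N \to \RR^N$ means the \emph{full} differential $D(\partial_\eta f)$ is surjective along $\Sigma_f$; it does \emph{not} force $\partial_\eta^2 f$ to be nondegenerate (the $q$-derivatives $\partial_q\partial_\eta f$ may be essential for surjectivity). The trivialization $\nu_{\Sigma_f} \cong \underline{\RR^N}$ comes from the full differential $D(\partial_\eta f)$ descending to the quotient $T(B\times\RR^N)|_{\Sigma_f}/T\Sigma_f$, not from the fiberwise Hessian alone. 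This doesn't affect your main argument, which already has the correct statement in the sentence preceding the parenthetical.
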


\begin{rem} 
Proposition~\ref{prop. trivial normal} strongly constrains whether an immersed Lagrangian $X \immto  T^*B$
(or an immersed Legendrian $X \immto  J^1B$)  admits a generating family. Indeed, the proposition yields a short exact sequence of vector bundles
	\eqnn
	0 \to TX \to T(B \times \RR^N)|_{X} \to \underline{\RR^N}  \to 0,
	\eqnnd
where $\underline\rr^N$ is the trivial bundle of rank $N$ over $X$. Thus the pullback of $TB$ to $X$ must be stably isomorphic to $TX$.  In particular, all characteristic classes of $X$ must match those of $B$ under the map $X\to B$. 
\end{rem}

\begin{ex}
Suppose $B$ is Euclidean space (or a smooth manifold with stably trivial tangent bundle -- a sphere of any dimension, a Riemann surface, a Lie group, et cetera). If there exists
a Lagrangian immersion $X \looparrowright T^*B$  admitting a generating family, then $X$ must have stably trivial tangent bundle. In particular, all the characteristic classes of  $X$ must
vanish.
\end{ex}

\begin{ex}
We also note that
Proposition~\ref{prop. trivial normal} does not assume compactness of $B$ or $X$. In particular, one can conclude that there does not exist a Lagrangian immersion 
  $\CC P^n \setminus  \{pt\}  \looparrowright T^*\RR^{2n}$ admitting  a generating family. \end{ex}

\begin{rem}
\label{remark. tangential structures on branes are natural}
It is known that many tangential obstructions must vanish for an $\Sphere$-linear Floer theory to be unambiguously defined on, say, an embedded Lagrangian in $T^*B$. We view the topological necessities for the existence of generating families as a similar constraint.
\end{rem}

\begin{proof}[Proof of Proposition~\ref{prop. trivial normal}.] 
We will prove the statement for the Lagrangian case. The argument for the Legendrian situation is similar. Recall we may identify $\Sigma_f$ with a subset of $B \times \RR^N$ via~\eqref{eqn. sigma f as a subset}. Consider the composition
	\eqnn
	\Sigma_f \subset B \times \RR^N \xrightarrow{df} \Gamma_{df} \subset T^*B \times T^*\RR^N \to T^*\RR^N \to (\RR^N)^\vee
	\eqnnd
where the last map is the projection to the cotangent coordinate of $T^*\RR^N$. By the definition of $\Sigma_f$ (as the locus where the derivatives of $f$ in the $\RR^N$ direction vanish) the image of the above composition is contained in the origin of $(\RR^N)^\vee$. In particular, $T\Sigma_f \to T((\RR^N)^\vee)$ is the zero map. By definition of normal bundle -- as the quotient of $T(B \times \RR^N)|_{\Sigma_f}$ by $T\Sigma_f$ -- we thus witness the induced map
	\eqnn
	N \Sigma_f \to T( (\RR^N)^\vee)
	\eqnnd
from the normal bundle. By genericity (Assumption~\ref{assumption. f generic}) the above map must be a fiber-wise surjection. A dimension count therefore shows $N{\Sigma_f} \to T\left( (\RR^N)^\vee\right)$ is a fiberwise isomorphism. Triviality of $N{\Sigma_f}$ follows.

Next we observe that the embedding $\Sigma_f \subset B \times \RR^N$ is, via a contracting homotopy of $\RR^N$ to its origin, homotopic to a map from $\Sigma_f$ with image contained in $B \cong B \times \{ {\bf 0}\} \subset B \times \RR^N$. This map is, by definition of $\Sigma_f$, identified with the composition $\Sigma_f \to T^*B \to B$.

Now, by applying the hypothesis that one has a diffeomorphism $X \cong \Sigma_f$ respecting the map to $T^*B$ (and hence the map to $B$) we obtain the desired result.
\end{proof}

Recall from Proposition~\ref{prop:leg-crit-point} that the critical locus of the difference function $\delta_f$ contains a copy of $\Sigma_f$;  similar arguments show 
that when $f$ is a generating family for an immersed   Lagrangian $L \subset T^*B$, the critical locus of $\delta_f$ likewise contains a copy of $\Sigma_f$ inside its $0$-level set.
 	Explicitly, the identification is given by the composition
	\eqn\label{eqn. identifying L with critical locus}
	\Sigma_f \subset B \times \RR^N
	\buildrel{\Delta_B}\over{\longrightarrow} B \times \RR^N \times \RR^N,
	\eqnd
where $\Delta_B$ denotes the diagonal map, $\Delta_B(x, \e) = (x, \e, \e)$. Thus, it makes sense to restrict vector bundles to $\Sigma_f$ (by pulling back along the above composition).
 
Now let us further assume that one can choose a Riemannian metric on $B \times \RR^N \times \RR^N$ so that the difference function $\delta_f$ is Morse-Bott.  It is natural to study
the Hessian of $\delta_f$. We  have the following:

\begin{prop}\label{prop. negative index is normal bundle}
The  negative index bundle of the Hessian of $\delta_f$, restricted to $\Sigma_f$,
is isomorphic to $N\Sigma_f$ (the normal bundle of the inclusion
$\Sigma_f \to B \times \RR^N$).
\end{prop}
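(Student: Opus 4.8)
The plan is to compute the Hessian of $\delta_f$ along $\Sigma_f$ in adapted coordinates and read off its negative eigenspace. First I would work at a point $(q_0,\eta_0) \in \Sigma_f$, identified via $\Delta_B$ with $(q_0,\eta_0,\eta_0)$ in the zero-level critical locus. The key structural observation is that $\delta_f(q,\eta,\tilde\eta) = f(q,\tilde\eta) - f(q,\eta)$, so the full second derivative of $\delta_f$ at $(q_0,\eta_0,\eta_0)$ has a block form coming from the two copies of the fiber variable: writing the domain tangent space as $T_{q_0}B \oplus \RR^N_\eta \oplus \RR^N_{\tilde\eta}$, the Hessian $\hess(\delta_f)$ is the difference of two copies of $\hess(f)$ sharing the $B$-direction. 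Concretely, with $P = \partial_q\partial_q f$, $S = \partial_q \partial_\eta f$, $R = \partial_\eta\partial_\eta f$ (all evaluated at $(q_0,\eta_0)$), one finds
\[
\hess(\delta_f) = \begin{pmatrix} 0 & -S & S \\ -S^T & -R & 0 \\ S^T & 0 & R \end{pmatrix},
\]
where the top-left $B\times B$ block vanishes because the $P$-contributions from the two copies of $f$ cancel. (Strictly, the vanishing of the $B\times B$ block uses that $\delta_f$ is Morse–Bott with critical submanifold $\Sigma_f$ containing this copy, together with the freedom to choose the metric; I would make the coordinate choice so that $T\Sigma_f$ sits in a standard position.)

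Next I would diagonalize the change of variables $(v, \xi, \tilde\xi) \mapsto (v, \tilde\xi - \xi, \tilde\xi + \xi)$ (or its metric-adapted analogue), under which the antisymmetric cross terms decouple. In the ``difference'' variable $w = \tilde\xi - \xi$ together with $v \in T_{q_0}B$, the quadratic form restricted there is precisely $\hess(f)$ evaluated at $(q_0,\eta_0)$, i.e. the form with matrix $\begin{pmatrix} P & S \\ S^T & R\end{pmatrix}$ once the cancelling $P$'s are accounted for — actually one obtains the fiber-Hessian-type form whose degeneracy is exactly $T\Sigma_f$ and whose negative eigenspace I want to identify; in the ``sum'' variable the form is positive-definite (or at least contributes no new negative directions once we quotient by $T\Sigma_f$). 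The upshot should be: the negative index bundle of $\hess(\delta_f)|_{\Sigma_f}$ is isomorphic to the negative index bundle of the fiberwise Hessian $\partial_\eta\partial_\eta f$ along $\Sigma_f$, and the latter is canonically $N\Sigma_f$ because $\Sigma_f = \{\partial_\eta f = 0\}$ is cut out transversally (Assumption~\ref{assumption. f generic}), so its normal bundle is identified with the image of $\partial_\eta\partial_\eta f$, which as a symmetric form supplies the negative directions.

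A cleaner route I would actually pursue: use the identification already established in the proof of Proposition~\ref{prop. trivial normal}, where $N\Sigma_f \xrightarrow{\ \cong\ } T((\RR^N)^\vee)$ via the second fiber derivative of $f$. I would argue that the negative index bundle of $\hess(\delta_f)|_{\Sigma_f}$ is, by the block computation above, naturally the image under this same map of the negative cone of the fiberwise quadratic form — but since we only care about the rank-$N$ normal directions, and $\partial_\eta\partial_\eta f$ restricted to the normal directions is an \emph{isomorphism} onto a space of dimension $N$, the ``negative index bundle'' picks out a subbundle of rank $N$ which must be all of $N\Sigma_f$. (This is where the Morse–Bott hypothesis matters: it guarantees the index bundle has constant rank and that the only degeneracy of $\hess(\delta_f)$ is along $T\Sigma_f$.) I expect the main obstacle to be bookkeeping: making the metric-dependent change of coordinates precise enough that the block-antidiagonal cancellation is rigorous and that no spurious negative directions appear from the ``sum'' variable or from cross terms between $T_{q_0}B$ and the fibers. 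I would handle this by choosing the Riemannian metric near $\Sigma_f$ to be a product adapted to the splitting $T\Sigma_f \oplus N\Sigma_f \oplus (\text{second fiber copy})$, so the computation reduces to the honest matrix algebra sketched above and the signature is manifestly $(+\cdots+, -\cdots-)$ with exactly $N$ minus signs transverse to $T\Sigma_f$.
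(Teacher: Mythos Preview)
Your block-form computation of $\hess(\delta_f)$ is correct, and passing to sum/difference fiber coordinates $u=\tilde\xi+\xi$, $w=\tilde\xi-\xi$ is the right move. But your description of the resulting form is wrong. Carrying out the substitution gives
\[
Q(v,u,w)\;=\;2\,v^{T}Sw\;+\;u^{T}Rw \;=\; w^{T}\bigl(2S^{T}v+Ru\bigr),
\]
a \emph{purely off-diagonal} pairing between $w\in\RR^N$ and the vector $2S^{T}v+Ru$. Restricted to the $w$-slice alone (or to the $(v,u)$-slice alone) the form vanishes identically; it is neither $\hess(f)$ on the difference variable nor positive-definite on the sum variable. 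In particular your ``upshot'' --- that the negative index bundle of $\hess(\delta_f)$ agrees with that of the fiberwise Hessian $R=\partial_\eta\partial_\eta f$ --- fails in general: $R$ need not be nondegenerate (Assumption~\ref{assumption. f generic} only gives surjectivity of the combined map $(S^{T}\mid R)$), so its negative index bundle is not even well-defined.

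Your computation can be repaired. Since $\ker(S^{T}\mid R)=T_{(q_0,\eta_0)}\Sigma_f$, the form $Q$ descends to a nondegenerate pairing between $N\Sigma_f=(T_{q_0}B\oplus\RR^N_u)/T\Sigma_f$ and $\RR^N_w$; in block form this is $\left(\begin{smallmatrix}0&A\\A^{T}&0\end{smallmatrix}\right)$ with $A$ invertible. The negative eigenspace of such a hyperbolic form has rank $N$ and projects isomorphically onto either summand (a negative eigenvector $(x,y)$ with $x=0$ forces $Ay=0$, hence $y=0$), yielding the isomorphism with $N\Sigma_f$.

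The paper instead argues coordinate-free via the swap involution $(x,\eta,\tilde\eta)\mapsto(x,\tilde\eta,\eta)$, which sends $\delta_f$ to $-\delta_f$ and hence exchanges the positive and negative eigenspaces of the Hessian. The map $\id+d(\swap)$, restricted to the negative eigenspace and followed by the projection $T(B\times\RR^N)\to N\Sigma_f$, is shown to be injective and hence an isomorphism by a dimension count. Your sum/difference coordinates are precisely the $\pm 1$-eigenspaces of $d(\swap)$, so the two arguments are cousins; the paper's version simply sidesteps the block algebra and the incorrect intermediate claims about definiteness.
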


Combining Propositions~\ref{prop. negative index is normal bundle} and \ref{prop. trivial normal}, we find:
\begin{cor} \label{cor:neg-index-trivial} The  negative index bundle of the Hessian of $\delta_f$, restricted to $\Sigma_f$, is a trivial vector bundle. 
\end{cor}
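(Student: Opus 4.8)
The final statement is Corollary~\ref{cor:neg-index-trivial}: the negative index bundle of the Hessian of $\delta_f$, restricted to $\Sigma_f$, is a trivial vector bundle.

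\medskip

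The plan is to deduce this immediately from the two preceding propositions, which the excerpt explicitly invokes (``Combining Propositions~\ref{prop. negative index is normal bundle} and~\ref{prop. trivial normal}''). First I would recall the chain of identifications. Proposition~\ref{prop. negative index is normal bundle} says that the negative index bundle of the Hessian of $\delta_f$, restricted to the critical submanifold $\Sigma_f$ (embedded via the diagonal~\eqref{eqn. identifying L with critical locus}), is isomorphic to the normal bundle $N\Sigma_f$ of the inclusion $\Sigma_f \hookrightarrow B \times \RR^N$. Proposition~\ref{prop. trivial normal}, together with the computation inside its proof, shows that $N\Sigma_f$ is a trivial bundle of rank $N$ over $\Sigma_f$ (indeed the proof exhibits the fiberwise isomorphism $N\Sigma_f \xrightarrow{\cong} T\big((\RR^N)^\vee\big)$, which is a trivial bundle since $(\RR^N)^\vee$ is a vector space). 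Composing these two isomorphisms yields a trivialization of the negative index bundle, which is exactly the assertion of Corollary~\ref{cor:neg-index-trivial}.

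\medskip

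So the argument has essentially no moving parts of its own: it is a two-line composition of already-established isomorphisms. The only point requiring a moment's care is that both Proposition~\ref{prop. negative index is normal bundle} and Proposition~\ref{prop. trivial normal} are statements about $\Sigma_f$ for a \emph{generic} generating family in the sense of Assumption~\ref{assumption. f generic}, and that in the application (the proof of Lemma~\ref{lem:neg-end}, where the filling $L_0$ plays the role of the critical submanifold) one has a generating family $F$ whose fiber critical set is identified with $L_0$; hence I would note that ``for generic $F$'' the hypotheses of both propositions are met, and that $\Sigma_f$ here is to be read as the relevant critical submanifold (possibly with boundary) so that the normal-bundle computation applies verbatim. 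The main (and really only) obstacle worth flagging is making sure the two propositions are genuinely about the same bundle over the same space with compatible identifications --- i.e.\ that the diagonal embedding~\eqref{eqn. identifying L with critical locus} used in Proposition~\ref{prop. negative index is normal bundle} and the generating-family presentation of $X \cong \Sigma_f$ used in Proposition~\ref{prop. trivial normal} are the same identification. Once that bookkeeping is confirmed, the corollary follows at once, and the proof can simply read: ``Proposition~\ref{prop. negative index is normal bundle} identifies the negative index bundle with $N\Sigma_f$, and Proposition~\ref{prop. trivial normal} shows $N\Sigma_f$ is trivial; composing, the negative index bundle is trivial.'' I would present it as a one-sentence proof of this form.
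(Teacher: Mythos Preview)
Your proposal is correct and matches the paper's approach exactly: the paper presents the corollary with no separate proof, simply writing ``Combining Propositions~\ref{prop. negative index is normal bundle} and~\ref{prop. trivial normal}, we find:'' before the statement. Your observation that one is really using the content of the \emph{proof} of Proposition~\ref{prop. trivial normal} (namely that $N\Sigma_f$ itself is trivial, not merely some homotopic embedding's normal bundle) is a fair bookkeeping point, and the paper implicitly relies on the same thing.
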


\begin{rem}
As we will see in the proof, the positive index bundle, restricted to $\Sigma_f$, is identified with the negative index bundle, so Proposition~\ref{prop. negative index is normal bundle} holds for the positive index bundle as well.
\end{rem}

\begin{proof}[Proof of Proposition~\ref{prop. negative index is normal bundle}.] The domain of $\delta_f$, 
$B \times \RR^N \times \RR^N$, has an order 2 diffeomorphism:
\eqnn
	\swap: (x,\e, \te) \mapsto (x,\te,\e).
\eqnnd
The diagonal $\Delta:= \{(x,\eta,\eta)\}$ is in the fixed point locus, so the derivative induces an order-2 automorphism $d(\swap)$ of the tangent bundle of $T(B \times \RR^N \times \RR^N)|_\Delta$.
Moreover, the composition of $\delta_f$ with the swap map produces $-\delta_f$:
\eqnn
\delta_f \circ \swap (x, \e, \te) = \delta_f (x, \te, \e) 
= f(x,\e) - f(x,\te)
= -\delta_f (x, \e, \te).
\eqnnd
Thus along the diagonal $\Delta$  the swap map has the effect of swapping the negative and positive eigenspaces of the Hessian of $\delta_f$  
and of preserving the zero eigenspace of the Hessian. This proves that the negative and positive eigenspaces have equal dimension. On the other hand, we know $\dim X_\leg = \dim B$, and we have assumed $f$ is Morse-Bott; so we conclude that the negative eigenspace has dimension $N$.

Now choose $(x, \e) \in \Sigma_f$, and choose a non-zero tangent vector 
\eqnn
	v \in T_{\Delta_B (x, \e)}  (\Delta_B (\Sigma_f)
\eqnnd
in the negative eigenspace of the Hessian.   
 
{\em Claim.}  $$v + d(\swap)(v) \notin T_{\Delta_B(x, \e)} (\Delta_B (\Sigma_f)).$$  
By the choice of $v$,   $v + d(\swap)(v)$ is in the span of the negative and positive eigenspaces. By the Morse-Bott Lemma,  $T_{\vartriangle(x, \e)} (B \times \rr^N \times \rr^N)$ is a  direct sum of the negative, positive, and null eigenspaces, and, since we are in a  Morse-Bott situation, the null eigenspace of the Hessian at $\Delta_B(x, \e)$ is precisely the tangent space to $\Delta_B(\Sigma_f)$. The Claim follows.
 
 It is immediate that $\id + d(\swap)$  
has image contained in the diagonal tangent space of $B \times \RR^N \times \RR^N$ -- i.e., the vector bundle of vectors of the form $(u,v,v)$. Restrict $\id + d(\swap)$ to the negative eigenbundle over $\Delta_B(\Sigma_f)$ and for every $\Delta_B(x, \e)$, $(x, \e) \in \Sigma_f$, consider the composition
	\begin{align}
	\text{Negative eigenspace}
	& \xrightarrow{\id + d(\swap)}
	\text{Diagonal tangent space of $B \times \RR^N \times \RR^N$} \nonumber \\
	& \cong 
	\text{Tangent space of $B \times \RR^N$} \nonumber \\
	& \to
	N\Sigma_f \subset B \times \RR^N. \nonumber
	\end{align}
  By the Claim, the above composition is an injection. A dimension count shows it must be an isomorphism.
\end{proof}

\bibliographystyle{amsplain} 
\bibliography{main}

\def\cprime{$'$} \def\polhk#1{\setbox0=\hbox{#1}{\ooalign{\hidewidth
  \lower1.5ex\hbox{`}\hidewidth\crcr\unhbox0}}}
\providecommand{\bysame}{\leavevmode\hbox to3em{\hrulefill}\thinspace}
\providecommand{\MR}{\relax\ifhmode\unskip\space\fi MR }
\providecommand{\MRhref}[2]{%
  \href{http://www.ams.org/mathscinet-getitem?mr=#1}{#2}
}
\providecommand{\href}[2]{#2}
\begin{thebibliography}{10}

\bibitem{abouzaid-blumberg}
Mohammed Abouzaid and Andrew~J. Blumberg, \emph{Arnold conjecture and morava
  {K}-theory}, 2021, \url{https://arxiv.org/abs/2103.01507}.

\bibitem{abouzaid-blumberg-foundations}
\bysame, \emph{Foundation of floer homotopy theory i: Flow categories}, 2024.

\bibitem{acgk}
Mohammed Abouzaid, Sylvain Courte, St{\'e}phane Guillermou, and Thomas Kragh,
  \emph{Twisted generating functions and the nearby {L}agrangian conjecture},
  2022, \url{https://arxiv.org/abs/2011.13178}.

\bibitem{abouzaid-courte-guillermou-kragh}
\bysame, \emph{Twisted generating functions and the nearby lagrangian
  conjecture}, 2024.

\bibitem{adams-blue-book}
J.~F. Adams, \emph{Stable homotopy and generalised homology}, Chicago
  {Lectures} in {Mathematics}. {Chicago} - {London}: {The} {University} of
  {Chicago} {Press}. {X}, 373 p. {{\textsterling}} 3.00 (1974)., 1974.

\bibitem{akaho-joyce}
Manabu Akaho and Dominic Joyce, \emph{Immersed {Lagrangian} {Floer} theory}, J.
  Differ. Geom. \textbf{86} (2010), no.~3, 381--500 (English).

\bibitem{bauer-furuta}
Stefan Bauer and Mikio Furuta, \emph{A stable cohomotopy refinement of
  {Seiberg}-{Witten} invariants. {I}}, Invent. Math. \textbf{155} (2004),
  no.~1, 1--19 (English).

\bibitem{beaudry-campbell}
Agn\`es Beaudry and Jonathan~A. Campbell, \emph{A guide for computing stable
  homotopy groups}, Topology and quantum theory in interaction, Contemp. Math.,
  vol. 718, Amer. Math. Soc., [Providence], RI, [2018] \copyright 2018,
  pp.~89--136. \MR{3869642}

\bibitem{bhupal-partial-order-2001}
Mohan Bhupal, \emph{A partial order on the group of contactomorphisms of
  {{\(\mathbb{R}^{2n+1}\)}} via generating functions}, Turk. J. Math.
  \textbf{25} (2001), no.~1, 125--135 (English).

\bibitem{blakey}
Kenneth Blakey, \emph{Floer homotopy theory and degenerate lagrangian
  intersections}, 2024.

\bibitem{bourgeois-chantraine}
Fr\'ed\'eric Bourgeois and Baptiste Chantraine, \emph{Bilinearized {L}egendrian
  contact homology and the augmentation category}, J. Symplectic Geom.
  \textbf{12} (2014), no.~3, 553--583. \MR{3248668}

\bibitem{BST:geography}
Fr\'{e}d\'{e}ric Bourgeois, Joshua~M. Sabloff, and Lisa Traynor,
  \emph{Lagrangian cobordisms via generating families: construction and
  geography}, Algebr. Geom. Topol. \textbf{15} (2015), no.~4, 2439--2477.

\bibitem{BST:geography-erratum}
\bysame, \emph{Erratum to ``{L}agrangian cobordisms via generating families:
  construction and geography''}, 2023, Available at
  \url{https://msp.org/agt/2015/15-4/agt-v15-n4-x16-constructions-erratum.pdf}.

\bibitem{chv:quasi-fns}
Yu. Chekanov, \emph{Critical points of quasifunctions, and generating families
  of {L}egendrian manifolds}, Funktsional. Anal. i Prilozhen. \textbf{30}
  (1996), no.~2, 56--69, 96.

\bibitem{chv}
\bysame, \emph{Differential algebra of {L}egendrian links}, Invent. Math.
  \textbf{150} (2002), 441--483.

\bibitem{CEKSW}
Gokhan Civan, Paul Koprowski, John Etnyre, Joshua~M. Sabloff, and Alden Walker,
  \emph{Product structures for {Legendrian} contact homology}, Math. Proc.
  Camb. Philos. Soc. \textbf{150} (2011), no.~2, 291--311 (English).

\bibitem{cohen-jones-segal}
R.~L. Cohen, J.~D.~S. Jones, and G.~B. Segal, \emph{Floer's infinite
  dimensional {Morse} theory and homotopy theory}, The Floer memorial volume,
  Basel: Birkh{\"a}user, 1995, pp.~297--325 (English).

\bibitem{Conley}
Charles Conley, \emph{Isolated invariant sets and the {M}orse index}, CBMS
  Regional Conference Series in Mathematics, vol.~38, American Mathematical
  Society, Providence, RI, 1978. \MR{511133}

\bibitem{DR:polygons}
Georgios Dimitroglou~Rizell, \emph{Lifting pseudo-holomorphic polygons to the
  symplectisation of {$P\times\Bbb{R}$} and applications}, Quantum Topol.
  \textbf{7} (2016), no.~1, 29--105.

\bibitem{dugger}
Daniel Dugger, \emph{A primer on homotopy colimits}, 2008,
  \url{https://pages.uoregon.edu/ddugger/hocolim.pdf}.

\bibitem{ekholm:lagr-cob}
T.~Ekholm, \emph{Rational {SFT}, linearized {L}egendrian contact homology, and
  {L}agrangian {F}loer cohomology}, Preprint available as arXiv:0902.4317,
  2009.

\bibitem{ees:high-d-analysis}
T.~Ekholm, J.~Etnyre, and M.~Sullivan, \emph{The contact homology of
  {L}egendrian submanifolds in {${\mathbb R}\sp {2n+1}$}}, J. Differential
  Geom. \textbf{71} (2005), no.~2, 177--305.

\bibitem{ees:high-d-geometry}
\bysame, \emph{Non-isotopic {L}egendrian submanifolds in {$\mathbb R\sp
  {2n+1}$}}, J. Differential Geom. \textbf{71} (2005), no.~1, 85--128.

\bibitem{ekholm-lekili}
Tobias Ekholm and Yank{\i} Lekili, \emph{Duality between lagrangian and
  legendrian invariants}, Geometry {\&} Topology \textbf{27} (2023), no.~6,
  2049--2179.

\bibitem{floer:witten}
A.~Floer, \emph{Witten's complex and infinite-dimensional {M}orse theory}, J.
  Differential Geom. \textbf{30} (1989), no.~1, 207--221.

\bibitem{f-r}
D.~Fuchs and D.~Rutherford, \emph{Generating families and legendrian contact
  homology in the standard contact space}, J. Topology \textbf{4} (2011),
  no.~1, 190--226.

\bibitem{fukaya-immersed}
Kenji Fukaya, \emph{Unobstructed immersed lagrangian correspondence and
  filtered a infinity functor}, 2023.

\bibitem{gepner-chapter}
David Gepner, \emph{An introduction to higher categorical algebra}, Handbook of
  homotopy theory, Boca Raton, FL: CRC Press, 2020, pp.~487--548 (English).

\bibitem{Giroux:gf-exist}
Emmanuel Giroux, \emph{Formes g\'en\'eratrices d'immersions lagrangiennes dans
  un espace cotangent}, G\'eom\'etrie symplectique et m\'ecanique ({L}a
  {G}rande {M}otte, 1988), Lecture Notes in Math., vol. 1416, Springer, Berlin,
  1990, pp.~139--145. \MR{1047480}

\bibitem{gromov-1985}
M.~Gromov, \emph{Pseudo holomorphic curves in symplectic manifolds}, Invent.
  Math. \textbf{82} (1985), 307--347 (English).

\bibitem{Hatcher}
A.~Hatcher, \emph{Algebraic topology}, Cambridge University Press, Cambridge,
  2002.

\bibitem{hirschi-porcelli}
Amanda Hirschi and Noah Porcelli, \emph{Lagrangian intersections and cuplength
  in generalised cohomology theories}, 2024.

\bibitem{hofer-lagrangian-embeddings-AIHP-1985}
Helmut Hofer, \emph{Lagrangian embeddings and critical point theory}, Ann.
  Inst. Henri Poincar{\'e}, Anal. Non Lin{\'e}aire \textbf{2} (1985), 407--462
  (English).

\bibitem{IWX}
Daniel~C. Isaksen, Guozhen Wang, and Zhouli Xu, \emph{Stable homotopy groups of
  spheres: from dimension 0 to 90}, Publications math\'ematiques de l'IH\'ES
  \textbf{137} (2023), no.~1, 107--243.

\bibitem{jin-treumann}
Xin Jin and David Treumann, \emph{Brane structures in microlocal sheaf theory},
  J. Topol. \textbf{17} (2024), no.~1, 68 (English), Id/No e12325.

\bibitem{lisa-jill}
J.~Jordan and L.~Traynor, \emph{Generating family invariants for {L}egendrian
  links of unknots}, Algebr. Geom. Topol. \textbf{6} (2006), 895--933
  (electronic).

\bibitem{Karlsson}
Cecilia Karlsson, \emph{A note on coherent orientations for exact {L}agrangian
  cobordisms}, Quantum Topol. \textbf{11} (2020), no.~1, 1--54. \MR{4071329}

\bibitem{Kragh:Viterbo-transfer}
{Kragh, Thomas}, \emph{{The {V}iterbo transfer as a map of spectra}}, {J.
  Symplectic Geom.} \textbf{{16}} ({2018}), no.~1, 85--226.

\bibitem{lalonde-sikorav}
Fran{\c{c}}ois Lalonde and Jean-Claude Sikorav, \emph{Lagrangian submanifolds
  and exact {Lagrangians} of cotangent bundles}, Comment. Math. Helv.
  \textbf{66} (1991), no.~1, 18--33 (French).

\bibitem{large-thesis}
Timothy Large, \emph{{S}pectral {F}ukaya {C}ategories for {L}iouville
  {M}anifolds}, Ph.D. thesis, Massachusetts Institute of Technology, 2011,
  Available at \url{https://dspace.mit.edu/handle/1721.1/139233}.

\bibitem{Latour}
Fran\c{c}ois Latour, \emph{Transversales lagrangiennes, p\'eriodicit\'e de
  {B}ott et formes g\'en\'eratrices pour une immersion lagrangienne dans un
  cotangent}, Annales scientifiques de l'{\'E}cole Normale Sup{\'e}rieure
  \textbf{24)} (1991), no.~4, 3--55.

\bibitem{laudenbach-sikorav-inventiones-1985}
Fran{\c{c}}ois Laudenbach and Jean-Claude Sikorav, \emph{Persistance
  d'intersection avec la section nulle au cours d'une isotopie hamiltonienne
  dans un fibr{\'e} cotangent}, Invent. Math. \textbf{82} (1985), 349--357
  (French).

\bibitem{lidman-manolescu}
Tye Lidman and Ciprian Manolescu, \emph{The equivalence of two
  {Seiberg}-{Witten} {Floer} homologies}, Ast{\'e}risque, vol. 399, Paris:
  Soci{\'e}t{\'e} Math{\'e}matique de France (SMF), 2018 (English).

\bibitem{L-S:homotopy-type}
Robert Lipshitz and Sucharit Sarkar, \emph{A {K}hovanov stable homotopy type},
  J. Amer. Math. Soc. \textbf{27} (2014), no.~4, 983--1042.

\bibitem{Lurie-HTT}
Jacob Lurie, \emph{Higher topos theory}, Ann. Math. Stud., vol. 170, Princeton,
  NJ: Princeton University Press, 2009 (English).

\bibitem{higher-algebra}
Jacob Lurie, \emph{Higher algebra}, 2017,
  \url{https://www.math.ias.edu/~lurie/papers/HA.pdf}.

\bibitem{manolescu-pin2}
Ciprian Manolescu, \emph{{{\(\mathrm{Pin}(2)\)}}-equivariant {Seiberg}-{Witten}
  {Floer} homology and the triangulation conjecture}, J. Am. Math. Soc.
  \textbf{29} (2016), no.~1, 147--176 (English).

\bibitem{May}
J.P. May, \emph{A concise course in algebraic topology}, Chicago Lectures in
  Mathematics, Chicago, 1999.

\bibitem{mcduff-salamon}
D.~McDuff and D.~Salamon, \emph{Introduction to symplectic topology}, second
  ed., Oxford Mathematical Monographs, The Clarendon Press Oxford University
  Press, New York, 1998.

\bibitem{mcduff-salamon-intro}
Dusa McDuff and Dietmar Salamon, \emph{Introduction to symplectic topology},
  3rd edition ed., Oxf. Grad. Texts Math., vol.~27, Oxford: Oxford University
  Press, 2016 (English).

\bibitem{melvin-shrestha}
P.~Melvin and S.~Shrestha, \emph{The nonuniqueness of {C}hekanov polynomials of
  {L}egendrian knots}, Geom. Topol. \textbf{9} (2005), 1221--1252.

\bibitem{milnor:morse}
J.~Milnor, \emph{Morse theory}, Based on lecture notes by M. Spivak and R.
  Wells. Annals of Mathematics Studies, No. 51, Princeton University Press,
  Princeton, N.J., 1963.

\bibitem{milnor-spaces-having-homotopy-type-of-CW-complex}
John~W. Milnor, \emph{On spaces having the homotopy of a {CW}-complex}, Trans.
  Am. Math. Soc. \textbf{90} (1959), 272--280 (English).

\bibitem{ziva}
Ziva Myer, \emph{A product structure on generating family cohomology for
  {L}egendrian submanifolds}, 2018, \url{https://arxiv.org/abs/1712.00364}.

\bibitem{nrssz}
Lenhard Ng, Dan Rutherford, Vivek Shende, Steven Sivek, and Eric Zaslow,
  \emph{Augmentations are sheaves}, Geom. Topol. \textbf{24} (2020), no.~5,
  2149--2286. \MR{4194293}

\bibitem{porcelli-smith-bordism}
Noah Porcelli and Ivan Smith, \emph{Bordism of flow modules and exact
  lagrangians}, 2024.

\bibitem{porcelli-smith-tangential}
\bysame, \emph{Spectral floer theory and tangential structures}, 2024.

\bibitem{porcelli-selecta}
Noah~W Porcelli, \emph{Families of relatively exact {L}agrangians, free loop
  spaces and generalised homology}, Selecta Mathematica \textbf{30)} (2024),
  no.~21, 1--53.

\bibitem{josh-lisa:cap}
Joshua~M. Sabloff and Lisa Traynor, \emph{Obstructions to the existence and
  squeezing of {L}agrangian cobordisms}, J. Topol. Anal. \textbf{2} (2010),
  no.~2, 203--232.

\bibitem{S-T:obstruct}
Joshua~M. Sabloff and Lisa Traynor, \emph{Obstructions to {L}agrangian
  cobordisms between {L}egendrians via generating families}, Algebr. Geom.
  Topol. \textbf{13} (2013), no.~5, 2733--2797. \MR{3116302}

\bibitem{sandon-capacity-annales-fourier-2011}
Sheila Sandon, \emph{Contact homology, capacity and non-squeezing in
  {{\(\mathbb R^{ 2n }\times S^{ 1 }\)}} via generating functions}, Ann. Inst.
  Fourier \textbf{61} (2011), no.~1, 145--185 (English).

\bibitem{SS-uniqueness}
{Schwede, Stefan and Shipley, Brooke}, \emph{{A uniqueness theorem for stable
  homotopy theory}}, {Mathematische Zeitschrift} \textbf{{239}} ({2002}),
  no.~{4}, {803--828}.

\bibitem{Seed}
Cotton Seed, \emph{Computations of the {L}ipshitz-{S}arkar {S}teenrod square on
  {K}hovanov homology}, https://arxiv.org/abs/1210.1882, October 2012.

\bibitem{shende-withdrawn}
Vivek Shende, \emph{Generating families and constructible sheaves}, 2018,
  \url{https://arxiv.org/abs/1504.01336}.

\bibitem{stz}
Vivek Shende, David Treumann, and Eric Zaslow, \emph{Legendrian knots and
  constructible sheaves}, Invent. Math. \textbf{207} (2017), no.~3, 1031--1133.
  \MR{3608288}

\bibitem{sikorav:gen-fn}
J.-C. Sikorav, \emph{Probl\`emes d'intersections et de points fixes en
  g\'eom\'etrie hamiltonienne}, Comment. Math. Helv. \textbf{62} (1987), no.~1,
  62--73.

\bibitem{tamarkin-microlocal-condition}
Dmitry Tamarkin, \emph{Microlocal condition for non-displaceability}, Algebraic
  and analytic microlocal analysis. AAMA, Evanston, Illinois, USA, May 14--26,
  2012 and May 20--24, 2013. Contributions of the workshops, Cham: Springer,
  2018, pp.~99--223 (English).

\bibitem{tanaka-traynor-v1}
Hiro~Lee Tanaka and Lisa Traynor, \emph{A stable homotopy invariant for
  {L}egendrians with generating families}, 2024,
  \url{https://arxiv.org/abs/2408.01587}.

\bibitem{theret:viterbo}
D.~Th{\'e}ret, \emph{A complete proof of {V}iterbo's uniqueness theorem on
  generating functions}, Topology Appl. \textbf{96} (1999), no.~3, 249--266.

\bibitem{traynor:shomology}
Lisa Traynor, \emph{Symplectic homology via generating functions}, Geom. Funct.
  Anal. \textbf{4} (1994), no.~6, 718--748.

\bibitem{traynor:helix}
\bysame, \emph{Legendrian circular helix links}, Math. Proc. Cambridge Philos.
  Soc. \textbf{122} (1997), no.~2, 301--314.

\bibitem{traynor:gf-polys}
\bysame, \emph{Generating function polynomials for {L}egendrian links}, Geom.
  Topol. \textbf{5} (2001), 719--760.

\bibitem{viterbo-functors-applications-ii}
C~Viterbo, \emph{Functors and computations in floer homology with applications
  part ii}, 2018.

\bibitem{viterbo-1992-symplectic-topology-as-generating-functions}
Claude Viterbo, \emph{Symplectic topology as the geometry of generating
  functions}, Math. Ann. \textbf{292} (1992), no.~4, 685--710 (English).

\bibitem{viterbo-jdg}
\bysame, \emph{Exact {Lagrange} submanifolds, periodic orbits and the
  cohomology of free loop spaces}, J. Differ. Geom. \textbf{47} (1997), no.~3,
  420--468 (English).

\bibitem{volpe-six-functors}
Marco Volpe, \emph{The six operations in topology}, 2023,
  \url{https://arxiv.org/abs/2110.10212}.

\bibitem{Weinstein-cat}
Alan Weinstein, \emph{Symplectic categories}, Tech. report, Proceedings of
  Geometry Summer School, Lisbon, July 2009, November 2009, arXiv:0911.4133.

\end{thebibliography}

\end{document}